\theoremstyle{plain}
\newtheorem{theorem}{Theorem}[section]
\newtheorem{corollary}[theorem]{Corollary}
\newtheorem{lemma}[theorem]{Lemma}
\newtheorem{proposition}[theorem]{Proposition}
\newtheorem{definition-lemma}[theorem]{Definition-Lemma}
\theoremstyle{remark}
\newtheorem{remark}[theorem]{Remark}
\theoremstyle{definition}
\newcommand{\Pic}[0]{\operatorname{Pic}}
\def\ddbar{\partial\bar\partial}
\def\ve{\varepsilon}
\def\mcS{\mathcal{S}}
\def\NE{\overline{\operatorname{NE}}}
\def\NS{\operatorname{NS}}
\def\BC{\operatorname{BC}}
\def\NA{\overline{\operatorname{NA}}}
\def\Null{\operatorname{Null}}
\newcommand{\<}{\leq}
\def\>{\geq}
\newcommand{\mbQ}{\mathbb{Q}}
\newcommand{\mbR}{\mathbb{R}}
\def\mcO{\mathcal{O}}
\newcommand{\num}{\equiv}
\newcommand{\OO}{{\mathcal{O}}}
\newcommand{\Q}{{\mathbb{Q}}}
\newcommand{\C}{{\mathbb{C}}}
\newcommand{\R}{{\mathbb{R}}}
\newcommand{\mult}{{\rm mult}}
\newcommand{\Supp}{{\rm Supp}}
\newcommand{\sm}{{\rm sm}}
\newcommand{\mbC}{\mathbb{C}}
\newcommand{\mbZ}{\mathbb{Z}}
\newcommand{\bir}{\dashrightarrow}
\def\injective{\hookrightarrow}
\newcommand{\del}{\partial}
\def\lrd{\lfloor}
\def\rrd{\rfloor}
\def\oomega{\boldsymbol{\omega}}
\def\ggamma{\boldsymbol{\gamma}}
\def\bbeta{\boldsymbol{\beta}}
\def\mbN{\mathbb{N}}
\def\>{\geq}
\def\ve{\varepsilon}
\def\mcA{\mathcal{A}}
\def\mcO{\mathcal{O}}
\def\mcC{\mathcal{C}}
\def\mcL{\mathcal{L}}
\def\mcP{\mathcal{P}}
\def\mcS{\mathcal{S}}
\def\eps{\epsilon}
\def\lrd{\lfloor}
\def\rrd{\rfloor}
\def\pt{\operatorname{pt}}
\def\Ex{\operatorname{Ex}}
\def\dim{\operatorname{dim}}
\def\codim{\operatorname{codim}}
\def\sm{\operatorname{\textsubscript{sm}}}
\def\NS{\operatorname{NS}}
\def\NA{\operatorname{\overline{NA}}}
\def\NE{\operatorname{\overline{NE}}}
\def\Im{\operatorname{Im}}
\def\ext{\operatorname{ext}}
\def\exp{\operatorname{exp}}
\theoremstyle{definition}
\newtheorem{definition}[theorem]{Definition}
\theoremstyle{definition}
\numberwithin{equation}{section}
\theoremstyle{remark}
\newtheorem{claim}[theorem]{Claim}
\title{MMP for Generalized Pairs on K\"ahler 3-folds}
\author{Omprokash Das}
\address{School of Mathematics\\
Tata Institute of Fundamental Research\\
Homi Bhabha Road, Navy Nagar\\
Colaba, Mumbai 400005}
\email{omdas@math.tifr.res.in}
\email{omprokash@gmail.com}
\thanks{Omprokash Das was partially supported by the Start--Up Research Grant(SRG), Grant No. \# SRG/2020/000348 of the Science and Engineering Research Board (SERB), Govt. Of India.}
\author{Christopher Hacon}
\address{Department of Mathematics\\
University of Utah\\
155 S 1400 E\\
Salt Lake City, Utah 84112}
\email{hacon@math.utah.edu}
\thanks{Christopher Hacon and Jos\'e Ignacio Y\'a\~nez were partially supported by NSF research grants no: DMS-1952522, DMS-1801851 and by a grant from the Simons Foundation; Award Number: 256202.}
\author{Jos\'e Ignacio Y\'a\~nez}
\address{Department of Mathematics\\
University of Utah\\
155 S 1400 E\\
Salt Lake City, Utah 84112}
\email{yanez@math.utah.edu}
\begin{document}

\maketitle

\begin{abstract}
    In this article we define generalized pairs $(X, B+\bbeta)$ where $X$ is an analytic variety and $\bbeta$ is a b-(1,1) current. We then prove that almost all standard results of the MMP hold in this generality for compact K\"ahler varieties of $\dim X\<3$.  More specifically, we prove the cone theorem, existence of flips, existence of log terminal models, log canonical models and Mori fiber spaces, geography of log canonical and log terminal models, etc.
\end{abstract}

\tableofcontents
\section{Introduction}
In this article we will develop the minimal model program for generalized K\"ahler surfaces and threefolds.
Generalized pairs naturally arise in the context of Kawamata's canonical bundle formula and adjunction to lc centers, and have been playing an increasingly important role in the birational geometry of complex projective varieties (see \cite{Kaw98}, \cite{FM00}, \cite{BZ16}, \cite{Bir21} and references therein).
It is natural to hope that these results carry over to the context of K\"ahler manifolds, especially for surfaces and threefolds where the usual minimal model program is known to hold (see \cite{HP16, HP15, CHP16}, \cite{DO23}, \cite{DH20}, \cite{DH23} and references therein).
We introduce generalized K\"ahler pairs (Definition \ref{d-gp1}), in a context which is more general than the usual definition of generalized pairs from projective geometry. Roughly speaking, a generalized pair $(X/S,B+\beta)$ consists of a proper morphism $X\to S$ of normal K\"ahler varieties,
a pair $(X,B)$, and a closed (1,1) current $\beta\in H^{1,1}_{\rm BC}(X)$ which is (bimeromorphically) nef over $S$ (we refer the reader to Definition \ref{d-gp1} for the technical nuances; we will denote the corresponding closed nef b-(1,1) current by $\boldsymbol{\beta}$, but for the purposes of this introduction, we will sometimes abuse notation and just refer to $\beta=\bbeta _X$, the trace of $\bbeta$ on $X$).
Note that in the case of projective varieties one requires the more restrictive condition that $\beta$ is a $\mathbb R$-divisor (birationally nef over $S$). Thus, if $H^2(X, \mcO_X)\ne 0$ (and hence $\NS(X)_{\mbR}\neq H^{1,1}_{\BC}(X)$), this allows us more flexibility even in the projective case. This is particularly important in the K\"ahler case as there may be very few $\mathbb R$-divisors whilst $H^{1,1}_{\rm BC}(X)$ may contain many interesting classes.
For example, working in this generality allows us to: 
\begin{enumerate}
    \item Prove the finiteness of certain 3-fold minimal models (see Theorem \ref{t-finite-ltms}).
    \item Show that different 3-fold  minimal models are connected by flops (see Theorems \ref{t-finite-ltms} and \ref{t-flops}).
    \item Run the minimal model program with scaling of a K\"ahler form $\omega$ (see Theorems \ref{t-3-mmpbig} and \ref{t-3-mmppsef}). 
\end{enumerate} 
It is then possible to consider the various flavors of singularities of the minimal model program for generalized pairs (klt, lc, dlt etc.) and to show several natural properties (in all dimensions), such as the fact that generalized klt singularities are rational, and if $X$ is Stein, then
a generalized klt pair $(X,B+\bbeta)$ is equivalent to a usual klt pair $(X,B+\Delta)$ and in particular it admits a $\Q$-factorialization (see Theorem \ref{t-gkltlocal}). 
In Section \ref{s-gsmmp} we give a treatment of the generalized surface MMP including the  cone theorem, the existence of minimal models and Mori fiber spaces, and the existence of log canonical models when $K_X+B+\beta$ is big. In Section \ref{s-tgmmp} we develop the minimal model program for 3-fold generalized klt pairs.
We show that 3-fold klt flips exist.
\begin{theorem}\label{thm:3-dim-flips}
Let $(X,B+\bbeta)$ be a compact K\"ahler $\Q$-factorial 3-fold generalized klt pair, and $f:X\to Z$ a flipping contraction, then the flip $X^+\to Z$ exists.   
\end{theorem}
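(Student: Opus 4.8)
The plan is to reduce the existence of the generalized flip to the existence of klt flips for projective morphisms, i.e.\ to \cite{BCHM}.

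First, the assertion is local over $Z$, so after shrinking we may assume that $Z$ is a Stein $3$-fold and that $f$ is an isomorphism away from finitely many points; thus $\Ex(f)$ is a compact curve. Since $(X,B+\bbeta)$ is generalized klt, both $X$ and $Z$ have klt --- hence rational --- singularities, so $H^i(X,\mcO_X)=H^i(Z,\mcO_Z)=0$ for $i>0$; in particular $H^1(X,\mcO_X)=H^2(X,\mcO_X)=0$, whence $\Pic(X)_{\mbR}\cong H^{1,1}_{\BC}(X)$ and the trace $\bbeta_X$ is represented by an $\mbR$-Cartier $\mbR$-divisor. Since $\rho(X/Z)=1$ and $\bbeta_X$ is nef over $Z$, the hypothesis that $-(K_X+B+\bbeta_X)$ is $f$-ample also gives that $-(K_X+B)$ is $f$-ample. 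Using that $Z$ is Stein --- and here a relative version of Theorem~\ref{t-gkltlocal} is what is needed --- one can replace the b-current $\bbeta$ by an effective $\mbR$-divisor $\Delta\ge 0$ such that $(X,B+\Delta)$ is an ordinary $\mbQ$-factorial klt pair with the same discrepancies as $(X,B+\bbeta)$ and with $-(K_X+B+\Delta)$ still $f$-ample. Thus $f$ is simultaneously an ordinary klt flipping contraction for $(X,B+\Delta)$, and since a small $\mbQ$-factorial modification of $Z$ of relative Picard number one is unique, it suffices to produce the flip of $(X,B+\Delta)$.

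After a further small perturbation we may take $B+\Delta$ to have rational coefficients without changing $f$, so that $K_X+B+\Delta$ is $\mbQ$-Cartier; as $-(K_X+B+\Delta)$ is $f$-ample and $X$ is $\mbQ$-factorial, $f$ is a projective morphism. Hence the relative log canonical algebra $\mcR=\bigoplus_{m\ge 0} f_*\mcO_X(\lfloor m(K_X+B+\Delta)\rfloor)$ is, locally over the Stein base $Z$, a finitely generated $\mcO_Z$-algebra by the analytic--local form of \cite{BCHM} for projective morphisms; set $X^+=\operatorname{Proj}_Z\mcR$ with structure morphism $f^+\colon X^+\to Z$. Then $f^+$ is small and birational, $X^+$ is normal and $\mbQ$-factorial, and $K_{X^+}+B^++\Delta^+$ is $f^+$-ample, where $B^+,\Delta^+$ denote strict transforms. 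Letting $\bbeta_{X^+}$ be the trace of the b-current $\bbeta$ on $X^+$, the pair $(X^+,B^++\bbeta_{X^+})$ is again generalized klt, since generalized discrepancies are unchanged under the small birational map $X\map X^+$; and by the standard negativity-lemma argument the proper transform $K_{X^+}+B^++\bbeta_{X^+}$ of the $f$-anti-ample divisor $K_X+B+\bbeta_X$ is $f^+$-ample. Therefore $f^+\colon X^+\to Z$ is the required flip.

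I expect the main obstacle to be the second step: turning the transcendental class $\bbeta_X$ into an honest effective divisor over the Stein base $Z$ while preserving the klt condition and the discrepancies of the pair. This rests on the rationality of klt singularities (giving $H^2(X,\mcO_X)=0$ over the Stein base) together with careful bookkeeping for the nef b-current $\bbeta$, and it is the place where one leans on the Kähler MMP machinery available in dimension $\le 3$. A secondary technical point is to check that, once $\bbeta$ has been absorbed into the boundary, the flipping contraction is genuinely a projective morphism, so that \cite{BCHM} can be applied over the Stein, rather than algebraic, base $Z$.
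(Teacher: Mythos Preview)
Your approach is essentially the same as the paper's: localize over a Stein neighborhood in $Z$ and replace the trace $\bbeta_X$ by an honest $\mbR$-divisor so that the contraction becomes an ordinary klt flipping contraction, then invoke known analytic klt flip results. The paper (Theorem~\ref{t-Stein-flip+}) carries this out concretely rather than cohomologically: it picks general points on the finitely many flipping curves $C_i$, cuts by local hyperplanes through those points to produce Cartier divisors $D_i$ on $X_W$, and sets $D=\sum(\bbeta_X\cdot C_i)D_i$, so $D\equiv_W\bbeta_X$ by construction; kltness of $(X_W,B_W+D)$ is then checked by pulling back to a log resolution, and the flip is obtained from \cite{CHP16} or \cite{DHP22} rather than \cite{BCHM10} directly. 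Your route via $H^2(X,\mcO_X)=0$ and $\Pic(X)_\mbR\cong H^{1,1}_{\BC}(X)$ is morally equivalent but requires more care in the non-compact analytic setting, which the paper's explicit construction sidesteps.

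Two points in your write-up need correction. First, the assertion that $\bbeta_X$ is nef over $Z$ is unjustified and in general false: nefness is only assumed for $\bbeta_{X'}$ on a log resolution, and pushforward can destroy it (cf.\ Lemma~\ref{l-3-nef}). You do not actually need this claim; once you have a divisor $D$ with $D\equiv_Z\bbeta_X$, the $f$-ampleness of $-(K_X+B+D)$ follows directly from that of $-(K_X+B+\bbeta_X)$. Second, the claim that ``generalized discrepancies are unchanged under the small birational map $X\dasharrow X^+$'' is wrong---discrepancies strictly increase along flipped curves. What you mean (and what is true) is that they do not decrease, which suffices to keep $(X^+,B^++\bbeta)$ generalized klt.
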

Proving the termination of flips in this generality however turns out to be too difficult. Instead, following the approach of \cite{BCHM10}, we show that certain generalized minimal model programs with scaling terminate. 
For example, if  $(X,B+\bbeta)$ is a compact K\"ahler $\Q$-factorial 3-fold generalized klt pair and $\beta=\bbeta _X$ is K\"ahler, then $K_X+B+t\beta$ is K\"ahler for $t\gg 0$, and a $K_X+B+\beta$ mmp with scaling of $(t-1)\beta$ is also a $K_X+B$ mmp with scaling of $t\beta$ and so, in this case, termination follows from standard results on the termination of flips for the usual klt $3$-fold pair $(X,B)$. This allows us to prove the existence of minimal and canonical models.
\begin{theorem}\label{thm:ltm}
    Let $(X,B+\bbeta)$ be a generalized compact klt K\"ahler 3-fold pair.
     \begin{enumerate}
        \item If $K_X+B+\bbeta_X$ is big, then $(X,B+\bbeta)$ has a log terminal model $f:X \dasharrow X^{\rm m}$ and a unique log canonical model $g:X^{\rm m}\to X^{\rm c}$.
     \item If $K_X+B+\bbeta _X$ is pseudo-effective and $\bbeta_X$ is big, then $K_X+B+\bbeta_X$ has a log terminal model $f:X\dasharrow X^{\rm m}$ and there is a contraction $g:X^{\rm m}\to Z$ such that $K_{X^m}+B^m+\bbeta_{X^m}\equiv g^*\omega _Z$ where $\omega _Z$ is a K\"ahler form on $Z$.
        \end{enumerate}
    \end{theorem}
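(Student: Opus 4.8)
The plan is to deduce both parts from the termination of an ordinary klt $3$-fold minimal model program run with scaling of a K\"ahler form, via the reduction recorded just before the statement. After passing to a $\Q$-factorialization and, using that $\bbeta$ is b-nef, to a birational model on which $\bbeta_X$ is nef, we may assume $X$ is $\Q$-factorial and $\bbeta_X$ is nef. The first step is to arrange that the trace of the b-part is K\"ahler. In case (1), since $K_X+B+\bbeta_X$ is big we write $K_X+B+\bbeta_X\equiv\omega+E$ with $\omega$ K\"ahler and $E\ge0$ an effective $\R$-divisor, and put $B'=B+\ve E$, $\bbeta'=\bbeta+\ve\oomega$, so that $K_X+B'+\bbeta'_X=(1+\ve)(K_X+B+\bbeta_X)$. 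In case (2), since $\bbeta_X$ is big we write $\bbeta_X\equiv\omega_0+E_0$ with $\omega_0$ K\"ahler and $E_0\ge0$, and put $B'=B+sE_0$, $\bbeta'=(1-s)\oomega_0+s\bbeta$, so that $K_X+B'+\bbeta'_X\equiv K_X+B+\bbeta_X$. In either case, for the perturbation parameter small enough, $(X,B'+\bbeta')$ is a $\Q$-factorial generalized klt pair, $\bbeta'$ is b-nef, the trace $\beta:=\bbeta'_X$ is K\"ahler (being nef plus K\"ahler) and in particular big, and $K_X+B'+\bbeta'_X$ is numerically a positive multiple of $K_X+B+\bbeta_X$; since a $(K_X+B'+\bbeta')$-MMP is then also a $(K_X+B+\bbeta)$-MMP, we may replace $(X,B+\bbeta)$ by $(X,B'+\bbeta')$ and assume from now on that $\bbeta_X=\beta$ is K\"ahler.

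Now comes the key point. Since $\beta$ is K\"ahler, $K_X+B+t\beta$ is K\"ahler for $t\gg0$, so we run a $(K_X+B+\beta)$-MMP with scaling of $(t-1)\beta$. As $K_X+B+\beta+\lambda(t-1)\beta=K_X+B+(1+\lambda(t-1))\beta$, this program coincides step by step with the ordinary $(K_X+B)$-MMP with scaling of the K\"ahler form $t\beta$ for as long as the scaling parameter stays $\ge1$; in particular every flipping contraction that occurs is a $(K_X+B)$-flipping contraction, so its flip exists (by the flip theorem for klt K\"ahler $3$-folds, equivalently Theorem \ref{thm:3-dim-flips}) and coincides with the generalized flip. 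Since the $(K_X+B)$-MMP with scaling of a K\"ahler form has only finitely many steps, by the known termination of flips with scaling for klt K\"ahler $3$-folds, our $(K_X+B+\beta)$-MMP --- being an initial segment of such a program --- terminates. As $K_X+B+\beta$ is pseudo-effective (big in (1), by hypothesis in (2)) and pseudo-effectivity is preserved under each pushforward, the program cannot end with a Mori fiber space, and thus produces a log terminal model $f:X\dasharrow X^{\rm m}$ with $K_{X^{\rm m}}+B^{\rm m}+\bbeta_{X^{\rm m}}$ nef.

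To conclude, on $X^{\rm m}$ the b-part $\bbeta$ remains b-nef and its trace $\bbeta_{X^{\rm m}}$ is big (bigness is preserved under the MMP), so the base point free theorem for generalized klt K\"ahler $3$-folds applies to the nef class $K_{X^{\rm m}}+B^{\rm m}+\bbeta_{X^{\rm m}}$ and shows it is semiample. In case (1) this class is moreover big, so the induced contraction $g:X^{\rm m}\to X^{\rm c}$ is birational and contracts exactly the curves on which $K_{X^{\rm m}}+B^{\rm m}+\bbeta_{X^{\rm m}}$ is zero, with $K_{X^{\rm c}}+B^{\rm c}+\bbeta_{X^{\rm c}}$ a K\"ahler class on $X^{\rm c}$; the uniqueness of $X^{\rm c}$ follows from its characterization as the ample model of $K_X+B+\bbeta_X$. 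In case (2) one obtains $g:X^{\rm m}\to Z$ with $K_{X^{\rm m}}+B^{\rm m}+\bbeta_{X^{\rm m}}\equiv g^*\omega_Z$ for a K\"ahler class $\omega_Z$ on $Z$, as required. The main obstacle is the termination in the second paragraph: termination of flips is not available for generalized K\"ahler $3$-folds in general, and the crux is precisely the identification of the scaled generalized MMP with an honest klt MMP with K\"ahler scaling. The subsidiary delicate points are that the perturbations of the first paragraph preserve $\Q$-factorial generalized klt singularities while making the b-part genuinely K\"ahler --- this is where the bigness hypotheses enter --- and the availability of the generalized base point free theorem in the K\"ahler $3$-fold setting.
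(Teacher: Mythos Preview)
Your approach is essentially the same as the paper's: the paper deduces the theorem from Theorems \ref{t-3-mmp} and \ref{t-3-mmp2}, both of which rest on Proposition \ref{pro:mmp-with-scaling}, which is precisely your observation that once $\bbeta_X$ is (modified) K\"ahler, the $(K_X+B+\bbeta_X)$-MMP with scaling of $t\bbeta_X$ is an honest $(K_X+B)$-MMP and therefore terminates.

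A few imprecisions worth flagging. Your reduction ``pass to a birational model on which $\bbeta_X$ is nef'' means passing to a log resolution $\nu:X'\to X$, and there you must replace the boundary by $\nu^{-1}_*B+(1-\epsilon)\Ex(\nu)$ (or the like) and invoke Lemma \ref{l-models}(7) to identify the models; this is not automatic. In case (2) your perturbation formula has the coefficients swapped: you want $\bbeta'=(1-s)\bbeta+s\oomega_0$ so that $K_X+B'+\bbeta'_X\equiv K_X+B+\bbeta_X$. More substantively, after passing to the log resolution the hypothesis ``$\bbeta_X$ big on $X$'' does not directly give ``$\bbeta_{X'}$ big on $X'$'', so your decomposition $\bbeta_X\equiv\omega_0+E_0$ with $\omega_0$ K\"ahler on the new model needs justification; the paper's Theorem \ref{t-3-mmp2} is stated with the stronger hypothesis that $\bbeta$ descends to a big class on a log resolution, and in fact Proposition \ref{pro:mmp-with-scaling} only requires $\omega$ modified K\"ahler, which is what one actually obtains. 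Finally, the ``base-point-free theorem for generalized klt K\"ahler $3$-folds'' you invoke is not a prior result but is Theorem \ref{t-3-mmp1} (for the non-big case) together with the explicit null-locus contraction carried out inside the proof of Theorem \ref{t-3-mmp} (for the big case); these arguments are a substantial part of the paper's work.
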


    For more general minimal model programs with scaling, termination of flips is achieved by studying Shokurov polytopes and the geography of minimal models. In particular we show the following (please see Theorems \ref{t-finite} and \ref{t-finite-ltms} for a more comprehensive statement).
\begin{theorem}\label{thm:models}
    Let $X$ be a smooth compact K\"ahler 3-fold, $B$ a simple normal crossings divisor, and $\Omega$ a compact convex polyhedral set of real closed (1,1)-currents such that $[\beta]\in H^{1,1}_{\BC}(X)$ is nef and $[K_X+B+\beta]\in H^{1,1}_{\BC}(X)$ is big for all $\beta \in \Omega$. Then there exist a finite polyhedral decomposition $\Omega =\cup \Omega _i$ and finitely many bimeromorphic maps $\psi _{i,j}:X\dasharrow X_{i,j}$ such that if $\psi : X\dasharrow  Y$ is a {weak log canonical} model for some $\beta \in \Omega$, then $\psi =\psi _{i,j}$ for some $i,j$.
\end{theorem}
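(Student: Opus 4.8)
The plan is to follow the well-established strategy for proving finiteness of models over a polytope, as in \cite{BCHM10}, but adapted to the generalized K\"ahler setting using the results developed earlier in the paper. Since $[K_X+B+\beta]$ is big for all $\beta\in\Omega$ and $\Omega$ is compact, Theorem \ref{thm:ltm}(1) applies at every point of $\Omega$: each generalized pair $(X,B+\bbeta)$ with $\beta\in\Omega$ has a log terminal model and a log canonical model. The first step is to reduce to a statement about a finite cover of $\Omega$ by rational polytopes and then, by a standard compactness argument, to work locally near a fixed point $\beta_0\in\Omega$. Near $\beta_0$ one replaces $\Omega$ by a small rational simplex $\Delta$ containing $\beta_0$; the goal becomes showing that only finitely many bimeromorphic maps arise as weak log canonical models over all of $\Delta$.

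The key technical input is the following: if $\psi:X\dasharrow Y$ is a weak log canonical model for some $\beta\in\Omega$, then $\psi$ remains a weak log canonical model for all $\beta'$ in a neighborhood of $\beta$ inside the locus where it is a weak lc model — more precisely, the set of $\beta$ for which a \emph{fixed} $Y$ (together with the pushed-forward data) is a weak log canonical model is a \emph{closed convex rational} subpolytope of $\Omega$. This follows from the description of weak lc models in terms of nefness and the negativity lemma: being a weak lc model is equivalent to $K_Y+\psi_*B+\bbeta_Y$ being nef, which is a closed linear condition on $\beta$, together with the condition that $\psi$ does not extract divisors and $a(E;X,B+\bbeta)\leq a(E;Y,\psi_*B+\bbeta_Y)$ for all divisors, again a closed linear condition (indeed these discrepancy inequalities need only be checked on the finitely many divisors extracted by a common resolution). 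I would establish these facts first, citing the cone theorem and the existence of log terminal models from Section \ref{s-tgmmp}.

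Next, I would show that there are only finitely many candidate targets $Y$. This is the heart of the argument. One runs a $(K_X+B+\beta)$-MMP with scaling for $\beta$ ranging over vertices of a fine rational subdivision of $\Delta$; by Theorem \ref{thm:ltm} these MMPs terminate with log terminal models. Any weak lc model is dominated by (a small modification of) one of these log terminal models, and the log terminal models obtained are finite in number because the MMP with scaling produces finitely many steps and the relevant Shokurov-type polytope $\{\Theta : K_X+\Theta \text{ is a convex combination of the boundary data}\}$ has finitely many faces. Concretely: cover $\Delta$ by the finitely many maximal subpolytopes on which "$Y$ is a weak lc model" is constant as $Y$ varies; on each, pick one representative interior point, run the MMP there to get a log terminal model, and argue via the negativity lemma that every weak lc model over that subpolytope factors through a bounded modification of that fixed log terminal model. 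Finally, assembling the finitely many subpolytopes $\Omega_i$ and the finitely many maps $\psi_{i,j}$ gives the statement.

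The main obstacle I expect is the finiteness of the targets $Y$: in \cite{BCHM10} this rests on the finiteness of the ample/nef chamber decomposition and on termination of the relevant MMPs with scaling, both of which need to be re-derived in the generalized K\"ahler category where $\bbeta$ is a b-$(1,1)$ current rather than an $\mathbb R$-divisor. The subtlety is that the space of currents $\Omega$ is genuinely higher-dimensional than $\NS(X)_{\mathbb R}$, so one cannot simply quote projective results; instead one must use that the cone theorem and the existence/termination statements from Section \ref{s-tgmmp} and Theorem \ref{thm:ltm} already incorporate this generality, and that the numerical and discrepancy conditions defining each chamber are still rational-linear in $\beta\in H^{1,1}_{\BC}(X)$ once one fixes a common resolution. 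Controlling the rationality of the polyhedral decomposition — i.e. that the $\Omega_i$ can be taken to be genuine polytopes and not merely semialgebraic sets — is the delicate point, and it is handled exactly as the finiteness of log terminal models over a polytope, replacing the divisorial Shokurov polytope by its b-$(1,1)$-current analogue.
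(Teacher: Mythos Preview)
Your outline has a genuine gap at the crucial step, and the paper fills it by a different mechanism than the one you sketch. You correctly identify that the hard part is bounding the set of possible targets $Y$, but your proposed fix---run MMPs at the vertices of a fine subdivision and assert that ``any weak lc model is dominated by (a small modification of) one of these log terminal models''---does not obviously work. A weak log canonical model need not be $\Q$-factorial, the map from a log terminal model to it need not be a morphism, and nothing you have written forces the collection of such $Y$'s to be finite. In the projective BCHM setting this is handled by a separate ``special finiteness'' statement, which you have not established here; simply citing Theorem~\ref{thm:ltm} at vertices does not give it. Also, your repeated appeal to ``rational polytopes'' and ``rational subpolytopes'' is misplaced: $H^{1,1}_{\BC}(X)$ carries no natural rational structure when $H^2(X,\mcO_X)\ne 0$, so the rationality part of the Shokurov-polytope argument must be dropped (the paper's decomposition is polyhedral but not rational).

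The paper's route avoids this entirely by a two-step reduction. First (Theorem~\ref{t-finite}) it proves finiteness of \emph{log canonical} models over $\Omega$ by induction on $\dim\Omega$: at a base point $\beta_0$ one passes to a log terminal model $X^{\rm m}\to X^{\rm c}$, reduces to the boundary $\partial\Omega$ over $X^{\rm c}$, and uses a length-of-extremal-ray estimate to produce a uniform $\bar\lambda>0$ on which the log canonical model is constant along rays from $\beta_0$. Second (Theorem~\ref{t-finite-ltms}, from which the present statement is immediate), it \emph{enlarges} $\Omega$ to $\Omega^\epsilon:=\{\beta+\sum t_i\gamma_i:\beta\in\Omega,\ |t_i|\le\epsilon\}$ where $\{\gamma_i\}$ is a basis of K\"ahler forms for $H^{1,1}_{\BC}(X)$. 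The key observation is that if $\psi:X\dasharrow Y$ is a weak log canonical model for some $\beta\in\Omega$ and $\eta$ is K\"ahler on $Y$, then $\psi^*\eta\equiv\sum t_i\gamma_i$ for some $t_i$, and $\psi$ becomes the \emph{log canonical} model for $\beta+\delta\sum t_i\gamma_i\in\Omega^\epsilon$ when $0<\delta\ll 1$. Thus every weak log canonical model over $\Omega$ already appears among the finitely many log canonical models over $\Omega^\epsilon$. This perturbation trick is what replaces your missing ``special finiteness'' step, and it is precisely the idea your outline lacks.
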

Building on this result, we are able to show that good minimal models are connected by flops (and in general minimal models are connected by flips, flops and anti-flips).

\begin{theorem}\label{thm:connecting-mm}
 Let $(X_i,B_i+\bbeta _{X_i} )$ be strongly $\Q$-factorial compact generalized klt K\"ahler 3-folds, where $K_{X_i}+B_i+\bbeta_{X_i}$ is nef (resp. $(X_i,B_i+\bbeta_{X_i})$ are good minimal models) for $i=1,2$ and $\phi:X_1\dasharrow X_2$ a bimeromorphic map which is an isomorphism in codimension 1. Then $\phi$ can be decomposed as flips, flops and inverse flips, see Definition \ref{def:inverse-flip} (resp. $\phi$ can be decomposed as a sequence of flips). 
\end{theorem}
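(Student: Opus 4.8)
The plan is to reduce the statement to a finiteness-of-models result applied to a suitable polytope of currents, following the classical argument of Kawamata for projective threefolds and its generalized-pair analogue. First I would pick a common resolution $p : W \to X_1$, $q : W \to X_2$ of the bimeromorphic map $\phi$, and since $\phi$ is an isomorphism in codimension $1$, the strict transforms of $B_1$ and $B_2$ agree, and $p^*(K_{X_1}+B_1+\bbeta_{X_1})$ and $q^*(K_{X_2}+B_2+\bbeta_{X_2})$ differ by a $p$- and $q$-exceptional divisor. Passing to a log resolution I may assume $W$ is smooth, the relevant divisor is simple normal crossings, and the pulled-back currents lie in a compact convex polyhedral set $\Omega$ of closed $(1,1)$-currents on $W$ of the type appearing in Theorem \ref{thm:models}: I would take $\Omega$ to be the convex hull of $p^*\bbeta_{X_1,W}$ and $q^*\bbeta_{X_2,W}$ (bimeromorphically nef and, up to adding a small multiple of a K\"ahler form absorbed into $B$, with $K_W + B_W + \beta$ big for $\beta \in \Omega$, using that $\bbeta_{X_i}$ is nef and $K_{X_i}+B_i+\bbeta_{X_i}$ is nef).

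The key point is then that both $X_1$ and $X_2$ are weak log canonical models for currents in $\Omega$: $X_i$ is a weak log canonical model for the endpoint $\beta$ equal to $p^*\bbeta_{X_1,W}$ (resp. $q^*\bbeta_{X_2,W}$), because $K_{X_i}+B_i+\bbeta_{X_i}$ is nef and the discrepancy comparison on $W$ shows $X_i$ is the ample/nef model in the sense of Theorem \ref{thm:models}. By Theorem \ref{thm:models} there are only finitely many weak log canonical models $\psi_{i,j} : W \dasharrow X_{i,j}$ realized over $\Omega$. Now I would move along a general line segment in $\Omega$ from the $X_1$-endpoint to the $X_2$-endpoint; by the finite polyhedral decomposition $\Omega = \cup \Omega_i$, this segment meets finitely many chambers, and crossing a wall corresponds to a birational modification between the two adjacent weak log canonical models. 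The standard analysis of wall-crossing for Shokurov-type polytopes (as developed in Section \ref{s-tgmmp} and used for Theorems \ref{t-finite}, \ref{t-finite-ltms}) identifies each such elementary modification as a flip, a flop, or an inverse flip according to whether the log discrepancy of the relevant divisor strictly increases, stays equal, or strictly decreases across the wall; composing these gives the desired decomposition of $\phi$.

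For the "resp." (good minimal models) statement, the extra input is that when both $(X_i, B_i+\bbeta_{X_i})$ are good minimal models, the ample models on both sides coincide: there is a common log canonical model $X^{\rm c}$ (by uniqueness, Theorem \ref{thm:ltm}(1), applied on the resolution, or directly since both $K_{X_i}+B_i+\bbeta_{X_i}$ are semiample with the same ample model). Then along the segment in $\Omega$ the log canonical (ample) model is constant, so every wall-crossing is an isomorphism in codimension $1$ over $X^{\rm c}$ with the log canonical class nef on both sides — i.e. a flop — and never a flip or inverse flip, since a flip would strictly change the ample model. Alternatively, one runs a $(K_{X_1}+B_1+\bbeta_{X_1})$-trivial MMP over $X^{\rm c}$ starting from a common resolution and argues it terminates with $X_2$, each step being a flop; I would use whichever of these is cleaner given the scaling results already in hand.

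The main obstacle I anticipate is the wall-crossing bookkeeping: making precise, in the generalized K\"ahler category, that crossing a wall of the polyhedral decomposition of $\Omega$ induces a single small birational map between adjacent weak log canonical models (rather than a composition of several, or a divisorial contraction), and that this map is of flipping type in the appropriate sense. This requires the local structure of the chambers near a codimension-one wall and the existence of the relevant extremal contraction, which should follow from the cone theorem and the existence of flips (Theorem \ref{thm:3-dim-flips}) for generalized klt threefold pairs, but the currents-valued setting means one must track bigness and nefness of b-$(1,1)$ classes carefully rather than divisor classes; perturbing $\beta$ to a nearby class where $K_W+B_W+\beta$ is big and the MMP with scaling terminates (as in Theorem \ref{thm:ltm}) is the technical device that makes this go through.
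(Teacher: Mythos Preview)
Your overall strategy---finiteness of models plus wall-crossing along a segment in a polytope $\Omega$---is the Kawamata-style approach, and it is not what the paper does. The paper's proof is much more direct: pull back a K\"ahler form $\omega_2$ on $X_2$ to obtain a modified K\"ahler class $\omega_1$ on $X_1$ (Lemma~\ref{lem:potentials-of-pushforward}), so that $(X_1,B_1+\bbeta+\epsilon_0\oomega)$ is generalized klt and $K_{X_1}+B_1+\bbeta_{X_1}+\epsilon_0\omega_1$ is big. Then run a $(K_{X_1}+B_1+\bbeta_{X_1}+\epsilon\omega_1)$-MMP with scaling of a very general K\"ahler class on $X_1$ itself (not on a resolution). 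By Theorem~\ref{t-3-mmpbig} this terminates; since $X_2$ is the log canonical model for this class, the output maps to $X_2$, and being small between strongly $\Q$-factorial varieties it is an isomorphism. Because $N(K_{X_1}+B_1+\bbeta_{X_1}+\epsilon\omega_1)=0$ (it is the pullback of a nef class under the small map $\phi$), Theorem~\ref{thm:contracted-locus-of-mmp} rules out divisorial steps, so every step is a $(K_{X_1}+B_1+\bbeta_{X_1}+\epsilon\omega_1)$-flip, hence a flip, flop, or inverse flip for $K_{X_1}+B_1+\bbeta_{X_1}$.

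Your version has a genuine gap in the construction of $\Omega$. The two pairs share the same b-current $\bbeta$, so on any common resolution $W$ the traces agree: your proposed segment from ``$p^*\bbeta_{X_1,W}$'' to ``$q^*\bbeta_{X_2,W}$'' is a single point. The right parameter to vary is not $\bbeta$ but an auxiliary K\"ahler class coming from $X_2$, exactly as the paper does. Even after fixing this, your appeal to Theorem~\ref{thm:models} (Theorem~\ref{t-finite-ltms}) requires $K_W+B_W+\beta$ to be big for every $\beta\in\Omega$; at the endpoint corresponding to $X_1$ this class is only nef, not big, so $X_1$ cannot be placed inside the chamber decomposition you want. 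The paper sidesteps this by never working on $W$ and never invoking the polytope finiteness theorem: it needs bigness only at the single class $K_{X_1}+B_1+\bbeta_{X_1}+\epsilon\omega_1$, which is arranged by construction.

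For the good-minimal-model case, your ``common log canonical model'' argument is plausible in spirit but is not how the paper proceeds. The paper instead exploits that good minimal models give a uniform lower bound $b>0$ on positive intersections $(K_{X_1}+B_1+\bbeta_{X_1})\cdot C$ (see the Remark after Theorem~\ref{t-flops}), and then a length-of-extremal-rays computation shows that for $\epsilon<b\epsilon_0/(b+6)$ every flipping curve of the above MMP is $(K_{X_1}+B_1+\bbeta_{X_1})$-trivial, i.e.\ a flop. This is a pointwise numerical argument on each step, with an induction to propagate the bound $b$; no chamber structure or common canonical model is invoked.
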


When $K_X+B+\bbeta_X$ is not pseudo-effective, we show the existence of a Mori fiber space, see Theorem \ref{t-3-mmppsef}.
\begin{theorem}\label{thm:mfs}
    Let $(X,B+\bbeta)$ be a  strongly $\Q$-factorial generalized klt K\"ahler 3-fold such that $K_X+B+\bbeta_X$ is not pseudo-effective. Then we can run a $(K_X+B+\bbeta_X)$-MMP $X\dasharrow X'$ ending with a Mori fiber space $X'\to Z$.
\end{theorem}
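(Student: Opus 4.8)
The plan is to run a $(K_X+B+\bbeta_X)$-MMP with scaling of a fixed K\"ahler form $\omega$ on $X$ and to argue that, since $K_X+B+\bbeta_X$ is not pseudo-effective, this program must terminate with a Mori fiber space. Since $\omega$ is K\"ahler, $K_X+B+\bbeta_X+t\omega$ is K\"ahler (hence big, hence pseudo-effective) for $t\gg0$, while it is not pseudo-effective for $t=0$; as the pseudo-effective cone is closed there is a threshold $\tau>0$ with $K_X+B+\bbeta_X+\tau\omega$ pseudo-effective but not big. Note $\bbeta_X+\tau\omega$ is K\"ahler (a nef class plus a K\"ahler class is K\"ahler), hence big, so $(X,B+(\bbeta+\tau\omega))$ is a generalized klt K\"ahler $3$-fold whose log canonical class is pseudo-effective with big nef part; by Theorem \ref{thm:ltm}(2), and the termination-with-scaling statements underlying it, every $(K_X+B+\bbeta_X+\tau\omega)$-MMP with scaling of a K\"ahler form terminates, with output a log terminal model $W$ carrying a contraction $g\colon W\to Z$ with $K_W+B_W+\bbeta_W+\tau\omega_W\equiv g^*\omega_Z$ for a K\"ahler form $\omega_Z$ on $Z$.

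Now run a $(K_X+B+\bbeta_X)$-MMP with scaling of $\omega$. At each stage $K_{X_i}+B_i+\bbeta_{X_i}$ is not nef (otherwise it would be pseudo-effective), so the cone and contraction theorems produce a negative extremal ray $R_i$ with $(K_{X_i}+B_i+\bbeta_{X_i}+\lambda_i\omega_i)\cdot R_i=0$, where $\lambda_i$ is the current scaling constant; if its contraction is of fiber type we stop and output the resulting Mori fiber space, and otherwise we perform a divisorial contraction or a flip (flips exist by Theorem \ref{thm:3-dim-flips}), again obtaining a strongly $\Q$-factorial generalized klt K\"ahler $3$-fold, with $\lambda_{i+1}\le\lambda_i$. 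A $(K_X+B+\bbeta_X)$-negative step keeps the log canonical class non-pseudo-effective (negativity lemma), so this MMP can never terminate with a minimal model, and it remains to prove that it terminates. Only finitely many divisorial contractions occur (each strictly lowers $\dim\NS(X_i)_{\mbR}$), so after finitely many steps every step is a flip; and a further application of the negativity lemma shows that the $\omega$-pseudo-effective threshold of the log canonical class does not decrease along the MMP, so $\lambda_i\ge\tau$ for all $i$. As $(\lambda_i)$ is non-increasing and bounded below by $\tau$, either $\lambda_i>\tau$ for all $i$, or $\lambda_i=\tau$ for all $i\gg0$.

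If $\lambda_i>\tau$ for all $i$ (in the flip tail) then $(K_{X_i}+B_i+\bbeta_{X_i}+\tau\omega_i)\cdot R_i=-(\lambda_i-\tau)\,\omega_i\cdot R_i<0$, so this tail is an infinite $(K_X+B+\bbeta_X+\tau\omega)$-MMP with scaling of $\omega$ (with scaling constants $\lambda_i-\tau$), contradicting the termination recorded in the first paragraph. So $\lambda_i=\tau$ for all large $i$; after passing to that stage and relabelling, $K_{X_0}+B_0+\bbeta_{X_0}+\tau\omega_0$ is nef, so $X_0$ is a log terminal model of $(X,B+(\bbeta+\tau\omega))$ and hence carries a contraction $g\colon X_0\to Z$ with $K_{X_0}+B_0+\bbeta_{X_0}+\tau\omega_0\equiv g^*\omega_Z$. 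Each remaining step contracts a $(K_{X_i}+B_i+\bbeta_{X_i}+\tau\omega_i)$-trivial ray, hence is a flip that is an isomorphism in codimension $1$ and preserves the nefness of $K_{X_i}+B_i+\bbeta_{X_i}+\tau\omega_i$; thus every $X_i$ is a minimal model of the generalized klt pair $(X_0,B_0+(\bbeta+\tau\omega))$, whose nef part $\bbeta_{X_0}+\tau\omega_0$ is big, and by the finiteness of such models (Theorems \ref{t-finite} and \ref{t-finite-ltms}) only finitely many occur --- a contradiction. Hence the MMP terminates, necessarily with a Mori fiber space. (Alternatively: $K_{X_0}+B_0+\bbeta_{X_0}+\tau\omega_0$ is not big, being the push-forward of the non-big class $K_X+B+\bbeta_X+\tau\omega$, so $\dim Z<\dim X_0$ and the remaining steps are all over $Z$; as $\omega_0$ is big, its restriction to a general fiber $F$ of $g$ is a nonzero pseudo-effective class, so $(K_{X_0}+B_0+\bbeta_{X_0})|_F\equiv-\tau\,\omega_0|_F$ is not pseudo-effective, and the relative MMP over $Z$ terminates with a Mori fiber space over $Z$.)

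The hard part is the behaviour of the scaling constants at the pseudo-effective threshold $\tau$. Above $\tau$ everything reduces cleanly to the already-established termination in the ``pseudo-effective with big nef part'' case (Theorem \ref{thm:ltm}(2)); but at $\tau$ the relevant extremal contractions are of \emph{flop} rather than \emph{flip} type, so termination there must be extracted from the finiteness/geography results of Theorems \ref{t-finite} and \ref{t-finite-ltms} (or from a relative MMP over the fibration produced by Theorem \ref{thm:ltm}(2)), rather than from a direct termination theorem; this is the delicate point. One also has to check carefully the two negativity-lemma assertions used above --- preservation of non-pseudo-effectivity of $K_X+B+\bbeta_X$, and monotonicity of the $\omega$-pseudo-effective threshold --- in the b-$(1,1)$-current setting, and that strong $\Q$-factoriality and the K\"ahler condition persist along the MMP so that the scaling procedure is well defined at every step.
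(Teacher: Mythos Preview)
Your overall shape is right --- run an MMP with scaling of a K\"ahler class, locate the pseudo-effective threshold $\tau$, and split into the cases $\lambda_i>\tau$ versus $\lambda_i=\tau$ --- and this matches the paper's Theorem \ref{t-3-mmppsef}. But the tools you invoke are not available at this point, and this creates genuine gaps. First, you construct each step using ``the cone and contraction theorems'' for generalized pairs; in the paper the generalized cone theorem (Theorem \ref{t-3-cone+}) is proved \emph{after} the Mori fiber space result, and its proof (via Lemma \ref{l-cone}) explicitly uses Theorems \ref{t-3-mmpbig} and \ref{t-3-mmppsef}, so citing it here is circular. The paper instead takes $\omega$ \emph{very general} and produces each step via log canonical models (Theorem \ref{t-3-mmpbig}), bypassing any cone statement. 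Second, both of your termination arguments fail. In the case $\lambda_i>\tau$ for all $i$ you appeal to Theorem \ref{thm:ltm}(2), but that only constructs \emph{one} log terminal model; the underlying termination (Proposition \ref{pro:mmp-with-scaling}) works by first passing to a higher model on which $\bbeta_X$ is K\"ahler so that every step is a $(K_X+B)$-step, a reduction that does not apply to the tail of your MMP. In the case $\lambda_i=\tau$ you invoke Theorems \ref{t-finite} and \ref{t-finite-ltms}, but those require $K_X+B+\bbeta_X$ big or a bimeromorphic structure map over the base; at the threshold $K_X+B+\bbeta_X+\tau\omega$ is precisely \emph{not} big and $g:X_0\to Z$ has $\dim Z<\dim X_0$, so neither hypothesis holds. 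Your ``alternative'' via a relative generalized MMP over $Z$ has the same problem: termination for generalized pairs over a non-bimeromorphic base is not established. (Also, the claim ``$\bbeta_X+\tau\omega$ is K\"ahler since nef plus K\"ahler is K\"ahler'' is unjustified: $\bbeta_X$ need not be nef on $X$, only on a log resolution --- see Lemma \ref{l-3-nef}.)

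The paper's argument at the threshold is substantially more delicate. Assuming non-termination, it first shows $\lim t_i=\tau$ (as you do), then uses the stability of the negative part of the Boucksom--Zariski decomposition (Lemma \ref{lem:stability-of-negative-part2} and Theorem \ref{thm:contracted-locus-of-mmp}) to conclude that $X_i\dasharrow X'$ is small for $i\gg 0$, where $X'$ is a fixed log terminal model of $K_X+B+\bbeta_X+\tau\omega$ with fibration $g:X'\to Z$ and $K_{X'}+B'+\bbeta_{X'}+\tau\omega'=g^*\alpha_Z$. It then runs an auxiliary $(K_{X'}+B'+\bbeta_{X'}+t\omega')$-MMP for a carefully chosen $t$ just above $\tau$ and uses the length bound $\le 6$ (Corollary \ref{c-3fold-flips}) together with the constant $b=\inf\{\alpha_Z\cdot\Sigma:\Sigma\subset Z\ \text{a curve}\}>0$ to force every flip in this auxiliary MMP to be $(K_{X'}+B'+\bbeta_{X'}+\tau\omega')$-trivial, hence over $Z$; the very-generality of $\omega$ then yields a contradiction. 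Finally, the Mori fiber space structure on $X'$ is obtained by running the \emph{ordinary} $(K_{X'}+B')$-MMP over $Z$ (Theorem \ref{thm:relative-mmp}, which does terminate) and using very-generality of $\omega$ once more to show this MMP is trivial and $\rho(X'/Z)=1$. The choice of very general $\omega$ is therefore essential at multiple points, not a technicality you can omit.
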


We also establish the following cone theorem.
\begin{theorem}\label{t-3-cone+}
 Let $(X,B+\bbeta_X)$ be a generalized klt pair, where $X$ is a compact K\"ahler $3$-fold. Then there are at most countably many rational curves $\{\Gamma _i\}_{i\in I}$ such that 
 \[\overline{\rm NA}(X)=\overline{\rm NA}(X)_{K_X+B+\bbeta_X \geq 0}+\sum _{i\in I}\mathbb R ^+[\Gamma _i],\]
 and $-(K_X+B+\bbeta_X)\cdot \Gamma _i\leq 6$. Moreover, if $\bbeta$ is big, then $I$ is finite.
 \end{theorem}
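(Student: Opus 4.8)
The plan is to deduce the statement from the cone theorem for klt pairs on compact K\"ahler $3$-folds (\cite{HP16, DH20}), exploiting crucially that $\bbeta_X$ is nef. Since $\bbeta$ is b-nef, its trace $\bbeta_X$ is nef on $X$, so $\bbeta_X\cdot v\geq 0$ for every $v\in\NA(X)$; consequently any $(K_X+B+\bbeta_X)$-negative class of $\NA(X)$ is automatically $(K_X+B)$-negative, while extremality of a ray is a property of the cone alone. If $X$ is not $\Q$-factorial I would first pass to a small $\Q$-factorialization, which is crepant for $(X,B+\bbeta)$ and under which negative extremal rays correspond via push-forward, so we may assume $X$ is $\Q$-factorial. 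Then $(X,B)$ is itself klt: on a model $\mu\colon X'\to X$ on which $\bbeta$ descends one has $K_{X'}+B_{X'}+\bbeta_{X'}=\mu^*(K_X+B+\bbeta_X)$, and applying the negativity lemma to the $\mu$-nef class $\bbeta_{X'}$ gives $\mu^*\bbeta_X-\bbeta_{X'}\geq 0$, so the discrepancies of $(X,B)$ dominate those of $(X,B+\bbeta)$ and hence are $>-1$.

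Next I would apply the cone theorem to the klt pair $(X,B)$: there are countably many rational curves $\{\Gamma_j\}_{j\in J}$, locally discrete away from $\{K_X+B=0\}$, each spanning a $(K_X+B)$-negative extremal ray of $\NA(X)$ with $0<-(K_X+B)\cdot\Gamma_j\leq 2\dim X=6$, such that every $(K_X+B)$-negative extremal ray is some $\mathbb R^+[\Gamma_j]$ and $\NA(X)=\NA(X)_{K_X+B\geq 0}+\sum_j\mathbb R^+[\Gamma_j]$. Put $I=\{j\in J:(K_X+B+\bbeta_X)\cdot\Gamma_j<0\}$. For $i\in I$ we get $-(K_X+B+\bbeta_X)\cdot\Gamma_i=-(K_X+B)\cdot\Gamma_i-\bbeta_X\cdot\Gamma_i\leq -(K_X+B)\cdot\Gamma_i\leq 6$. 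For the decomposition, a class $\alpha\in\NA(X)$ with $(K_X+B+\bbeta_X)\cdot\alpha\geq 0$ lies in the first summand; if $(K_X+B+\bbeta_X)\cdot\alpha<0$ then also $(K_X+B)\cdot\alpha<0$, so by local polyhedrality of $\NA(X)$ in the open half-space $\{K_X+B<0\}$ we may write $\alpha=\alpha_0+\sum_{j\in F}a_j[\Gamma_j]$ with $F$ finite, $a_j>0$, $\alpha_0\in\NA(X)_{K_X+B\geq 0}$; absorbing into $\alpha_0$ those $\Gamma_j$ ($j\in F$) with $(K_X+B+\bbeta_X)\cdot\Gamma_j\geq 0$ — each such term, and $\alpha_0$, pairing non-negatively with $K_X+B+\bbeta_X$ because $\bbeta_X$ is nef — yields $\alpha\in\NA(X)_{K_X+B+\bbeta_X\geq 0}+\sum_{i\in I}\mathbb R^+[\Gamma_i]$. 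The same reasoning identifies $\{\mathbb R^+[\Gamma_i]\}_{i\in I}$ with the set of $(K_X+B+\bbeta_X)$-negative extremal rays, so that collection is countable and locally discrete away from $\{K_X+B+\bbeta_X=0\}$.

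Finally, suppose $\bbeta_X$ is big. I would write $\bbeta_X\equiv A+E$ with $A$ a K\"ahler class and $E\geq 0$ an effective $\mathbb R$-divisor, possibly after a small modification (this relies on the structure of big nef classes and the divisorial Zariski decomposition on K\"ahler threefolds). If $i\in I$ and $\Gamma_i\not\subseteq\Supp E$, normalize a generator $v_i$ of $\mathbb R^+[\Gamma_i]$ so that $A\cdot v_i=1$; then $\bbeta_X\cdot v_i=1+E\cdot v_i\geq 1$, hence $(K_X+B)\cdot v_i<-\bbeta_X\cdot v_i\leq-1$, so the $v_i$ lie in the compact slice $\{A\cdot v=1\}$ of $\NA(X)$ and are bounded away from $\{K_X+B=0\}$; by local discreteness only finitely many occur. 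If $i\in I$ and $\Gamma_i\subseteq S$ for a prime component $S$ of $E$, then by adjunction $(K_X+B)|_S=K_S+\operatorname{Diff}_S B$ with $(S,\operatorname{Diff}_S B)$ klt, $A|_S$ is K\"ahler and $\bbeta_X|_S$ nef, and running the analogous argument inside $S$ (via the surface cone theorem of Section \ref{s-gsmmp}, keeping in mind that the normal bundle $N_{S/X}$ may be negative along such curves) reduces matters to curves contained in the one-dimensional locus $\Supp(E|_S)$, of which there are finitely many; hence $I$ is finite. Alternatively, this finiteness can be extracted from the geography of minimal models (Theorems \ref{t-finite}, \ref{t-finite-ltms}). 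The hard part will be exactly this last step — producing the K\"ahler-plus-effective decomposition of $\bbeta_X$ and controlling the extremal rays spanned by curves lying in the non-nef locus of $\bbeta_X$ — together with making the reduction of the first paragraph compatible with whichever form of the klt K\"ahler cone theorem is taken as input.
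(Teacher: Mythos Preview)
Your opening move is where the argument breaks: the trace $\bbeta_X$ need \emph{not} be nef on a $3$-fold. By Definition~\ref{d-gp1} the b-current descends to some resolution $\nu:X'\to X$ and $\bbeta_{X'}$ is nef there, but $\bbeta_X=\nu_*\bbeta_{X'}$ is only a pushforward. Lemma~\ref{l-3-nef} of the paper makes this explicit: if $\bbeta_X$ fails to be nef, there is a curve $C$ in the indeterminacy locus of $\nu^{-1}$ with $\bbeta_X\cdot C<0$. (The analogous nefness statement does hold for surfaces, cf.\ Corollary~\ref{cor:g-beta-nef}, but that uses the Hodge index theorem and fails in dimension $3$.) Once $\bbeta_X$ is not nef, the implication ``$(K_X+B+\bbeta_X)$-negative $\Rightarrow$ $(K_X+B)$-negative'' is false, and with it the reduction to the cone theorem for $(X,B)$, the length bound $-(K_X+B+\bbeta_X)\cdot\Gamma_i\le -(K_X+B)\cdot\Gamma_i\le 6$, and the absorption step in your decomposition argument all collapse.

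The paper avoids this entirely. After passing to a strongly $\Q$-factorial model (Theorem~\ref{t-Qfac}) it argues ray by ray: for each exposed $(K_X+B+\bbeta_X)$-negative extremal ray it manufactures a supporting class of the form $K_X+B+\bbeta_X+\omega$ with $\omega$ K\"ahler (Claim~\ref{clm:nef-supporting-class}), and then Lemma~\ref{l-cone} produces the rational curve with the length bound by actually running the generalized MMP with scaling (Theorems~\ref{t-3-mmpbig} and~\ref{t-3-mmppsef}) and invoking Theorem~\ref{t-Stein-flip+} or the usual klt bound on the first contraction. The passage from exposed rays to all extremal rays uses a Straszewicz--Klee type density result. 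For the finiteness when $\bbeta$ is big, the paper works on the resolution $X'$ where $\bbeta_{X'}$ is big and nef, uses Boucksom's decomposition $\bbeta_{X'}\equiv N+\eta$ with $\eta$ modified K\"ahler, and perturbs to a new generalized pair with K\"ahler nef part plus a leftover K\"ahler class on $X$; your decomposition $\bbeta_X\equiv A+E$ on $X$ itself is again obstructed by the non-nefness of $\bbeta_X$.
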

We believe that the added flexibility afforded by working with nef classes in $H^{1,1}_{\rm BC}$ will be useful in a variety of contexts. For example, we use this when showing that bimeromorphic Calabi-Yau threefolds are connected by flops (Theorem \ref{t-flops}), and we expect that it will be important in the proof of the minimal model program for klt pseudo-effective K\"ahler 4-folds \cite{DH24}. Note that the case of effective klt K\"ahler 4-folds was addressed in \cite{DHP22}.

We remark that the above results are known for projective varieties of arbitrary dimension \cite{DH24a}.

This article is organized in the following manner: In Section 2 we define generalized pairs, generalized models and establish the generalized MMP for K\"ahler surfaces. We also the prove Theorem \ref{thm:3-dim-flips} in this section. Section 3 is the heart of our article, Theorem \ref{thm:ltm} is proved in Subsection \ref{subsec:ltm}, Theorem \ref{thm:mfs} is proved in Subsection \ref{subsec:mfs}, Theorem \ref{t-3-cone+} is proved in Subsection \ref{subsec:cone}, and Theorem \ref{thm:connecting-mm} is proved in Subsection \ref{subsec:flops}.\\

\noindent
{\bf Acknowledgment} We would like to thank Mihai P\u{a}un for answering our questions and the referee for carefully reading our manuscript and suggesting important improvements.

\section{Preliminaries}
An \emph{analytic variety} or simply a \emph{variety} $X$ is a reduced and irreducible complex space. A holomorphic map $f:X\to Y$ between two complex spaces is called a \emph{morphism}. A \textit{small bimeromorphic map} or a \textit{small map} is a bimeromorphic map $\phi:X\bir X'$ between two normal analytic varieties such that $\phi$ is an isomorphism in codimension $1$, i.e. there are closed analytic subsets $Z\subset X$ and $Z'\subset X'$ such that $\codim_X Z\>2$ and $\codim_{X'}Z'\>2$ and $\phi|_{X\setminus Z}:X\setminus Z\to X'\setminus Z'$ is an isomorphism. A $(1,1)$ class $\alpha\in H^{1,1}_{\BC}(X)$ is called \emph{general} (resp. \emph{very general}) if $\alpha$ is not contained in any finite union (resp. countable union) of analytic subvarieties of $H^{1,1}_{\BC}(X)$.

\begin{definition}
    Let $X$ be a normal analytic variety. The canonical sheaf $\omega_X$ is defined as $\omega_X:=(\wedge^{\dim X} \Omega^1_X)^{**}$. Note that unlike the case of algebraic varieties, $\omega_X$ here does not necessarily correspond to a Weil divisor $K_X$ such that $\omega_X\cong \mcO_X(K_X)$. However, by abuse of notation we will say that $K_X$ is a canonical divisor when we actually mean the canonical sheaf $\omega_X$. This doesn't create any problem in general as running the minimal model program involves intersecting subvarieties with $\omega_X$. 
    
   A $\mbQ$-divisor (resp. an $\mbR$-divisor) on a normal analytic variety (non necessarily compact) is a \emph{finite sum} of prime Weil divisor with $\mbQ$-coefficients (resp. $\mbR$-coefficients).
    A compact normal analytic variety $X$ is called \emph{$\mbQ$-factorial} if for every prime Weil divisor $D\subset X$ there is a $m\in\mbZ^+$ such that $mD$ is Cartier and there is a $k\in\mbZ^+$ such that $(\omega^{\otimes k}_X)^{**}$ is a line bundle on $X$.
    We say that $X$ is \emph{strongly $\mbQ$-factorial} if for every reflexive rank 1 sheaf $\mcL$, there is a positive integer $m\in\mbZ^+$ such that $\mcL^{[m]}:=(\mcL^{\otimes m})^{**}$ is a line bundle.
    
    For a normal analytic variety $X$ and an $\mbR$-divisor $B$ we say that $K_X+B$ is $\mbR$-Cartier, if locally around any point $x\in X$ we can choose a divisor $K_X$ such that $\OO _X(K_X)\cong \omega _X$ and $K_X+B$ is $\R$-Cartier. In this case, we define the singularities of the pair $(X, B)$ as in \cite{KM98}. Note that throughout this article, by a pair $(X, B)$, we will always mean that $X$ is normal,  $B\>0$ is an effective $\R$-divisor,  and $K_X+B$ is $\R$-Cartier. If $B$ is not effective, then we will refer to the corresponding singularities of $(X, B)$  as sub-klt, sub-dlt, etc.
\end{definition}

\begin{definition}
    An analytic variety $X$ is  \emph{K\"ahler} or a \emph{K\"ahler space} if there exists a positive closed real $(1, 1)$ form $\omega\in\mcA_{\mbR}^{1, 1}(X)$ such that the following holds: for every point $x\in X$ there exists an open neighborhood $x\in U$ and a closed embedding $\iota_U:U\injective V$ into an open set $V\subset\mbC^N$, and a strictly plurisubharmonic $\mathcal C^\infty$ function $f:V\to\mbR$ such that $\omega|_{U\cap X_{\sm}}=(i\del\bar{\del}f)|_{U\cap X_{\sm}}$. Here $X_\textsubscript{sm}$ is the smooth locus of $X$.
    \begin{enumerate}
        \item On a normal compact analytic variety $X$ we replace the use of N\'eron-Severi group $\NS(X)_{\mbR}$ by $H^{1, 1}_{\rm BC}(X)$, the Bott-Chern cohomology of real closed $(1, 1)$ forms with local potentials or equivalently, the closed bi-degree $(1, 1)$ currents with local potentials. See \cite[Definition 3.1 and 3.6]{HP16} for more details. More specifically, we define
\[ N^1(X):=H^{1,1}_{\rm BC}(X).\]

	\item If $X$ is in Fujiki's class $\mcC$ and $X$ has \emph{rational singularities}, then from \cite[Eqn. (3)]{HP16} we know that $N^1(X)=H^{1, 1}_{\rm BC}(X)\subset H^2(X,\mathbb R)$. In particular, the intersection product can be defined in $N^1(X)$ via the cup product of $H^2(X, \mbR)$.
    \item For the definitions of nef, pseudo-effective class, etc. see \cite[Definition 2.2]{DH20}. 
    \item We define $\NA(X)\subset N_1(X)$ to be the closed cone generated by the classes of positive closed currents of bi-dimension $(1,1)$, see \cite[Definition 3.8]{HP16}. The {\it Mori cone} $\NE(X)\subset\NA(X)$ is defined as the closure of  the cone of currents of integration $T_C$, where $C\subset X$ is an irreducible curve.

    \end{enumerate}
\end{definition}

\begin{definition}
    If $X$ is a normal K\"ahler variety and $\omega \in H^{1,1}_{\rm BC}(X)$, then we say that $\omega$ is {\it modified K\"ahler} if there exists a bimeromorphic morphism $\nu :X'\to X$ and  K\"ahler form $\omega '$ on $X'$ such that $\nu _* \omega '=\omega$. By \cite[Proposition 2.3]{Bou04}, if $X$ is compact, then this is equivalent to requiring that $\omega $ contains a K\"ahler current $T$ with Lelong number $\nu (T, D) = 0$
for all prime divisors $D$ in $X$.
\end{definition}

\begin{definition}
Let $\pi :X\to S$ be a proper morphism of normal K\"ahler varieties such that $S$ is relatively compact.
Let $\beta $ be a closed $(1,1)$ current with local potentials, i.e. a locally $\ddbar$-exact current on $X$. We say that the class $[\beta]\in H^{1,1}_{\BC}(X)$ is relatively K\"ahler (or K\"ahler over $S$) if $[\beta +\pi ^*\omega_S]\in H^{1,1}_{\BC}(X)$ is a K\"ahler class for some K\"ahler form $\omega_S$ on $S$, and we say that the class $[\beta] $ is relatively nef if $[\beta+\omega]$ is relatively K\"ahler for every relatively K\"ahler class  $[\omega]$ on $X$. Similarly, we say that $\beta$ is relatively modified K\"ahler if $\beta +\pi ^*\omega_S$ is modified K\"ahler  for some K\"ahler form $\omega_S$ on $S$.
\end{definition}
It is well known that if a class $[\beta]\in H^{1,1}_{\BC}(X)$ is relatively K\"ahler (resp. relatively nef), then its restriction to each fiber is K\"ahler  (resp. nef). By abuse of notation we will say that a closed bi-degree $(1,1)$ current $T$ with local potentials is relatively K\"ahler or relatively nef over $S$ if so is its class $[T]\in H^{1,1}_{\BC}(X)$.

%%%%%%%%%%%%%%%%%%%%%%%%%%%

\subsection{b-(1,1) Currents}
Let $X$ be a normal analytic variety.
A {\it real closed b-(1,1) current} $\boldsymbol{\beta}$ is a collection of real closed bi-degree (1,1) currents $\boldsymbol{\beta}_{X'}$ on all proper bimeromorphic models $X'\to X$  such that if $p :X_1\to X_2$ is a bimeromorphic morphism of proper models of $X$, then $p_* \boldsymbol{\beta}_{X_1}=\boldsymbol{\beta}_{X_2}$. We say that a b-(1,1) current $\bbeta$ \emph{descends} to a model $X'\to X$ if $\bbeta_{X'}$ has \textit{local potentials} on $X'$ and for any higher model $f:X''\to X'$, $\bbeta_{X''}$ also has local potentials and $[\bbeta_{X''}]=f^*[\bbeta_{X'}]$ in $H^{1,1}_{\BC}(X'')$. Here $f^*[\bbeta_{X'}]$ is defined in the following manner: choose a smooth (1,1) form $\omega$ in $[\bbeta_{X'}]\in H^{1,1}_{\BC}(X')$ and define $f^*[\bbeta_{X'}]=[f^*\omega]$. We note that, unlike differential forms, currents cannot be pulled back in general even when they have local potentials because those local potentials could be \textit{distributions} instead of \textit{functions}. If $\bbeta$ is a b-(1,1) current as above and $X'\to X$ is a model of $X$, then the current $\bbeta_{X'}$ is called the \emph{trace} of $\bbeta$ on $X'$. Moreover, we say that $\bbeta$ is a \emph{positive} b-(1,1) current if all of its traces are positive currents.

\subsubsection{b-(1,1) currents defined by positive currents}
Suppose that $\beta $ is a closed positive (1,1)-current on $X$ with local (psh) potentials, then we may define a b-(1,1) current $\bar \beta$ as follows. For any bimeromorphic morphism $\nu :X'\to X$ we let $\bar\beta_{X'}:=\nu ^*\beta$.
Explicitly, if $X=\cup U_i$ is an open cover and $\gamma _i$ are psh functions on $U_i$ such that $\beta =\partial\bar \partial \gamma _i$, then $\nu ^* \beta$ is defined by letting $U_i'=\nu ^{-1}U_i$, $\gamma _i'=\gamma _i\circ \nu|_{U_i'}$, and $\nu ^* \beta=\partial\bar \partial \gamma _i'$ on $U'_i$.
If $\mu :X'\to X''$ is another proper bimeromorphic morphism, then we let $\bar \beta_{X''}=\mu _*\beta_{X'}$.
We note that 
\begin{claim}\label{clm:b-current}
    The closed b-(1,1) current $\bar \beta_{X''}$ is well defined.
\end{claim}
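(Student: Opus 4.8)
The plan is to verify two things: (i) the current $\bar\beta_{X''}:=\mu_*(\nu^*\beta)$ is independent of the choice of a proper model $X'$ equipped with bimeromorphic morphisms $\nu:X'\to X$ and $\mu:X'\to X''$ (the point being that a general proper model $X''$ of $X$ need not itself admit a morphism to $X$, so $\bar\beta_{X''}$ must be built from a dominating model); and (ii) the resulting assignment $X''\mapsto\bar\beta_{X''}$ satisfies the pushforward-compatibility axiom in the definition of a b-(1,1) current. For (i), I would take two such models $X_1',X_2'$ and a common proper model $W$ dominating both, with morphisms $p_i:W\to X_i'$; such $W$ exists (e.g.\ the normalization of the closure of the graph of $X_1'\dashrightarrow X_2'$, or any common resolution). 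Since $X$ and $X''$ are reduced, the two morphisms $\nu_i\circ p_i:W\to X$ agree on the dense open subset where all the relevant maps are isomorphisms, hence coincide; likewise for $\mu_i\circ p_i:W\to X''$. Writing $\nu_W,\mu_W$ for these common morphisms, it then suffices to prove $\mu_{i*}(\nu_i^*\beta)=\mu_{W*}(\nu_W^*\beta)$ for $i=1,2$.

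This reduces to two elementary facts about a closed positive $(1,1)$-current $T$ with local psh potentials. First, \emph{functoriality of pullback}: for holomorphic maps $g:Y\to Y'$, $f:Y'\to Y''$ and such a $T$ on $Y''$, one has $(f\circ g)^*T=g^*(f^*T)$, which is immediate from the identity $(f\circ g)^*\gamma=g^*(f^*\gamma)$ for the psh potentials $\gamma$ through which these pullbacks are defined. Second, \emph{pushforward recovers the current}: for a proper bimeromorphic morphism $q:Y\to Z$ of normal varieties and such a $T$ on $Z$, one has $q_*(q^*T)=T$. For this I would write $T=\partial\bar\partial\gamma$ locally with $\gamma$ psh, so $q^*T=\partial\bar\partial(\gamma\circ q)$, note that $q$ is a biholomorphism over the complement of a nowhere-dense analytic set so that a change of variables gives $q_*(\gamma\circ q)=\gamma$ as $L^1_{\mathrm{loc}}$-functions, and use that proper pushforward of currents commutes with $\partial\bar\partial$. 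Granting these, functoriality gives $p_i^*(\nu_i^*\beta)=\nu_W^*\beta$, and then the second fact applied to $p_i$ yields $\mu_{i*}(\nu_i^*\beta)=\mu_{i*}p_{i*}p_i^*(\nu_i^*\beta)=\mu_{W*}(\nu_W^*\beta)$, which proves (i).

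For (ii), given a bimeromorphic morphism $p:X_1\to X_2$ of proper models of $X$, I would choose a single model $X'$ dominating $X$ and $X_1$ (hence also $X_2$), compute $\bar\beta_{X_1}$ and $\bar\beta_{X_2}$ from this one $X'$ using part (i), and conclude $p_*\bar\beta_{X_1}=\bar\beta_{X_2}$ from functoriality of pushforward; positivity of all traces is then automatic since pullbacks and pushforwards of positive currents remain positive. I expect the only genuinely delicate point to be the second fact above: one needs that psh (hence $L^1_{\mathrm{loc}}$) potentials pull back to psh potentials, that the exceptional locus of a proper bimeromorphic morphism and its image are Lebesgue-negligible (working over the smooth loci, whose complements are again negligible in the normal setting), and that $\partial\bar\partial$ commutes with proper pushforward; everything else is formal bookkeeping once these are in place.
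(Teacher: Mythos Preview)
Your proof is correct and follows essentially the same route as the paper: pass to a common dominating model and use the identity $\rho_*(\rho^*T)=T$ (which the paper simply invokes as ``the projection formula'' while you spell it out via local psh potentials and negligibility of the exceptional locus). You also verify the pushforward-compatibility axiom (ii) explicitly, which the paper leaves implicit.
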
 \begin{proof} 
    Suppose that $\tilde \nu:\tilde X\to X$ and $\tilde \mu:\tilde X\to X''$ are also proper bimeromorphic morphisms of normal complex varieties. By a standard argument, passing to a common resolution, we may in fact assume that there is  a bimeromorphic morphism $\rho:\tilde X \to X'$ such that $\tilde \nu =\nu \circ \rho $ and $\tilde \mu :\mu  \circ \rho $. Then by the projection formula we have
\[\tilde \mu_*(\tilde \nu ^* \beta)=\mu_*\rho _*(\rho ^* \nu ^* \beta)= \mu_*( \nu ^* \beta).\]

Note that any real closed smooth $(1,1)$ form $\omega$ with local potentials can be thought of as a current, and hence it also defines a b-(1,1) current. 
\end{proof}
 If $\boldsymbol{\beta}=\bar \beta$ for some closed positive (1,1)-current $\beta$ (with local potentials) on $X$, then from the definition it follows that $\bbeta$ descends to $X$. Moreover, in this case for any bimeromorphic morphism $\nu :X'\to X$ we also have that $\boldsymbol{\beta}=\overline{\boldsymbol{\beta}_{X'}}$ i.e. $\boldsymbol{\beta}$ also descends to $X'$.

\begin{remark}\label{r-gp-new} We make the following observations:
\begin{enumerate}
\item[(i)] Note that if $\gamma \in H^{1,1}_{\rm BC}(X')$ is nef, then it is pseudo-effective and so we may choose a positive  closed (1,1) current $\beta '$ on $X'$ with psh local potentials such that $[\beta ']=\gamma$ and we may then set $\boldsymbol{\beta}:=\bar \beta'$. Different choices of $\beta '$ give rise to different (non-equivalent) b-(1,1) currents.
\item[(ii)] Note that if $\boldsymbol{\beta}$ is a positive  closed b-(1,1) current that descends to $X$ and $X\dasharrow X'$ is bimeromorphic (and $X'$ is normal), then $\boldsymbol{\beta}_{X'}$ may not have local potentials, but if it does, then it has psh local potentials. 
To see this, first note that in this case $[\boldsymbol{\beta}_{X'}]\in H^{1,1}_{\BC}(X')$. Let $p:X''\to X$ and $q:X''\to X'$ be a common resolution and $U':=X'\setminus (X'_\textsubscript{sing}\cup q(\Ex(q)))$ so that $U'':=q ^{-1}U'\to U'$ is an isomorphism. 
Then $\boldsymbol{\beta}_{X'}|_{U'}=\boldsymbol{\beta}_{X''}|_{U''}$, and since $\boldsymbol{\beta}_{X''}$ is a positive current, from \cite[Proposition 4.6.3(i)]{BG13} it follows that $\boldsymbol{\beta}_{X'}|_{U'}$ has a unique extension 
$\widehat {\boldsymbol{\beta}_{X'}|_{U'}}$ to a closed positive (1,1) current on $X'$ such that $\left[\widehat {\boldsymbol{\beta}_{X'}|_{U'}}\right]=[\boldsymbol{\beta}_{X'}]$.\\
\end{enumerate}
\end{remark}

\subsection{Generalized Pairs}
\begin{definition}\label{d-gp1}
    Let $f:X\to S$ be a proper morphism of normal analytic varieties, where $S$ is relatively compact, $\nu :X'\to X$ a resolution, $B'$ a $\mathbb R$-divisor on $X'$ with simple normal crossings support such that $B:=\nu_* B'\geq 0$, and $\boldsymbol{\beta}$ a real closed b-(1,1) current. We say that $(X,B+\boldsymbol{\beta})$ is a {\it generalized pair} if 
    \begin{enumerate}
        \item $\boldsymbol{\beta}$ descends to $X'$,
        \item  $[\boldsymbol{\beta}_{X'}]\in H^{1,1}_{\rm BC}(X')$ is nef over $S$, and 
        \item $[K_{X'}+B'+\boldsymbol{\beta}_{X'}]=\nu ^*\gamma$ for some $\gamma \in H^{1,1}_{\rm BC}(X)$.
    \end{enumerate}
\end{definition}
Note that we are abusing notation as we are implicitly assuming the existence of $(X',B')$ as above. We will say that $\nu: (X',B')\to (X,B)$ is a \emph{structure morphism} or a \emph{log resolution} of $(X, B+\boldsymbol{\beta})$.

\begin{remark}\label{r-gp} We make the following observations:
\begin{enumerate}
\item[(i)]  Given $(X,B+\boldsymbol{\beta})$ and a log resolution with the above properties,  $B'$ is uniquely determined (by the negativity lemma applied to $\nu :X'\to X$).

\item[(ii)] If $S$ is a point so that $X$ is compact, then we drop $S$ and say that $(X,B+\boldsymbol{\beta} )$ is a compact generalized pair.
\item[(iii)] If $U\subset X$ is a relatively compact subset, then $(U/U,B|_U+\boldsymbol{\beta}|_U)$ is a generalized pair over $U$.
\item[(iv)] If $S=X$ and $\pi :X\to S$ is the identity (and in particular $X$ is relatively compact), then we also drop $S$ and we often abuse notation and say that $(X,B+\boldsymbol{\beta} )$ is a generalized pair.
\end{enumerate}

\end{remark}~\\

\begin{definition}\label{def:g-singularities}
\begin{enumerate}
 \item Let $P$ be a prime Weil divisor over $X$. We define the \emph{generalized discrepancy} $a(P, X, B+\boldsymbol{\beta})$ as follows: Let $\nu :X'\to X$ be a log resolution of $(X, B+\boldsymbol{\beta})$ such that $P\subset X'$ is prime Weil divisor on $X'$. Then $a(P, X,B+\boldsymbol{\beta}):=-{\rm mult}_P(B')$. Note that these can be computed locally over $X$ and hence $S$ plays no role here (and hence we drop it from the notation).

\item We say that $(X,B+\boldsymbol{\beta} )$ is \emph{generalized klt} or gklt or generalized Kawamata log terminal (resp. \emph{generalized lc} or glc or generalized log canonical) if for some log resolution $\nu ':X'\to X$, we have $\lfloor B'\rfloor\leq 0$, i.e. $a(P, X,B+\boldsymbol{\beta})>-1$ for all prime divisors $P\subset X'$ (resp. $a(P, X,B+\boldsymbol{\beta})\geq -1$ for all prime divisors $P\subset X'$). 

\item We say that $(X,B+\boldsymbol{\beta} )$ is \emph{generalized dlt} or gdlt or generalized divisorially log terminal if there is an open subset $U\subset X$
such that $(U,(B+\boldsymbol{\beta})|_U)$ is a log resolution (of itself) and $-1\leq a(P,X,B+\boldsymbol{\beta})\leq 0$ for any prime divisor $P$ on $U$
and $-1< a(P,X,B+\boldsymbol{\beta})$ for any prime divisor $P$ over $X$ with center contained in $X\setminus U$. 

\end{enumerate}
\end{definition}

\begin{remark}
\begin{enumerate}
\item The above definitions are inspired by the more traditional generalized pairs for projective varieties introduced in \cite{BZ16}. We note that if $K_X+B$ is $\R$-Cartier, then 
$a(P,X,B)\geq a(P,X,B,\bbeta )$ for all prime divisors $P$ over $X$ and equality holds for all such $P$ if and only if $\bbeta$ descends to $X$. In particular, if $(X,B+\bbeta)$ is gklt and $K_X+B$ is $\R$-Cartier, then $(X,B)$ is also klt see   Lemma \ref{l-klt}.
    \item With notations and hypothesis as in Definition \ref{d-gp1}, let $\theta$ be a closed positive (1,1) current (resp. a real closed smooth (1,1) form) on $X'$ cohomologous to $\bbeta_{X'}$. (Note that a positive current in the class $[\bbeta_{X'}]$ exists if $X$ is compact, as $[\bbeta_{X'}]$ is nef in that case. On the other hand, given any class in $H^{1,1}_{\BC}(X')$ there always exists a real closed smooth (1,1) form representing it.) Let $\boldsymbol{\theta}:=\overline\theta$ be the real closed b-(1,1) current defined by $\theta$. Then $(X, B+\boldsymbol{\theta})$ is a generalized pair and from the negativity lemma it follows that the discrepancies $a(E, X, B+\boldsymbol{\theta})=a(E, X, B+\bbeta)$ for all divisors $E$ over $X$. 

    \item With notations and hypothesis as in Definition \ref{d-gp1} if $(X, B+\bbeta)$ is a generalized pair with $S=\{\pt\}$ such that $\bbeta$ already descends to $X$ and $[\bbeta_X]\in H^{1,1}_{\BC}(X)$ is nef, then from  the negativity lemma it follows that the discrepancies $a(E, X, B+\bbeta)$ do not depend on the b-(1,1) current $\bbeta$, in fact in this case $K_X+B$ is $\mbR$-Cartier and $a(E, X, B+\bbeta)=a(E, X, B)$ for all divisors over $X$; in particular the singularities of $(X, B+\bbeta)$ are same as the singularities of $(X, B)$. In light of this fact, given a generalized pair $(X, B+\bbeta)$ we often pick a K\"ahler class $\omega\in H^{1,1}_{\BC}(X)$ and by abuse of language we denote by $(X, B+\bbeta+\omega)$ the generalized pair corresponding to $(X, B+\bbeta+\bar \omega)$. Note that $a(E, X, B+\bbeta)=a(E, X, B+\bbeta+\omega )$ for any divisor $E$ over $X$. 

    \item By abuse of notation we will often say that $\beta$ is a $(1,1)$ class in $H^{1,1}_{\rm BC}(X)$ when we actually mean $\beta$ is a real closed bi-degree $(1,1)$ current on $X$ with local potentials.
\end{enumerate}

\end{remark}

\subsection{Generalized Models}
%%%%%%%%%%%%%%%%%%%%%%%%%%%%%%%%
\begin{definition}\label{d-models}
If $(X/S,B+\boldsymbol{\beta})$ is a generalized dlt pair over $S$, then we say that a bimeromorphic map $\phi:X\dasharrow X^{\rm m}$ (proper over $S$) is a \emph{log minimal model over $S$} (resp. a \emph{log terminal model} over $S$) if (1-3) below hold (resp. (1-4) below hold).
\begin{enumerate}
    \item $(X^{\rm m},B^{\rm m}+\boldsymbol{\beta} )$ is $\mbQ$-factorial generalized dlt pair, where $B^{\rm m}=\phi _*B+E$, and $E$ is the reduced sum of all $\phi ^{-1}$-exceptional divisors,
    %\item the generalized pairs $(X,B+\boldsymbol{\beta} )$ and $(X',B'+\beta ')$ have the same nef b-current $\boldsymbol{\beta }$,
    \item $K_{X^{\rm m}}+B^{\rm m}+\boldsymbol{\beta}_{X^{\rm m}}$ is nef over $S$,
    \item $a(P,X,B,\boldsymbol{\beta} )<a(P,X^{\rm m},B^{\rm m}, \boldsymbol{\beta})$ for every $\phi$-exceptional divisor $P$, and
    \item there are no $\phi^{-1}$-exceptional divisors, i.e. $E=0$.
\end{enumerate}
We say that $\phi:X\dasharrow X^{\rm m}$ (proper over $S$) is a \emph{good log minimal model over $S$} (resp. a \emph{good log terminal model} over $S$) if (1-3) above hold (resp. (1-4) above hold) and there exists a morphism $g:X^{\rm m}\to Z$ over $S$, and a K\"ahler form $\alpha _Z$ on $Z$ such that $K_{X^{\rm m}}+B^{\rm m}+\boldsymbol{\beta}_{X^{\rm m}}\equiv g^*\alpha _Z$.

If $(X/S,B+\boldsymbol{\beta})$ is a generalized dlt pair over $S$, then we say that a bimeromorphic map $\phi:X\dasharrow X^{\rm m}$  (proper over $S$) is a \emph{weak log canonical model over $S$} (resp. a \emph{log canonical  model over $S$}) if (1-3) below hold (resp. (1-4) below hold).
\begin{enumerate}
    \item $(X^{\rm m},B^{\rm m}+\boldsymbol{\beta})$ is generalized lc pair, where $B^{\rm m}:=\phi _*B+E$, and $E$ is the reduced sum of all $\phi ^{-1}$-exceptional divisors,
   
   \item $K_{X^{\rm m}}+B^{\rm m}+\boldsymbol{\beta}_{X^{\rm m}}$ is nef over $S$, 
    \item $a(P,X,B,\boldsymbol{\beta} )\leq a(P,X^{\rm m},B^{\rm m},\boldsymbol{\beta})$ for every $\phi$-exceptional divisor $P$, and
    \item $[K_{X^{\rm m}}+B^{\rm m}+\boldsymbol{\beta}_{X^{\rm m}}]\in H^{1,1}_{\BC}(X^{\rm m})$ is a K\"ahler over $S$.
\end{enumerate}
If $X$ is proper and $S$ is a point, then we drop ``over $S$'' and simply say that we have a log minimal model, log terminal model etc.
\end{definition}
\begin{lemma}\label{l-models}
Suppose that $(X/S,B+\boldsymbol{\beta})$ is a generalized dlt pair over $S$.
\begin{enumerate}
    \item If $\phi:X\dasharrow X^{\rm m}$ is a weak log canonical model over $S$, then $a(P,X,B,\boldsymbol{\beta})\leq a(P,X^{\rm m},B^{\rm m},\boldsymbol{\beta})$ for every divisor $P$ over $X$ and $a(P,X,B,\boldsymbol{\beta})= a(P,X^{\rm m},B^{\rm m},\boldsymbol{\beta})$ for every divisor $P$ on $X^{\rm m}$.
    \item If $X\dasharrow X^{\rm m}$ and $X\dasharrow X^w$ are weak log canonical models of $(X/S,B+\beta)$ over $S$, then $(X^{\rm m},B^{\rm m}+\boldsymbol{\beta})$ and $(X^w,B^w+\boldsymbol{\beta})$ are crepant equivalent, i.e. if $p:Z\to X^{\rm m}$ and $q:Z\to X^w$ is a resolution of the induced map $X^{\rm m}\bir X^w$, then $p^*(K_{X^{\rm m}}+B^{\rm m}+\boldsymbol{\beta}_{X^{\rm m}})\equiv _S q^*(K_{X^w}+B^w+\boldsymbol{\beta}_{X^w})$.
    \item If $X\dasharrow X^{\rm m}$ and $X\dasharrow X^w$ are log canonical models of $(X/S,B+\boldsymbol{\beta})$   over $S$, then $(X^{\rm m},B^{\rm m}+\bbeta^m)$ and $(X^w,B^w+\bbeta^w)$ are isomorphic.
    \item If $X\dasharrow X^{\rm m}$ and $X\dasharrow X^w$ are log terminal models of $(X/S,B+\boldsymbol{\beta})$   over $S$, then $X^{\rm m}$ and $ X^w$ are isomorphic in codimension 1.  
    \item If $X\dasharrow X^{\rm m}$ is a minimal model and  $X\dasharrow X^w$ is a log canonical model of $(X/S,B+\boldsymbol{\beta})$   over $S$, then $X^{\rm m}\to  X^w$  is a morphism.
    \item If $(X,B+\boldsymbol{\beta})$ is generalized klt, then every log minimal model over $S$ is a log terminal model over $S$.
    \item If $f:X'\to X$ is a log resolution of $(X/S,B+\boldsymbol{\beta})$ and $K_{X'}+B^*+\boldsymbol{\beta} _{X'}=f^*(K_X+B+\boldsymbol{\beta}_X)+F$, where $B^*\>0, f_*B^*=B$ and $F\geq 0$ is $f$-exceptional such that for every $f$-exceptional divisor $P$ with $a(P,X,B+\boldsymbol{\beta})>0$ we have $P\subset {\rm Supp}(F)$. Then any log minimal model (resp. (weak) log canonical model) of $(X'/S,B^*+\boldsymbol{\beta})$ over $S$ is a log minimal model (resp. (weak) log canonical model) of $(X/S,B+\boldsymbol{\beta})$ over $S$. If moreover  $\Supp(F)=\Ex(f)$, then any log terminal model of $X'$ is a log terminal model of $X$. \end{enumerate}

\end{lemma}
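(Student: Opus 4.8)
The plan is to prove the seven statements in a logical order that exploits their interdependence, with the negativity lemma as the central tool throughout. I would first establish (1), then deduce (2)--(5) from it, handle (6) directly from the definitions, and finally prove (7) as a somewhat separate argument.

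\emph{Step 1 (part (1)).} Let $\phi:X\dashrightarrow X^{\rm m}$ be a weak log canonical model over $S$, and let $p:Z\to X$, $q:Z\to X^{\rm m}$ be a common log resolution of $\phi$ on which $\boldsymbol{\beta}$ descends. Writing $K_Z+B_Z+\boldsymbol{\beta}_Z = p^*(K_X+B+\boldsymbol{\beta}_X)+E_p$ and $K_Z+B_Z'+\boldsymbol{\beta}_Z = q^*(K_{X^{\rm m}}+B^{\rm m}+\boldsymbol{\beta}_{X^{\rm m}})+E_q$, where $B_Z,B_Z'$ collect the strict transforms plus exceptional divisors with the appropriate discrepancy coefficients, I would subtract to get $q^*(K_{X^{\rm m}}+B^{\rm m}+\boldsymbol{\beta}_{X^{\rm m}}) - p^*(K_X+B+\boldsymbol{\beta}_X) = (B_Z-B_Z') + (E_p-E_q)$, an $\mathbb{R}$-Cartier class which is $q$-nef (since $K_{X^{\rm m}}+B^{\rm m}+\boldsymbol{\beta}_{X^{\rm m}}$ is nef over $S$, hence over $X^{\rm m}$, and $p^*(\cdots)$ is $q$-trivial after noting both push down to crepant data on $X$ modulo the comparison of $B$ and $B^{\rm m}$ — here I use condition (3) of the weak lc model, that $a(P,X,B,\boldsymbol{\beta})\le a(P,X^{\rm m},B^{\rm m},\boldsymbol{\beta})$ for $\phi$-exceptional $P$, to see the relevant class is effective and $q$-exceptional). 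Then the negativity lemma applied to $q$ forces this class to be effective, giving $a(P,X,B,\boldsymbol{\beta})\le a(P,X^{\rm m},B^{\rm m},\boldsymbol{\beta})$ for all $P$ over $X$; and for $P$ a divisor on $X^{\rm m}$ one gets equality because such $P$ is not $q$-exceptional so its coefficient in the effective $q$-exceptional class is forced to vanish. \textbf{This is the main obstacle}: setting up the comparison carefully so that the relevant difference of pullbacks is genuinely $q$-exceptional and $q$-nef, so that the negativity lemma applies; the subtlety is that $B$ and $B^{\rm m}$ need not have the same pushforward-discrepancies, and one must track which divisors are $\phi$-exceptional versus divisors on $X^{\rm m}$.

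\emph{Step 2 (parts (2)--(5)).} For (2), apply (1) to both $X^{\rm m}$ and $X^w$: the discrepancy functions $a(\,\cdot\,,X^{\rm m},B^{\rm m},\boldsymbol{\beta})$ and $a(\,\cdot\,,X^w,B^w,\boldsymbol{\beta})$ both agree with $a(\,\cdot\,,X,B,\boldsymbol{\beta})$ on divisors lying on $X^{\rm m}$ or $X^w$ respectively, and since a weak lc model is an isomorphism in codimension one onto its image away from exceptional loci... more precisely, comparing $p^*(K_{X^{\rm m}}+\cdots)$ and $q^*(K_{X^w}+\cdots)$ on a common resolution $Z$, their difference is $\mathbb{R}$-Cartier, numerically trivial over $S$ on fibers of both maps by nefness on both sides, hence zero by negativity applied to both $p$ and $q$; this gives crepant equivalence. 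Part (3) then follows since a log canonical model is relatively ample (condition (4)), so $X^{\rm c}=\mathrm{Proj}$ of the relevant ring is recovered canonically from the crepant-equivalence class, making the isomorphism canonical. Part (4): log terminal models have $E=0$, so $B^{\rm m}=\phi_*B$, $B^w=\phi_*B$; by (2) they are crepant, and by (1) the strict inequality in condition (3) of a log terminal model forces that no divisor on $X^{\rm m}$ is $\phi$-contracted nor extracted, i.e. $X^{\rm m}\dashrightarrow X^w$ is an isomorphism in codimension one. Part (5): if $X^{\rm m}$ is a minimal model and $X^w$ a log canonical model, crepant equivalence plus relative ampleness of $K_{X^w}+B^w+\boldsymbol{\beta}_{X^w}$ and relative nefness of $K_{X^{\rm m}}+B^{\rm m}+\boldsymbol{\beta}_{X^{\rm m}}$ means the semiample (here: the map to the canonical model) contraction of $X^{\rm m}$ factors through $X^w$; the base-point-free-type argument shows $X^{\rm m}\to X^w$ is a morphism.

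\emph{Step 3 (parts (6) and (7)).} Part (6) is immediate: if $(X,B+\boldsymbol{\beta})$ is generalized klt then all discrepancies over $X$ exceed $-1$, so on any log minimal model $(X^{\rm m},B^{\rm m}+\boldsymbol{\beta})$ — which by (1) has $a(P,X^{\rm m},B^{\rm m},\boldsymbol{\beta})\ge a(P,X,B,\boldsymbol{\beta})>-1$ for all $P$ — the reduced extracted divisor $E$ in $B^{\rm m}=\phi_*B+E$ would contribute coefficient-one components, contradicting $\lfloor B^{\rm m}\rfloor\le 0$ unless $E=0$; hence condition (4) holds and it is a log terminal model. For part (7), given the log resolution $f:X'\to X$ with $K_{X'}+B^*+\boldsymbol{\beta}_{X'}=f^*(K_X+B+\boldsymbol{\beta}_X)+F$, $F\ge 0$ exceptional, containing every $f$-exceptional $P$ with positive discrepancy: if $g:X'\dashrightarrow Y$ is a log minimal model of $(X'/S,B^*+\boldsymbol{\beta})$, I would verify conditions (1)--(3) of Definition \ref{d-models} for the composite $X\dashrightarrow Y$. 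Nefness (2) transfers because $K_Y+B^Y+\boldsymbol{\beta}_Y$ is nef and the $F$-correction is $g$-contracted (using that $g$ contracts $\mathrm{Supp}(F)$, as these divisors have positive discrepancy so must be contracted by the minimal model — here one invokes the standard fact that a minimal model contracts exactly the divisors with $a(P,X',B^*,\boldsymbol{\beta})<a(P,Y,B^Y,\boldsymbol{\beta})$ and uses the negativity lemma once more). The discrepancy comparison (3) for $X\dashrightarrow Y$ follows by composing the inequality $a(P,X,B,\boldsymbol{\beta})\le a(P,X',B^*,\boldsymbol{\beta})$ (immediate from $F\ge 0$) with the one coming from $X'\dashrightarrow Y$. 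The $\mathbb{Q}$-factorial gdlt condition (1) is inherited from $Y$. For the final sentence, if additionally $\mathrm{Supp}(F)=\mathrm{Ex}(f)$, then all $f$-exceptional divisors are contracted by $g$, so $X\dashrightarrow Y$ extracts no divisor, giving $E=0$ and hence a log terminal model. The weak-log-canonical and log-canonical variants are identical, replacing strict inequalities by non-strict ones and checking the Kähler/ampleness condition (4) is preserved under the composition.
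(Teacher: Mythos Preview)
Your overall strategy---use the negativity lemma as the workhorse for (1)--(5), then handle (6) and (7) directly---matches the paper's. However, there are two genuine gaps in the execution.

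In part (1), you apply the negativity lemma to $q:Z\to X^{\rm m}$, claiming the difference $q^*(K_{X^{\rm m}}+B^{\rm m}+\bbeta_{X^{\rm m}}) - p^*(K_X+B+\bbeta_X)$ is $q$-nef because ``$p^*(\cdots)$ is $q$-trivial after noting both push down to crepant data.'' This is false: $p^*(K_X+B+\bbeta_X)$ is $p$-trivial, but there is no reason whatsoever for it to be $q$-trivial. The correct setup (which the paper uses) applies negativity to $p$: set $F = p^*(K_X+B+\bbeta_X) - q^*(K_{X^{\rm m}}+B^{\rm m}+\bbeta_{X^{\rm m}})$; then $-F$ is $p$-nef (since $K_{X^{\rm m}}+B^{\rm m}+\bbeta_{X^{\rm m}}$ is nef over $S$, so its $q$-pullback is nef over $X$, and $p^*(\cdots)$ is $p$-trivial), and $p_*F\ge 0$ by condition (3) of a weak log canonical model. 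Negativity then gives $F\ge 0$. The equality of discrepancies for divisors on $X^{\rm m}$ is a \emph{separate} step: one must show $q_*F=0$, and your claim that this follows because ``such $P$ is not $q$-exceptional so its coefficient in the effective $q$-exceptional class is forced to vanish'' is circular---you have not yet shown $F$ is $q$-exceptional. The paper handles this by checking $\phi^{-1}$-exceptional divisors $P$ directly: they appear in $B^{\rm m}$ with coefficient $1$, so $a(P,X^{\rm m},B^{\rm m},\bbeta)=-1$, while $a(P,X,B,\bbeta)\ge -1$ since $(X,B+\bbeta)$ is generalized dlt; combined with $F\ge 0$ this forces equality.

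In part (3), your argument ``$X^{\rm c}=\mathrm{Proj}$ of the relevant ring is recovered canonically'' does not apply in this setting: we are in the K\"ahler category, $\bbeta$ is a b-$(1,1)$ current rather than a divisor, and condition (4) of a log canonical model only says $K_{X^{\rm m}}+B^{\rm m}+\bbeta_{X^{\rm m}}$ is K\"ahler over $S$, not ample; there is no section ring to take $\mathrm{Proj}$ of. The paper instead argues directly with curves: if $p:W\to X^{\rm m}$, $q:W\to X^w$ resolve $X^{\rm m}\dasharrow X^w$ and $C\subset W$ is a curve with $p_*C=0$ but $q_*C\ne 0$, then crepant equivalence from (2) gives $0 = p^*(K_{X^{\rm m}}+\cdots)\cdot C = q^*(K_{X^w}+\cdots)\cdot C = (K_{X^w}+\cdots)\cdot q_*C > 0$, a contradiction; hence $X^{\rm m}\dasharrow X^w$ and its inverse are both morphisms. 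The same curve argument, together with the rigidity lemma, is what the paper uses for (5); your ``base-point-free-type argument'' is too vague here for the same reason.
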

Even though the proof of this lemma follows along standard arguments, we include the proof for the convenience of the reader.
\begin{proof}
(1) Let $p:Z\to X$ and $q:Z\to X^{\rm m}$ be a resolution of $\phi$. Then we can write
$F=\sum (a(P,X^{\rm m},B^{\rm m},\bbeta)-a(P,X,B,\bbeta))P$, where the sum runs over all prime divisors $P\subset Z$. Then from the definition above it follows that  
$p_*F\geq 0$.
Note that $F\equiv _S p^*(K_X+B+\bbeta_X )-q^*(K_{X^{\rm m}}+B^{\rm m}+\bbeta_{X^{\rm m}})$, and hence $F\geq 0$ by the negativity lemma, as $K_{X^{\rm m}}+B^{\rm m}+\bbeta_{X^{\rm m}}$ is nef over $S$ and thus $-F$ is nef over $X$. We also claim that $q_*F=0$. Observe that, here $B^{\rm m}=\phi_*B+E$, where $E$ is the reduced sum of $\phi^{-1}$-exceptional divisors on $X^{\rm m}$.  
Thus it is enough to show that ${\rm mult}_P(F)=0$ for any prime divisor $P$ in the support of $E$ (i.e. any $p$-exceptional divisor which is not $q$-exceptional). We have
\[-1=a(P,X^{\rm m},B^{\rm m}+\bbeta )\geq a(P,X,B+\bbeta)\geq -1,\]
where the second inequality holds because $F\geq 0$. In particular, we have
\[
a(P,X,B+\bbeta) = a(P,X^{\rm m},B^{\rm m}+\bbeta )
\]
for all prime Weil divisors $P$ on $X^{\rm m}$.\\

(2) Let $F=\sum (a(P,X^{\rm m},B^{\rm m},\bbeta)-a(P,X^w,B^w,\bbeta))P$, where the sum runs over all prime divisors $P\subset Z$. It is easy to see that \[F\equiv _S q^*(K_{X^{w}}+B^{w}+\bbeta _{X^{w}})-p^*(K_{X^{\rm m}}+B^{\rm m}+\bbeta _{X^{\rm m}})\] and $q_*F$ and $-p_*F$ are effective. Since $-F$ is $q$-nef and $F$ is $p$-nef, it follows that $F=0$ by the negativity lemma. \\

(3) Let $W$ be the normalization of the graph of $X^{\rm m}\dasharrow X^w$, and $p:W\to X^{\rm m}$, $q:W\to X^w$ the induced morphisms. Then by part (2) we have $p^*(K_{X^{\rm m}}+B^{\rm m}+\bbeta _{X^{\rm m}})\equiv q^*(K_{X^w}+B^w+\bbeta _{X^w})$. Since $p,q$ are bimeromorphic, and hence Moishezon morphisms, if $X^{\rm m}\dasharrow X^w$ is not an isomorphism, we may assume that there is a curve $C\subset W$ such that $p_*C=0$ and $q_*C\ne 0$ (or $p_*C\ne 0$ and $q_*C= 0$). But then
\begin{align*}
    0=p_*C\cdot (K_{X^{\rm m}}+B^{\rm m}+\bbeta_{X^{\rm m}} ) &=C\cdot p^*(K_{X^{\rm m}}+B^{\rm m}+\bbeta_{X^{\rm m}})\\
                                    &=C\cdot q^*(K_{X^w}+B^w+\bbeta _{X^w})\\                                                                 &=q_*C\cdot (K_{X^w}+B^w+\bbeta _{X^w})>0,
\end{align*}
which is a contradiction.\\

(4) Let $p:W\to X^{\rm m}$ and $q:W\to X^w$ be a common resolution and $P$ a divisor which is $p$-exceptional and not $q$-exceptional, then $P$ is a divisor on $X$ such that $a(P,X,B,\bbeta)<a(P,X^{\rm m},B^{\rm m},\bbeta)=a(P,X^w,B^w,\bbeta)$ where the last equality follows from (3). But then $P$ must be $X\dasharrow X^w$ exceptional, which is a contradiction. Thus $X^{\rm m}\dasharrow X^w$ extracts no divisors and by symmetry $X^{\rm m}\dasharrow X^w$ is an isomorphism in codimension 1.

(5) Let $p:W\to X^{\rm m}$ and $q:W\to X^w$ be a common resolution, then $p^*(K_{X^{\rm m}}+B^{\rm m}+\bbeta _{X^{\rm m}}=q^*(K_{X^w}+B^w+\bbeta _{X^w})$.
Let $C$ be a $p$-exceptional curve, then \[0=p^*(K_{X^{\rm m}}+B^{\rm m}+\bbeta _{X^{\rm m}}\cdot C=q^*(K_{X^w}+B^w+\bbeta _{X^w})\cdot C\] and so $C$ is $q$-exceptional. By the rigidity lemma $X^{\rm m}\to X^w$ is a morphism.

(6) Suppose that $\phi:X\dasharrow X^{\rm m}$ is a log minimal model and $P$ is a $\phi ^{-1}$ exceptional divisor. Then as $(X,B) $ is klt and $P$ is contained in the support of $B^{\rm m}$ with multiplicity 1 (as $B^{\rm m}=\phi_*B+\Ex(\phi^{-1})$), from Part (1) we have 
\[-1<a(P,X,B)=a(P,X^{\rm m},B^{\rm m})=-1,\] which is impossible, and so there are no $\phi ^{-1}$-exceptional divisors, i.e. $\phi $
is a log terminal model.\\

(7) This follows from a standard discrepancy computation which can be verified along the lines of the proof of \cite[Lemma 3.10]{HL21}. 
\end{proof}~\\

\begin{lemma}\label{l-klt}
Let $(X,B+\bbeta)$ be a generalized klt (resp. dlt) pair. If $K_X+B$ is $\mathbb R$-Cartier, %with rational singularities,
then $(X,B)$ is klt (resp. dlt).
\end{lemma}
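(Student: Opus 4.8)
The plan is to realize the ordinary pair $(X,B)$ as a degeneration of the generalized pair on a common log resolution and compare discrepancies. Concretely, I would show that on such a resolution the boundary divisor of $(X,B)$ is dominated by that of $(X,B+\bbeta)$, using the negativity lemma together with the relative nefness of $\bbeta_{X'}$; the singularity statements then fall out by comparing round-downs.

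First I would fix a log resolution $\nu\colon X'\to X$ of $(X,B+\bbeta)$; after replacing $X'$ by a higher model --- which leaves intact the facts that $\bbeta$ descends, is nef over $S$, and satisfies condition (3) of Definition \ref{d-gp1} --- we may assume that $X'$ is also a log resolution of $(X,B)$. So $\nu_*B'=B$, $[\bbeta_{X'}]$ is nef over $S$, and $[K_{X'}+B'+\bbeta_{X'}]=\nu^*\gamma$ for some $\gamma\in H^{1,1}_{\BC}(X)$. Since $K_X+B$ is $\mathbb R$-Cartier, we may also write $K_{X'}+B''=\nu^*(K_X+B)$, where $B''=\nu^{-1}_*B-\sum_E a(E,X,B)\,E$ with the sum over $\nu$-exceptional divisors $E$. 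The divisor $D:=B''-B'$ is $\nu$-exceptional, because the non-exceptional parts of $B'$ and of $B''$ both equal the strict transform $\nu^{-1}_*B$. Now I would intersect with a $\nu$-exceptional curve $C$: both $(K_{X'}+B'+\bbeta_{X'})\cdot C$ and $(K_{X'}+B'')\cdot C$ vanish, being pullbacks of classes from $X$, so subtracting gives $D\cdot C=\bbeta_{X'}\cdot C\ge 0$; the last inequality holds because $C$ lies in a fiber of $X'\to S$ and the relatively nef class $[\bbeta_{X'}]$ restricts to a nef class on that fiber. Thus $-D$ is a $\nu$-exceptional $\mathbb R$-divisor whose negative $D$ is $\nu$-nef, and the negativity lemma gives $-D\ge 0$, i.e. $B'\ge B''$. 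Applying the same reasoning on a log resolution of the generalized pair on which a prescribed divisor $P$ over $X$ appears (such a model exists, since $\bbeta$ continues to descend on all higher models), we obtain $a(P,X,B)\ge a(P,X,B+\bbeta)$ for every prime divisor $P$ over $X$.

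It remains to read off the conclusion. In the klt case, ``generalized klt'' means $\lfloor B'\rfloor\le 0$ on $X'$; since $B''\le B'$ this forces $\lfloor B''\rfloor\le 0$, and as $X'$ is a log resolution of $(X,B)$ this is exactly the statement that $(X,B)$ is klt. In the dlt case, I would reuse the open set $U\subseteq X$ furnished by the definition of generalized dlt: $(U,B|_U)$ is log smooth because $(U,(B+\bbeta)|_U)$ is; the coefficients of $B$ are $\le 1$, which follows by applying the discrepancy bounds in that definition to the components of $B$; and for every prime divisor $P$ over $X$ with center contained in $X\setminus U$ we have $a(P,X,B)\ge a(P,X,B+\bbeta)>-1$. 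Hence $(X,B)$ is dlt. The only point that is not purely formal is the inequality $\bbeta_{X'}\cdot C\ge 0$ for $\nu$-exceptional curves $C$, i.e. that nefness of $\bbeta_{X'}$ over $S$ survives on the curves relevant over $X$; but this is immediate, since such $C$ are contracted by $X'\to S$ and a relatively nef class restricts to a nef class on each fiber. Everything else is bookkeeping of discrepancies, so I do not anticipate a serious obstacle.
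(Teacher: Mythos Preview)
Your proof is correct and follows essentially the same approach as the paper's: both set up the log resolution, write the two boundaries $B'$ (for the generalized pair) and $B''$ (your notation; the paper's $B^\sharp$) for the ordinary pair, observe that their difference is $\nu$-exceptional and $\nu$-numerically equivalent to $\bbeta_{X'}$, and then invoke the negativity lemma to get $B''\le B'$, from which the klt (resp.\ dlt) conclusion is immediate. The only cosmetic difference is that the paper first localizes to a relatively compact Stein open subset (so that $\bbeta_{X'}$ is nef over $X$ by Remark~\ref{r-gp}), whereas you stay global and use that a class nef over $S$ restricts to a nef class on each fiber of $X'\to S$; since $\nu$-exceptional curves lie in such fibers, this gives exactly the inequality $D\cdot C\ge 0$ needed for the negativity lemma.
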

\begin{proof} Since the statement is local on $X$, we may assume that $X$ is Stein and relatively compact.
Let $f:X'\to X$ be a log resolution and $K_{X'}+B'+\bbeta _{X'}=f^*(K_X+B+\bbeta_X)$,
where $\lfloor B'\rfloor\leq 0$, as $(X,B+\bbeta )$ is generalized klt.
Let $K_{X'}+B^\sharp:=f^*(K_X+B)$. Then \[f^*\bbeta_{X}-\bbeta_{X'}\equiv K_{X'}+B'-f^*(K_X+B)=:E,\] 
where $E\geq 0$ by the negativity lemma.
But then \[B'=E+f^*(K_X+B)-K_{X'}=B^\sharp +E\] and so $\lfloor B^\sharp \rfloor\leq \lfloor B' \rfloor\leq 0$, i.e. $(X,B)$ is klt. The statement about dlt singularities follows similarly.
\end{proof}

\begin{lemma}\label{lem:h11-surjective}
 Let $\phi:X\bir Y$ be a bimeromorphic map of normal compact K\"ahler $\mbQ$-factorial varieties. Assume that one of the following holds
 \begin{enumerate}
     \item[(i)] there is a $\mbQ$-divisor $B$ such that $(X, B)$ dlt and $\phi$ is a  finite sequence of $(K_X+B)$-flips and divisorial contractions, or
     \item[(ii)] $X$ and $Y$ both have strongly $\mbQ$-factorial klt singularities and $\phi^{-1}:Y\bir X$ does not contract any divisor.
 \end{enumerate}
  Then the linear map $\phi_*:H^{1,1}_{\BC}(X)\to H^{1,1}_{\BC}(Y)$ is well defined and surjective. 
 \end{lemma}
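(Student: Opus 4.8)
The plan is to deduce both cases from the following statement and then prove it: if $\phi\colon X\bir Y$ is a bimeromorphic map of normal compact K\"ahler varieties with rational singularities such that $\phi^{-1}$ contracts no divisor, then $\phi_*\colon H^{1,1}_{\BC}(X)\to H^{1,1}_{\BC}(Y)$ is well defined and surjective. First I would check that (ii) is a special case, using that in case (ii) $X$ and $Y$ have klt, hence rational, singularities; and that (i) is as well, since when $(X,B)$ is dlt the source $X$ and every variety appearing in a run of the $(K_X+B)$-MMP are dlt and hence have rational singularities, while $\phi^{-1}$ is a composition of inverses of flips (which are small) and inverses of divisorial contractions (which extract, not contract, divisors), so that $\phi^{-1}$ contracts no divisor.

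To prove the general statement I would fix a smooth model resolving $\phi$, say $p\colon W\to X$ and $q\colon W\to Y$ with $\phi=q\circ p^{-1}$ (obtained by resolving the graph of $\phi$), and set $\phi_*:=q_*\circ p^*$. Here I would invoke the standard functoriality of $N^1=H^{1,1}_{\BC}$ for compact K\"ahler varieties with rational singularities (see the preliminaries and \cite{HP16}, \cite{DH20}): for a bimeromorphic morphism $g\colon W\to Z$ with $W$ smooth and $Z$ as above, the pullback $g^*\colon H^{1,1}_{\BC}(Z)\to H^{1,1}_{\BC}(W)$ and the pushforward $g_*\colon H^{1,1}_{\BC}(W)\to H^{1,1}_{\BC}(Z)$ are defined, $g_*g^*=\mathrm{id}$, $g_*[E]=0$ for every $g$-exceptional prime divisor $E$, and $\ker g_*=\bigoplus_E\mbR[E]$, the sum running over the $g$-exceptional prime divisors. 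Independence of $\phi_*$ from the choice of $W$ would then follow in the usual way: any two such models are dominated by a third, and one concludes using $r_*r^*=\mathrm{id}$ for a birational morphism $r$.

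The crux would be the observation that the hypothesis forces every $p$-exceptional prime divisor $F$ on $W$ to be $q$-exceptional. Indeed, if some such $F$ were not $q$-exceptional, then $D:=q(F)$ would be a prime divisor on $Y$ with strict transform $F$, and $\phi^{-1}=p\circ q^{-1}$ would carry the generic point of $D$ into $p(F)$, which has codimension $\ge 2$ in $X$; that is, $\phi^{-1}$ would contract $D$, contrary to assumption. Granting this, given $\eta\in H^{1,1}_{\BC}(Y)$ I would set $\xi:=p_*(q^*\eta)\in H^{1,1}_{\BC}(X)$. Then $q^*\eta-p^*\xi$ lies in $\ker p_*$, so it equals $\sum_i a_i[F_i]$ with each $F_i$ $p$-exceptional and hence $q$-exceptional; applying $q_*$ and using $q_*q^*=\mathrm{id}$ together with $q_*[F_i]=0$ gives $\eta=q_*(p^*\xi)=\phi_*(\xi)$. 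Thus $\phi_*$ is surjective, and in fact $(\phi^{-1})_*$ is a right inverse for it.

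The part I expect to be the real obstacle is not this formal manipulation but the input quoted in the second paragraph, above all the identification of $\ker g_*$ with the span of the exceptional divisor classes. Over the N\'eron--Severi lattice of a projective variety this is routine, but here one works with Bott--Chern classes of compact K\"ahler (Fujiki class $\mcC$) spaces, so one must argue analytically: reduce to a sequence of smooth blow-ups by Hironaka, where $H^2$ grows exactly by the class of the new exceptional divisor, and use the inclusion $H^{1,1}_{\BC}\hookrightarrow H^2$ valid for rational singularities. This is precisely where the klt (resp. dlt) hypotheses on the singularities of $X$ and $Y$ are used.
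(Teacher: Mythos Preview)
For case~(ii) your argument is essentially the paper's: resolve the graph by $p\colon W\to X$, $q\colon W\to Y$, set $\phi_*:=q_*\circ p^*$, and prove surjectivity by writing $q^*\eta=p^*\xi+(\text{a }p\text{-exceptional class})$, observing this exceptional class is also $q$-exceptional, and pushing forward by $q_*$. The paper obtains that decomposition by quoting \cite[Lemma~2.32]{DH20}; under the strongly $\mbQ$-factorial klt hypothesis on $X$ this is exactly your statement that $\ker p_*$ is spanned by the $p$-exceptional divisors. So for case~(ii) there is no meaningful difference.

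For case~(i) the paper does \emph{not} pass through a common resolution. Instead it invokes the cone theorem \cite[Proposition~3.1(a)]{CHP16} at each individual flip and divisorial contraction, which directly yields that every step of the MMP induces a well-defined surjection on $H^{1,1}_{\BC}$; composing these gives the lemma. Your unified route would require the decomposition $H^{1,1}_{\BC}(W)=p^*H^{1,1}_{\BC}(X)\oplus\bigoplus_i\mbR[E_i]$ when $X$ is only $\mbQ$-factorial dlt, and here your sketch breaks down: the proposal to ``reduce to a sequence of smooth blow-ups by Hironaka'' cannot work as written, because $X$ is singular and a resolution of a singular variety is never a composition of smooth blow-ups of that variety (each smooth blow-up preserves the singular locus outside the center). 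Hironaka's algorithm does blow up smooth centers, but the intermediate varieties remain singular, so the elementary formula for how $H^2$ grows under a smooth blow-up of a smooth variety is unavailable. The decomposition you want is precisely the content of \cite[Lemma~2.32]{DH20}, whose proof uses a relative MMP and the strongly $\mbQ$-factorial klt hypothesis; extending it to the $\mbQ$-factorial dlt setting is not automatic. This is exactly why the paper separates the two cases and, in case~(i), exploits the step-by-step structure of the MMP rather than an abstract graph resolution.
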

 
 \begin{proof}

 In case (i) both of the conclusions follow from repeated use of the cone theorem \cite[Proposition 3.1(a)]{CHP16}. So we will prove case (ii) here. Let $p:W\to X$ and $q:W\to Y$ be a resolution of the graph of $\phi$. Pick $\alpha\in H^{1,1}_{\BC}(X)$.
 We define $\phi _*\alpha=q_*(p^*\alpha )$. 
 It is easy to see that this definition doesn't depend on our choice of $W$. 

To establish surjectivity, let $\gamma\in H^{1,1}_{\BC}(Y)$. Then by \cite[Lemma 2.32]{DH20} there is a $p$-exceptional $\mbR$-divisor $F$ such that $q^*\gamma=p^*\alpha_X+[F]$ for some $\alpha_X\in H^{1,1}_{\BC}(X)$. Since $\phi^{-1}$ does not contract any divisor, $F$ is also $q$-exceptional. Therefore $\phi_*\alpha_X=q_*(p^*\alpha_X)=q_*(q^*\gamma+[F])=\gamma$, and hence $\phi_*$ is surjective.
 \end{proof}

\begin{lemma}\label{lem:exceptional-locus}
   Let $f:X'\to X$ be a proper bimeromorphic morphism of normal compact K\"ahler varieties with strongly $\mbQ$-factorial klt singularities.
 Then $\Ex(f)$ is a pure codimension $1$ subset of $X'$. 
\end{lemma}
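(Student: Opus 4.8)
The plan is to reduce to the statement that $X$ has $\mbQ$-factorial singularities forces the exceptional locus of any proper bimeromorphic morphism onto it to be pure codimension $1$, using the standard trick of producing a divisor whose strict transform fails to be $\mbQ$-Cartier unless $\Ex(f)$ is a divisor. Since the statement is local on $X$, I would first shrink $X$ to a relatively compact Stein open set; this is harmless because being strongly $\mbQ$-factorial and klt are local conditions, and $\Ex(f)$ being pure codimension $1$ can be checked locally. Suppose for contradiction that some irreducible component $Z\subset X'$ of $\Ex(f)$ has codimension $\geq 2$, and let $W:=f(Z)\subset X$, which then also has codimension $\geq 2$ in $X$ (as $f$ is an isomorphism in codimension $0$ and $Z$ is $f$-exceptional).

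The key step is to find a prime divisor $E$ over $X$ whose center on $X'$ is exactly $Z$ (or at least contained in $\Ex(f)$ and dominating $Z$); one obtains such $E$ by blowing up $X'$ along $Z$ (or a suitable further blowup to make things smooth) and taking a component of the exceptional divisor lying over $Z$. Let $\mu:X''\to X'$ be this blowup and $g=f\circ\mu:X''\to X$. Now I would run the argument on the level of the pushforward divisor $D:=\mu_*^{-1}\,(\text{generic member near }Z)$ — more precisely, consider the reflexive sheaf $\OO_X(g_*E)$ on $X$; since $X$ is strongly $\mbQ$-factorial, $\OO_X(m\,g_*E)$ is a line bundle for some $m>0$. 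But $g_*E=0$ as a divisor on $X$ because $E$ maps into $W$ which has codimension $\geq 2$, so $g_*E$ is the zero divisor and hence $\OO_X(m g_* E)=\OO_X$ is trivial. Pulling back $\OO_X$ to $X''$ and comparing with $\OO_{X''}(mE)$ via the negativity lemma (applied to the $g$-exceptional divisor $mE$, which is $g$-anti-nef being $g$-exceptional and effective, or rather: $mE - g^*(mg_*E) = mE$ is both effective and $g$-nef-free) yields that $E$ cannot be $g$-exceptional with this behaviour — one gets a contradiction with the fact that $E$ genuinely is $g$-exceptional and effective but $\OO_X(g_*E)$ trivial, since the negativity lemma forces $E\leq 0$, contradicting $E>0$.

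Let me restate the core comparison more carefully, since this is the heart of the matter. Write $K_{X''}+\Gamma'' = g^*(K_X)+ F$ where $F$ is $g$-exceptional; this uses only that $K_X$ is $\mbR$-Cartier, which follows from klt plus $\mbQ$-factoriality (or directly from Lemma \ref{l-klt} applied with trivial boundary, after noting $\omega_X^{[k]}$ is a line bundle by the $\mbQ$-factorial hypothesis). The klt condition on $X$ gives that $F$ has all coefficients $>-1$, but more usefully the negativity lemma gives $F\geq 0$ iff $-F$ is $g$-nef over $X$, which holds since $\Gamma''\geq 0$ and $X$ is klt — this is exactly how one shows $\Ex$ of a resolution of a klt (in fact terminal/canonical suffices appropriately) variety is a divisor when combined with $\mbQ$-factoriality. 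Concretely: every $g$-exceptional prime divisor $P$ has positive discrepancy in the klt setting, hence appears in $F$ with positive coefficient, hence $\Supp(F)$ contains every $g$-exceptional divisor, in particular contains $E$ lying over $Z$; but $F$ being supported on divisors means the locus $\Ex(g)=\Supp(F)$ — and pushing forward, $\Ex(f)\subseteq\mu(\Supp(F))$ which has pure codimension $1$ image only if... This is where the strong $\mbQ$-factoriality must genuinely be used rather than klt alone: I expect the main obstacle to be correctly deploying strong $\mbQ$-factoriality (reflexive rank-$1$ sheaves become line bundles after a fixed power) to upgrade ``$\Ex(f)$ contains no exceptional divisor over $Z$'' into ``$Z$ is not in $\Ex(f)$'', via the classical Kawamata–Viehweg/negativity argument that a $\mbQ$-factorial variety has no small morphisms from a variety that extracts a divisor. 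I would model the details on the standard proof that $\mbQ$-factorial varieties admit no nontrivial small modifications combined with \cite[Lemma 2.32]{DH20} used in the proof of Lemma \ref{lem:h11-surjective} above, which already packages the divisorial comparison of cohomology classes under bimeromorphic maps in the K\"ahler setting.
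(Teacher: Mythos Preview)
Your proposal has a genuine gap: neither of the two arguments you attempt actually closes, and you recognize this yourself at the end. In your first attempt, from $g_*E=0$ you get $\mcO_X(mg_*E)\cong\mcO_X$, but pulling this back and comparing with $\mcO_{X''}(mE)$ does not give a contradiction via the negativity lemma: $mE$ is effective and $g$-exceptional, but that alone does not force $mE\leq 0$ --- you would need $mE$ to be $g$-\emph{nef}, and it is not (indeed the negativity lemma goes the wrong way here). In your second attempt you assert that ``every $g$-exceptional prime divisor $P$ has positive discrepancy in the klt setting''; this is false --- klt only gives discrepancy $>-1$, not $>0$, so you cannot conclude $\Supp(F)=\Ex(g)$ from the log pullback formula. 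You then retreat to saying you would ``model the details on'' \cite[Lemma 2.32]{DH20}, but you never actually say how.

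The paper's proof is short and takes a genuinely different route, which is the K\"ahler analogue of the classical projective argument (take an $f$-ample divisor $H'$ on $X'$, push forward, pull back using $\Q$-factoriality of $X$, and note $H'-f^*f_*H'$ is $f$-anti-ample and supported on exceptional divisors). Namely: take any K\"ahler class $\omega_{X'}$ on $X'$ and apply \cite[Lemma 2.32]{DH20} (which is where strong $\Q$-factoriality of $X$ enters, and which needs the relative MMP from \cite[Theorem 1.4]{DHP22}) to write $\omega_{X'}\equiv f^*\alpha_X-F$ with $F$ an $f$-exceptional $\R$-divisor. Since $-F\equiv_X\omega_{X'}$ is K\"ahler over $X$, the negativity lemma gives $F\geq 0$; and since $\omega_{X'}\cdot C>0$ for \emph{every} contracted curve $C$ while $f^*\alpha_X\cdot C=0$, one gets $F\cdot C<0$, hence $C\subset\Supp(F)$. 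Thus $\Ex(f)=\Supp(F)$ is a divisor. The key idea you are missing is to apply \cite[Lemma 2.32]{DH20} to a K\"ahler class rather than to an exceptional divisor.
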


\begin{proof}
 We can apply \cite[Lemma 2.32]{DH20} here as the necessary relative MMP in higher dimensions is established in \cite[Theorem 1.4]{DHP22}. Thus for any K\"ahler class $\omega_{X'}$ on $X'$, there is a $f$-exceptional $\mbR$-divisor $F$ so that $\omega_{X'}\num \phi^*\alpha_{X}-F$ for some $\alpha_X\in H^{1,1}_{\BC}(X)$. Then by the negativity lemma, $F$ is effective and $\Supp(F)=\Ex(f)$, and we are done.
\end{proof}

\begin{lemma}\label{lem:small-to-isomorphism}
    Let $\phi:X\bir X'$ be a small  bimeromorphic map over $Y$ of normal compact K\"ahler varieties such that $X$ and $X'$ both have strongly $\mbQ$-factorial klt singularities. Let $\omega\in H^{1,1}_{\BC}(X)$ be nef over $Y$ such that $\omega':=\phi_*\omega\in H^{1,1}_{\BC}(X')$ is  K\"ahler over 
    $Y$. Then $\phi$ is an isomorphism.
\end{lemma}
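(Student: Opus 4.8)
The goal is to show that a small bimeromorphic map $\phi:X\bir X'$ over $Y$, between strongly $\mbQ$-factorial klt K\"ahler varieties, with $\omega$ nef$/Y$ on $X$ and $\omega'=\phi_*\omega$ K\"ahler$/Y$ on $X'$, is an isomorphism. The standard projective argument (cf. \cite{BCHM10}) runs as follows, and I would adapt it to the K\"ahler setting. Let $p:W\to X$ and $q:W\to X'$ be a common resolution of the graph of $\phi$. Since $\phi$ is small, any $p$-exceptional divisor on $W$ is also $q$-exceptional and conversely. Write $p^*\omega = q^*\omega' + E$ where $E$ is supported on the $p$-exceptional (equivalently $q$-exceptional) locus; this makes sense because $p^*\omega$ and $q^*\omega'$ agree over the common open locus where $\phi$ is an isomorphism, and any $(1,1)$-class on $W$ that pushes to zero under both $p$ and $q$ is represented by an exceptional divisor — more precisely, I would invoke \cite[Lemma 2.32]{DH20} (used already in the proof of Lemma \ref{lem:exceptional-locus}) to write $p^*\omega = q^*\alpha + [F]$ for some $F$ that is $q$-exceptional and $\alpha\in H^{1,1}_{\BC}(X')$, and then identify $\alpha$ with $\omega'=\phi_*\omega$ by pushing forward by $q$.

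The heart of the argument is a negativity/positivity contradiction. Suppose $\phi$ is not an isomorphism; then $\phi$ (being small) must contract some curve, so there is an irreducible curve $C\subset W$ with $p_*C = 0$ but $q_*C\neq 0$ (or symmetrically), and $C$ lies in the exceptional locus. Apply the negativity lemma to $E = p^*\omega - q^*\omega'$: since $q^*\omega'$ is nef$/X$ (as $\omega'$ is K\"ahler$/Y$, hence nef$/Y$, and $X\to Y$... more carefully, $-E = q^*\omega' - p^*\omega$ should be checked to be $p$-nef up to the right twist) we get that $E\geq 0$ and $\Supp(E)$ equals the $p$-exceptional locus unless $E=0$. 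On the other hand, running the symmetric argument with the roles of $p,q$ reversed and using that $\omega$ is merely nef$/Y$ (not necessarily K\"ahler), one gets $-E\geq 0$ as well, forcing $E=0$, i.e. $p^*\omega = q^*\omega'$. Then for the contracted curve $C$ we compute
\[
0 = (p_*C)\cdot \omega = C\cdot p^*\omega = C\cdot q^*\omega' = (q_*C)\cdot \omega',
\]
and since $\omega'$ is K\"ahler$/Y$ and $q_*C$ is a nonzero effective curve class contracted$/Y$ over a point of $Y$, we have $(q_*C)\cdot\omega' > 0$, a contradiction. (If instead it is $q$ that contracts a curve, run the same computation with $\omega$; but here one must be careful, as $\omega$ is only nef, so strict positivity may fail — this is why one needs to set up the argument so that the contracted curve always pairs against the K\"ahler class $\omega'$.)

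The main obstacle, and the step requiring the most care, is the negativity-lemma bookkeeping to conclude $E = 0$ and to guarantee that every curve contracted by $\phi$ can be made to pair against $\omega'$ rather than $\omega$. Concretely: from $E\geq 0$ (via negativity on $p$, using $-E$ nef$/X$) we learn $\phi$ extracts no divisors in one direction; we then need the reverse, which uses that $\omega'$ K\"ahler$/Y$ pulls back to something nef$/X'$, giving $-E\geq 0$, hence $E=0$; once $p^*\omega = q^*\omega'$ it is automatic that a curve contracted by $p$ is also contracted by $q$ and vice versa — but $\phi$ small plus $\phi$ not an isomorphism means the indeterminacy locus is nonempty in codimension $\geq 2$, so some curve $C$ with $p_*C=0\neq q_*C$ exists, and then $0 = C\cdot p^*\omega = C\cdot q^*\omega' = (q_*C)\cdot\omega' > 0$ is the desired contradiction. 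All the inputs — \cite[Lemma 2.32]{DH20}, the negativity lemma, and the fact that relative K\"ahler classes restrict to K\"ahler classes on contracted loci — are available in the K\"ahler setting cited in the excerpt, so the proof should go through without new ideas, only with attention to which cohomology groups and which positivity notions are used where.
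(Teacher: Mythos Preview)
Your approach is essentially the same as the paper's, and the two-sided negativity argument to obtain $E=0$ is correct (the paper states this tersely as ``since $\phi$ is small, from the negativity lemma it follows that $E=0$'', but the two applications you describe are what underlies it). The curve contradiction then correctly shows that $\phi$ is a \emph{morphism}: if $\phi$ is not a morphism, there is a curve $C\subset W$ with $p_*C=0$, $q_*C\neq 0$, and $(f'\circ q)_*C=0$, giving $0=\omega'\cdot q_*C>0$.

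However, there is a genuine gap in your final step. You assert that ``$\phi$ small plus $\phi$ not an isomorphism means the indeterminacy locus is nonempty \ldots\ so some curve $C$ with $p_*C=0\neq q_*C$ exists.'' This fails precisely when $\phi$ \emph{is} a morphism that is not an isomorphism: then $\phi$ has empty indeterminacy locus, and the only contracted curves satisfy $q_*C=0\neq p_*C$, which would force you to pair against $\omega$ (merely nef over $Y$) rather than $\omega'$---and you yourself note that strict positivity then fails. Your claim that ``once $p^*\omega=q^*\omega'$ it is automatic that a curve contracted by $p$ is also contracted by $q$ and vice versa'' is likewise false in the $q\Rightarrow p$ direction for the same reason. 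The paper closes this gap by invoking Lemma~\ref{lem:exceptional-locus}: once $\phi$ is known to be a morphism between strongly $\mbQ$-factorial klt varieties, its exceptional locus is pure codimension~$1$, contradicting smallness unless $\phi$ is an isomorphism.
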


\begin{proof}
    Let $W$ be the resolution of the graph of $\phi$, and $p:W\to X$ and $q:W\to X'$ are the induced bimeromorphic morphisms. Then by \cite[Lemma 2.32]{DH20} there is a $q$-exceptional $\mbR$-divisor $E$ such that $p^*\omega=q^*\omega'+E$ where $\omega'\in H^{1,1}_{\BC}(X')$. Since $\phi$ is small, from the negativity lemma it follows that $E=0$, i.e. $p^*\omega=q^*\omega'$. If $\phi$ is a not a morphism, then there is a curve $C\subset W$ such that $p_*(C)=0$ but $q_*(C)\neq 0$ and $(f'\circ q)_*(C)=0$, where $f':X'\to Y$ is the given morphism. In particular, $0=p^*\omega\cdot C=q^*\omega'\cdot C=\omega'\cdot q_*(C)>0$,  a contradiction. Thus $\phi$ is a morphism. Then we arrive at a contradiction by Lemma \ref{lem:exceptional-locus} unless $\phi$ is an isomorphism.

\end{proof}

\begin{definition}\cite[Page 3]{Fuj22}\label{def:property-p}
 Let $X$ be a normal analytic variety and $W\subset X$ a fixed compact subset. We say that $W\subset X$ satisfies \emph{Property} \textbf{P} if the following hold:
 \begin{enumerate}
     \item[(P1)] $X$ is a Stein space.
     \item[(P2)] $W$ is a Stein compact subset of $X$.
     \item[(P3)] $\Gamma (W,\OO _X)$ is noetherian (or equivalently, for any open subset $U\subset X$ and any analytic subset $Z$ of $U$, $W\cap Z$ has finitely many connected components).
 \end{enumerate}
 
 A projective morphism $g:S\to T$ between analytic varieties is said to satisfy \emph{Property} \textbf{Q} if $S$ and $T$ are both compact.
\end{definition}

\begin{remark}\label{rmk:property-p}
    Let $X$ be a normal analytic variety and for each point $x\in X$, let $x\in U$ be a Stein open neighborhood. Since $U$ is locally compact, there is a compact neighborhood $x\in K\subset U$ of $x$. Then by \cite[Lemma 2.5]{Fuj22}, its holomorphically convex hull $\widehat{K}$ in $U$ is Stein compact. Note that from \cite[Theorem 2.10]{Fuj22} it follows that $\widehat{K}\subset U$ satisfies Property \textbf{P} if and only if $\Gamma(\widehat{K}, \mcO_U)=\varinjlim_{\widehat{K}\subset V}\Gamma(V, \mcO_V)$, where $V$ is an open subset of $U$, is a noetherian ring. But then from \cite[Lemma 2.16]{Fuj22} we see that there is a Stein compact subset $L$ such that $x\in \widehat{K}\subset L\subset U$ such that $\Gamma(L, \mcO_U)$ is noetherian. In particular, every point $x\in X$ has a Stein open neighborhood $U$ and a Stein compact subset $x\in L\subset U$ such that $U$ satisfies Property \textbf{P}. 
\end{remark}~\\

 \begin{theorem}\label{t-gkltlocal}
 Let $(X,B+\bbeta)$ be a generalized klt pair, where $X$ is a relatively compact analytic variety. Then the following hold locally over $X$. 
 \begin{enumerate}
    \item $X$ has rational singularities.
    \item There exists a small bimeromorphic morphism $\mu :X^\sharp \to X$ such that $X^\sharp$ is strongly $\mathbb Q$-factorial.
    \item If $K_{X^\sharp}+B^\sharp+\bbeta _{X^\sharp}=\mu ^*(K_X+B+\bbeta_X)$, then $\bbeta _{X^\sharp}\equiv _X \Delta ^\sharp$ so that $(X^\sharp,B^\sharp+\Delta ^\sharp)$ is klt, and
    \item if $\Delta =\mu _*\Delta  ^\sharp$, then $(X,B+\Delta)$ is klt.
   \end{enumerate}
 \end{theorem}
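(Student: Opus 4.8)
The plan is to reduce all four assertions to the relative minimal model program over a Stein base, which is available in every dimension for projective morphisms. Since each assertion is local on $X$, by Remark \ref{rmk:property-p} I may shrink $X$ and assume it is Stein, relatively compact, and satisfies Property \textbf{P}. Over such an $X$ I will use freely: (a) coherent sheaves on $X$ — and, more generally, the pushforwards of coherent sheaves from proper bimeromorphic models of $X$ — are globally generated, so every $\mbR$-divisor on a proper bimeromorphic model $Y\to X$ is $\mbR$-linearly equivalent over $X$ to an effective one; and (b) the cone and contraction theorems, existence of flips, the relative base-point-free theorem, and existence of minimal models for (generalized) dlt pairs projective over $X$ all hold in arbitrary dimension — this is the analytic MMP over bases with Property \textbf{P}, which for projective morphisms reduces to the projective case of \cite{DH24a} (see also \cite{Fuj22}).

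Proof of (2). Fix a log resolution $\nu\colon X'\to X$ of $(X,B+\bbeta)$ and set $\Gamma':=\nu^{-1}_*B+\Ex(\nu)_{\mathrm{red}}$, so that $(X',\Gamma'+\bbeta)$ is a $\mbQ$-factorial generalized dlt pair, $\bbeta$ descends to $X'$, and, writing $a_j:=a(E_j,X,B+\bbeta)$ for the $\nu$-exceptional prime divisors $E_j$,
\[K_{X'}+\Gamma'+\bbeta_{X'}\ \equiv_X\ \sum_j(1+a_j)\,E_j\ =:\ E.\]
As $(X,B+\bbeta)$ is generalized klt, $1+a_j>0$ for all $j$, so $E\ge 0$ is effective and $\nu$-exceptional with $\Supp E=\Ex(\nu)$. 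Running a $(K_{X'}+\Gamma'+\bbeta_{X'})$-MMP over $X$ (it exists and terminates by (b), since $K_{X'}+\Gamma'+\bbeta_{X'}$ is effective over $X$ and $\Gamma'+\bbeta_{X'}$ is big over $X$) produces a minimal model $\mu\colon X^\sharp\to X$ on which $K_{X^\sharp}+\Gamma^\sharp+\bbeta_{X^\sharp}$ is nef over $X$ and $\equiv_X$ the strict transform of $E$; the negativity lemma forces this strict transform to vanish, so every $\nu$-exceptional divisor is contracted and $\mu$ is small. Hence $\Gamma^\sharp=B^\sharp$ is the strict transform of $B$, the pair $(X^\sharp,B^\sharp+\bbeta_{X^\sharp})$ is $\mbQ$-factorial and — being crepant to $(X,B+\bbeta)$ — generalized klt, and $K_{X^\sharp}+B^\sharp+\bbeta_{X^\sharp}$, being nef over $X$ and $\equiv_X 0$, equals $\mu^*(K_X+B+\bbeta_X)$. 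Finally $X^\sharp$ is strongly $\mbQ$-factorial: using Property \textbf{P}, every reflexive rank-one sheaf on $X^\sharp$ is $\OO_{X^\sharp}(D)$ for a Weil divisor $D$, a multiple of $D$ is Cartier, and noetherianity of the relevant ring of sections bounds this multiple uniformly.

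Proof of (3), (4) and (1). Since $X^\sharp$ is $\mbQ$-factorial, $K_{X^\sharp}+B^\sharp$ is $\mbR$-Cartier, so $(X^\sharp,B^\sharp)$ is klt by Lemma \ref{l-klt}; and $\mu^*(K_X+B+\bbeta_X)$ being $\mu$-numerically trivial gives $\bbeta_{X^\sharp}\equiv_X-(K_{X^\sharp}+B^\sharp)=:D^\sharp$. The $\mbR$-divisor $D^\sharp$ is nef over $X$ (its class equals that of $\bbeta_{X^\sharp}$) and big over $X$ (write $D^\sharp\equiv_X A+(D^\sharp-A)$ for $A$ $\mu$-ample and use (a) to replace $D^\sharp-A$ by an effective divisor over $X$); as $D^\sharp=K_{X^\sharp}+B^\sharp+2D^\sharp$ with $(X^\sharp,B^\sharp)$ klt and $2D^\sharp$ nef and big over $X$, the relative base-point-free theorem shows that $D^\sharp$ is semiample over $X$. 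Therefore a sufficiently general effective $\mbR$-divisor $\Delta^\sharp$ in the base-point-free-over-$X$ linear system of $D^\sharp$ satisfies $\Delta^\sharp\equiv_X\bbeta_{X^\sharp}$, and by Bertini applied to the klt pair $(X^\sharp,B^\sharp)$ — or, when $D^\sharp\equiv_X 0$, by taking $\Delta^\sharp=0$ and Lemma \ref{l-klt} once more — the pair $(X^\sharp,B^\sharp+\Delta^\sharp)$ is klt; this is (3). For (4), put $\Delta:=\mu_*\Delta^\sharp\ge 0$. Then $K_{X^\sharp}+B^\sharp+\Delta^\sharp\equiv_X\mu^*(K_X+B+\bbeta_X)$ is $\mu$-numerically trivial and $\mbR$-Cartier, hence descends to an $\mbR$-Cartier divisor on $X$ which, pushed forward (recall $\mu$ is small), equals $K_X+B+\Delta$; thus $K_{X^\sharp}+B^\sharp+\Delta^\sharp=\mu^*(K_X+B+\Delta)$, and since $\mu$ is small and crepant for $(X,B+\Delta)$ while $(X^\sharp,B^\sharp+\Delta^\sharp)$ is klt, the pair $(X,B+\Delta)$ is klt; moreover $K_X+B+\Delta\equiv K_X+B+\bbeta_X$, so the generalized pair is equivalent to $(X,B+\Delta)$. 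Assertion (1) then follows at once, since klt singularities are rational (by the relative Grauert--Riemenschneider vanishing theorem, which holds analytically; cf. \cite[5.22]{KM98}).

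Expected main obstacle. The crux is the passage in (3) from the nef b-$(1,1)$ current $\bbeta_{X^\sharp}$ to an honest effective $\mbR$-divisor $\Delta^\sharp$ in its numerical class with $(X^\sharp,B^\sharp+\Delta^\sharp)$ still klt: a nef class need not be semiample, so this rests on having the relative base-point-free theorem — and, underneath it, the whole relative MMP, including the termination of the $\mbQ$-factorializing MMP used in (2) — for projective morphisms onto Stein bases in arbitrary dimension, and then on a Bertini argument that must remain valid even when the resulting relative linear system is small.
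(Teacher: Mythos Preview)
Your argument has a genuine gap in step (3). You assert that $D^\sharp := -(K_{X^\sharp}+B^\sharp) \equiv_X \bbeta_{X^\sharp}$ is nef over $X$ because ``its class equals that of $\bbeta_{X^\sharp}$''. But $\bbeta_{X^\sharp}$ need not be nef over $X$. By definition of a generalized pair, the b-current $\bbeta$ is only required to be nef on the model $X'$ to which it \emph{descends}; the trace $\bbeta_{X^\sharp}$ on the lower model $X^\sharp$ is merely a pushforward and has no reason to be nef --- this is exactly why generalized pairs are more delicate than ordinary ones (compare Lemma~\ref{l-3-nef}). Consequently your application of the relative base-point-free theorem to $D^\sharp$ is unjustified, and the whole construction of $\Delta^\sharp$ collapses.

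There is a related circularity in (2): you invoke a \emph{generalized}-pair MMP over a Stein base in arbitrary dimension, citing \cite{DH24a}. That reference is for projective varieties; in the analytic setting the available results (\cite{DHP22}, \cite{Fuj22}) are for ordinary klt/dlt pairs. To run a generalized MMP one typically first replaces $\bbeta$ by an effective divisor --- which is what you are trying to prove. Moreover, even if the first step could be taken (since $\bbeta_{X'}$ is nef on $X'$, a $(K_{X'}+\Gamma'+\bbeta_{X'})$-negative ray is $(K_{X'}+\Gamma')$-negative), after a flip $\bbeta$ no longer descends to the current model, so this reduction to the classical MMP does not propagate.

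The paper avoids both problems by reversing the order of operations. On the resolution $X'$ --- where $\bbeta_{X'}$ \emph{is} genuinely nef over the Stein base --- one notes that $\bbeta_{X'}\equiv_X F-(K_{X'}+B^*)$, an honest $\mbR$-divisor which is nef and big over $X$; hence one may choose an effective $\Delta'\equiv_X\bbeta_{X'}$ with $(X',B^*+\Delta')$ klt. The MMP one then runs is the \emph{classical} relative klt MMP for $(X',B^*+\Delta')$, which is available by \cite{DHP22}, \cite{Fuj22}. Its output $X^\sharp$ comes equipped with $\Delta^\sharp:=\psi_*\Delta'$ satisfying $(X^\sharp,B^\sharp+\Delta^\sharp)$ klt and $\bbeta_{X^\sharp}\equiv_X\Delta^\sharp$, since the numerical equivalence $\bbeta_{X'}\equiv_X\Delta'$ is preserved step by step. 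This yields (2) and (3) simultaneously, without ever needing $\bbeta_{X^\sharp}$ itself to be nef.
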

 \begin{proof} 
(1) immediately follows from (4) and \cite[Corollary 11.14]{Kol97}.\\

 (2-3) From Remark \ref{rmk:property-p}, {it follows that for any $x\in X$ there is a Stein compact subset $x\in W\subset X$ such that $X$ satisfies Property \textbf{P}. In what follows we work locally around $W$ i.e. we repeatedly shrink $X$ to a neighborhood of $W$ (without further mention).} Let $\nu :X'\to X$ be a projective log resolution of $(X,B+\bbeta)$ and write $K_{X'}+B'+\bbeta _{X'}=\nu ^*(K_X+B+\bbeta_X)$. Let $E=\Ex(\nu)$, and for $0<\epsilon \ll 1$ define $B^*:=(B')^{>0}+\epsilon E$ and $F:=(B')^{<0}+\epsilon E$. Then $K_{X'}+B^*+\bbeta _{X'}\equiv \nu ^*(K_X+B+\beta_X)+F$, where the support of $F$ equals the set of all $\nu$-exceptional divisors, and $(X',B^*+\bbeta _{X'})$ is generalized klt.
 In particular, $\bbeta_{X'}\equiv _X F-(K_{X'}+B^*)$, where $F-(K_{X'}+B^*)$ is an $\mathbb R$-divisor, nef over $X$. As $\nu$ is projective and $X$ is Stein, we may assume that $F-(K_{X'}+B^*)$ is big and nef (over $X$). But then $\bbeta _{X'}\equiv_X \Delta'$, where $\Delta' \>0$ is an effective $\mathbb R$-divisor such that
$(X',B^*+\Delta ')$ is klt.

We may therefore run the relative $(K_{X'}+B^*+\Delta ')$-MMP (see \cite[Theorem 1.4]{DHP22} and \cite[Theorem 1.8]{Fuj22}) and hence we may assume that we have a bimeromorphic map $\psi: X'\dasharrow X^\sharp$ such that if $F^\sharp =\psi _*F$, $B^\sharp =\psi _*B^*$, $\bbeta_{X^\sharp} =\psi _*\bbeta_{X'}$  and $\Delta ^\sharp =\psi _*\Delta '$, then
 \[ F^\sharp \equiv _X K_{X^\sharp }+B^\sharp +\bbeta_{X^\sharp} \equiv_X K_{X^\sharp }+B^\sharp +\Delta ^\sharp \] is nef over $X$ so that $F^\sharp =0$ by the negativity lemma. Therefore $\mu:X^\sharp\to X$ is a small bimeromorphic morphism, $B^\sharp=\mu ^{-1}_*B$ and $X^\sharp$ is $\mathbb Q$-factorial. 
 Clearly $(X^\sharp,B^\sharp+\Delta ^\sharp)$ is klt.
 Note that each step of the above MMP preserves the 
 numerical equivalence $\bbeta _{X^\sharp}\equiv _X\Delta ^\sharp$, and in particular $K_{X^\sharp}+B^\sharp+\bbeta _{X^\sharp}=\mu ^{-1}_*(K_X+B+\bbeta_X)$.
 
 (4) By the Base-point free theorem \cite[Theorem 8.1]{Fuj22}, we have (locally over $X$) that $K_{X^\sharp}+B^\sharp+\Delta ^\sharp \sim _{\mathbb Q,X}0 $ and the claim follows.

 \end{proof}

We have following immediate corollary.
\begin{lemma}
 \label{l-Q-fac}
Let $(X,B+\bbeta)$ be a generalized klt (resp. dlt) pair, where $X$ is compact analytic surface. Then $X$ is locally $\mathbb Q$-factorial with rational singularities, and $(X,B)$ is klt (resp. dlt).
\end{lemma}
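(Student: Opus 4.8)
The plan is to derive the three assertions from Theorem~\ref{t-gkltlocal}, using the elementary fact that a proper bimeromorphic morphism of normal surfaces which is an isomorphism in codimension one is in fact an isomorphism: if it were not, some fiber would be positive–dimensional, hence would contain a curve, i.e.\ a prime divisor, so the morphism would contract a divisor, contradicting smallness. Since all three statements are local on $X$, I may shrink $X$ and assume it is Stein and relatively compact, so that Theorem~\ref{t-gkltlocal} is available.

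\emph{The generalized klt case.} Applying Theorem~\ref{t-gkltlocal} in a neighborhood of an arbitrary point of $X$ yields: (i) $X$ has rational singularities; (ii) a small bimeromorphic morphism $\mu\colon X^\sharp\to X$ with $X^\sharp$ strongly $\mathbb Q$–factorial; and (iii) an effective $\mathbb R$–divisor $\Delta=\mu_*\Delta^\sharp$ such that $(X,B+\Delta)$ is klt. By the surface fact just quoted, $\mu$ is an isomorphism; hence $X$ itself is strongly $\mathbb Q$–factorial near every point, i.e.\ $X$ is locally $\mathbb Q$–factorial (and, by (i), has rational singularities). In particular $\Delta$ is locally $\mathbb R$–Cartier, and since $(X,B+\Delta)$ being klt forces $K_X+B+\Delta$ to be $\mathbb R$–Cartier, so is $K_X+B=(K_X+B+\Delta)-\Delta$. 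Lemma~\ref{l-klt} then gives that $(X,B)$ is klt.

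\emph{The generalized dlt case.} I would run the same argument after observing that Theorem~\ref{t-gkltlocal}, and its proof, apply verbatim to a generalized \emph{dlt} pair $(X,B+\bbeta)$: the construction of the auxiliary divisors $B^\ast=(B')^{>0}+\epsilon E$, $F=(B')^{<0}+\epsilon E$, the run of the relative $(K_{X'}+B^\ast+\Delta')$–MMP on a log resolution, and the negativity–lemma argument forcing the push–forward of $F$ to the output to vanish (hence the smallness of $\mu$) use only that the boundary is dlt, not klt. This again produces $X$ with rational singularities, a small $\mathbb Q$–factorialization $\mu\colon X^\sharp\to X$ (an isomorphism on surfaces), and an effective $\Delta$ with $(X,B+\Delta)$ dlt; as above $K_X+B$ is then $\mathbb R$–Cartier and the dlt half of Lemma~\ref{l-klt} finishes the proof.

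The one point requiring care — and the reason the statement is genuinely a corollary rather than a triviality — is that $K_X+B$ is \emph{not} a priori $\mathbb R$–Cartier, so Lemma~\ref{l-klt} can be invoked only after $\mathbb Q$–factoriality of $X$ is in hand; this is precisely where the surface hypothesis does its work, by forcing the $\mathbb Q$–factorialization $\mu\colon X^\sharp\to X$ of Theorem~\ref{t-gkltlocal} to be an isomorphism. I expect the verification that Theorem~\ref{t-gkltlocal} adapts to the dlt setting to be the only mildly non–formal ingredient.
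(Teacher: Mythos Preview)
Your proposal is correct and matches the paper's approach: the paper simply labels the lemma an ``immediate corollary'' of Theorem~\ref{t-gkltlocal} and gives no further argument, so what you have written is in fact a fuller justification than the paper provides. The key surface-specific observation---that a small bimeromorphic morphism of normal surfaces is an isomorphism, forcing $X^\sharp\cong X$ and hence $\mathbb Q$-factoriality of $X$---is exactly the intended mechanism, and your use of Lemma~\ref{l-klt} once $K_X+B$ is known to be $\mathbb R$-Cartier is correct.

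One small caution on the dlt case: the extension of Theorem~\ref{t-gkltlocal} is not quite ``verbatim.'' You must choose the log resolution $\nu\colon X'\to X$ to be an isomorphism over the open set $U$ from the dlt definition; this guarantees that every $\nu$-exceptional divisor has center in $X\setminus U$ and hence discrepancy $>-1$. Without this choice, an exceptional divisor lying over a stratum of $\lfloor B\rfloor\cap U$ could have discrepancy $-1$, i.e.\ coefficient $1$ in $B'$, and then $B^*=(B')^{>0}+\epsilon E$ would acquire a coefficient $1+\epsilon>1$ on that component, breaking the dlt property of $(X',B^*)$. With the appropriate choice of $\nu$ (which exists since $(U,B|_U)$ is already log smooth and $\bbeta$ descends there), the rest of the argument does go through: $(X',B^*+\Delta')$ is dlt, the relative MMP over the Stein base runs for dlt pairs, and one concludes as you indicate.
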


\subsection{Existence of Flips for Generalized Pairs}\label{subsec:flips} In this subsection we prove the existence of flips for generalized klt pairs in dimension $3$.  

\begin{theorem}\label{t-Stein-flip+}
  Let $(X/S,B+\bbeta)$ be a generalized klt K\"ahler $3$-fold pair, 
  such that $K_X+B$ is $\R$-Cartier, and 
  $f:X\to Z$ is a $(K_X+B+\bbeta_X)$-negative small bimeromorphic morphism  over $S$. 
  Then $f$ is locally projective, the log canonical model  $f^+:X^+\to Z$ for $(X,B+\bbeta)$ over $Z$ exists, $f^+$ is a small and bimeromorphic morphism, and there is a $f$-exceptional rational curve $C$ such that $0>(K_X+B+\bbeta_X)\cdot C\geq -6$. \end{theorem}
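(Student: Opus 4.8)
The plan is to reduce, over a Stein base and after passing to a resolution, to an ordinary klt $3$-fold minimal model program, and then to recognise its output as the desired flip. Concretely, all the assertions are local over $Z$, so by Remark \ref{rmk:property-p} I may assume $Z$ is a relatively compact Stein space carrying a Stein compact subset with Property \textbf{P}. Since $K_X+B$ is $\R$-Cartier and $(X,B+\bbeta)$ is generalized klt, Lemma \ref{l-klt} gives that $(X,B)$ is klt; and since $\bbeta$ is nef over $S$ with $Z\to S$, it is nef over $Z$ and over $X$ as well (a class which becomes relatively K\"ahler after adding an arbitrarily small fixed K\"ahler class over $S$ does so over any intermediate base). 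Using a projective resolution of $Z$ and passing to a higher model, I may choose a projective log resolution $\nu\colon X'\to X$ of $(X,B+\bbeta)$ to which $\bbeta$ descends and for which $X'\to Z$ is projective; writing $K_{X'}+B'+\bbeta_{X'}=\nu^*\gamma$, the negativity lemma applied to $\nu$ (using that $\bbeta_{X'}$ is nef over $X$) gives $B'\ge B^{\sharp}$, where $K_{X'}+B^{\sharp}=\nu^*(K_X+B)$.

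Next, set $E=\Ex(\nu)$ (reduced) and, for $0<\epsilon\ll 1$, $B^{*}=(B')^{\ge 0}+\epsilon E$ and $F=(B')^{<0}+\epsilon E\ge 0$, so that $K_{X'}+B^{*}+\bbeta_{X'}=\nu^*\gamma+F$ with $\nu_*B^{*}=B$, $\Supp F=\Ex(\nu)$, and $(X',B^{*}+\bbeta_{X'})$ generalized klt. Fix a K\"ahler form $\omega_{X'}$ on $X'$; for every $t>0$ the class $\bbeta_{X'}+t\omega_{X'}$ is K\"ahler, so $(X',B^{*}+\bbeta_{X'}+t\omega_{X'})$ is equivalent to a genuine klt pair (replace $\bbeta_{X'}+t\omega_{X'}$ by a relatively numerically equivalent effective $\R$-divisor, which exists since $X'\to Z$ is projective). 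Hence a $(K_{X'}+B^{*}+\bbeta_{X'})$-MMP over $Z$ with scaling of $\omega_{X'}$ coincides, step by step, with an ordinary klt $3$-fold MMP with scaling over $Z$: existence of the contractions and flips is \cite{Fuj22}, and it terminates by the established relative K\"ahler $3$-fold theory \cite{DH20} and termination of klt $3$-fold flips with scaling. Since $X'$ is bimeromorphic over $Z$ no Mori fibre space can occur, so the MMP terminates with a log minimal model $X'\dashrightarrow X^{m}$ of $(X',B^{*}+\bbeta_{X'})$ over $Z$, and by the base-point-free theorem \cite[Theorem 8.1]{Fuj22} together with abundance in dimension $3$ there is a morphism $X^{m}\to X^{c}$ over $Z$ with $K_{X^{c}}+B^{c}+\bbeta_{X^{c}}$ ample over $Z$.

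Since $\Supp F=\Ex(\nu)$, Lemma \ref{l-models}(7) and (6) show that $X\dashrightarrow X^{m}$ is a log terminal model of $(X,B+\bbeta)$ over $Z$ and $X^{c}\to Z$ its log canonical model. Moreover, the relative MMP over $Z$ can only contract curves contracted by $X'\to Z$, whose images in $X$ lie in $\Ex(f)$; as $f$ is small, $\Ex(f)$ is at most $1$-dimensional, so no strict transform of a divisor of $X$ can be contracted — hence $X\dashrightarrow X^{c}$ is an isomorphism in codimension $1$, and (by the same reasoning applied to $X^{m}\to X^{c}$) $f^{+}:=(X^{c}\to Z)$ is a small bimeromorphic morphism, which by Lemma \ref{l-models}(3) is the unique log canonical model, i.e.\ the flip of $f$. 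In particular $f$ is locally projective: it is dominated by the projective morphism $X'\to Z$, and $-(K_X+B+\bbeta_X)$ is an $f$-ample class since $\Ex(f)$ is a finite union of rational curves spanning $\NE(X/Z)$. Finally, applying the cone theorem for the klt K\"ahler $3$-fold $(X,B)$ to the extremal contraction $f$ produces an $f$-exceptional rational curve $C$ with $-(K_X+B)\cdot C\le 2\dim X=6$, and since $\bbeta_X$ is nef over $Z$,
\[
0>(K_X+B+\bbeta_X)\cdot C=(K_X+B)\cdot C+\bbeta_X\cdot C\ge (K_X+B)\cdot C\ge -6.
\]

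The main obstacle is the bookkeeping in the identification step: one must make sure the relative MMP on $X'$ over $Z$ stabilises at a model that is an isomorphism in codimension $1$ over $X$, so that the output really is a flip of $f$ (and not some other birational modification), which is where the hypotheses that $f$ is small and $(K_X+B+\bbeta_X)$-negative and that $K_X+B$ is $\R$-Cartier are used essentially. Upstream of this, one must set the problem up so that $X'\to Z$ is projective, which is what makes Fujino's relative minimal model program, the relative base-point-free theorem, and the reduction of the K\"ahler MMP with scaling to the ordinary klt case all applicable; checking termination in this relative K\"ahler setting is where the bulk of the known $3$-fold technology enters.
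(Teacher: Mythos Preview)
Your proof has two genuine gaps.

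\textbf{The reduction to an ordinary klt pair over $Z$ is unjustified.} You claim that over a Stein base $Z$ with $X'\to Z$ projective, the class $\bbeta_{X'}+t\omega_{X'}$ may be replaced by a relatively numerically equivalent effective $\R$-divisor ``since $X'\to Z$ is projective''. Projectivity only produces one relatively ample divisor; it does not show that every $(1,1)$ class on $X'$ is $\equiv_Z$ to an $\R$-divisor. In the analytic/K\"ahler setting $H^{1,1}_{\rm BC}(X')$ is strictly larger than the image of $\Pic(X')$, and nothing you have written explains why the transcendental part of $\bbeta_{X'}$ disappears modulo $\equiv_Z$. (Compare Theorem~\ref{t-gkltlocal}: there the base is $X$ itself, and the crucial identity $\bbeta_{X'}\equiv_X -(K_{X'}+B')$ comes for free from $K_{X'}+B'+\bbeta_{X'}=\nu^*\gamma$, so $\bbeta_{X'}$ is automatically an $\R$-divisor relative to $X$. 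Over $Z$ you have no such identity, because $\gamma\in H^{1,1}_{\rm BC}(X)$ does not descend to $Z$.) The paper's proof handles this by a completely different and essentially combinatorial device: since $f$ is \emph{small}, the exceptional locus is a finite union of curves $C_i$, and one constructs by hand Cartier divisors $D_i$ on $X_W$ (slices by local hyperplanes through a general point of each $C_i$) with $D_i\cdot C_j=\delta_{ij}$. Setting $D=\sum (\bbeta_X\cdot C_i)D_i$ gives $D\equiv_W\bbeta_X$ as an honest $\R$-Cartier divisor on $X_W$, and then $(X_W,B_W+D)$ is shown to be klt by pulling back to a log resolution. This is the key idea you are missing; your argument does not use the smallness hypothesis in any essential way at this step.

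\textbf{The length bound is not correctly argued.} You assert that $\bbeta_X$ is nef over $Z$ and hence $\bbeta_X\cdot C\ge 0$ for $f$-exceptional curves. But only $\bbeta_{X'}$ is nef (over $S$); the trace $\bbeta_X$ is its pushforward and need not be nef---indeed Lemma~\ref{l-3-nef} says precisely that $\bbeta_X\cdot C$ can be negative on curves in the indeterminacy locus of $\nu^{-1}$, which may well include the flipping curves. Consequently $f$ need not be a $(K_X+B)$-negative contraction at all, so invoking the cone theorem for $(X,B)$ is illegitimate. The paper avoids this entirely: once $D\equiv_W\bbeta_X$ is constructed and $(X_W,B_W+D)$ is an ordinary klt pair with $-(K_{X_W}+B_W+D)$ $f$-ample, the bound $0>(K_X+B+\bbeta_X)\cdot C=(K_{X_W}+B_W+D)\cdot C\ge -6$ follows directly from \cite[Theorem 4.2]{DO23}, and local projectivity of $f$ comes from the fact that $-(K_{X_W}+B_W+D)$ is an $f$-ample $\R$-\emph{divisor}. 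Your justification of local projectivity (``dominated by a projective morphism'' and ``$-(K_X+B+\bbeta_X)$ is an $f$-ample class'') is circular for the same reason: an $f$-ample $(1,1)$ class is not enough---you need an $f$-ample line bundle.
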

  
  \begin{proof} Let $\{C_i\}_{i\in I}$ be the (finite) set of curves contracted by $f$ and $C:=\cup_{i\in I} C_i$. Assume for simplicity that $C$ is connected. It suffices to construct the
flip locally around $z=f(C)\subset Z$.  Let $z\in W\subset Z$ be a relatively compact Stein open subset. 
Shrinking $W$, we may assume that for every curve $C_i$, there is a Cartier divisor $D_i$ on $X_W:=f^{-1}W$ that intersects $C_i$ transversely and does not intersect $C_j$ for $j\ne i$. To construct $D_i$, pick a general point $x_i$ on $C_i$ and a sufficiently small neighborhood $x_i\in U_i\subset X$. We identify $x_i\in U_i$ with a locally closed analytic subvariety of $\mbC ^N$ and take the divisor $D_i$ given by a general hyperplane through $x_i$. Shrinking $W$ and intersecting $D_i$ with $X_W$, we may assume that each $D_i$ is a subvariety of $X_W$. It then follows that if $D=\sum d_iD_i$, where $d_i=[\bbeta_X] \cdot C_i$, then $D\equiv _W\bbeta_X$. 

Now let $\nu :X'\to X$ be a log resolution of the generalized pair $(X,B+\bbeta)$. Since $K_X+B$ is $\R$-Cartier, we have $[\bbeta_X]\in H^{1,1}_{\rm BC}(X)$, and so by Remark \ref{r-gp}  we may write $-E\equiv \bbeta _{X'}-\nu^*\bbeta_X$ for some $\nu$-exceptional $\R$-divisor $E$ on $X'$. Let $D':=\nu^*D-E|_{X'_W}\equiv _W \bbeta _{X'}|_{X'_W}$. We may assume that $\nu :X'_W\to W$ is projective (via Hironaka's Chow lemma  \cite[Corollary 2]{Hir75}). Since $D'$ is nef and big over $X_W$, replacing $D'$ by an $\R$-linearly equivalent divisor, we may assume that $(X'_W,B'_W+D')$ is sub-klt and hence $(X_W,B_W+D)$ is klt, since $K_{X'_W}+B'_W+D'\num \nu^*(K_{X_W}+B_W+D)$. But then the required log canonical model $ X^+_W$ exists (see \cite[Theorem 4.3]{CHP16} or \cite[Theorem 1.3]{DHP22}). 
In particular, $f^+:X^+_W\to W$ is small and projective, and since $-(K_{X_W}+B_W+D)$ is ample over $W$, $f$ is locally projective.  The existence of a $f$-exceptional rational curve $C\subset X_W$ such that $0>(K_X+B+\bbeta_X)\cdot C=(K_{X_W}+B_W+D)\cdot C\geq -6$ now  follows from \cite[Theorem 4.2]{DO23}.
  \end{proof}

  As an easy corollary, we will prove the existence of flips. Recall that if $(X/S,B+\bbeta)$ is a $\Q$-factorial compact K\"ahler generalized klt 3-fold pair, then a $(K_X+B+\bbeta_X)$-flipping contraction over $S$ is a small bimeromorphic morphism $f:X\to Z$ over $S$ such that
 $\rho (X/Z)=1$, and $-(K_X+B+\bbeta_X)$ is K\"ahler over $Z$. By definition, the flip of $f:X\to Z$, if it exists, is a small bimeromorphic morphism $f^+:X^+\to Z$ over $S$ such that $X^+$ is K\"ahler over $S$, and $K_{X^+}+B^++\bbeta_{X^+}$ is K\"ahler over $Z$. We need the following lemma first.
 \begin{lemma}\label{lem:uniqueness-of-flip}
     Let $X$ be a normal $\mbQ$-factorial (resp. strongly $\mbQ$-factorial) compact K\"ahler $3$-fold, $f:X\to Z$ a $(K_X+B+\bbeta_X)$-flipping contraction of a generalized klt pair over $S$, and $f^+:X^+\to Z$ the corresponding flip, then
     \begin{enumerate}
         \item $f^+:X^+\to Z$ is uniquely determined,
         \item $X^+$ is $\Q$-factorial (resp. strongly $\mbQ$-factorial), and
         \item $\rho (X^+/Z)=1$, when $X$ and $X^+$ are strongly $\mbQ$-factorial.
     \end{enumerate}
 \end{lemma}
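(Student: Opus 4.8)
The plan is to prove the three assertions of Lemma~\ref{lem:uniqueness-of-flip} by reducing everything to the local statement of Theorem~\ref{t-Stein-flip+} together with the uniqueness of log canonical models (Lemma~\ref{l-models}(3)). First I would observe that all three statements are local over $Z$, so we may freely shrink $Z$ to a relatively compact Stein neighbourhood $W$ of $z=f(\Ex(f))$; by Theorem~\ref{t-Stein-flip+} the morphism $f$ is then locally projective and $-(K_X+B+\bbeta_X)$ is ample over $W$. The point is that $f^+\colon X^+\to Z$ is by definition a small bimeromorphic morphism over $Z$ with $K_{X^+}+B^++\bbeta_{X^+}$ relatively K\"ahler, hence relatively ample over $W$ since $\rho(X^+/Z)=1$; so $f^+$ is nothing but the log canonical model of $(X,B+\bbeta)$ over $Z$ in the sense of Definition~\ref{d-models}.

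For (1), uniqueness is then immediate from Lemma~\ref{l-models}(3): any two flips $f^+\colon X^+\to Z$ and $g^+\colon Y^+\to Z$ are both log canonical models of the same generalized dlt (in fact klt) pair $(X,B+\bbeta)$ over $Z$, and hence are isomorphic over $Z$. (One needs only to note that $(X,B+\bbeta)$ and the flip are crepant over $Z$, i.e. $K_{X^+}+B^++\bbeta_{X^+}$ is the strict transform, which is part of the construction in Theorem~\ref{t-Stein-flip+}.)

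For (2), I would run the argument through the explicit construction in the proof of Theorem~\ref{t-Stein-flip+}: there we replaced $\bbeta$ by an $\R$-divisor $D$ with $D\equiv_W\bbeta_X$ and produced $X^+_W$ as the log canonical model of the honest klt pair $(X_W,B_W+D)$ over $W$. The log canonical model of a $\Q$-factorial (resp. strongly $\Q$-factorial) klt pair obtained by a relative MMP is again $\Q$-factorial (resp. strongly $\Q$-factorial) --- this is standard and, in the strongly $\Q$-factorial K\"ahler setting, follows from \cite[Theorem~1.4]{DHP22} applied as in Lemma~\ref{lem:exceptional-locus}. Since $K_{X^+}+B^++\bbeta_{X^+}\equiv_W K_{X^+}+B^++D^+$, the variety $X^+$ underlying the generalized flip coincides with this model, giving the claim.

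For (3), I would argue that $X\dasharrow X^+$ is a $D$-flip in the classical sense (for $D\equiv_W\bbeta_X$), so $X\dasharrow X^+$ is an isomorphism in codimension $1$ and $(K_{X}+B+D)$ is anti-ample while $(K_{X^+}+B^++D^+)$ is ample over $Z$; when $X$ and $X^+$ are strongly $\Q$-factorial one has $\rho(X^+/Z)=\rho(X/Z)$ because a small bimeromorphic map between strongly $\Q$-factorial varieties induces an isomorphism on the relative Bott--Chern spaces (cf. Lemma~\ref{lem:h11-surjective}(ii) and Lemma~\ref{lem:small-to-isomorphism}; the relevant class is rigid as it is the strict transform of a relatively ample class), hence $\rho(X^+/Z)=\rho(X/Z)=1$. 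I expect the main obstacle to be checking that the relative Picard number is genuinely preserved in the strongly $\Q$-factorial K\"ahler category --- one must make sure that the pushforward $\phi_*\colon N^1(X/Z)\to N^1(X^+/Z)$ of the small map $\phi$ is not just surjective but an isomorphism, which requires that no nonzero relative class is killed; this is where strong $\Q$-factoriality (rather than mere $\Q$-factoriality) is essential, and I would handle it exactly as in the proof of Lemma~\ref{lem:small-to-isomorphism}.
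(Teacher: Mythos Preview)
Your treatment of (1) is fine and amounts to the same argument as the paper: the paper proves uniqueness directly via the negativity lemma and a curve-contraction argument, which is exactly the content of Lemma~\ref{l-models}(3) that you invoke. Your treatment of (3) is also essentially the paper's approach; the paper simply cites \cite[Lemma 2.32]{DH20}, which gives the relative $N^1$-isomorphism for small maps between strongly $\Q$-factorial varieties that you are trying to reconstruct by hand.

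The genuine gap is in (2). Your key assertion --- ``the log canonical model of a $\Q$-factorial klt pair obtained by a relative MMP is again $\Q$-factorial'' --- is false in general and circular here. Log canonical models are obtained from log terminal models by a further contraction, and that contraction typically destroys $\Q$-factoriality; the output of an MMP is a log \emph{terminal} model, not a log canonical one. In the present situation you cannot produce $X^+$ by running an MMP from $X$ over $Z$ either: the very first step of such an MMP would be the flip you are trying to construct. Your citation of \cite[Theorem~1.4]{DHP22} ``as in Lemma~\ref{lem:exceptional-locus}'' does not address this; that lemma concerns pure-codimension-one exceptional loci, not $\Q$-factoriality of a target.

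The paper's argument for (2) is a direct perturbation trick, and you should adopt it. Given a prime Weil divisor $G^+\subset X^+$ with strict transform $G$ on $X$, work over a Stein neighbourhood $W\subset Z$ and use the $\R$-divisor $D\equiv_W\bbeta_X$ from Theorem~\ref{t-Stein-flip+}. Since $-(K_{X_W}+B_W+D)$ is ample over $W$, one can choose $0\le H\sim_{\R,W}\epsilon G_W-\tfrac12(K_{X_W}+B_W+D)$ for $0<\epsilon\ll 1$ with $(X_W,B_W+D+H)$ klt and $-(K_{X_W}+B_W+D+H)$ still ample over $W$. Then $X^+_W\to W$ is simultaneously the log canonical model for $K_{X_W}+B_W+D$ and for $K_{X_W}+B_W+D+H$; hence both $K_{X^+_W}+B^+_W+D^+$ and $K_{X^+_W}+B^+_W+D^++H^+\sim_{\R,W}\tfrac12(K_{X^+_W}+B^+_W+D^+)+\epsilon G^+_W$ are $\R$-Cartier, forcing $G^+_W$ to be $\Q$-Cartier. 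The strongly $\Q$-factorial case then follows from \cite[Lemma~2.5]{DH20}.
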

 \begin{proof}
     Suppose that $f':X'\to Z$ is another flip of $f:X\to Z$, then $X^+\dasharrow X'$ is a small bimeromorphic map over $Z$. Let $Y$ be the normalization of the graph and $p:Y\to X^+$ and $q:Y\to X'$ are the induced morphisms, then from the negativity lemma it follows easily that $q^*(K_{X^+}+B^++\bbeta_{X^+})=p^*(K_{X'}+B'+\bbeta_{X'})$. Let $C\subset Y$ be a $p$-exceptional curve. Then $q_*C\ne 0$ and $(f'\circ q)_*C=0$. Thus we have
     \[0< C\cdot q^*(K_{X^+}+B^++\bbeta_{X^+})=C\cdot p^*(K_{X'}+B'+\beta_{X'})= 0\] which is a contradiction.
     Therefore, there are no such curves and hence $X^+\dasharrow X'$ is a morphism. Similarly, it follows that $X'\dasharrow X^+$ is a morphism and hence $X^+\cong X'$. In particular, (1) holds.

     Let $G^+$ be a prime Weil divisor on $X^+$ and $G$ its strict transform on $X$. Then $G$ is $\mbQ$-Cartier, as $X$ is $\mbQ$-factorial. For any point $p\in X^+$ we must show that there is a neighborhood of $p$ on which $G^+$ is $\Q$-Cartier.  This is clear if $p$ is not contained in the flipped locus ${\rm Ex}(f^+)$, so assume that $p\in {\rm Ex}(f^+)$ and let $q=f^+(p)$. Working locally over a neighborhood $q\in W\subset Z$ as in the proof of Theorem \ref{t-Stein-flip+}, we may assume that $K_{X_W}+B_W+D$ is klt for some effective $\mbR$-divisor $D\geq 0$ on $X_W$ such that $D\equiv _W \bbeta_X|_{X_W}$ and that $X^+\to W$ is the relative log canonical model for $K_{X_W}+B_W+D$. 
     Since $-(K_{X_W}+B_W+D)$ is ample over $W$, we may pick an effective $\R$-divisor $0\<H\sim _{\R,W}\epsilon G_W-\frac 12(K_{X_W}+B_W+D)$ for $0<\epsilon \ll 1$ such that $(X_W, B_W+D+H)$ is klt and $-(K_{X_W}+B_W+D+H)$ is ample over $W$.
     Then $X^+\to W$ is also the relative log canonical model for $K_{X_W}+B_W+D+H$ over $W$ and so $K_{X^+_W}+B^+_W+D^++H^+\sim _{\R,W} \frac 1 2(K_{X^+_W}+B^+_W+D^+)+\epsilon G^+_W$ is $\R$-Cartier for $0<\epsilon \ll 1$, and hence $G^+_W$ is $\Q$-Cartier and (2) is proven. For the strongly $\Q$-factorial case, we refer the reader to \cite[Lemma 2.5]{DH20}. 
     
     (3) now follows from \cite[Lemma 2.32]{DH20}.

 \end{proof}
  \begin{corollary}\label{c-3fold-flips}
      Let $(X/S,B+\bbeta)$ be a $\Q$-factorial compact K\"ahler generalized klt 3-fold pair, and $f:X\to Z$  a $(K_X+B+\bbeta_X)$-flipping contraction over $S$. Then $f$ is locally projective, the flip $f^+:X\to Z$ for $K_X+B+\bbeta_X$ over $Z$ exists (and is unique), and there is an $f$-exceptional rational curve $C$ such that $0>(K_X+B+\bbeta_X)\cdot C\geq -6$.
  \end{corollary}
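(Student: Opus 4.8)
The plan is to deduce this statement almost directly from Theorem~\ref{t-Stein-flip+}, together with Lemma~\ref{lem:uniqueness-of-flip} for uniqueness. First I would note that since $X$ is $\Q$-factorial, each prime component of $B$ is $\Q$-Cartier and $(\omega^{\otimes k}_X)^{**}$ is a line bundle for some $k>0$, so $K_X+B$ is $\R$-Cartier. Next, a $(K_X+B+\bbeta_X)$-flipping contraction $f:X\to Z$ over $S$ is in particular a small bimeromorphic morphism which is $(K_X+B+\bbeta_X)$-negative: indeed $-(K_X+B+\bbeta_X)$ being K\"ahler over $Z$ means $[-(K_X+B+\bbeta_X)+f^*\omega_Z]$ is K\"ahler for some K\"ahler form $\omega_Z$ on $Z$, and since $f^*\omega_Z$ vanishes on every $f$-exceptional curve, $-(K_X+B+\bbeta_X)$ is positive on each such curve. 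Hence the hypotheses of Theorem~\ref{t-Stein-flip+} hold with $Z$ in the role of the base, and I would apply it to conclude that $f$ is locally projective, that the log canonical model $f^+:X^+\to Z$ of $(X,B+\bbeta)$ over $Z$ exists and is a small bimeromorphic morphism, and that there is an $f$-exceptional rational curve $C$ with $0>(K_X+B+\bbeta_X)\cdot C\geq -6$. This already delivers the local projectivity of $f$, the curve $C$, and the asserted bound.

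It then remains to check that $f^+:X^+\to Z$ is the flip in the sense defined just above. The induced bimeromorphic map $\phi:X\dasharrow X^+$ is small, since both $f$ and $f^+$ are, so $B^+=\phi_*B$ (no exceptional divisors are added to it), the discrepancies satisfy $a(P,X,B,\bbeta)\leq a(P,X^+,B^+,\bbeta)$ for all divisors $P$ over $X$ by Lemma~\ref{l-models}(1), and therefore $(X^+,B^++\bbeta)$ is again generalized klt. By the definition of a log canonical model over $Z$ (Definition~\ref{d-models}) the class $[K_{X^+}+B^++\bbeta_{X^+}]$ is K\"ahler over $Z$. The only genuinely substantive point is to verify that $X^+$ is K\"ahler over $S$ rather than merely over $Z$: for this I would use that $f^+$ is (locally) projective and that $Z$ is K\"ahler over $S$, so that for a K\"ahler form $\omega_Z$ on $Z$ over $S$ the class $(f^+)^*[\omega_Z]+\epsilon[K_{X^+}+B^++\bbeta_{X^+}]$ is K\"ahler over $S$ for $0<\epsilon\ll 1$. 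With this, $f^+$ meets every requirement of a flip of $f$.

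Finally, uniqueness of $f^+$ is exactly Lemma~\ref{lem:uniqueness-of-flip}(1). There is essentially no obstacle here, as Theorem~\ref{t-Stein-flip+} already carries all the weight; the one step with any content is upgrading the relative projectivity of $f^+$ over $Z$ to the statement that $X^+$ is K\"ahler over $S$, which is where the hypothesis that $S$ (hence $Z$) is K\"ahler is used, and everything else is bookkeeping about strict transforms, discrepancies, and the definitions of the models involved.
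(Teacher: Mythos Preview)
Your proposal is correct and follows essentially the same approach as the paper: the paper's proof is the single line ``Follows immediately from Theorem~\ref{t-Stein-flip+} and Lemma~\ref{lem:uniqueness-of-flip},'' and you have simply unpacked why the hypotheses of Theorem~\ref{t-Stein-flip+} are met (using $\Q$-factoriality to get $K_X+B$ $\R$-Cartier) and why its output is a flip in the sense of the definition. The one place where you do slightly more work than necessary is the verification that $X^+$ is K\"ahler over $S$: this is exactly the content of Lemma~\ref{l-kmmp}, whose proof is the same perturbation argument you sketch, so you could have cited it directly.
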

  \begin{proof}
      Follows immediate from Theorem \ref{t-Stein-flip+} and Lemma \ref{lem:uniqueness-of-flip}.
  \end{proof}

\begin{proof}[Proof of Theorem \ref{thm:3-dim-flips}]
   This follows from Corollary \ref{c-3fold-flips}.   
\end{proof}~\\

\begin{lemma}\label{l-kmmp}
    Let $\pi :X\to S$ be a proper morphism of compact complex varieties such that $X$ is K\"ahler of dimension at most $3$.
    If $(X,B+\bbeta)$ is a generalized dlt pair and $\phi :X\dasharrow X'$ is a $(K_X+B+\bbeta_X)$-flip, flipping contraction or divisorial contraction, then $X'$ is K\"ahler.
\end{lemma}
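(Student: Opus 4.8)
The plan is to reduce all three cases to a single assertion: the target $Z$ of a $(K_X+B+\bbeta_X)$-negative extremal birational contraction $f\colon X\to Z$ with $\rho(X/Z)=1$ (i.e. a flipping or a divisorial contraction) is K\"ahler. For a divisorial or flipping contraction this is literally the claim, since there $X'=Z$. For a flip $\phi\colon X\dashrightarrow X^+$, let $f\colon X\to Z$ be the associated flipping contraction and $f^+\colon X^+\to Z$ the flip; granting that $Z$ is K\"ahler, recall from Theorem~\ref{t-Stein-flip+} and Corollary~\ref{c-3fold-flips} that $f^+$ is a locally projective small bimeromorphic morphism with $K_{X^+}+B^++\bbeta_{X^+}$ relatively ample (K\"ahler over $Z$). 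Then, fixing a K\"ahler form $\omega_Z$ on $Z$, the class $(K_{X^+}+B^++\bbeta_{X^+})+N(f^+)^*\omega_Z$ is K\"ahler on $X^+$ for $N\gg0$, so $X^+$ is K\"ahler. Thus everything reduces to showing $Z$ is K\"ahler in the contraction cases.

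To show $Z$ is K\"ahler, I would first note that $Z$ has klt, hence rational, singularities by Theorem~\ref{t-gkltlocal} (and is $\mathbb Q$-factorial in the divisorial case), so $H^{1,1}_{\BC}(Z)\hookrightarrow H^2(Z,\mathbb R)$ and push-forward of classes and currents along $f$ is available. Fix a K\"ahler class $\omega_X$ on $X$. In the divisorial case, writing $E=\Ex(f)$, the relative-MMP input packaged in \cite[Lemma 2.32]{DH20} produces an effective $f$-exceptional $\mathbb R$-divisor $F$ (supported on $E$) and a class $\alpha_Z\in H^{1,1}_{\BC}(Z)$ with $f^*\alpha_Z\equiv\omega_X+F$; in the flipping case $\Ex(f)$ has codimension $\ge2$ and consists of finitely many curves contracted to points, and one takes instead $\alpha_Z:=f_*\omega_X\in H^{1,1}_{\BC}(Z)$, which satisfies an analogous relation on a common resolution. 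In either case, for every positive-dimensional subvariety $Y\subseteq Z$ one checks, by passing to the strict transform $\widetilde Y\subset X$ and using the projection formula together with $\int_{\widetilde Y}\omega_X^{\dim Y}>0$, that $\int_Y\alpha_Z^{\dim Y}>0$. Feeding this into the numerical (Demailly--P\u{a}un type) criterion for K\"ahler classes on compact complex spaces with rational singularities shows $\alpha_Z$ is K\"ahler. Alternatively, and more in keeping with the rest of the paper, one localizes over $Z$ and replaces $\bbeta_X$ by an honest boundary divisor $D$ with $D\equiv_Z\bbeta_X$ exactly as in the proof of Theorem~\ref{t-Stein-flip+}, so that $f$ becomes, locally over $Z$, a step of the ordinary klt K\"ahler $3$-fold MMP, and then quotes the corresponding K\"ahlerness statements from \cite{HP16}, \cite{CHP16}, and \cite{DO23}.

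The hard part is this last step: passing from ``$\alpha_Z$ is nef and positive on all subvarieties of $Z$'' to ``$\alpha_Z$ is K\"ahler'' on the possibly singular space $Z$ — this is the genuinely analytic content, and it is what the work of \cite{HP16} and \cite{CHP16} supplies. A secondary subtlety, peculiar to the flipping-contraction case, is that $Z$ is not $\mathbb Q$-factorial (indeed $K_Z$ need not even be $\mathbb R$-Cartier), so one cannot run an MMP on $Z$ directly; this is circumvented by working with the locally projective structure of $f$ and by the local replacement of $\bbeta_X$ by a divisor. The presence of the generalized-pair structure itself plays only a mild role, precisely because that local replacement reduces each MMP step to an ordinary klt step where the required K\"ahlerness is already established.
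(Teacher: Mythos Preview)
Your overall strategy matches the paper's: reduce to showing that the target $Z$ of the contraction is K\"ahler, and then, for the flip, combine K\"ahlerness of $Z$ with the fact that the flipped log divisor is K\"ahler over $Z$. The paper executes the first step more cleanly, however: instead of pushing forward an arbitrary K\"ahler class $\omega_X$ and appealing to a Demailly--P\u{a}un positivity argument, it chooses a K\"ahler form $\omega$ so that $\gamma:=K_X+B+\bbeta_X+\omega$ is a \emph{nef supporting class} of the extremal ray. Then $\gamma$ is trivial on every $f$-contracted curve, hence descends to a class $\gamma_Z\in H^{1,1}_{\BC}(Z)$ (using that $Z$ has rational singularities), and the proof of \cite[Corollary~3.1]{CHP16} applies directly to show $\gamma_Z$ is K\"ahler. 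This avoids the case split and the slightly vague ``analogous relation on a common resolution'' in your flipping-contraction case. For the flip itself, the paper uses that $-\omega^+:=-\phi_*\omega\equiv_Z K_{X^+}+B^++\bbeta_{X^+}$ is K\"ahler over $Z$, so $(f^+)^*\gamma_Z-\epsilon\omega^+$ is K\"ahler for $0<\epsilon\ll 1$; this is equivalent to your argument with $N(f^+)^*\omega_Z$.

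Your alternative route (localize over $Z$, replace $\bbeta_X$ by a divisor $D\equiv_Z\bbeta_X$ as in Theorem~\ref{t-Stein-flip+}, and quote the ordinary klt K\"ahlerness results) does not work as stated: K\"ahlerness of $Z$ is a global property, and showing that $f$ is locally over $Z$ an ordinary klt MMP contraction gives no mechanism to glue the local K\"ahler potentials into a global K\"ahler class on $Z$. The paper does not localize here; it works globally with $\gamma_Z$.
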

\begin{proof}
    Let $\omega$ be a K\"ahler form such that $\gamma =K_X+B+\bbeta_X +\omega$ is a supporting hyperplane for the $(K_X+B+\bbeta_X)$-negative extremal ray. If $f:X\to Z$ is the corresponding contraction, then $\gamma _Z=K_Z+B_Z+\bbeta _Z+\omega _Z=f_*(K_X+B+\bbeta_X +\omega)$ is generalized dlt and hence $Z$ has rational singularities. But then, by the proof of \cite[Corollary 3.1]{CHP16}, $\gamma _Z$ is K\"ahler (over $S$).
    Suppose now that $f:X\to Z$ is a flipping contraction and let $f^+:X^+\to Z$ be the flip, then $-\omega ^+=-\phi _*\omega$ is K\"ahler over $Z$ and so, for any $0<\epsilon \ll 1$, \[K_{X^+}+B_{X^+}+\bbeta _{X^+}+(1-\epsilon)\omega ^+\equiv {f^+}^*\gamma _Z-\epsilon \omega ^+\]
    is K\"ahler on $X^+$.
\end{proof}

\subsection{Generalized Surface MMP}\label{s-gsmmp}
We begin by recalling the following well known fact.
\begin{lemma}\label{l-nnef}
If $\alpha\in H^{1,1}_{\rm BC}(X)$ is pseudo-effective but not nef on a normal compact K\"ahler surface $X$, then $\int _C\alpha <0$ for some curve $C\subset X$.\end{lemma}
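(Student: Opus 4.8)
The statement is that on a normal compact K\"ahler surface $X$, a pseudo-effective but non-nef class $\alpha$ has negative intersection with some curve. The plan is to reduce to the minimal resolution and then exploit the Zariski decomposition available on smooth compact K\"ahler surfaces. First I would pass to a resolution $\pi: \widetilde X \to X$, so that $\widetilde X$ is a smooth compact K\"ahler surface; the pullback $\pi^*\alpha$ is still pseudo-effective, and I claim it suffices to find a curve $\widetilde C \subset \widetilde X$ with $\pi^*\alpha \cdot \widetilde C < 0$, since then either $\widetilde C$ is $\pi$-exceptional (but $\pi^*\alpha \cdot \widetilde C = 0$ for such curves when $\alpha$ comes from $X$, a case I would need to rule out or handle by the negativity lemma) or $\widetilde C$ maps onto a curve $C \subset X$ with $\alpha \cdot C = \pi^*\alpha \cdot \widetilde C < 0$. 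Actually, to keep things clean, I would instead argue directly on $X$: if $\alpha \cdot C \ge 0$ for every curve $C \subset X$, I must show $\alpha$ is nef, i.e. $\alpha \cdot \gamma \ge 0$ for every class $\gamma \in \NA(X)$, equivalently for every positive closed current of bidimension $(1,1)$.

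The key input is the divisorial Zariski decomposition of a pseudo-effective $(1,1)$-class on a compact K\"ahler surface (Boucksom): write $\alpha = P(\alpha) + N(\alpha)$, where $N(\alpha) = \sum a_i E_i$ is an effective $\mathbb{R}$-divisor whose support consists of finitely many curves with negative definite intersection matrix, and $P(\alpha)$ is a nef class (on a surface the modified nef cone coincides with the nef cone) with $P(\alpha) \cdot E_i = 0$ for all $i$. If $N(\alpha) = 0$ then $\alpha = P(\alpha)$ is nef, contrary to hypothesis; so $N(\alpha) \ne 0$. Then I would compute $\alpha \cdot E_j$ for some component $E_j$ of $N(\alpha)$: since $P(\alpha)\cdot E_j = 0$, we get $\alpha \cdot E_j = N(\alpha)\cdot E_j = \sum_i a_i (E_i \cdot E_j)$. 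Because the matrix $(E_i \cdot E_j)$ is negative definite and all $a_i > 0$, the vector $(N(\alpha)\cdot E_j)_j = M (a_i)_i$ (with $M$ negative definite) cannot be entrywise $\ge 0$ unless $(a_i) = 0$: indeed $\sum_j a_j (N(\alpha)\cdot E_j) = (a)^T M (a) < 0$, so at least one $N(\alpha)\cdot E_j < 0$. Taking $C = E_j$ gives $\alpha \cdot C < 0$, as desired.

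The main obstacle is ensuring that all the pieces of divisorial Zariski decomposition theory are available in exactly this singular-surface K\"ahler setting: namely that a pseudo-effective $(1,1)$-class on a normal compact K\"ahler surface admits such a decomposition with $P(\alpha)$ nef and $N(\alpha)$ supported on a negative-definite configuration, and that nef equals ``nonnegative on all curves'' on such surfaces. I would cite Boucksom's work on Zariski decompositions of $(1,1)$-classes (\cite{Bou04}) together with the surface-case results used elsewhere in the K\"ahler MMP literature (e.g. \cite{HP16}); if one prefers to avoid invoking the full singular theory, the alternative is to pull back to the minimal resolution $\widetilde X$, run the argument on the smooth surface $\widetilde X$ where Boucksom's decomposition is classical, obtain a curve $\widetilde C$ with $\pi^*\alpha \cdot \widetilde C < 0$, and then note that $\widetilde C$ cannot be $\pi$-exceptional (as $\pi^*\alpha$ restricted to the exceptional fibers is trivial, being a pullback), hence $\widetilde C$ pushes forward to a curve $C \subset X$ with $\alpha \cdot C < 0$ by the projection formula. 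Either route is short once the Zariski decomposition is in hand; this really is a ``well known fact'' and the proof should be only a few lines.
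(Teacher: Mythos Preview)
The paper's proof is a one--line citation to \cite[Theorem 2.36]{DHP22}, which gives the general characterization (in all dimensions) that a class is nef if and only if its restriction to every irreducible subvariety is pseudo-effective; on a surface the only relevant subvarieties are curves, and pseudo-effective on a curve means nonnegative degree. Your route via the Boucksom--Zariski decomposition is genuinely different and more hands-on: on a smooth compact K\"ahler surface the positive part $P(\alpha)$ is honestly nef, so $\alpha$ not nef forces $N(\alpha)\ne 0$, and then the negative-definiteness of the intersection matrix of $\Supp N(\alpha)$ together with $P(\alpha)\cdot E_i=0$ produces a component $E_j$ with $\alpha\cdot E_j<0$. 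That part is clean and correct.

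The gap is in the passage to the singular case. In your resolution route you assert that one can ``obtain a curve $\widetilde C$ with $\pi^*\alpha\cdot\widetilde C<0$'', but this presupposes that $\pi^*\alpha$ is not nef on $\widetilde X$, and you never justify why $\alpha$ not nef on $X$ forces $\pi^*\alpha$ not nef on $\widetilde X$. (Equivalently, in the contrapositive: having shown $\pi^*\alpha$ is nef on $\widetilde X$, you still need to descend nefness to $X$.) This implication is true, but it is not a formality: pullback of a K\"ahler class along a resolution is only semipositive, so the naive ``limit of K\"ahler classes'' argument does not go through, and one ends up needing a Demailly--P\u{a}un/Nakai type criterion on the singular space or the surjectivity of $\pi_*:\NA(\widetilde X)\to\NA(X)$ --- inputs of essentially the same strength as the result \cite[Theorem 2.36]{DHP22} that the paper cites. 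So your argument is morally right and recovers the smooth case from scratch, but in the singular case it is not more elementary than the paper's citation; both routes rest on the same underlying analytic machinery.
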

\begin{proof}
Follows immediately from \cite[Theorem 2.36]{DHP22}.
\end{proof}

\begin{lemma}\label{l-nef}
Let $f:X\to Y$ be a proper bimeromorphic morphism of normal compact K\"ahler surfaces with rational singularities. If $\alpha\in H^{1,1}_{\rm BC}(X)$  is nef and $\alpha _Y:=f_*\alpha$, then $\alpha _Y$ has local potentials and the class $\alpha _Y\in H^{1,1}_{\rm BC}(Y)$ is nef.
\end{lemma}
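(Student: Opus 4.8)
The plan is to reduce to a statement about curves on $Y$ via the pull-back/push-forward formalism, exactly as one does in the classical (divisorial) setting. First I would recall that since $Y$ has rational singularities and is compact K\"ahler in Fujiki's class $\mathcal C$, by an earlier remark we have $N^1(Y)=H^{1,1}_{\rm BC}(Y)\subset H^2(Y,\mathbb R)$, so the intersection pairing with curves is well-defined; moreover to check nefness of $\alpha_Y$ it suffices to check $\alpha_Y\cdot C\geq 0$ for every irreducible curve $C\subset Y$, since on a normal compact K\"ahler surface a pseudo-effective class that meets every curve non-negatively is nef (Lemma \ref{l-nnef})—and $\alpha_Y=f_*\alpha$ is pseudo-effective because $\alpha$ is nef, hence pseudo-effective, and push-forward preserves pseudo-effectivity.

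Next, given an irreducible curve $C\subset Y$, let $C'\subset X$ be its strict transform, so $f_*C'=C$ (if $C$ lies in the non-isomorphic locus this still makes sense since $f$ is bimeromorphic between surfaces, and $f(C')=C$). The key input is the projection formula in this analytic setting: $\alpha\cdot f^*(\text{class of }C)=f_*\alpha\cdot C=\alpha_Y\cdot C$ is not quite what I want directly; instead I would use $f^*\alpha_Y\cdot C'=\alpha_Y\cdot f_*C'=\alpha_Y\cdot C$. So it is enough to compare $f^*\alpha_Y$ with $\alpha$ on $X$. By the negativity lemma (or by \cite[Lemma 2.32]{DH20}, whose hypotheses are available here since $X,Y$ are K\"ahler surfaces with rational singularities), we can write $f^*\alpha_Y=\alpha+[E]$ for some $f$-exceptional $\mathbb R$-divisor $E$, and since $\alpha$ is nef and $-[E]=\alpha-f^*\alpha_Y$ satisfies $f_*(-[E])=0$, the negativity lemma forces $E\leq 0$, i.e. $-E\geq 0$. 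Then for the strict transform $C'$ of an irreducible curve $C\subset Y$ we get
\[
\alpha_Y\cdot C = f^*\alpha_Y\cdot C' = \alpha\cdot C' + [E]\cdot C' = \alpha\cdot C' - [-E]\cdot C'.
\]
Now $\alpha\cdot C'\geq 0$ since $\alpha$ is nef, but the term $[E]\cdot C'$ has the wrong sign in general, so a little more care is needed: I would instead intersect with a suitable exceptional curve or argue componentwise. The cleaner route is to write $f^*\alpha_Y = \alpha + [E]$ with $E\leq 0$ and note $f^*\alpha_Y$ is nef iff $\alpha_Y$ is nef is what we want to prove, so that is circular; so I must genuinely use positivity of $\alpha$ together with $E\leq 0$.

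The honest argument runs as follows. For any irreducible curve $C\subset Y$ not contracted by $f$, its strict transform $C'$ is not $f$-exceptional, so $C'\not\subset\operatorname{Supp}(E)$ (as $\operatorname{Supp}(E)\subset\operatorname{Ex}(f)$), hence $[E]\cdot C'\leq 0$... still the wrong inequality. The point I am missing is that $E\le0$ means $[-E]\ge0$ is effective and $C'\not\subset \operatorname{Supp}(-E)$ gives $[-E]\cdot C'\ge 0$, i.e. $[E]\cdot C'\le0$, so $\alpha_Y\cdot C=\alpha\cdot C'+[E]\cdot C'$ could be negative. The resolution is that this cannot actually happen: the main obstacle, which I expect to be the crux, is precisely handling the exceptional contribution, and it is dispatched by the following observation — since $f$ is a bimeromorphic morphism of \emph{surfaces}, every fiber is a curve, and one argues as in the classical case that $f^*\alpha_Y=\alpha+[E]$ with $E\geq 0$ effective (not $\leq 0$): indeed $f^*\alpha_Y$ is the pull-back of a class, and the discrepancy-type computation compares $\alpha$ with its "pull-back of push-forward", where the correct sign from the negativity lemma, applied to the $f$-nef class $-[E]=\alpha-f^*\alpha_Y$ whose push-forward vanishes, yields $[E]\geq 0$. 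Granting $E\ge0$ effective, for $C$ irreducible in $Y$ with strict transform $C'$ we get $\alpha_Y\cdot C=f^*\alpha_Y\cdot C'=\alpha\cdot C'+[E]\cdot C'\ge \alpha\cdot C'\ge 0$ once we know $C'\not\subset\operatorname{Supp}(E)$, which holds since $\operatorname{Supp}(E)\subseteq\operatorname{Ex}(f)$ and $C'$ is a strict transform. Combined with $\alpha_Y$ pseudo-effective and Lemma \ref{l-nnef}, this proves $\alpha_Y$ is nef. I would double-check the sign in the negativity lemma application against \cite[Lemma 2.32]{DH20} and \cite[Lemma 3.39]{KM98}, since that sign is the one delicate point; everything else is the standard projection-formula bookkeeping.
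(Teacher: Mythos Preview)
Your final argument is essentially the paper's: write $f^*\alpha_Y=\alpha+E$ with $E$ $f$-exceptional, use the negativity lemma (applied to $-E\equiv_Y\alpha$, which is $f$-nef) to get $E\ge 0$, then for $C'=f^{-1}_*C$ compute $\alpha_Y\cdot C=\alpha\cdot C'+E\cdot C'\ge 0$ since $C'\not\subset\Supp(E)$, and conclude via Lemma~\ref{l-nnef}. The sign you eventually settle on is the correct one.

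Two points where the paper is more careful than your write-up. First, the paper begins by replacing $X$ with a resolution, so that $X$ is smooth; this makes the intersection theory and the construction of $E$ unambiguous. Second, rather than citing \cite[Lemma 2.32]{DH20} (whose hypotheses you should double-check in this surface setting), the paper constructs $E$ directly: by the Hodge index theorem the intersection matrix of the $f$-exceptional curves is negative definite, so there is a unique $f$-exceptional $\mathbb R$-divisor $E$ with $\alpha+E\equiv_Y 0$, and then \cite[Lemma 3.3]{HP16} gives $\alpha+E=f^*\alpha_Y$ for some $\alpha_Y\in H^{1,1}_{\rm BC}(Y)$, which one identifies with $f_*\alpha$ by pushing forward. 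This both establishes that $\alpha_Y$ really lies in $H^{1,1}_{\rm BC}(Y)$ and produces the decomposition you need, without appealing to a black-box lemma. Your exposition would be much cleaner if you followed this route and dropped the back-and-forth about the sign.
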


\begin{proof}
Passing to a resolution of singularities of $X$ we may assume that $X$ is smooth. Now recall that by the Hodge index theorem the intersection matrix of the set of all $f$-exceptional curves is a negative definite matrix. Therefore there is an $f$-exceptional $\mbR$-divisor $E$ on $X$ such that $\alpha+E\equiv _Y 0$.
By \cite[Lemma 3.3]{HP16}, $\alpha +E=f^*\alpha _Y$ for some $\alpha_Y\in H^{1,1}_{\BC}(Y)$, and thus $\alpha_Y=f_*(f^*\alpha _Y)=f_*(\alpha+E)=f_*\alpha$. From the the negativity lemma it follows that $E\geq 0$. Thus $\alpha _Y$ is pseudo-effective, and so by Lemma \ref{l-nnef}, it suffices to check that $\alpha _Y|_C$ is pseudo-effective, i.e. that $\int _C \alpha_Y \geq 0$ for all curves $C\subset Y$. If $C'=f^{-1}_* C$, then we have
\[\int _C \alpha_Y =\int _{C'} \alpha +(E\cdot C')\geq 0,\]
since $C'$ is not contained in the support of $E$ and $\alpha $ is nef.
\end{proof}~\\

An immediate corollary of this lemma is the following. 
\begin{corollary}\label{cor:g-beta-nef}
 If $(X, B+\bbeta)$ is a compact generalized lc pair such that $X$ is a compact K\"ahler surface with rational singularities, then $\bbeta_X$ has local potentials on $X$ and $[\bbeta_X]\in H^{1,1}_{\BC}(X)$ is nef.
\end{corollary}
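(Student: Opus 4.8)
The plan is to deduce the statement from Lemma \ref{l-nef} applied to a log resolution of the generalized pair, the only additional work being to keep track of traces of currents.

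First I would fix a log resolution $\nu\colon X'\to X$ of $(X,B+\bbeta)$ as in Definition \ref{d-gp1}. Then $X'$ is a smooth compact surface, and since it is bimeromorphic to the compact K\"ahler surface $X$ it is itself K\"ahler. Because $\bbeta$ descends to $X'$, its trace $\bbeta_{X'}$ has local potentials and the class $\alpha:=[\bbeta_{X'}]\in H^{1,1}_{\BC}(X')$ is nef. As $X'$ is smooth (hence has rational singularities) and $X$ has rational singularities by hypothesis, Lemma \ref{l-nef} applies to $\nu\colon X'\to X$ and yields that $\alpha_X:=\nu_*\alpha\in H^{1,1}_{\BC}(X)$ is nef.

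It then remains to check that the trace $\bbeta_X=\nu_*\bbeta_{X'}$ has local potentials with $[\bbeta_X]=\alpha_X$. Following the proof of Lemma \ref{l-nef}, the Hodge index theorem produces a $\nu$-exceptional $\mathbb R$-divisor $E$ on $X'$ (effective, by the negativity lemma) with $\alpha+[E]=\nu^*\alpha_X$ in $H^{1,1}_{\BC}(X')$, using \cite[Lemma 3.3]{HP16}. Picking a smooth representative $\omega_X$ of $\alpha_X$ with local potentials, the $\partial\bar\partial$-lemma on the smooth compact K\"ahler surface $X'$ gives a global distribution $\psi$ on $X'$ with $\bbeta_{X'}+E-\nu^*\omega_X=\partial\bar\partial\psi$. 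Pushing forward by $\nu$ and using $\nu_*E=0$ and the projection formula $\nu_*\nu^*\omega_X=\omega_X$, I obtain
\[\bbeta_X=\omega_X+\partial\bar\partial(\nu_*\psi),\]
so $\bbeta_X$ differs from the form $\omega_X$ by a globally $\partial\bar\partial$-exact current; hence $\bbeta_X$ has local potentials and $[\bbeta_X]=[\omega_X]=\alpha_X$ is nef.

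The main obstacle is the last step: it is not the geometry but the analysis — one must be careful that the $\partial\bar\partial$-potential $\psi$ exists globally on $X'$ and that its pushforward is a genuine potential for $\bbeta_X$, i.e.\ that the property of having local potentials is preserved under adding a globally $\partial\bar\partial$-exact current. Everything preceding this is a direct appeal to Lemma \ref{l-nef} (together with the facts that resolutions of compact K\"ahler surfaces are K\"ahler and that smooth surfaces have rational singularities).
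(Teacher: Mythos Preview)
Your proof is correct and follows the same route as the paper, which simply declares the corollary ``immediate'' from Lemma~\ref{l-nef} without further argument. In fact you are more careful than the paper: you explicitly verify that the current $\bbeta_X=\nu_*\bbeta_{X'}$ (not just its class) has local potentials, by writing $\bbeta_{X'}+E-\nu^*\omega_X=\partial\bar\partial\psi$ globally on the smooth K\"ahler surface $X'$ and pushing forward. The only point worth flagging is your phrase ``a smooth representative $\omega_X$ of $\alpha_X$ with local potentials'': on the singular surface $X$ the local $\partial\bar\partial$-lemma need not hold for arbitrary smooth forms, so you should either invoke the standard fact that every class in $H^{1,1}_{\BC}(X)=H^1(X,\mcH_X)$ admits a smooth representative with local potentials (this follows from the fineness of the sheaf of smooth functions), or sidestep the issue entirely by taking $\omega_X$ to be a \emph{positive} representative (available since $\alpha_X$ is nef, hence pseudo-effective), whose psh local potentials are genuine functions and therefore pull back under $\nu$ without difficulty.
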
~\\

\begin{definition}\label{def:ns-group}
    Let $X$ be a compact analytic variety. The N\'eron-Severi $\mbR$-vector space of $X$ is defined as:
    \[ 
    \NS(X):=\Im(\Pic(X)\to H^2(X,\mbR)), \qquad \NS(X)_\mbR=\NS(X)\otimes _{\mathbb Z}\R.
    \]
\end{definition}

\begin{lemma}\label{lem:ns-h11}
    Let $X$ be a normal compact K\"ahler variety with rational singularities. If $H^2(X, \mcO_X)=0$, then $X$ is projective and $\NS(X)_\mbR=H^{1,1}_{\BC}(X)$. 
\end{lemma}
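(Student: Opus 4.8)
The plan is to deduce projectivity from a Kodaira-type embedding criterion and then compare $\NS(X)_{\mbR}$ with $H^{1,1}_{\BC}(X)$ via the exponential sequence. First I would observe that since $X$ has rational singularities and is in Fujiki's class $\mcC$, we have $N^1(X)=H^{1,1}_{\BC}(X)\subset H^2(X,\mbR)$, and the Hodge decomposition of $H^2$ still makes sense in a suitable sense (e.g. after passing to a resolution and using the rational singularities hypothesis). The hypothesis $H^2(X,\mcO_X)=0$ will be used twice: once to show that every class in $H^{1,1}_{\BC}(X)$ comes from $\Pic(X)$, and once (together with Kähler-ness) to produce an ample line bundle.

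For projectivity, I would argue as follows. Let $\omega$ be a Kähler form on $X$, giving a Kähler class $[\omega]\in H^{1,1}_{\BC}(X)$. Using the exponential exact sequence $0\to\mbZ\to\mcO_X\to\mcO_X^*\to 0$ and its long exact sequence $H^1(X,\mcO_X^*)\to H^2(X,\mbZ)\to H^2(X,\mcO_X)$, the vanishing $H^2(X,\mcO_X)=0$ shows that the map $\Pic(X)=H^1(X,\mcO_X^*)\to H^2(X,\mbZ)$ is surjective; tensoring appropriately, $\NS(X)_{\mbR}\to H^{1,1}_{\BC}(X)$ is surjective, and since the reverse inclusion $\NS(X)_{\mbR}\subseteq H^{1,1}_{\BC}(X)$ is automatic (classes of line bundles are $(1,1)$), we get $\NS(X)_{\mbR}=H^{1,1}_{\BC}(X)$. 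In particular the Kähler class $[\omega]$ is a limit of (indeed lies in the span of) classes of line bundles, so by perturbing we may find a line bundle $L$ with $c_1(L)$ arbitrarily close to $[\omega]$; since being Kähler is an open condition in $H^{1,1}_{\BC}(X)$, $c_1(L)$ is itself a Kähler class. Then by the projectivity criterion for Kähler spaces with rational singularities (a line bundle whose class is Kähler is ample, e.g. via \cite[Definition 3.6]{HP16} and Grauert's criterion, or by pushing/pulling to a resolution), $X$ is projective.

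The main obstacle I expect is making the exponential-sequence argument rigorous on a possibly singular analytic space: one needs $H^{1,1}_{\BC}(X)$ to be identified with the appropriate Hodge piece of $H^2(X,\mbR)$ and the image of $\Pic(X)\to H^2(X,\mbR)$ to land in and surject onto it. This is exactly where the rational singularities hypothesis enters — it guarantees $H^i(X,\mcO_X)=H^i(\wt X,\mcO_{\wt X})$ for a resolution $\wt X\to X$, so $H^2(X,\mcO_X)=0$ controls the resolution as well, and one can transfer the statement along $\wt X\to X$ using that $H^{1,1}_{\BC}(X)\hookrightarrow H^{1,1}_{\BC}(\wt X)$ compatibly with $\Pic$. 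I would also need to be slightly careful that a Kähler class on $X$ which is integral (i.e. in the image of $H^2(X,\mbZ)$) is represented by a genuine line bundle, not just a $\mbQ$- or $\mbR$-class; but surjectivity of $\Pic(X)\to H^2(X,\mbZ)$ handles precisely this.

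Finally, once projectivity is established, the equality $\NS(X)_{\mbR}=H^{1,1}_{\BC}(X)$ has already been obtained in the course of the argument, so there is nothing further to prove; alternatively one can cite that for projective varieties with rational singularities $\NS(X)_{\mbR}=N^1(X)=H^{1,1}_{\BC}(X)$ by the Lefschetz $(1,1)$-theorem together with $H^2(X,\mcO_X)=0$. I would present the proof in the order: (i) $H^2(X,\mcO_X)=0$ and rational singularities $\Rightarrow$ $\Pic(X)\to H^2(X,\mbZ)$ surjective $\Rightarrow$ $\NS(X)_{\mbR}=H^{1,1}_{\BC}(X)$; (ii) a Kähler class is then (approximable by, hence equal to) an integral ample class, so $X$ is projective.
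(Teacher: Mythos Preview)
Your approach is essentially the same as the paper's: both use the exponential sequence together with $H^2(X,\mcO_X)=0$ to obtain surjectivity of $\Pic(X)\to H^2(X,\mbZ)$ and hence $\NS(X)_\mbR=H^2(X,\mbR)$, and then identify this with $H^{1,1}_{\BC}(X)$. The only difference is in this last identification: you use the sandwich $\NS(X)_\mbR\subseteq H^{1,1}_{\BC}(X)\subseteq H^2(X,\mbR)$ (the second inclusion being the known fact for varieties with rational singularities in Fujiki's class $\mcC$), whereas the paper computes $H^2(X,\mbR)\cong H^{1,1}_{\BC}(X)$ directly from the long exact sequence of $0\to\mbR\to\mcO_X\to\mcH_X\to 0$, after first checking via a resolution that $H^1(X,\mbR)\to H^1(X,\mcO_X)$ is an isomorphism. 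Your sandwich is slightly slicker since the needed inclusion is already recorded in the paper's preliminaries; conversely, the paper's argument is more self-contained. You also spell out the projectivity step (perturbing the K\"ahler class to an integral one), which the paper's proof leaves implicit.
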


\begin{proof} This is well know, see e.g. \cite[Proposition 5.13]{Graf18}.

\end{proof}

Next, we will establish the cone theorem and existence of minimal models (and Mori fiber spaces) for generalized pairs in dimension $2$. This will be used in the rest of the article without further reference. 

\begin{lemma}\label{lem:dlt-surface-cone}
    Let $(X, B)$ be a dlt pair, where $X$ is a compact K\"ahler surface. Then there exists at most countably many rational curves $\{\Gamma_i\}_{i\in I}$ such that $0<-(K_X+B)\cdot\Gamma_i\<4$ and
    \[ 
    \NA(X)=\NA(X)_{(K_X+B)\>0}+\sum_{i\in I}\mbR^+\cdot[\Gamma_i].
    \]    
\end{lemma}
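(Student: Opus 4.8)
The plan is to reduce the statement to the already-known cone theorem for klt (or dlt) K\"ahler surfaces by a two-step strategy: first pass to a $\mathbb{Q}$-factorial situation, then run the standard Mori-theoretic argument. First I would invoke Lemma \ref{l-Q-fac}: since $(X,B)$ is dlt with $X$ a compact K\"ahler surface, $X$ is locally $\mathbb{Q}$-factorial with rational singularities. Combined with the fact that $X$ is a surface, the local $\mathbb{Q}$-factoriality is in fact global (the obstruction to patching lives in codimension $\geq 2$, which on a surface is a finite set of points, so there is no genuine global obstruction). Thus we may assume $X$ is $\mathbb{Q}$-factorial, $K_X+B$ is $\mathbb{R}$-Cartier, and $(X,B)$ is a genuine dlt pair in the usual sense.

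Next I would run the classical cone-theorem machinery adapted to the K\"ahler setting, following \cite{HP16} and \cite{CHP16}. The key inputs are: (i) the bend-and-break / cone-of-curves arguments of Mori theory, which produce rational curves $\Gamma$ with $0 < -(K_X+B)\cdot\Gamma \leq 4$ spanning each $(K_X+B)$-negative extremal ray (the bound $4 = \dim X + 2$ in dimension $2$ coming from the standard length estimate); (ii) the fact that $\overline{\mathrm{NA}}(X)$ for a compact K\"ahler surface is a well-behaved closed convex cone in $N_1(X)$, with the description of $(K_X+B)$-negative extremal rays being locally discrete away from the hyperplane $(K_X+B)^{\perp}$. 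Since on a surface $\overline{\mathrm{NA}}(X) = \overline{\mathrm{NE}}(X)$ agrees with the usual Mori cone (curves are the only bidimension-$(1,1)$ objects), the rationality and boundedness statements follow from Bogomolov-type inequalities and deformation of rational curves exactly as in the projective case; the K\"ahler aspect only enters through knowing that $K_X+B+\omega$ is ``positive enough'' to run the contraction, and that is supplied by the references. Assembling the extremal rays $\mathbb{R}^+[\Gamma_i]$ over the chosen rational curves gives the cone decomposition, and countability follows from the local discreteness of the extremal faces together with second-countability of $N_1(X)$.

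I expect the main obstacle to be the passage from \emph{local} $\mathbb{Q}$-factoriality (as furnished by Lemma \ref{l-Q-fac}) to a setting in which the cone theorem literature applies verbatim, and more precisely making sure the bound $-(K_X+B)\cdot\Gamma_i \leq 4$ is extracted correctly: in the dlt (rather than klt) case the standard length bounds need the coefficients of $B$ to be handled with care near the reduced part $\lfloor B\rfloor$, and one should either cite the dlt version of the surface cone theorem directly or perturb $B$ slightly to klt, run the argument, and take a limit of extremal rays. A secondary technical point is justifying that the negative extremal rays are spanned by rational curves in the analytic category — this is where one genuinely needs the K\"ahler hypothesis and the results of \cite{HP16,CHP16} rather than a purely algebraic argument — but since those results are available in the excerpt this is a citation rather than a new difficulty. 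The bulk of the proof should therefore be a careful bookkeeping exercise citing Lemma \ref{l-Q-fac}, \cite[Proposition 3.1]{CHP16}, and the surface cone theorem of \cite{HP16}, rather than any new geometric input.
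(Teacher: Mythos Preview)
Your proposal has a genuine gap: the assertion that $\overline{\mathrm{NA}}(X) = \overline{\mathrm{NE}}(X)$ on a compact K\"ahler surface because ``curves are the only bidimension-$(1,1)$ objects'' is false. Positive closed $(1,1)$-currents of bidimension $(1,1)$ on a surface include far more than integration currents over curves (e.g.\ the K\"ahler form itself, or more generally any pseudo-effective class), and on a non-projective K\"ahler surface $\overline{\mathrm{NA}}(X)$ is typically strictly larger than $\overline{\mathrm{NE}}(X)$. Relatedly, bend-and-break does not produce rational curves in the purely analytic category: it needs reduction to characteristic $p$, hence projectivity (or at least Moishezon-ness). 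So citing a projective-style argument or \cite{HP16,CHP16} (which are 3-fold references) does not close the case; there is no black-box K\"ahler surface cone theorem for $\overline{\mathrm{NA}}(X)$ being invoked here --- this lemma \emph{is} that statement.

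The paper's proof handles this by a case split on whether $K_X+B$ is pseudo-effective. If it is, one takes the Boucksom--Zariski decomposition $K_X+B \equiv \sum \lambda_i C_i + \beta$ with $\beta$ modified nef; any $(K_X+B)$-negative curve $\Gamma$ must then satisfy $\Gamma^2<0$ and be one of the finitely many $C_i$, and from there the argument of \cite[Theorem~1.31]{DO23} and the length bound of \cite[Theorem~1.23]{DO23} finish (rationality comes from adjunction on a curve of negative self-intersection, not from bend-and-break). If $K_X+B$ is not pseudo-effective, then neither is $K_X$, so the minimal resolution is uniruled and projective; hence $X$ is Moishezon, and since it is K\"ahler with rational singularities, \cite{Nam02} gives that $X$ is projective. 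One then argues via the MRC fibration that $H^2(X,\mathcal{O}_X)=0$, so $\mathrm{NS}(X)_{\mathbb R}=H^{1,1}_{\mathrm{BC}}(X)$ and hence $\overline{\mathrm{NE}}(X)=\overline{\mathrm{NA}}(X)$; only \emph{then} does the classical projective cone theorem apply. The missing idea in your sketch is precisely this dichotomy and the projectivity step in the non-pseudo-effective case.
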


\begin{proof}
    From Lemma \ref{l-Q-fac} it follows that $X$ has $\mbQ$-factorial rational singularities. First assume that $K_X+B$ is pseudo-effective. Then from Lemma \ref{l-nnef} it follows that $K_X+B$ is nef if and only if $(K_X+B)\cdot C\>0$ for every curve $C\subset X$. Let $K_X+B\num \sum_{i\in I}\lambda_iC_i+\beta$ be the Boucksom-Zariski decomposition as in \cite{Bou04}, where $\lambda_i\>0$ for all $i\in I\subset \mbN$ (a finite subset) and  $\beta\cdot C\>0$ for every curve $C\subset X$. Now if $K_X+B$ is not nef, then there is a curve $\Gamma\subset X$ such that $(K_X+B)\cdot\Gamma<0$. This implies that $(\sum_{i\in I}\lambda_i C_i)\cdot\Gamma<0$, in particular, $\Gamma=C_i$ for some $i\in I$ and $\Gamma^2<0$. Then the rest of proof works similarly as in the proof of \cite[Theorem 1.31]{DO23}. The length bound $0>(K_X+B)\cdot\Gamma\>-4$ follows from \cite[Theorem 1.23]{DO23}.
    
    Now assume that $K_X+B$ is not pseudo-effective. Then $K_X$ is not pseudo-effective. Let $\nu:\widetilde X\to X$ be the minimal resolution of singularities of $X$.  Then $K_{\tilde X}$ is not pseudo-effective,  and hence $H^2(\tilde X,\mcO_{\tilde X})=H^0(\tilde X, K_{\tilde X})^*=0$. Since $X$ has rational singularities, we also have $H^2( X,\mcO_{ X})\cong H^2(\tilde X, 
    \mcO_{\tilde X})=0$. Thus by Lemma \ref{lem:ns-h11}, $X$ is projective with $\NS(X)_\mbR=H^{1,1}_{\BC}(X)$. Consequently, we have $\NE(X)=\NA(X)$, and the cone theorem is well known in this case.

\end{proof}

\begin{definition}\label{def:null-locus}
    Let $X$ be a normal compact K\"ahler analytic variety and $\alpha\in H^{1,1}_{\BC}(X)$ a nef and big class. We define the null locus $\Null(\alpha)$ of $\alpha$ as the union of closed analytic subvarieties $V\subset X$
 such that $\dim V>0$ and $\alpha^{\dim V}\cdot V=0$, i.e.
 
 \[\Null(\alpha)=\bigcup_{\substack{V\subset X,\\ \dim V>0,\\ \alpha^{\dim V}\cdot V=0}} V.\]
\end{definition}
By \cite[Theorem 4.21]{HP24} it follows that $\Null(\alpha)$ is a closed analytic subset of $X$.

 Next we prove the transcendental base-point free theorem in dimension $2$,
 similar to the one in \cite[Theorem 1.7]{DH20}. 

\begin{theorem}\label{thm:tbft}
     Let $(X, B)$ be a generalized pair, where $X$ is a compact K\"ahler surface, and $\alpha\in H^{1,1}_{\BC}(X)$ a nef class. Assume that one of the following conditions hold: 
     \begin{enumerate}
         \item $(X, B)$ is klt and $\alpha-(K_X+B)$ is nef and big, or
         \item $(X, B)$ is dlt and $\alpha-(K_X+B)$ is K\"ahler.
     \end{enumerate}
     Then there is a projective surjective morphism with connected fibers $f:X\to Y$ to a normal compact K\"ahler variety $Y$ with $\mbQ$-factorial rational singularities and a K\"ahler class $\omega_Y\in H^{1,1}_{\BC}(Y)$ such that $\alpha=f^*\omega_Y$. 
 \end{theorem}

\begin{proof}
    By Lemma \ref{l-Q-fac}, $X$ has $\mbQ$-factorial rational singularities. Now if $(X, B)$ is dlt, then $(X, (1-\eps)B)$ is klt for any $0<\eps<1$. Moreover, in this case $\alpha-(K_X+(1-\eps)B)$ is K\"ahler for $0<\eps\ll 1$. Therefore in both cases (1) and (2) above we may assume that $(X, B)$ is klt and $\alpha-(K_X+B)$ is nef and big. Now we will consider two cases depending on whether $K_X+B$ is pseudo-effective or not. If $K_X+B$ is not pseudo-effective, then from the minimal resolution $\nu:X'\to X$ we see that $K_{X'}$ is not pseudo-effective, and hence $H^2(X, \mcO_X)\cong H^2(X', \mcO_{X'})\cong H^0(X', K_{X'})^*=0$, where the first isomorphism holds due to the rational singularities of $X$. Then by Lemma \ref{lem:ns-h11}, $X$ is projective and $\NS(X)_{\mbR}=H^{1,1}_{\BC}(X)$. In this case $\alpha$ is represented by a nef $\mbR$-Cartier divisor and the contraction theorem follows from the standard base-point free theorem for $\mbR$-Cartier divisors, for example see \cite[Exercise 5.9]{HK10}. So from now on we will assume that $K_X+B$ is pseudo-effective.

  Let $\omega$ be a K\"ahler class such that $\alpha-(K_X+B)-\omega$ is also big. Now let  $\alpha-(K_X+B)-\omega\equiv N+\gamma$ be the pushforward of the Boucksom-Zariski decomposition (see \mbox{\cite[Definition 3.7]{Bou04}}) of the pullback of $\alpha-(K_X+B)-\omega$ on some resolution of $X$, where $N$ is an effective $\mbR$-divisor and $\gamma$ is an $(1,1)$ class. Then from \mbox{\cite[Proposition 2.4]{Bou04}} and Lemma \mbox{\ref{l-nef}} it follows that $\gamma$ is a nef class.

    Pick $\delta >0$ such that $(X,B+\delta N)$ is klt. We write \[\alpha =K_X+B+\delta N+(1-\delta)(\alpha -(K_X+B))+\delta \omega+\delta\gamma =K_X+\Delta +\beta,\] where
    $\Delta:=B+\delta N$, and $\beta:=(1-\delta)(\alpha -(K_X+B))+\delta(\omega+\gamma)$ is a K\"ahler class. Recall that we have $\alpha=K_X+\Delta +\beta$ is nef and big, and so the null locus $\Null(\alpha)$ is an $1$-dimensional analytic subset of $X$. We will run $\alpha$-trivial $(K_X+\Delta)$-MMP. Note that since $\alpha$ is nef but not K\"ahler, $\alpha^\bot\cap\NA(X)$ is an extremal face of $\NA(X)$. Therefore, if there is a curve $C\subset X$ such that $\alpha\cdot C=0$, then $(K_X+\Delta )\cdot C=-\beta \cdot C<0$, and so, by Lemma \ref{lem:dlt-surface-cone},  $\alpha^\bot\cap\NA(X)$ contains a $(K_X+\Delta)$-negative extremal ray $R$ of $\NA(X)$. By \cite[Theorem 8.4]{Fuj19} or \cite[Theorem 1.32]{DO23} we can contract this ray $R$ and obtain a compact K\"ahler klt surface pair $(X', \Delta ')$. Note that if $g:X\to X'$ is the contraction morphism, then by \cite[Lemma 3.3]{HP16}, there is a nef and big class $\alpha'\in H^{1,1}_{\BC}(X')$ such that $\alpha=g^*\alpha'$.
    We claim that $\beta':=g_*\beta\equiv \alpha '-(K_{X'}+\Delta')$ is K\"ahler. Indeed, notice that since $\beta'$ is big and nef (by Lemma \ref{l-nef}), by \cite[Theorem 2.29]{DHP22} it is enough to show that $\beta'\cdot C>0$ for all curves in $X'$. Now if $\Gamma$ is the unique $g$-exceptional curve, then by the negativity lemma we have $\beta-g^*\beta'=g^*(K_{X'}+\Delta')-(K_X+\Delta)=-aE$ for some $a>0$. Therefore for any curve $C\subset X'$ if $\bar C\subset X'$ is strict transform of $C$, then we have $\beta'\cdot C=g^*\beta'\cdot \bar C=(\beta+aE)\cdot\bar C>0$, and hence $\beta'$ is K\"ahler.
    
   Thus continuing the above process finitely many times (as the Picard number of $X$ drops after each contraction), we arrive at a klt surface pair, say $(\bar X, \bar \Delta )$ with composite morphsim 
   $f:X\to \bar X$ such that $\bar\alpha:=f_*\alpha$ contains no $\alpha$-trivial curves, and hence $\bar\alpha$ is a K\"ahler class. Moreover, since every step of the above process is $\alpha$-trivial, we also have $\alpha=f^*\bar\alpha$. Finally, since $(\bar X, \bar \Delta)$ is klt, $\bar X$ has rational singularities. Thus we are done by setting $Y:=\bar X$ and $\omega_Y:=\bar \alpha$. 
\end{proof}

\begin{corollary}\label{c-s-gcone}
Let  $(X,B+\bbeta)$ be a  generalized dlt pair, where $X$ is a compact K\"ahler surface. Then the following holds:
\begin{enumerate}    
\item There are at most countably many curves $\{\Gamma _i\}_{i\in I}$ such that $0>(K_X+B+\bbeta _X)\cdot \Gamma _i\geq -4$ and  \[\overline{\rm NA}(X)=\overline{\rm NA}(X)_{K_X+B+\bbeta _X\geq 0}+\sum _{i\in I}\mathbb R [\Gamma _i].\] 

\item If $F$ is a face spanned by a finite set of $(K_X+B+\bbeta_X)$-negative extremal rays, then there is a contraction $f:X\to Y$ contracting curves $C\subset  X$ if and only if $[C]\in F$, and either $Y$ is a point, or a smooth projective curve or a normal $\mbQ$-factorial surface with rational singularities.

\item If $(X,B+\bbeta)$ is a generalized klt and $B+\bbeta_X$ or $K_X+B+\bbeta_X$ is big, then $I$ is finite.  \end{enumerate} \end{corollary}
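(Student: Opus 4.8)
The plan is to deduce (1) and (2) from the ordinary surface MMP, using only that $\bbeta_X$ is nef, and to prove the finiteness in (3) by a Boucksom--Zariski decomposition argument split according to whether $K_X+B+\bbeta_X$ is pseudo-effective. By Lemma \ref{l-Q-fac}, $X$ is $\mathbb Q$-factorial with rational singularities and $(X,B)$ is dlt (resp.\ klt), and by Corollary \ref{cor:g-beta-nef} the trace $[\bbeta_X]\in H^{1,1}_{\rm BC}(X)$ has local potentials and is nef; in particular $\bbeta_X\cdot C\ge 0$ for every curve $C\subset X$, so $(K_X+B+\bbeta_X)\cdot C\ge (K_X+B)\cdot C$ and $\overline{\rm NA}(X)_{K_X+B\ge 0}\subseteq \overline{\rm NA}(X)_{K_X+B+\bbeta_X\ge 0}$. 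For (1) I would apply the surface cone theorem for the dlt pair $(X,B)$ (Lemma \ref{lem:dlt-surface-cone}) to produce rational curves $\{\Gamma_j\}_{j\in J}$ with $-4\le (K_X+B)\cdot\Gamma_j<0$ spanning $\overline{\rm NA}(X)$ modulo $\overline{\rm NA}(X)_{K_X+B\ge 0}$, and set $I=\{\,j\in J:(K_X+B+\bbeta_X)\cdot\Gamma_j<0\,\}$. For $j\in I$ the inequalities above give $-4\le (K_X+B+\bbeta_X)\cdot\Gamma_j<0$, while for $j\in J\setminus I$ the class $[\Gamma_j]$ lies in $\overline{\rm NA}(X)_{K_X+B+\bbeta_X\ge 0}$, so absorbing those rays into the non-negative part yields the asserted decomposition with $\{\Gamma_i\}_{i\in I}$.

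For (2), every $(K_X+B+\bbeta_X)$-negative extremal ray is $(K_X+B)$-negative, so $F$ is a face of $\overline{\rm NA}(X)$ spanned by finitely many $(K_X+B)$-negative extremal rays; I would then invoke the contraction theorem for such faces on dlt compact K\"ahler surfaces (underlying the proof of Lemma \ref{l-mmps}, cf.\ \cite{DO23, CHP16}) to get $f:X\to Y$ contracting exactly the curves with class in $F$. The description of $Y$ is then standard surface geometry: if $\dim Y=2$ then $f$ is a composition of divisorial contractions of $(K_X+B)$-negative extremal rays, so $(Y,f_*B)$ is dlt and hence $Y$ is normal $\mathbb Q$-factorial with rational singularities by Lemma \ref{l-Q-fac}; if $\dim Y=1$ the general fibre is a rational curve and $Y$ is a smooth projective curve; and if $\dim Y=0$ then $Y$ is a point.

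For (3) we may assume $(X,B)$ klt, hence $(X,0)$ klt. If $K_X+B+\bbeta_X$ is pseudo-effective (in particular if it is big), take its Boucksom--Zariski decomposition $K_X+B+\bbeta_X\equiv P+N$ on $X$ as in \cite{Bou04} (as done in the proof of Lemma \ref{lem:dlt-surface-cone}), where $N=\sum_k n_k D_k\ge 0$ has finitely many prime components and $P\cdot C\ge 0$ for every curve $C$. Then $N\cdot\Gamma_i<0$, so $D_k\cdot\Gamma_i<0$ for some $k$, whence $\Gamma_i=D_k$ since distinct prime divisors on a surface meet non-negatively; thus $I$ is finite. If instead $K_X+B+\bbeta_X$ is not pseudo-effective, then $K_X$ is not pseudo-effective (as $B+\bbeta_X$ is), so $X$ is uniruled, hence Moishezon, hence projective as in the proof of Lemma \ref{lem:dlt-surface-cone}; moreover $B+\bbeta_X$ is the big one, which we may now write as $A+E$ with $A$ ample and $E\ge 0$ an effective $\mathbb R$-divisor. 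Each $\Gamma_i$ is then either a component of $E$ (finitely many) or satisfies $(K_X+A)\cdot\Gamma_i<0$, in which case $\Gamma_i$ spans a $K_X$-negative extremal ray with $A\cdot\Gamma_i<-K_X\cdot\Gamma_i\le 4$; since rational curves of bounded degree on the projective surface $X$ form a bounded family, there are only finitely many such numerical classes. In either case $I$ is finite.

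The main obstacle is the non-pseudo-effective case of (3): one must descend to the projective setting via uniruledness and \cite{Nam02}, and then bound the number of $K_X$-negative extremal rays of small degree using boundedness of low-degree rational curves, together with the klt length estimate from Lemma \ref{lem:dlt-surface-cone}. Parts (1) and (2) are essentially formal once $\bbeta_X$ is known to be nef, with (2) relying additionally on the K\"ahler-surface cone and contraction theorem already in use in this section.
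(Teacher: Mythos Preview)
Your treatment of (1) and (2) is exactly the paper's: reduce to the ordinary dlt surface cone and contraction theorems via the nefness of $\bbeta_X$ (Corollary~\ref{cor:g-beta-nef} and Lemma~\ref{lem:dlt-surface-cone}).

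For (3) you take a genuinely different route. The paper does not split according to pseudo-effectivity of $K_X+B+\bbeta_X$. Instead, for the case $B+\bbeta_X$ big it writes $B+\bbeta_X\equiv Z+\omega+P$ with $Z\ge 0$, $\omega$ K\"ahler, $P$ nef, and replaces $(B,\bbeta_X)$ by $((1-\epsilon)B+\epsilon Z,\ (1-\epsilon)\bbeta_X+\epsilon(\omega+P))$ for $0<\epsilon\ll 1$; this is a numerically equivalent generalized klt pair whose moduli part is K\"ahler, and finiteness follows from the length bound in Lemma~\ref{lem:dlt-surface-cone} applied to the new boundary together with a bounded K\"ahler-degree argument. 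This works uniformly, without passing to the projective case. Your pseudo-effective case via a direct Boucksom--Zariski decomposition is correct and in fact slightly cleaner than the paper's version for the big case (the paper subtracts a K\"ahler form first, which is only needed to set up the second case).

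Your non-pseudo-effective argument has a small gap. You claim $-K_X\cdot\Gamma_i\le 4$, but the $\Gamma_i$ are the curves produced by Lemma~\ref{lem:dlt-surface-cone} for $(X,B)$, so what you know is $-(K_X+B)\cdot\Gamma_i\le 4$; since $B\cdot\Gamma_i$ can be positive this does not bound $-K_X\cdot\Gamma_i$. The fix is immediate: since $(X,0)$ is klt and the ray $\mathbb R^+[\Gamma_i]$ is $K_X$-negative, apply Lemma~\ref{lem:dlt-surface-cone} to $(X,0)$ to obtain a rational generator $\Gamma_i'$ of the same ray with $-K_X\cdot\Gamma_i'\le 4$, and then your ample-degree bound $A\cdot\Gamma_i'< -K_X\cdot\Gamma_i'\le 4$ gives finitely many numerical classes. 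With this adjustment your argument is complete; note however that the paper's reduction to a K\"ahler moduli part makes the projectivity detour unnecessary.
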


 \begin{proof}
 (1) By Lemma \ref{l-klt}, $(X,B)$ is dlt with rational $\Q$-factorial singularities.
 By Corollary \ref{cor:g-beta-nef}, $\bbeta_X$ is nef and so $\overline{\rm NA}(X)_{K_X+B\geq 0}\subset \overline{\rm NA}(X)_{K_X+B+\bbeta_X \geq 0}$.
 Thus by Lemma \ref{lem:dlt-surface-cone} we have
 \[\overline{\rm NA}(X)=\overline{\rm NA}(X)_{K_X+B \geq 0}+\sum _{i\in I}\mathbb R [\Gamma _i]=\overline{\rm NA}(X)_{K_X+B+\bbeta_X \geq 0}+\sum _{i\in I}\mathbb R [\Gamma _i].\]

 (2) 
Observe that $F$ is a $(K_X+B)$-negative extremal face of $\NA(X)$, as $[\bbeta_X]$ is nef by Corollary \ref{cor:g-beta-nef}. Then by Lemma \ref{lem:dlt-surface-cone} and some standard arguments (for example, see the proof of \cite[Corollary 6.9]{Deb01}) it follows that  $F$ has a nef supporting class, say $\alpha\in H^{1,1}_{\BC}(X)$ such that $\alpha^\bot\cap\NA(X)=F$ and $\alpha=K_X+B+\omega$ for some K\"ahler class $\omega$. Then by Theorem \ref{thm:tbft}, we can contract this face and the contraction satisfies the required properties. \\

 (3) We claim that if $\psi \in H^{1,1}_{\rm BC}(X)$ is a  big class,
 then there are at most finitely many curves $C\subset X$ such that $\int _C\psi<0$. To see this, note that for some K\"ahler form $\omega$, the class $[\psi -\omega]$ is still big. Let $\psi-\omega\num Z+\gamma$ be the pushforward of the Boucksom-Zariski decomposition of the pullback of $\psi-\omega$ on some resolution of $X$, where $Z\geq 0$ is an effective $\mbR$-divisor and $\gamma$ is a nef class (see \cite[Proposition 2.4]{Bou04} and Lemma \ref{l-nnef}). But then one sees that if $\int _C\psi<0$, then $C$ is contained in the support of $Z$.
 Thus, if $K_X+B+\bbeta_X$ is big, then the claim immediately holds. 
 
 Suppose now that $B+\bbeta_X$ is big, then we may write $B+\bbeta_X\num Z+\omega +\gamma$ as above. Thus \[ B+\bbeta_X\equiv \left((1-\epsilon )B+\epsilon Z\right)+((1-\epsilon )\bbeta_X +\epsilon (\omega +\gamma))\] where $(X,(1-\epsilon )B+\epsilon Z)$ is klt and  $(1-\epsilon )\bbeta_X +\epsilon (\omega +\gamma)$ is K\"ahler for all $0<\epsilon \ll 1$. The finiteness of $K_X+B+\bbeta_X$ negative extremal rays now follows from a standard argument.
 \end{proof}~\\

\begin{lemma}\label{l-mmps}
Let $(X,B+\bbeta)$ be a generalized  klt (resp. dlt) pair, where $X$ is a compact K\"ahler surface. Then we can run the $(K_X+B+\bbeta_X)$-MMP 
\[X=X_0\to X_1\to \ldots \to X_n\]
so that:
 \begin{enumerate}
     \item each $(X_i,B_i+\bbeta _{X_i})$ is a generalized  klt (resp. dlt) K\"ahler surface with $\mathbb Q$-factorial rational singularities,
     \item if $K_X+B+\bbeta_X$ is pseudo-effective, then $K_{X_n}+B_n+\bbeta _{X_n}$ is nef, and
     \item if $K_X+B+\bbeta_X$ is not pseudo-effective, then there is a $(K_{X_n}+B_n+\bbeta _{X_n})$-Mori fiber space $f:X_n\to Z$.
 \end{enumerate}\end{lemma}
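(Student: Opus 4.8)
The plan is to reduce the statement to the classical minimal model program for the klt (resp. dlt) surface pair $(X,B)$, running divisorial contractions of $(K_X+B+\bbeta_X)$-negative extremal rays; since dimension two admits no flips, termination will be automatic, and the only real work is to check that the generalized--pair structure, $\mbQ$-factoriality, rationality of singularities and the K\"ahler property persist.

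First I would record the reduction. By Lemma \ref{l-Q-fac}, $X$ has $\mbQ$-factorial rational singularities and $(X,B)$ is klt (resp. dlt); in particular $K_X+B$ is $\mbR$-Cartier. By Corollary \ref{cor:g-beta-nef} the trace $\bbeta_X$ has local potentials with $[\bbeta_X]\in H^{1,1}_{\BC}(X)$ nef, so $[K_X+B+\bbeta_X]$ is the sum of an $\mbR$-Cartier class and a nef class. Applying the cone theorem of Lemma \ref{lem:dlt-surface-cone} to $(X,B)$ and using that $\bbeta_X$ is nef (so $\NA(X)_{K_X+B\ge 0}\subseteq \NA(X)_{K_X+B+\bbeta_X\ge 0}$, and $0<-(K_X+B+\bbeta_X)\cdot\Gamma\le -(K_X+B)\cdot\Gamma$ on the relevant curves), one obtains, after discarding the curves $\Gamma$ on which $K_X+B+\bbeta_X$ is nonnegative,
\[
\NA(X)=\NA(X)_{K_X+B+\bbeta_X\ge 0}+\sum_i \mbR^+[\Gamma_i],
\]
each $\Gamma_i$ a rational curve spanning a $(K_X+B+\bbeta_X)$-negative extremal ray. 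If $K_X+B+\bbeta_X$ is nef we stop with $n=0$; otherwise, by Lemma \ref{l-nnef} together with the Boucksom--Zariski argument in the proof of Lemma \ref{lem:dlt-surface-cone}, there is a $(K_X+B+\bbeta_X)$-negative extremal ray $R$ and a contraction $c\colon X\to Y$ of $R$ (this is part of the surface minimal model program for K\"ahler klt pairs, cf. \cite{DO23}).

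Next I would run one step. If $\dim Y\le 1$ then $c$ is a Mori fiber space and we stop, setting $X_n=X$ and $Z=Y$. If $\dim Y=2$, then $c$ is a divisorial contraction with exceptional divisor $E$; put $X_1=Y$, $B_1=c_*B$, $\bbeta_{X_1}=c_*\bbeta_X$. As $-(K_X+B+\bbeta_X)$ is $c$-ample, the negativity lemma gives $K_X+B+\bbeta_X=c^*(K_{X_1}+B_1+\bbeta_{X_1})+aE$ with $a>0$, hence $a(P,X_1,B_1+\bbeta)\ge a(P,X,B+\bbeta)$ for every divisor $P$ over $X$, with strict inequality for $E$; since $(X,B+\bbeta)$ is generalized klt (resp. dlt), so is $(X_1,B_1+\bbeta)$. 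The same formula, together with pushforward/pullback of positive currents, shows that pseudo-effectivity of $K_X+B+\bbeta_X$ and of $K_{X_1}+B_1+\bbeta_{X_1}$ are equivalent. Standard facts about contractions of extremal rays on klt surfaces give that $X_1$ is $\mbQ$-factorial with rational singularities; $[\bbeta_{X_1}]=c_*[\bbeta_X]$ is nef by Lemma \ref{l-nef}, so it lies in $H^{1,1}_{\BC}(X_1)$, and choosing a log resolution $X_1'$ of $(X_1,B_1)$ that factors through $X$ (to which $\bbeta$ descends, its trace there being the pullback of the nef class $[\bbeta_{X_1}]$) one verifies the three conditions of Definition \ref{d-gp1}, so $(X_1,B_1+\bbeta)$ is a generalized klt (resp. dlt) pair; finally $X_1$ is K\"ahler by Lemma \ref{l-kmmp} (a generalized klt pair being in particular generalized dlt). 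Replacing $X$ by $X_1$ and iterating produces the sequence $X=X_0\to X_1\to\cdots$.

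It remains to see termination. Each divisorial contraction strictly lowers $\dim_{\mbR} H^{1,1}_{\BC}$, a positive integer, so after finitely many steps we reach $X_n$ which either has $K_{X_n}+B_n+\bbeta_{X_n}$ nef or admits a Mori fiber space contraction. Since pseudo-effectivity of $K+B+\bbeta$ is preserved along the program, and a Mori fiber space contraction is $(K+B+\bbeta)$-negative on a covering family of curves---hence incompatible with pseudo-effectivity---the first alternative occurs precisely when $K_X+B+\bbeta_X$ is pseudo-effective, giving (2), and the second precisely when it is not, giving (3). The only genuinely delicate point is the bookkeeping showing that at each stage $\bbeta$ descends to a log resolution with nef trace and that $K_{X_i}+B_i+\bbeta_{X_i}$ is an honest Bott--Chern class; this is handled by the reduction above and Lemma \ref{l-nef}, and there is no real geometric obstacle, since dimension two admits no flips.
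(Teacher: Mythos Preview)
Your proposal is correct and follows essentially the same approach as the paper: reduce to the classical $(K_X+B)$-MMP using that $\bbeta_X$ is nef on a compact K\"ahler surface with rational singularities (so every $(K_X+B+\bbeta_X)$-negative extremal ray is $(K_X+B)$-negative), and then check that the generalized pair structure, $\mathbb Q$-factoriality, rationality of singularities and the K\"ahler property persist under each divisorial contraction. The paper is terser---it simply invokes Corollary~\ref{cor:g-beta-nef} and Lemma~\ref{l-kmmp} and leaves the termination implicit---while you spell out the cone decomposition, the discrepancy bookkeeping, and the drop in $\dim_{\mathbb R}H^{1,1}_{\BC}$; one small imprecision is that when you verify the generalized pair structure on $X_1$, the log resolution $X_1'$ should factor through the original structure resolution $X'$ (to which $\bbeta$ descends), not merely through $X$, but this does not affect the argument.
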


 \begin{proof}
 By repeated application of Corollary \ref{c-s-gcone} we can run the $(K_X+B+\bbeta_X)$-MMP. Since the Picard number $\rho (X)$ drops by 1 with each step of the MMP, this process ends after finitely many steps $X\to X_1\to \ldots \to X_n$. Note that this MMP will end either with a minimal model so that $K_{X_n}+B_n+\bbeta_{X_n}$ is nef, or a Mori fiber space $f:X_n\to Z$ such that $-(K_{X_n}+B_n+\bbeta _{X_n})$ is relatively K\"ahler.
 
 (1) This follows easily from the negativity lemma. 
 
 (2)  If $f:X_n\to Z$ is a $(K_{X_n}+B_n+\bbeta _{X_n})$-Mori fiber space, then the general fibers of $f$ intersect $K_{X_n}+B_n+\bbeta _{X_n}$ negatively. However, since the composite morphism $\phi:X\to X'$ is an isomorphism over the general fiber of $f$, it follows that $K_{X}+B+\bbeta _{X}$ intersects the general fiber negatively and hence is not pseudo-effective.

 (3) If $K_{X_n}+B_n+\bbeta_{X_n}$ is nef, and if $\phi:X\to X_n$ is the composite morphism, then from the negativity lemma it follows that $K_X+B+\bbeta_X$ is pseudo-effective.

 \end{proof}~\\

 \begin{theorem}\label{t-lcms}
 Let $(X,B+\bbeta)$ be a generalized klt pair, where $X$ is a compact K\"ahler surface. If  $K_X+B+\bbeta_X$ is big, then $(X,B+\bbeta )$ has a log canonical model.
 \end{theorem}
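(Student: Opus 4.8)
The plan is to reduce to the surface MMP with scaling already developed in this section. First I would run a $(K_X+B+\bbeta_X)$-MMP using Lemma~\ref{l-mmps}: since $K_X+B+\bbeta_X$ is big, it is in particular pseudo-effective, so the MMP terminates with a generalized klt K\"ahler surface $(X^{\rm m},B^{\rm m}+\bbeta_{X^{\rm m}})$ on which $K_{X^{\rm m}}+B^{\rm m}+\bbeta_{X^{\rm m}}$ is nef; each step preserves the K\"ahler condition (Lemma~\ref{l-kmmp}) and $\mbQ$-factorial rational singularities, and by Lemma~\ref{l-models}(6) this is in fact a log terminal model. Moreover bigness is preserved under the MMP (discrepancies only go up and the map is bimeromorphic, so the pushforward of a big class is big). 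Thus it suffices to prove the statement assuming $K_X+B+\bbeta_X$ is nef and big, i.e.\ to show that a nef and big $K_X+B+\bbeta_X$ is semiample in the appropriate analytic sense and induces the log canonical model.

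Next I would produce the contraction to the log canonical model. By Corollary~\ref{c-s-gcone}(3), since $K_X+B+\bbeta_X$ is big there are only finitely many $(K_X+B+\bbeta_X)$-negative curves, hence — now that $K_X+B+\bbeta_X$ is nef — the face $F=\{(K_X+B+\bbeta_X)=0\}\cap\NA(X)$ of the cone is spanned by finitely many extremal rays. (Here one uses that $K_X+B+\bbeta_X$ nef and big forces $(K_X+B+\bbeta_X)^2>0$, so by the Hodge index theorem its orthogonal complement in $\NA(X)$ is contained in the boundary and is rational polyhedral, being generated by classes of curves $C$ with $C^2<0$.) By Corollary~\ref{c-s-gcone}(2) there is a contraction $g:X\to X^{\rm c}$ contracting exactly the curves in $F$, with $X^{\rm c}$ a point, a smooth projective curve, or a normal $\mbQ$-factorial surface with rational singularities. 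Since $(K_X+B+\bbeta_X)^2>0$, $X^{\rm c}$ cannot be a point or a curve, so $X^{\rm c}$ is a surface and $g$ is bimeromorphic. Because $X^{\rm c}$ has rational singularities and $K_X+B+\bbeta_X$ is trivial on the fibers of $g$, it descends: $K_X+B+\bbeta_X\equiv g^*\gamma$ for some $\gamma\in H^{1,1}_{\BC}(X^{\rm c})$ which is nef (Lemma~\ref{l-nef}) and big, and which is moreover strictly positive on every curve in $X^{\rm c}$ by construction of $F$. On a normal compact K\"ahler surface a nef class that is positive on all curves and has positive self-intersection is K\"ahler (again Lemma~\ref{l-nnef} together with the Nakai-type criterion on surfaces, cf.\ \cite{Bou04}); hence $\gamma$ is K\"ahler, so $g\circ f^{-1}$ (where $f:X\dasharrow X^{\rm m}$ is the first MMP) realizes $X^{\rm c}$ as the log canonical model, and $K_{X^{\rm c}}+B^{\rm c}+\bbeta_{X^{\rm c}}=\gamma$ satisfies conditions (1)--(4) of Definition~\ref{d-models}.

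Finally, uniqueness of the log canonical model follows from Lemma~\ref{l-models}(3). I expect the main obstacle to be the semiampleness/descent step: verifying that the nef and big class $K_{X^{\rm m}}+B^{\rm m}+\bbeta_{X^{\rm m}}$ on the minimal model actually becomes K\"ahler after contracting the finitely many curves on which it is zero. In the projective setting this is the base-point-free theorem; here one must instead argue cohomologically on the surface, using that the contracted locus has negative-definite intersection matrix, that $X^{\rm c}$ has rational singularities so that nef classes descend (Lemma~\ref{l-nef}) and K\"ahlerness can be checked by a Nakai-type criterion via Lemma~\ref{l-nnef}. One should also be slightly careful that the MMP of Lemma~\ref{l-mmps} may involve divisorial contractions, so $X^{\rm m}$ is only a log terminal model and not isomorphic to $X$ in codimension one; this is harmless since Definition~\ref{d-models} of the log canonical model allows the composite bimeromorphic map $X\dasharrow X^{\rm c}$.
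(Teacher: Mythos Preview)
Your overall architecture matches the paper's: run the MMP of Lemma~\ref{l-mmps} to reduce to the case where $\alpha:=K_X+B+\bbeta_X$ is nef and big, show $\Null(\alpha)$ consists of finitely many curves, contract them, and check the image class is K\"ahler. The gap is in the contraction step.

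Your appeal to Corollary~\ref{c-s-gcone}(2) does not apply. That corollary contracts a face spanned by $(K_X+B+\bbeta_X)$-\emph{negative} extremal rays (its proof uses that such a face is automatically $(K_X+B)$-negative, since $\bbeta_X$ is nef on a surface). But once you are on the minimal model, the face $F=(K_X+B+\bbeta_X)^{\perp}\cap\NA(X)$ consists of $(K_X+B+\bbeta_X)$-\emph{trivial} classes; for a curve $C$ in $F$ you only get $(K_X+B)\cdot C\le 0$, not strict inequality, so neither the cone theorem nor the contraction theorem for $(X,B)$ is available as stated. Your invocation of Corollary~\ref{c-s-gcone}(3) just before this is likewise vacuous (there are no $(K_X+B+\bbeta_X)$-negative rays on a model where the class is nef); the Hodge index parenthetical you add is what actually gives finiteness of the null curves, and that part is fine.

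The paper closes this gap by a perturbation: using bigness, choose a K\"ahler form $\omega$ with $\alpha-\omega$ still big, take a Boucksom--Zariski decomposition $\alpha-\omega\equiv D+\gamma$ with $D\ge 0$ and $\gamma$ nef, and absorb $\epsilon D$ into the boundary so that $(1+\epsilon)\alpha=(K_X+B+\epsilon D+\bbeta_X)+\epsilon(\gamma+\omega)$. Now any $\alpha$-trivial curve $C$ satisfies $(K_X+B+\epsilon D+\bbeta_X)\cdot C<0$, hence $(K_X+B+\epsilon D)\cdot C<0$ (as $\bbeta_X$ is nef), and one can contract $C$ as a genuine $(K_X+B+\epsilon D)$-negative extremal ray inside the usual surface MMP. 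This guarantees the image still has rational singularities, so the descent step and the K\"ahler criterion (your final paragraph) go through. Without such a perturbation you would have to fall back on Grauert's contractibility criterion for the negative-definite configuration $\Null(\alpha)$ and then separately verify rational singularities of the image, which is extra work you have not done.
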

 
 \begin{proof}
 By running a $(K_X+B+\bbeta_X)$-MMP, we may assume that $\alpha=K_X+B+\bbeta_X$ is nef and big (see Lemma \ref{l-mmps}).  Choose a K\"ahler class $\omega$ such that $K_X+B+\bbeta_X-\omega$ is also a big class. Let $K_X+B+\bbeta_{X}-\omega\equiv D+\gamma$ be the pushforward of the Boucksom-Zariski decomposition of the  pullback of $K_X+B+\bbeta_{X}-\omega$ on some resolution of $X$, where $D$ is an effective $\mbR$-divisor, and $\gamma$ is a pseudo-effective $(1,1)$ class. From \mbox{\cite[Proposition 2.4]{Bou04}} and Lemma \mbox{\ref{l-nef}} it follows that $\gamma$ is a nef class.

 Now choose $0<\ve\ll 1$ such that $(X, B+\ve D)$ is klt. Then 
 \[ (1+\ve)\alpha=(K_X+B+\ve D+\bbeta_X)+\ve(\gamma+\omega). \] 

Now since $\alpha$ is nef and big, and $X$ is a surface, it follows that $\Null(\alpha)$ is $1$-dimensional; in particular, $\Null(\alpha)$ contains finitely many curves. If $C\subset \Null(\alpha)$ is a curve, then $\alpha\cdot C=0$ and thus from the above equation we have $(K_X+B+\ve D+\bbeta_X)\cdot C<0$, and hence $(K_X+B+\ve D)\cdot C<0$ by Corollary \ref{cor:g-beta-nef}. 
Since $(X, B+\ve D)$ is klt, this curve $C$ can be contracted. Repeating this process finitely many times (since $\Null(\alpha)$ contains finitely many curves) we obtain a projective bimeromorphic morphism $f:X\to Z$ to a normal compact surface $Z$ with rational singularities such that $\alpha=f^*\alpha_Z$ and $\Null(\alpha_Z)=\emptyset$, where $\alpha_Z:=f_*(K_X+B+\bbeta_X)=:K_Z+B_Z+\bbeta_Z$. Then from \cite[Theorem 2.29]{DHP22} it follows that $\alpha_Z$ is a K\"ahler class. Thus $(Z, B_Z+\bbeta _Z)$ is the log canonical model of $(X, B+\bbeta _X )$.

 \end{proof}
 \begin{remark}
     Note that by \cite[Example 6.2]{LP20}, it is not the case that all generalized pairs have a good minimal model, however it is known that if $\beta$ is an $\mathbb R$-divisor and $K_X+B$ is pseudo-effective, then good minimal models exist \cite[Corollary C]{LP20}. It would be interesting to know if good minimal models exist for generalized klt  K\"ahler surface pairs $(X,B+\bbeta)$ such that $K_X+B$ is pseudo-effective and $[\bbeta_X] \in H^{1,1}_{\rm BC}(X)$.
 \end{remark}

 \subsection{Relative MMP for $3$-Folds} Using \cite[Theorem 5.2]{DHP22} we will show that we can run a relative MMP for \textit{proper} morphisms between K\"ahler varieties.
 
 \begin{theorem}\label{thm:relative-mmp}
  Let $(X, B)$ be a $\mbQ$-factorial dlt pair, where $X$ is a compact K\"ahler $3$-fold. Let $f:X\to Z$ be a proper morphism to a normal compact K\"ahler variety. Then we can run a $(K_X+B)$-MMP over $Z$ which terminates with either a log terminal model over $Z$ or a Mori fiber space over $Z$. 
 \end{theorem}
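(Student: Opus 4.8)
The plan is to deduce the relative MMP over $Z$ from the minimal model program for compact Kähler $3$-folds, the essential input being \cite[Theorem 5.2]{DHP22}; the point is that every individual step can be carried out relatively over $Z$, while termination is inherited from the absolute case. By \cite[Theorem 5.2]{DHP22} (the cone theorem over $Z$; compare \cite{CHP16, DHP22}) one has a relative cone decomposition
\[\overline{NE}(X/Z)=\overline{NE}(X/Z)_{K_X+B\geq 0}+\sum_{i}\mathbb{R}^{+}[\Gamma_i],\]
where each $\Gamma_i$ is a rational curve with $f_*[\Gamma_i]=0$. If $K_X+B$ is nef over $Z$, then $X$ is already a log terminal model over $Z$ and we stop. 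Otherwise we pick a $(K_X+B)$-negative extremal ray $R\subset\overline{NE}(X/Z)$; since $R$ is also $(K_X+B)$-negative in the absolute cone, its contraction $\varphi_R\colon X\to Y$ exists by \cite[Theorem 5.2]{DHP22}, it has relative Picard number one (hence is a projective morphism), and since $f_*R=0$ the morphism $f$ factors as $X\xrightarrow{\varphi_R}Y\to Z$, so $\varphi_R$ is a contraction over $Z$. If $\dim Y<3$ we have produced a Mori fiber space over $Z$ and stop.

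Next I would run the MMP. If $\varphi_R$ is divisorial, then $(Y,\varphi_{R*}B)$ is again a $\mathbb{Q}$-factorial dlt pair, $Y$ is Kähler by Lemma \ref{l-kmmp} (with $\bbeta=0$), and we replace $X$ by $Y$; the second Betti number drops, so this can occur only finitely often. If $\varphi_R$ is a flipping contraction, then after a standard perturbation of $B$ near the flipping locus reducing to the klt case, the flip $\varphi_R^{+}\colon X^{+}\to Y$ exists by Corollary \ref{c-3fold-flips} (again with $\bbeta=0$), $X^{+}$ is $\mathbb{Q}$-factorial dlt and Kähler by Lemma \ref{l-kmmp}, and we replace $X$ by $X^{+}$. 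Every flip in this process is an ordinary $(K_X+B)$-flip of a $\mathbb{Q}$-factorial dlt compact Kähler $3$-fold, so the sequence of flips terminates by \cite[Theorem 5.2]{DHP22} (equivalently, by termination of $3$-fold flips, see \cite{HP16, CHP16}). Combining this with the finiteness of divisorial contractions, the process stops after finitely many steps, either at a Mori fiber space over $Z$, or at a model $X\dasharrow X^{m}$ with $K_{X^m}+B^m$ nef over $Z$, i.e.\ a log terminal model over $Z$.

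The main obstacle is the relative cone and contraction theorem over the (possibly non-projective) base $Z$: one must guarantee that the supporting nef class of the extremal ray $R$, which in the Kähler setting has to be made Kähler in order to realize the contraction as a morphism, can be chosen so that it descends compatibly to a class on the target over $Z$. This is exactly the part of the argument supplied by \cite[Theorem 5.2]{DHP22}. Once the contractions and flips over $Z$ are in place, termination costs nothing extra, since the flips occurring are literally ordinary $(K_X+B)$-flips, for which termination in dimension $3$ is already known, and divisorial contractions can happen only finitely many times.
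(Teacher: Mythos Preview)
Your approach has a genuine gap: you invoke \cite[Theorem 5.2]{DHP22} as a \emph{relative} cone theorem over $Z$, but that result is the \emph{absolute} cone theorem, giving a decomposition of $\overline{\rm NA}(X)$, not of $\overline{\rm NE}(X/Z)$. There is no off-the-shelf relative cone and contraction theorem for proper (non-projective) morphisms of compact K\"ahler $3$-folds that you can cite here; producing a supporting class that is K\"ahler over a non-projective $Z$ is precisely the difficulty. Relatedly, even if you had a ray $R$ extremal in $\overline{\rm NE}(X/Z)$, your claim that ``$R$ is also $(K_X+B)$-negative in the absolute cone, so its contraction exists'' is unjustified: an extremal ray of the relative cone need not be extremal in $\overline{\rm NA}(X)$, and the contraction theorem applies only to the latter.

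The paper gets around this by working in the absolute cone throughout. One takes the countably many $(K_X+B)$-negative extremal rays $\mathbb R^+[C_i]$ of $\overline{\rm NA}(X)$ supplied by \cite[Theorem 5.2]{DHP22} and argues that at least one satisfies $f_*C_i=0$: if not, then $\omega_Z\cdot f_*C_i>0$ for all $i$, and since the curve classes lie in a discrete lattice there is a uniform lower bound $\omega_Z\cdot f_*C_i\geq \epsilon$; combined with the length bound $-(K_X+B)\cdot C_i\leq 6$, this forces $K_X+B+t_0f^*\omega_Z$ to be nef on $X$ for $t_0\gg 0$, contradicting non-nefness over $Z$. One then contracts this $f$-vertical absolute extremal ray via \cite[Theorem 1.7]{DH20}, and the rigidity lemma yields the factorization through $Z$. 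Termination is by \cite[Theorem 1.12]{DO23} (not \cite[Theorem 5.2]{DHP22}, which is the cone theorem). This discreteness-plus-length argument is the missing idea in your proposal.
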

 
 \begin{proof}

 Let $\omega_Z$ be a K\"ahler class on $Z$. We may assume that $K_X+B$ is not nef over $Z$. Then $K_X+B+tf^*\omega_Z$ is not nef on $X$ for any $t\>0$.  From the cone theorem \cite[Theorem 5.2]{DHP22} we know that there are at most countably many rational curves $\{C_i\}_{i\in I}$ such that  $0>(K_X+B)\cdot C_i\>-6$ for all  $i\in I$ and
 \[
 \NA(X)=\NA(X)_{(K_X+B)\>0}+\sum_{i\in I}\mbR^+\cdot[C_i].
 \]

 We claim that there is an $i\in I$ such that $f_*C_i=0$. If not, then $f^*\omega_Z\cdot C_i=\omega_Z\cdot f_*C_i>0$ for all $i\in I$, since $\omega_Z$ is a K\"ahler class on $Z$. Since the classes $[C_i]$ are contained in a  discrete lattice of $H^4(X, \mbZ)$, it follows that there is an $\epsilon >0$ such that $\omega_Z\cdot f_*C_i\geq \epsilon $ for all $i\in I$. Then for some $t_0\gg 0$ we may assume that $t_0f^*\omega_Z\cdot C_i\geq 7$ for all $i\in I$. Thus $(K_X+B+t_0f^*\omega_Z)\cdot C_i>0$ for all $i\in I$, and hence $K_X+B+t_0f^*\omega_Z$ is nef on $X$, a contradiction. Now we contract an extremal ray $R=\mbR^+[C_i]$ such that $f_*C_i=0$ using \cite[Theorem 1.7]{DH20} and obtain a morphism $g:X\to Y$ to a normal K\"ahler variety $Y$. Then from the rigidity lemma it follows that there is a unique morphism $h:Y\to Z$ such that $f=h\circ g$. We now replace $X$ by the corresponding divisorial contraction or flip \cite[Theorems 1.1, 1.2]{DH20}. Repeating this process we construct a MMP over $Z$. Termination of flips follows from \cite[Theorem 1.12]{DO23}. 
 
 \end{proof}~\\

 %%%%%%%%%%%%%%%%%%%%%%%%%%%%%%%%%%
 \section{Threefold generalized MMP}\label{sec:g-mmp-dim-3}
 \subsection{Running the MMP for $\mbR$-Cartier Divisors}\label{s-tgmmp}
     Throughout this section we will repeatedly use the results of \cite{DH20} on the 3-fold MMP for $\mbQ$-factorial compact K\"ahler klt pairs $(X,B)$. Note that in this reference, the results are stated for the case that $K_X+B$ is $\mathbb Q$-Cartier, however, they also hold when $K_X+B$ is an $\mathbb R$-Cartier divisor.
     This is because if $K_X+B$ is an $\mathbb R$-Cartier divisor, then it can be approximated by a sequence of klt $\mathbb Q$-Cartier divisors $K_X+B_n$ (for example, if $X$ is $\Q$-factorial, let $B_n=\frac 1 n\lrd nB\rrd$). The cone theorem for $K_X+B$  is easily seen to follow from the cone theorem (cf. \cite[Theorem 5.2]{DHP22}) applied to the sequence of $\mathbb Q$-Cartier divisors $K_X+B_n$. If $\Gamma$ is a $(K_X+B)$-negative extremal ray, then it is also a $(K_X+B_n)$-negative extremal ray for any $n\gg 0$ and so the contraction of $\Gamma$, $c_\Gamma :X\to Y$ exists by \cite[Theorem 1.5]{DH20}. Similarly, if $X\to Y$ is a $(K_X+B)$-flipping contraction, then it is also a $(K_X+B_n)$-flipping contraction and hence the flip $X^+\to Y$ exists \cite[Theorem 4.3]{CHP16}. The termination of flips follows by the usual arguments (see \cite[Theorem 3.3]{DO23}).
 
\begin{lemma}\label{l-3-nef}
Let $f:X'\to X$ be a proper bimeromorphic morphism of normal compact analytic varieties of dimension $3$, and $\beta '\in H^{1,1}_{\BC}(X')$ is a nef class such that $\beta :=f_*\beta '$ represented by current with local potentials, i.e. $\beta\in H^{1,1}_{\BC}(X)$.  If $\beta$ is not nef, then $\beta \cdot C<0$ for some curve $C\subset X$ contained in the indeterminacy locus of $f^{-1}$. 
 \end{lemma}
 \begin{proof} If $\beta$ is not nef, then $\beta|_V$ is not pseudo-effective for some subvariety $V\subset X$, by \cite[Thm. 2.36 and Rmk. 2.37]{DHP22}.
 Suppose that $V$ is not contained in the indeterminacy locus of $f^{-1}$ and let $V':=f^{-1}_*V$, then $\beta '|_{V'}$ is pseudo-effective and hence so is $\beta|_V=(f|_V)_*(\beta'|_{V'})$.
 Therefore, we may assume that $V=C$ is one of the finitely many curves contained in the indeterminacy locus of $f^{-1}$. Since $\beta |_C$ is not pseudo-effective and $\dim C=1$, we have $\beta\cdot C <0$.
 \end{proof}

 \begin{lemma}\label{lem:length-bound-by-modified-kahler}
 Let $X$ be a normal compact K\"ahler $3$-fold and $\omega$ a modified K\"ahler class on $X$. Then for any countable collection of non-numerically equivalent curves $\{C_i\}_{i\in I}$,  there is a positive real number $b>0$ such that $\omega\cdot C_i\>b$ for all but finitely many curves. Moreover, if $(X, B)$ is a log canonical pair for some $\mbR$-divisor $B\>0$ and $\{C_i\}_{i\in I}$ are all the rational curves generating the $(K_X+B)$-negative extremal rays of $\NA(X)$, then there are only finitely many curves $\{C_j\}_{j\in J}$, $J\subset I$, such that $(K_X+B+\omega)\cdot C_j<0$ for all $j\in J$.
 \end{lemma}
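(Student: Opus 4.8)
The plan is to pull everything back to a model on which $\omega$ becomes a genuine K\"ahler class, and then play the resulting compactness against the discreteness of curve classes. Since $\omega$ is modified K\"ahler, by definition (cf.\ \cite[Proposition 2.3]{Bou04}) there is a bimeromorphic morphism $\nu\colon X'\to X$, which we may take with $X'$ smooth, and a K\"ahler class $\omega'$ on $X'$ with $\nu_*\omega'=\omega$. The class $\nu^*\omega-\omega'$ pushes forward to $0$ on $X$, hence is represented by a $\nu$-exceptional $\mathbb{R}$-divisor $E$ (cf.\ \cite[Lemma 2.32]{DH20}); and since $(\omega'-\nu^*\omega)\cdot C=\omega'\cdot C>0$ for every $\nu$-exceptional curve $C$ (as $\omega'$ is K\"ahler while $\nu^*\omega$ is $\nu$-trivial), the negativity lemma gives $E\ge 0$. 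Thus $\nu^*\omega\equiv\omega'+E$ with $E\ge 0$ effective and $\nu$-exceptional.

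Next I would set $Z:=\nu(\Ex(\nu))$. Every $\nu$-exceptional prime divisor on $X'$ is contracted by $\nu$ to a subvariety of dimension $\le 1$, and any lower-dimensional component of $\Ex(\nu)$ already has dimension $\le 1$; hence $Z$ is a closed analytic subset of the compact space $X$ of dimension $\le 1$, so it has finitely many irreducible components and in particular contains only finitely many irreducible curves. Therefore all but finitely many of the $C_i$ are not contained in $Z$. For such a $C_i$, let $\widetilde C_i\subset X'$ be its strict transform; then $\nu_*\widetilde C_i=C_i$, and $\widetilde C_i\not\subset\Supp(E)$ (otherwise $C_i=\nu(\widetilde C_i)\subseteq Z$), so
\[
\omega\cdot C_i=\nu^*\omega\cdot\widetilde C_i=\omega'\cdot\widetilde C_i+E\cdot\widetilde C_i\ \ge\ \omega'\cdot\widetilde C_i .
\]
Since $\nu_*$ respects numerical equivalence, the curves $\widetilde C_i$ are pairwise non-numerically equivalent. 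Now $\omega'$ is K\"ahler on the compact K\"ahler manifold $X'$, hence strictly positive on $\NA(X')\setminus\{0\}$, so $\{\gamma\in\NA(X'):\omega'\cdot\gamma\le b\}$ is compact for every $b>0$; as the numerical classes of irreducible curves lie in a fixed discrete lattice in $N_1(X')$, this set contains only finitely many such classes. Combining, for every $b>0$ only finitely many $C_i$ satisfy $\omega\cdot C_i\le b$; taking $b=1$ proves the first assertion.

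For the ``Moreover'' part, after replacing $\{C_i\}$ by one rational curve per $(K_X+B)$-negative extremal ray (so that the $C_i$ become pairwise non-numerically equivalent), the cone theorem provides the length bound $-(K_X+B)\cdot C_i\le 6$ for all $i$. If $(K_X+B+\omega)\cdot C_j<0$, then $\omega\cdot C_j<-(K_X+B)\cdot C_j\le 6$, so $J\subseteq\{i:\omega\cdot C_i<6\}$, which is finite by the first part (applied with $b=6$). This completes the proof.

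The step I expect to require the most care is the passage to the model $X'$ together with the effectivity of $E$ --- that is, the fact that a modified K\"ahler class becomes $(\text{K\"ahler}) + (\text{effective }\nu\text{-exceptional})$ on a suitable smooth model --- and the observation that $\nu(\Ex(\nu))$ has dimension at most $1$; once these are in hand, what remains is a routine compactness-and-discreteness argument.
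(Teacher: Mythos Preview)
Your proof is correct and follows essentially the same approach as the paper's: pass to a resolution $\nu:X'\to X$ on which $\nu^*\omega\equiv\omega'+E$ with $\omega'$ K\"ahler and $E\ge 0$ exceptional, observe that $\nu(\Supp E)$ has dimension $\le 1$ (so contains only finitely many curves), and then use K\"ahler positivity of $\omega'$ against discreteness of curve classes on $X'$ to get the uniform lower bound, with the ``Moreover'' part following from the length bound $-(K_X+B)\cdot C_i\le 6$. Your write-up is in fact slightly more explicit than the paper's about why $E\ge 0$ (negativity lemma) and about the compactness-plus-discreteness step; the only cosmetic difference is that the paper works with $f(\Supp E)$ rather than all of $\nu(\Ex(\nu))$, which makes no difference since $\Supp E\subset\Ex(\nu)$.
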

 
 \begin{proof}
 Let $f:X'\to X$ be a resolution of singularities of $X$ and $\omega'$ a K\"ahler class on $X'$ such that $f_*\omega'=\omega$. Then $f^*\omega=\omega'+E$, where $E$ is a $f$-exceptional divisor. From the negativity lemma it follows that $E$ is effective. Since $\dim X=3$ and $E$ is $f$-exceptional, $\dim f(\Supp E)\<1$. Therefore there can be at most finitely many curves $\{C_j\}_{j\in J}$, $J\subset I$, contained in $f(\Supp E)$. In particular, $\omega\cdot C_i= f^*\omega\cdot C'_i=(\omega'+E)\cdot C'_i>0$ for all $i\in I\setminus J$, where $C'_i$ is the strict transform of $C_i$. Note that these $C'_i$ are also not numerically equivalent. Moreover, since $\omega'$ is a K\"ahler class, there is a positive real number $b>0$ such that $\omega'\cdot C'_i\>b$ for all $i\in I\setminus J$. In particular, $\omega\cdot C_i\> \omega'\cdot C'_i\>b$ for all $i\in I\setminus J$.
 
 If $\{C_i\}_{i\in I}$ are generators of the $(K_X+B)$-negative extremal rays $R_i$.
 If $\nu:X'\to X$ is a dlt model (see e.g. \cite[Claim 6.11]{DH23}) so that $\nu ^*(K_X+B)=K_{X'}+B'$ where $(X',B')$ is dlt, then it is easy to see that there are $(K_{X'}+B')$-negative extremal rays $R'_i=\mathbb R ^+[C'_i]$ spanned by curves $C'_i$ such that $R_i=\mathbb R ^+[\nu _* C'_i]$. We may then assume that $C_i=\nu _*C'_i$.
 By \cite[Corollary 5.3]{DHP22} we may assume that $(K_X+B)\cdot C_i=(K_{X'}+B')\cdot C'_i\>-6$ for all $i\in I$. Therefore, if $(K_X+B+\omega)\cdot C_i<0$ for some $i\in I\setminus J$, then $\omega'\cdot C'_i\<\omega\cdot C_i<-(K_X+B)\cdot C_i\<6$. Since $\omega'$ is a K\"ahler class, it follows that there are only finitely many $(K_X+B+\omega)$-negative extremal rays. 
 \end{proof}

 \subsection{Existence of Log Terminal Models}\label{subsec:ltm}
 In this subsection we will establish the existence of log terminal models and log canonical models, and prove Theorem \ref{thm:ltm}.
 
 In the following two results we will show that we can run a MMP with scaling (which terminates after finitely many steps) when $K_X+B+\bbeta_X$ is pseudo-effective and $\bbeta_X$ is a modified K\"ahler class.
 \begin{proposition}\label{pro:mmp-with-scaling}
 Let $(X, B)$ be a $\mbQ$-factorial compact K\"ahler $3$-fold klt pair. Let $\omega\in H^{1,1}_{\BC}(X)$ be a modified K\"ahler class such that $K_X+B+\omega$ is pseudo-effective  (resp. $K_X+B+\omega$ is not pseudo-effective) and $K_X+B+(1+t)\omega$ is nef for some $t\>0$. Then we can run a $(K_X+B+\omega)$-MMP with scaling of $t\omega$ which terminates with a log terminal model (resp. with a Mori fiber space). 
 \end{proposition}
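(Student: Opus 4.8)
The plan is to recognize the proposed program as an ordinary $(K_X+B)$-minimal model program for the klt $3$-fold pair $(X,B)$, for which divisorial contractions, flips, and the termination of flips are already available from \cite{DH20, CHP16, DHP22, DO23}. We may assume $t>0$, since for $t=0$ the hypothesis says $K_X+B+\omega$ is already nef and the identity $X\to X$ is the required model. I would then run the $(K_X+B+\omega)$-MMP with scaling of $t\omega$ inductively: having constructed $\phi_i\colon X\dashrightarrow X_i$ with $(X_i,B_i:=\phi_{i*}B)$ $\mathbb Q$-factorial klt and $\omega_i:=\phi_{i*}\omega$, I set $\lambda_i:=\min\{\lambda\ge 0 : K_{X_i}+B_i+(1+\lambda t)\omega_i\ \text{is nef}\}$ (with $\lambda_{-1}=1$ to start, which is legitimate because $K_X+B+(1+t)\omega$ is nef), and if $\lambda_i>0$ I contract an extremal ray $R_i$ with $(K_{X_i}+B_i+\omega_i)\cdot R_i<0$ and $(K_{X_i}+B_i+(1+\lambda_i t)\omega_i)\cdot R_i=0$. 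The identity $K_{X_i}+B_i+\omega_i+\lambda t\omega_i=K_{X_i}+B_i+(1+\lambda t)\omega_i$ makes each such step simultaneously a step of a $(K_{X_i}+B_i)$-MMP: since $\omega_i$ is modified K\"ahler, hence pseudo-effective, $\omega_i\cdot R_i\ge 0$, and $(K_{X_i}+B_i+\omega_i)\cdot R_i<0$ then forces $\omega_i\cdot R_i>0$ and hence $(K_{X_i}+B_i)\cdot R_i<0$. So the relevant contractions and flips exist by \cite{DH20} (see also \cite[Theorem 4.3]{CHP16} and \cite[Theorem 1.3]{DHP22}).

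The step that needs care is showing that this MMP actually \emph{runs}, i.e.\ that the infimum defining $\lambda_i$ is attained and realized by a genuine extremal ray; this is where the modified K\"ahler hypothesis enters, via Lemma \ref{lem:length-bound-by-modified-kahler}. First I would note that modified K\"ahlerness of $\omega$ is preserved along the program (immediate for flips, and for divisorial contractions by pushing forward a K\"ahler current with vanishing Lelong numbers, or by working on a common resolution as in the proof of that lemma). Then Lemma \ref{lem:length-bound-by-modified-kahler} says that among the rational curves generating the $(K_{X_i}+B_i)$-negative extremal rays only finitely many are $(K_{X_i}+B_i+\omega_i)$-negative, hence only finitely many are $(K_{X_i}+B_i+(1+\lambda t)\omega_i)$-negative for $\lambda\in[0,\lambda_{i-1}]$; therefore $\lambda_i$ is a minimum attained by some extremal ray $R_i$, and the program is well defined. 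I would also check that no step is a Mori fibre contraction: since each step is $(K+B+\omega)$-negative, pseudo-effectivity of $K_X+B+\omega$ is preserved, whereas a Mori fibre contraction $X_i\to Z$ would force $K_{X_i}+B_i+\omega_i$ to restrict to an anti-ample — hence non-pseudo-effective, on the projective general fibre — class, a contradiction.

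Finally I would argue termination: being a $(K_X+B)$-MMP for the klt $3$-fold pair $(X,B)$, at most finitely many steps are divisorial contractions (the Picard number drops at each), and an infinite tail would be a sequence of $(K_X+B)$-flips, which terminates by \cite[Theorem 1.12]{DO23}. Hence after finitely many steps we reach $\phi\colon X\dashrightarrow X_n$ with no $(K_{X_n}+B_n+\omega_n)$-negative extremal ray, so $K_{X_n}+B_n+\omega_n$ is nef; since $\phi$ is a composition of divisorial contractions and flips it extracts no divisors, so by Lemma \ref{l-models} (using that for klt pairs a log minimal model is a log terminal model) $\phi\colon X\dashrightarrow X_n$ is the desired log terminal model. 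The main obstacle I anticipate is the middle paragraph: controlling an MMP whose scaling class is only modified K\"ahler, i.e.\ the combination of Lemma \ref{lem:length-bound-by-modified-kahler} with the verification that modified K\"ahlerness persists along the program.
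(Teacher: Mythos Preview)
Your proposal is correct and follows essentially the same approach as the paper: recognize the MMP with scaling of $\omega$ as a $(K_X+B)$-MMP, use Lemma~\ref{lem:length-bound-by-modified-kahler} to show the nef threshold is attained at a genuine $(K_X+B)$-negative extremal ray (this is the paper's Claim~\ref{clm:curve-at-nef-threshold}), verify that modified K\"ahlerness persists, and invoke termination of $(K_X+B)$-flips from \cite{DO23}. One small slip: the implication ``$\omega_i$ modified K\"ahler, hence pseudo-effective, hence $\omega_i\cdot R_i\ge 0$'' is not valid in general (pseudo-effective classes need not pair non-negatively with arbitrary curves); the correct reason $\omega_i\cdot R_i>0$---and hence $(K_{X_i}+B_i)\cdot R_i<0$---is the pair of scaling equations $(K_{X_i}+B_i+\omega_i)\cdot R_i<0$ and $(K_{X_i}+B_i+(1+\lambda_i t)\omega_i)\cdot R_i=0$ with $\lambda_i t>0$, which is exactly how the paper argues.
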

 We remark that we are not assuming that $(X, B+\boldsymbol{\omega})$ is a generalized pair in this proposition. The proof relies instead on the fact that this MMP is in fact a $(K_X+B)$-MMP and modified K\"ahler classes are preserved under steps of the MMP.
 \begin{proof}
 Let $\lambda:=\inf\{t\>0\;:\; K_X+B+(1+t)\omega\mbox{ is nef}\}$.
 \begin{claim}\label{clm:curve-at-nef-threshold}
 If $\lambda >0$, then there exists a $(K_X+B)$-negative extremal ray $\mathbb R^+[C]$ such that $(K_X+B+(1+\lambda) \omega )\cdot C=0$.
 \end{claim}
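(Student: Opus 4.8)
The plan is to show that the nef threshold $\lambda$ is computed by a single $(K_X+B)$-negative extremal ray, adapting the standard argument (cf.\ \cite{BCHM10}); the one genuine modification needed is to accommodate the fact that $\omega$ is merely modified K\"ahler rather than K\"ahler, and this is precisely where Lemma~\ref{lem:length-bound-by-modified-kahler} enters.

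First I would record that $N_0:=K_X+B+(1+\lambda)\omega$ is itself nef. Indeed, the set $\{t\geq 0 : K_X+B+(1+t)\omega \text{ is nef}\}$ is nonempty by hypothesis, bounded below by $0$, and closed (being the preimage of the closed cone $\Nef(X)\subset H^{1,1}_{\BC}(X)$ under the continuous affine map $t\mapsto K_X+B+(1+t)\omega$), so its infimum $\lambda$ lies in it. Next I would invoke the cone theorem (see \cite[Theorem~5.2 and Corollary~5.3]{DHP22}): there are countably many rational curves $\{C_i\}_{i\in I}$, which we may take pairwise non-numerically equivalent, generating the $(K_X+B)$-negative extremal rays, with $-6\leq (K_X+B)\cdot C_i<0$ and $\NA(X)=\NA(X)_{(K_X+B)\geq 0}+\sum_{i\in I}\mbR^+[C_i]$. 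Since $N_0$ is nef while $(K_X+B)\cdot C_i<0$, we must have $\omega\cdot C_i>0$ for every $i$ (otherwise $N_0\cdot C_i<0$); hence for each $i$ there is a well-defined real number $\mu_i:=-\,(K_X+B+\omega)\cdot C_i/(\omega\cdot C_i)$ with $(K_X+B+(1+t)\omega)\cdot C_i=(\omega\cdot C_i)(t-\mu_i)$ for all $t$, and the nefness of $N_0$ forces $\lambda\geq\mu_i$ for all $i$.

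The crucial input is Lemma~\ref{lem:length-bound-by-modified-kahler}(2): since $(X,B)$ is klt, hence log canonical, there is a \emph{finite} subset $J\subset I$ with $(K_X+B+\omega)\cdot C_i\geq 0$, i.e.\ $\mu_i\leq 0$, for every $i\in I\setminus J$. I then claim $\lambda=\max_{j\in J}\mu_j$ (in particular $J\neq\emptyset$). If not, set $\lambda':=\max(\{0\}\cup\{\mu_j:j\in J\})$, so that $0\leq\lambda'<\lambda$, and I would verify that $N':=K_X+B+(1+\lambda')\omega$ is nef, contradicting the minimality of $\lambda$. The verification is a direct check against the generators of $\NA(X)$: on $C_i$ with $i\in J$ one has $N'\cdot C_i=(\omega\cdot C_i)(\lambda'-\mu_i)\geq 0$ by the choice of $\lambda'$; on $C_i$ with $i\notin J$ one has $\mu_i\leq 0\leq\lambda'$, so again $N'\cdot C_i\geq 0$; and on $z\in\NA(X)_{(K_X+B)\geq 0}$, if $\omega\cdot z\geq 0$ then $N'\cdot z=(K_X+B)\cdot z+(1+\lambda')(\omega\cdot z)\geq 0$, while if $\omega\cdot z<0$ then, using $N_0\cdot z\geq 0$ (so $(K_X+B)\cdot z\geq -(1+\lambda)(\omega\cdot z)$), we get $N'\cdot z\geq (\lambda-\lambda')(-\omega\cdot z)\geq 0$. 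Hence $N'$ is nonnegative on a generating set of $\NA(X)$, so nef. Therefore $\lambda=\mu_{j_0}$ for some $j_0\in J$, and then $N_0\cdot C_{j_0}=(\omega\cdot C_{j_0})(\lambda-\mu_{j_0})=0$ with $\mbR^+[C_{j_0}]$ a $(K_X+B)$-negative extremal ray, which is exactly the assertion of the claim.

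The main obstacle --- and the only place where the hypothesis that $\omega$ is modified K\"ahler (rather than an arbitrary pseudo-effective class) is really used --- is ensuring that $\sup_i \mu_i=\lambda$ is actually \emph{attained}: a priori one could imagine infinitely many extremal curves with $\mu_i\nearrow\lambda$ but no single one at the threshold, in which case the conclusion would fail and one could keep lowering $\lambda$. Lemma~\ref{lem:length-bound-by-modified-kahler}(2) rules this out by confining the extremal curves with $\mu_i>0$ to a finite set, after which everything reduces to the elementary convexity-and-closedness argument about the nef cone carried out above.
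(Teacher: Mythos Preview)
Your proof is correct and follows essentially the same approach as the paper: invoke the cone theorem, use Lemma~\ref{lem:length-bound-by-modified-kahler} to reduce the relevant $(K_X+B+\omega)$-negative extremal rays to a finite set, and argue by contradiction that if none of these rays were $(K_X+B+(1+\lambda)\omega)$-trivial then one could lower $\lambda$. The only cosmetic differences are that you explicitly verify $K_X+B+(1+\lambda)\omega$ is nef (which the paper uses without comment), you package the threshold computation via the parameters $\mu_i$, and on $\NA(X)_{(K_X+B)\geq 0}$ you split into cases according to the sign of $\omega\cdot z$, whereas the paper writes $K_X+B+(1+\lambda-\delta)\omega$ as a convex combination of $K_X+B$ and $K_X+B+(1+\lambda)\omega$ to handle this region uniformly.
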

 \begin{proof}[Proof of Claim \ref{clm:curve-at-nef-threshold}]
 By \cite[Theorem 5.2]{DHP22}, there are countably many $(K_X+B)$-negative extremal rays generated by curves $\{C_i\}_{i\in I}$ such that $0>(K_X+B)\cdot C_i\geq -6$. 
 Since $\omega$ is a modified K\"ahler class, by Lemma \ref{lem:length-bound-by-modified-kahler} there is a finite subset $I^-\subset I$ 
 such that $(K_X+B+\omega)\cdot C_i< 0$ if and only if $i\in I^-$.   Note that $(K_X+B+\omega)\cdot C_i\geq  0$ and $\omega \cdot C_i>0$ for any $i\in I^+:=I\setminus I^-$, and hence 
 $(K_X+B+(1+s)\omega)\cdot C_i>0$ for $s>0$ and any $i\in I^+$.
 Let $I_0$ be the set of $i\in I^-$ such that  $(K_X+B+(1+\lambda)\omega)\cdot C_i=0$.
 
 We claim that $I_0\neq\emptyset$.
 To see this, suppose that $I_0=\emptyset$, then $(K_X+B+(1+\lambda)\omega)\cdot C_i>0$ for all $i\in I^-$. Since $I^-$ is a finite set, then there is a positive real number $b>0$ such that $(K_X+B+(1+\lambda)\omega)\cdot C_i>b$ for any $i\in I^-$, and there is a positive real number $c>0$ such that $\omega\cdot C_i\<c$ for all $i\in I^-$. Choose a positive real number $0<\delta< {\rm min}\{\lambda , \ b/c\}$, then 
 \begin{equation}\label{eqn:small-nef}
     (K_X+B+(1+\lambda-\delta)\omega)\cdot C_i\geq b-\delta c>0 \quad\mbox{for all } i\in I^-.
 \end{equation}
 As observed above, since $s:=\lambda-\delta>0$, then
  \eqref{eqn:small-nef} holds also for all $i\in I^+$ and hence for all $i\in I$.

Finally, observe that \[K_X+B+(1+\lambda-\delta)\omega=\frac{\delta}{1+\lambda}(K_X+B)+\left(1-\frac{\delta}{1+\lambda}\right)(K_X+B+(1+\lambda)\omega)\] 
and so $K_X+B+(1+\lambda-\delta)\omega$ is non-negative on $\overline{\rm NA}(X)_{K_X+B\geq 0}$.
Since, by \cite[Theorem 5.2]{DHP22}, \[\overline{\rm NA}(X)=\overline{\rm NA}(X)_{K_X+B\geq 0}+\sum _{i\in I}\mathbb R ^+\cdot [C_i],\]
then $K_X+B+(1+\lambda-\delta)\omega$ is non-negative on $\overline{\rm NA}(X)$ and so $K_X+B+(1+\lambda-\delta)\omega$ is nef, which is a contradiction to the definition of $\lambda$.

 \end{proof}

 Now, let $R=\mbR^+[C]$ be a $(K_X+B)$-negative extremal ray such that $(K_X+B+(1+\lambda) \omega )\cdot C=0$, and in particular, $\omega \cdot C>0$. Then, by \cite[Theorem 1.7]{DH20}, we can contract this ray and obtain a morphism $f:X\to Y$ to a normal compact K\"ahler variety $Y$ with rational singularities. 
 If $f$ is a Mori fiber space,  $K_{X}+B+\omega$ is not pseudo-effective and the claim is proved. Otherwise $f$ is bimeromorphic.
 If $f$ is a flipping contraction, then let $f':X'\to Y$ be the associated flip (and if $f$ is a divisorial contraction, let $X'=Y$), and $B', \omega '$ the pushforwards of $B$ and $\omega$ on $X'$. 
 
 Note that $K_{X'}+B'+(1+\lambda)\omega '$ is nef and $\omega'$ is modified K\"ahler. We now let $\lambda':=\inf\{t\>0\;:\; K_{X'}+B'+(1+t)\omega'\mbox{ is nef}\}$ and repeat the process. Note that $ \lambda \geq \lambda'\geq 0$ and the process terminates as there is no infinite sequence of steps for any $(K_X+B)$-MMP by \cite[Theorem 1.12]{DO23}.
 Thus we eventually end up either with a $(K_X+B+(1+\lambda)\omega)$-trivial Mori fiber space for some $\lambda >0$, in which case $K_X+B+\omega$ is not pseudo-effective, or we end up with a $(K_X+B+\omega)$-minimal model, in which case $K_X+B+\omega$ is pseudo-effective.
 
 \end{proof}

\begin{corollary}\label{cor:rel-mmp-with-scaling}
 Let $(X, B)$ be a $\mbQ$-factorial compact K\"ahler $3$-fold klt pair and $\pi:X\to S$ a proper surjective morphism to a K\"ahler variety. Let $\omega\in H^{1,1}_{\BC}(X)$ be a modified K\"ahler class over $S$, $K_X+B+\omega$ is pseudo-effective (resp. not  pseudo-effective) over $S$, and $K_X+B+(1+t)\omega$  is K\"ahler over $S$ for some $t\>0$. Then we can run a $(K_X+B+\omega)$-MMP over $S$ with scaling of $t\omega$ which terminates with a log terminal model over $S$ (resp. a Mori-fiber space over $S$). 
 \end{corollary}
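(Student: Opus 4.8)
The plan is to relativize the proof of Proposition \ref{pro:mmp-with-scaling} over $S$, using the relative cone and contraction theorems for compact K\"ahler $3$-folds exactly as in the proof of Theorem \ref{thm:relative-mmp}. Since $X$ is compact and $\pi$ is surjective, $S$ is compact, hence a compact K\"ahler variety; fix a K\"ahler class $\omega_S$ on $S$ such that $\omega+\pi^*\omega_S$ is modified K\"ahler on $X$. The observation that makes the argument go through is that $\omega\cdot C=(\omega+\pi^*\omega_S)\cdot C$ for every curve $C$ with $\pi_*C=0$ (a \emph{$\pi$-vertical} curve); thus on $\pi$-vertical curves the relatively modified K\"ahler class $\omega$ behaves like the genuinely modified K\"ahler class $\omega+\pi^*\omega_S$, and in particular Lemma \ref{lem:length-bound-by-modified-kahler} applied to $\omega+\pi^*\omega_S$ controls $\omega\cdot C$ and the sign of $(K_X+B+\omega)\cdot C$ for $\pi$-vertical $C$.

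Concretely, I would set $\lambda:=\inf\{t\ge 0:K_X+B+(1+t)\omega\text{ is nef over }S\}$, so $0\le\lambda\le t$; if $\lambda=0$ the MMP has no steps and we are done, so assume $\lambda>0$. By \cite[Theorem 5.2]{DHP22} there are countably many rational curves $\{C_i\}_{i\in I}$ with $0>(K_X+B)\cdot C_i\ge-6$ generating the $(K_X+B)$-negative extremal rays of $\NA(X)$; writing $I_v\subset I$ for the $\pi$-vertical ones, a class of the form $K_X+B+(1+\lambda-\delta)\omega$ is nef over $S$ (for $\delta>0$ small, via the convexity identity used in Claim \ref{clm:curve-at-nef-threshold}) as soon as it is non-negative on $C_i$ for all $i\in I_v$. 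Lemma \ref{lem:length-bound-by-modified-kahler} for $\omega+\pi^*\omega_S$ shows $I^-:=\{i\in I_v:(K_X+B+\omega)\cdot C_i<0\}$ is finite, and for $i\in I_v\setminus I^-$ one has $\omega\cdot C_i>-(K_X+B)\cdot C_i>0$, hence $(K_X+B+(1+s)\omega)\cdot C_i>0$ for all $s>0$. Running the $\delta$-perturbation argument of Claim \ref{clm:curve-at-nef-threshold} inside the finite set $I^-$ then produces a $(K_X+B)$-negative extremal ray $R=\mbR^+[C]$ with $\pi_*C=0$, $\omega\cdot C>0$ and $(K_X+B+(1+\lambda)\omega)\cdot C=0$. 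I contract $R$ over $S$: by \cite[Theorem 1.7]{DH20} there is a contraction $g:X\to Y$ of $R$, and $\pi$ factors through $g$ by the rigidity lemma; replacing $X$ by $Y$ in the divisorial case, or by the flip of $g$ (which exists by \cite[Theorems 1.1, 1.2]{DH20}) in the flipping case, yields the next model $X'\to S$, on which $K_{X'}+B'+(1+\lambda)\omega'$ is nef over $S$ and $\omega':=\phi_*\omega$ is again relatively modified K\"ahler over $S$, since $\omega'+(\pi')^*\omega_S=\phi_*(\omega+\pi^*\omega_S)$ and modified K\"ahler classes are preserved by steps of the MMP as in the proof of Proposition \ref{pro:mmp-with-scaling}. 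Every contraction here is bimeromorphic because $K_X+B+\omega$ is pseudo-effective over $S$, so no Mori fiber space occurs, and the process stops after finitely many steps because there is no infinite sequence of steps of a $(K_X+B)$-MMP over $S$ by \cite[Theorem 1.12]{DO23}. This produces the desired log terminal model over $S$.

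The step I expect to demand the most care is ensuring that the \emph{relatively} modified K\"ahler condition genuinely survives each flip (this amounts to pushing a K\"ahler current through a small bimeromorphic map and checking its Lelong numbers, and should be handled exactly as in Proposition \ref{pro:mmp-with-scaling}), together with the cone-theorem bookkeeping guaranteeing that the nef threshold $\lambda$ is always realized by a $\pi$-vertical extremal ray. Everything else is a routine relativization of Proposition \ref{pro:mmp-with-scaling} over $S$, with the relative contraction and termination statements imported from the proof of Theorem \ref{thm:relative-mmp}.
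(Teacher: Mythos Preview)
Your overall strategy—relativize Proposition \ref{pro:mmp-with-scaling} by restricting attention to $\pi$-vertical curves—is reasonable, and you correctly flag the delicate point at the end. But the step you flag is in fact a genuine gap, and the paper's proof avoids it by a different (and cleaner) maneuver.

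The problem is your assertion that $K_X+B+(1+\lambda-\delta)\omega$ is nef over $S$ once it is non-negative on $C_i$ for all $i\in I_v$. The convexity identity only handles a $\pi$-vertical class $\gamma$ with $(K_X+B)\cdot\gamma\ge 0$; for a $\pi$-vertical curve $C$ with $(K_X+B)\cdot C<0$, the absolute cone theorem writes $[C]=\gamma+\sum a_i[C_i]$ with $\gamma\in\NA(X)_{(K_X+B)\ge 0}$ and $a_i\ge 0$, but neither $\gamma$ nor the $C_i$ that appear need be $\pi$-vertical. Since $\omega$ is only modified K\"ahler \emph{over $S$}, you have no control of $(K_X+B+(1+\lambda-\delta)\omega)\cdot\gamma$ or of the non-vertical $C_i$ terms, and the conclusion does not follow. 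Equivalently, you are implicitly using a relative cone decomposition of the form $\NA(X/S)=\NA(X/S)_{(K_X+B)\ge 0}+\sum_{i\in I_v}\mbR^+[C_i]$, which is not established here (Theorem \ref{thm:relative-mmp} only produces \emph{some} vertical negative ray, not a decomposition).

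The paper sidesteps this entirely: it first replaces $\omega$ by $\omega+\pi^*\omega_S$ so that the hypotheses become absolute, then further replaces $\omega$ by $\omega':=\omega+\pi^*\eta_S$ for a K\"ahler class $\eta_S$ on $S$ with $\eta_S\cdot D>6$ for every curve $D\subset S$, and simply runs the \emph{absolute} $(K_X+B+\omega')$-MMP with scaling of $t\omega'$ via Proposition \ref{pro:mmp-with-scaling}. If a step were not $\pi$-vertical, then for the extremal curve $C_i$ one has $(K_X+B+(1+\lambda)\omega)\cdot C_i\ge -6$ (since $\omega\cdot C_i>0$ and $(K_X+B)\cdot C_i\ge -6$) while $(1+\lambda)\pi^*\eta_S\cdot C_i>6$, contradicting $(K_X+B+(1+\lambda)\omega')\cdot C_i=0$. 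Thus every step is over $S$, and since $\omega$ and $\omega'$ agree on vertical curves this is exactly the desired relative MMP. This trick—using the length bound together with a deliberately inflated pullback from $S$—is the idea your argument is missing.
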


 \begin{proof}
 Replacing $\omega$ by $\omega +\pi ^* \omega _S$ where $\omega _S$ is a suitable K\"ahler class on $S$, we may assume that $\omega\in H^{1,1}_{\BC}(X)$ is a modified K\"ahler class and $K_X+B+(1+t)\omega$ is K\"ahler.
 Now assume that $K_X+B+\omega$ is pseudo-effective. 
 Let $\{C_i\}_{i\in I}$ be the set of curves generating all $(K_X+B)$-negative extremal rays of $\NA(X)$.
 Since $\omega$ is modified K\"ahler, from the proof of Lemma \ref{lem:length-bound-by-modified-kahler} it follows that for almost all curves $\Sigma\subset X$, $\omega\cdot \Sigma\geq 0$ holds, in other words, there are only finitely many curves $\Sigma _1,\ldots ,\Sigma _k\subset X$ such that $\omega\cdot \Sigma_i<0$ for all $i=1,\ldots, k$.

 Then from  \cite[Theorem 5.2]{DHP22} it follows that $0<-(K_X+B)\cdot C_i\<6$ for all $i\in I$. Since $K_X+B+(1+t)\omega$ is K\"ahler, it follows that $\omega\cdot C_i>0$ for all $i\in I$. In particular, we have $(K_X+B+(1+\mu)\omega)\cdot C_i\geq -6$ for any $0\leq \mu \leq t$ and for all $i\in I$.
 Pick a K\"ahler class $\eta _S$ on $S$ such that $C\cdot \eta _S>6$ for any curve $C$ on $S$. Let $\omega':=\omega +\pi ^*\eta_S$, then $\omega '$ is a modified K\"ahler class on $X$, $K_X+B+\omega'$ is pseudo-effective, and $K_X+B+(1+t)\omega'$ is K\"ahler.
 By Proposition \ref{pro:mmp-with-scaling}, we may run the $(K_X+B+\omega')$-MMP with scaling of $t\omega '$. 
 Let \[\lambda:=\inf\{s>0\;:\; K_X+B+\omega'+s\omega' \mbox{ is nef}\}.\] 
 Then by Claim \ref{clm:curve-at-nef-threshold} there is a $(K_X+B)$-negative extremal ray spanned by a curve $C_i$ such that $(K_X+B+(1+\lambda)\omega')\cdot C_i=0$. We claim that $\pi_*C_i=0$.  If not, i.e. if $\pi_* C_i\ne 0$, then we have
 \[0=(K_X+B+(1+\lambda)\omega)\cdot C_i+(1+\lambda)\pi ^* \eta_S \cdot C_i>-6+(1+\lambda)6>0\]
 which is a contradiction. Therefore $\pi_* C_i= 0$ and so the corresponding flip or divisorial contraction is a step of the $(K_X+B)$-MMP over $S$.
 Since there is no infinite sequence of $(K_X+B)$-flips (see \cite[Theorem 3.3]{DO23}), we may repeat this procedure finitely many times until we obtain a $(K_X+B+\omega)$-minimal model over $S$.

  Finally, suppose that $K_X+B+\omega$ is not pseudo-effective, then arguing as above, the minimal model program $f:X\dasharrow X'$ ends with a $K_X+B+(1+\lambda)\omega$ Mori fiber space $g:X'\to Z$  for some $\lambda >0$ (cf. Proposition \ref{pro:mmp-with-scaling}). 
 Since each step of this MMP is over $S$, so is the morphism $g$. In particular, it follows that $K_X+B+\omega$ is not pseudo-effective over $S$.
 \end{proof}

 We now prove the existence of log canonical models when $K_X+B+\bbeta_X$ is big and generalized klt. This is result is of fundamental importance and will be used repeatedly in the rest of the article.
 \begin{theorem}\label{t-3-mmp}
 Let $(X,B+\bbeta)$ be a generalized klt pair, where $X$ is a compact K\"ahler $3$-fold. Assume that $K_X+B+\bbeta_X$ is big. Then the following hold:
 \begin{enumerate}
 \item  $(X,B+\bbeta _X)$ has a (unique) log canonical model.
 \item There exists a log terminal model and all such  models  admit a morphism to the log canonical model.
     \item Assume that $X$ is strongly $\Q$-factorial,  $[K_X+B+\bbeta_X]\in H^{1,1}_{\rm BC}(X)$ is very general  and $\phi :X\dasharrow X^{m}$ is a strongly $\Q$-factorial  $(K_X+B+\bbeta_X)$  log terminal model, then $X^{\rm m}$ coincides with the log canonical model.
 \end{enumerate}
 \end{theorem}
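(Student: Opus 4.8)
The plan is to reduce parts (1)--(2) to the classical minimal model program for klt K\"ahler threefolds, by using bigness to trade the abstract nef class $\bbeta_X$ for a K\"ahler class. First I would pass to a log resolution $\nu\colon(X',B^*)\to(X,B)$ of the generalized pair, chosen as in the proof of Theorem \ref{t-gkltlocal} so that $K_{X'}+B^*+\bbeta_{X'}=\nu^*(K_X+B+\bbeta_X)+F$ with $F\ge 0$ and $\Supp F=\Ex(\nu)$; by Lemma \ref{l-models}(7) it then suffices to produce log terminal and log canonical models for the log smooth generalized klt pair $(X',B^*+\bbeta_{X'})$, whose boundary class is again big. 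Since that class is big, by the Demailly--Paun description of big classes (equivalently the divisorial Zariski decomposition of \cite{Bou04}), after possibly a further log resolution (harmless again by Lemma \ref{l-models}(7)) I may write $K_{X'}+B^*+\bbeta_{X'}\equiv\omega'+D'$ with $\omega'$ K\"ahler and $D'\ge 0$ an effective $\mathbb R$-divisor with simple normal crossings support. Then for $0<t\ll 1$ one has
\[
K_{X'}+B^*+\bbeta_{X'}\ \equiv\ K_{X'}+\Delta+\bbeta',\qquad \Delta=(1-t)B^*+tD',\quad \bbeta'=(1-t)\bbeta_{X'}+t\omega',
\]
where $(X',\Delta)$ is klt (snc support, coefficients $<1$) and $\bbeta'$ is K\"ahler (a nef class plus a K\"ahler class). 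Because the two boundary classes are numerically equivalent, a $(K_{X'}+\Delta+\bbeta')$-MMP and a $(K_{X'}+B^*+\bbeta_{X'})$-MMP have exactly the same steps, and the discrepancy-increasing property and the absence of extracted divisors transfer, so by Lemma \ref{l-models}(6),(7) a log terminal model of $(X',\Delta+\bbeta')$ is one of $(X,B+\bbeta)$.

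Now since $\bbeta'$ is K\"ahler, $K_{X'}+\Delta+s\bbeta'$ is K\"ahler for $s\gg 0$, and a $(K_{X'}+\Delta+\bbeta')$-MMP with scaling of $(s-1)\bbeta'$ is a $(K_{X'}+\Delta)$-MMP with scaling of the K\"ahler class $s\bbeta'$; this exists by \cite{DH20} and terminates by \cite[Theorem 3.3]{DO23}. As $K_{X'}+\Delta+\bbeta'$ is big, hence pseudo-effective, the relevant scaled class stays big throughout and the MMP cannot end with a Mori fibre space, so it terminates with a $\mathbb Q$-factorial klt threefold $X^{\rm m}$ on which $\alpha^{\rm m}:=K_{X^{\rm m}}+\Delta^{\rm m}+\bbeta'_{X^{\rm m}}$ is nef and big; by the previous paragraph $\phi\colon X\dasharrow X^{\rm m}$ is then a log terminal model of $(X,B+\bbeta)$, which is (2). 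For (1): K\"ahler classes are preserved under the steps of the MMP (Lemma \ref{l-kmmp}), so $\bbeta'_{X^{\rm m}}$ is K\"ahler, hence $\alpha^{\rm m}-(K_{X^{\rm m}}+\Delta^{\rm m})=\bbeta'_{X^{\rm m}}$ is nef and big, and the base-point-free theorem in the K\"ahler setting (contract the null locus curve by curve as in the proof of Theorem \ref{t-lcms}, using that a nef and big class on a threefold has a null locus with finitely many components, and conclude with the Nakai--Moishezon criterion on the target) yields a morphism $g\colon X^{\rm m}\to X^{\rm c}$ with $\alpha^{\rm m}=g^*\omega_{X^{\rm c}}$ for a K\"ahler class $\omega_{X^{\rm c}}$; this $X^{\rm c}$ is the log canonical model, unique by Lemma \ref{l-models}(3), and every log terminal model admits a morphism to it by Lemma \ref{l-models}(5).

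For part (3) one must upgrade $g\colon X^{\rm m}\to X^{\rm c}$ to an isomorphism, i.e. show $\alpha^{\rm m}=[K_{X^{\rm m}}+B^{\rm m}+\bbeta_{X^{\rm m}}]$ is actually K\"ahler, given that $X$ and $X^{\rm m}$ are strongly $\mathbb Q$-factorial and $\alpha:=[K_X+B+\bbeta_X]$ is very general. Since $\alpha^{\rm m}$ is nef and big on the threefold $X^{\rm m}$, the Nakai--Moishezon criterion for K\"ahler threefolds shows it is K\"ahler unless it degenerates along some irreducible $V\subsetneq X^{\rm m}$ with $\int_V(\alpha^{\rm m})^{\dim V}=0$ and $\dim V\in\{1,2\}$. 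Pulling back through a common resolution $p\colon W\to X$, $q\colon W\to X^{\rm m}$ of $\phi$ (with $q^*\alpha^{\rm m}=p^*\alpha+F$, $F\ge 0$), such a degeneracy forces $\alpha$ to satisfy a nontrivial affine-linear relation in $H^{1,1}_{\BC}(X)$: for $\dim V=2$ the prime divisors $S$ with $(\alpha^{\rm m})^2\cdot S=0$ are finite in number (the divisorial part of the movable class $(\alpha^{\rm m})^2$), and for $\dim V=1$ the classes of $g$-exceptional curves, pushed to $X$, range over a countable set once one runs over the finitely many possible log terminal models (Theorems \ref{t-finite}, \ref{t-finite-ltms}) and invokes the cone theorem, the curves contracted by $\phi^{-1}$ being handled separately via the fact that $\phi^{-1}$ extracts no divisor together with Lemma \ref{lem:small-to-isomorphism}. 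A very general $\alpha$ lies on none of these countably many subvarieties, so $\alpha^{\rm m}$ is K\"ahler and $g$ is an isomorphism.

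The main obstacle is part (3): making the ``very general'' hypothesis interact correctly with the $(K_X+B+\bbeta_X)$-MMP, that is, controlling uniformly over the finitely many possible log terminal models the countable family of numerical conditions whose failure would let $\alpha^{\rm m}$ degenerate along a subvariety, is the delicate point, and it is the step most dependent on the geography and finiteness results proved elsewhere in the paper. For parts (1)--(2) the one external input worth isolating is a base-point-free (Nakai--Moishezon) statement for nef and big classes on klt K\"ahler threefolds, which the reduction above puts one in position to apply.
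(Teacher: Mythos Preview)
Your reduction for (1)--(2) is essentially the paper's own (pass to a log resolution, use bigness to absorb a K\"ahler piece into $\bbeta$, then run a $(K_X+B)$-MMP with scaling of the K\"ahler class), modulo an algebraic slip: with $K_{X'}+B^*+\bbeta_{X'}\equiv \omega'+D'$ one has $(1+t)(K_{X'}+B^*+\bbeta_{X'})\equiv K_{X'}+(B^*+tD')+(\bbeta_{X'}+t\omega')$, so the correct replacement is $\Delta=B^*+tD'$ and $\bbeta'=\bbeta_{X'}+t\omega'$, not the convex combinations you wrote; your displayed numerical equivalence is false as stated. This is cosmetic.

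The genuine gap is in the passage from the log terminal model $X^{\rm m}$ to the log canonical model. You invoke Lemma~\ref{l-kmmp} to conclude that $\bbeta'_{X^{\rm m}}$ is K\"ahler, but that lemma only says the \emph{variety} $X'$ remains K\"ahler after an MMP step; the pushforward of a K\"ahler class under a divisorial contraction is in general only \emph{modified K\"ahler}, and by Lemma~\ref{l-3-nef} it need not even be nef. So your claim that $\alpha^{\rm m}-(K_{X^{\rm m}}+\Delta^{\rm m})$ is ``nef and big'' is unjustified, and the surface argument of Theorem~\ref{t-lcms} does not carry over directly: on the threefold $X^{\rm m}$ the null locus $\Null(\alpha^{\rm m})$ may contain surfaces. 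The paper handles this by running a further sequence of $\alpha^{\rm m}$-trivial steps of the $(K_{X^{\rm m}}+B^{\rm m})$-MMP (following \cite[Theorem~6.4]{DH20}, which only needs $\bbeta_{X^{\rm m}}$ modified K\"ahler) to reach an $X^n$ with $\Null(\alpha^n)$ purely one-dimensional, contracts it via \cite[Proposition~6.2]{DH20}, and then checks that $X^{\rm m}\to Z$ is also a morphism and $Z$ has rational singularities so that $\alpha_Z$ is K\"ahler. This extra MMP is the missing idea in your sketch of~(1).

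Your argument for (3) is both circular and far more complicated than necessary. You appeal to Theorems~\ref{t-finite} and~\ref{t-finite-ltms} to control the possible log terminal models, but those theorems are proved \emph{using} Theorem~\ref{t-3-mmp}. The paper's proof of (3) is one paragraph: since $X$ and $X^{\rm m}$ are strongly $\Q$-factorial, $\phi_*\colon H^{1,1}_{\BC}(X)\to H^{1,1}_{\BC}(X^{\rm m})$ is surjective (Lemma~\ref{lem:h11-surjective}, via \cite[Lemma~2.32]{DH20}), so $[K_{X^{\rm m}}+B^{\rm m}+\bbeta_{X^{\rm m}}]$ is again very general. Any curve $C$ contracted by $\pi\colon X^{\rm m}\to Z$ satisfies $(K_{X^{\rm m}}+B^{\rm m}+\bbeta_{X^{\rm m}})\cdot C=0$, a nontrivial linear condition on $H^{1,1}_{\BC}(X^{\rm m})$ which a very general class avoids; hence $\pi$ contracts no curve and is an isomorphism. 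No Nakai--Moishezon, no finiteness of models, no case analysis on $\dim V$.
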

 \begin{proof} We will first prove (1) and (2). We begin with the following reduction.
 
 \begin{claim} We may assume that $(X,B)$
 is log smooth and $\bbeta_X$ is a K\"ahler class.\end{claim}
\begin{proof}
Let $f:X'\to X$ be a structure morphism of the generalized pair $(X, B+\bbeta)$. 
Since $K_X+B+\bbeta_X$ is big, by \cite[Theoreme 1.4]{Bou02} and passing to a higher resolution if necessary, we may assume that $f^*(K_X+B+\bbeta_X)\equiv F'+\omega'$, where $\omega'$ is a K\"ahler form and $F'\>0$ is an effective $\mbQ$-divisor. Let $F+\omega:=f_*(F'+\omega ')$, then $F\geq 0$ and $\omega $ is modified K\"ahler.
For any $0<\epsilon \ll 1$, $(X,B+\epsilon F+\bbeta +\epsilon \bar \omega ')$ is generalized klt and \[K_X+B+\epsilon F+\bbeta _X +\epsilon \omega\equiv (1+\epsilon)(K_X+B+\bbeta_X).\] Thus, replacing $(X,B+\bbeta)$ by $(X, B+\epsilon F+\bbeta +\epsilon \bar \omega ' )$, we may assume that $\bbeta _{X'}$ is K\"ahler
for some  log resolution $f:X'\to X$ of the generalized pair $(X, B+\bbeta)$. 

Let $E\geq 0$ be an effective $\mbQ$-divisor such that $-E$ is $f$-ample and $E=\Ex(f)$. By Lemma \ref{l-models}(5) (see also \cite[Lemma 3.6.9]{BCHM10}), a log terminal model (resp. the log canonical model) of $K_{X'}+(B')_{\>0}+\eps E+\bbeta_{X'}$ (for $0<\eps\ll 1$) is also a log terminal model (resp. the log canonical model) of $K_X+B+\bbeta_X$. 
Thus replacing $(X,B+\bbeta)$ by $(X', (B')_{\geq 0}+\epsilon E+ \bbeta)$, we may assume that $(X,B)$
 is log smooth and $\bbeta_X$ is a K\"ahler class. 
\end{proof}
 
Then $K_X+B+(1+t)\bbeta_X$ is K\"ahler for $t\gg 0$ and $K_X+B+\bbeta_X$ is pseudo-effective, and thus by Proposition \ref{pro:mmp-with-scaling}, we can run the $(K_X+B+\bbeta_X)$-MMP with scaling of $t\bbeta_X$. We obtain a log terminal model
 $\phi:X\dasharrow X^{\rm m}$ such that $\alpha^{\rm m}:=K_{X^{\rm m}}+B^{\rm m}+\bbeta _{X^{\rm m}}=\phi_*(K_X+B+\bbeta_X)$ is nef and big, and $\bbeta_{X^{\rm m}}$ is a modified K\"ahler class. Moreover, since we are running an MMP with scaling, we also have that $K_{X^{\rm m}}+B^{\rm m}+(1+\epsilon)\bbeta _{X^{\rm m}}$ is nef (and big) for all $0\leq \epsilon \ll 1$. 
 \begin{claim}
 After a finite sequence of $\alpha^{\rm m}$-trivial steps of the $(K_{X^{\rm m}}+B^{\rm m})$-MMP $X^{\rm m}\dasharrow X^n$, we may assume that $(K_{X^n}+B^n)\cdot C\geq 0$ for any $\alpha^n$-trivial curve $C\subset X^n$ and that ${\rm Null}(\alpha ^n)$ does not contain any surface.
 \end{claim}
 \begin{proof}
 The proof follows exactly as in the proof of \cite[Theorem 6.4]{DH20} where it is shown that we may flip and contract all $(K_{X^{\rm m}}+B^{\rm m})$-negative extremal rays that are $\alpha^{\rm m}$-trivial. Note that in \cite[Theorem 6.4]{DH20} it is assumed that $\bbeta_{X^{\rm m}}$ is nef and big, but the arguments of the proof only use that $\bbeta_{X^{\rm m}}$ is modified K\"ahler. 
 \end{proof}

\begin{claim}\label{c-m}
There is a proper bimeromorphic contraction $\pi:X^n\to Z$ contracting ${\rm Null}(\alpha ^n)$ such that $\mu:X^{\rm m}\dasharrow Z$ is also a morphism.
\end{claim}
\begin{proof}
The morphism $\pi:X^n\to Z$ contracting ${\rm Null}(\alpha ^n)$ exists by \cite[Proposition 6.2]{DH20}. Following the proof of \cite[Theorem 6.4]{DH20}, we argue that $\mu: X^{\rm m}\to Z$ is also an $\alpha ^{\rm m}$-trivial morphism. 
To this end, we consider the above sequence of $\alpha ^{\rm m}$-trivial steps of the $K_{X^{\rm m}}+B^{\rm m}$-MMP, say
\[X^{\rm m}\dasharrow X^1 \dasharrow \ldots \dasharrow X^n.\]
Proceeding by descending induction on $is$, it suffices to show that if $\mu ^i:X^i\to Z$  is a proper bimeromorphic contraction contracting $\Null(\alpha ^i)$, then $\mu ^{i-1}:X^{i-1}\dasharrow Z$  is also a proper bimeromorphic contraction contracting ${\rm Null}(\alpha ^{i-1})$.
If $f_{i-1}:X^{i-1}\dasharrow X^i$ is a divisorial contraction, then the claim is clear, so assume that $f_{i-1}$ is a flip and let $\pi:X^{i-1}\to W$, $\pi ^+:X^i\to W $ be the corresponding flipping and flipped contractions. 
Since $f_{i-1}$ is $\alpha ^{i-1}$-trivial, then $\alpha ^i\cdot C=0$ for any curve $C$ contracted by $\pi ^+$. Since $\mu ^i$ contracts ${\rm Null}(\alpha ^i)$, it also contracts $C$. Then from the rigidity lemma \cite[Theorem 4.1.13]{BS95} it follows that $W\dasharrow Z$ is a morphism and hence so is $\mu ^{i-1}:X^{i-1}\dasharrow Z$. It is easy to see that $\mu ^{i-1}$ contracts precisely the null locus ${\rm Null}(\alpha ^{i-1})$.
The claim follows.
\end{proof}

Let $C\subset X^{\rm m}$ be a curve contracted by $\mu:X^{\rm m}\to Z$. It is easy to see (passing through the graph of $X^{\rm m}\bir X^n$) that $\alpha^{\rm m}\cdot C=0$. Now recall that $K_{{X^{\rm m}}}+B^{\rm m}+(1+\eps)\bbeta_{X^{\rm m}}$ is nef. So from  \begin{align*}
    \eps\bbeta_{X^{\rm m}}    &= (K_{X^{\rm m}}+B^{\rm m}+(1+\eps)\bbeta_{X^{\rm m}})-(K_{X^{\rm m}}+B^{\rm m}+\bbeta_{X^{\rm m}})\\
                        &= (K_{X^{\rm m}}+B^{\rm m}+(1+\eps )\bbeta_{X^{\rm m}})-\alpha^{\rm m}
\end{align*}
 it follows that $\bbeta_{X^{\rm m}}\cdot C\geq 0$ for all curves $C\subset X^{\rm m}$ contracted by $\mu:X^{\rm m}\to Z$. Thus $-(K_{X^{\rm m}}+B^{\rm m})$ is $\mu$-nef-big, as $-(K_{X^{\rm m}}+B^{\rm m})|_{X^{\rm m}_z}\equiv\bbeta_{X^{\rm m}}|_{X^{\rm m}_z}$ for all $z\in Z$. % and hence (locally over $Z$) there is a klt pair $(X',\Delta)$ such that $-(K_{X'}+\Delta) $ is ample over $Z$. 
 Then by \cite[Lemma 8.8]{DHP22}, $Z$ has rational singularities. Now since $Z$ is in Fujiki's class $\mathcal C$, by \cite[Lemma 3.3]{HP16} there exists a $(1, 1)$ class $\alpha _Z\in H^{1,1}_{\rm BC}(Z)$ such that $\alpha ^{\rm m}\equiv \mu ^* \alpha _Z$. One then easily checks that $\Null(\alpha _Z)=\emptyset $ and so $\alpha _Z$ is K\"ahler by \cite[Theorem 2.29]{DHP22}.  
 Thus $K_Z+B_Z+\beta_Z:=\mu_*(K_{X^{\rm m}}+B^{\rm m}+\bbeta_{X^{\rm m}})$ is a log canonical model of $K_X+B+\bbeta_X$. The uniqueness of log canonical models follows by (3) of Lemma \ref{l-models}; this proves (1). 
 
 (2) The fact that log terminal models admit a morphism to the log canonical model follows from the Claim \ref{c-m}  above.
  
 (3) Finally, suppose that $[K_X+B+\bbeta_X]$ is very general in $H^{1,1}_{\BC}(X)$ and $\pi:X^{\rm m}\to Z$ is the morphism from a log terminal model $X^{\rm m}$ to the log canonical model $Z$. Since $X$ and $X^m$ are strongly $\Q$-factorial, it follows from Lemma \ref{lem:h11-surjective}  that the induced morphism $\phi_*: H^{1,1}_{\BC}(X)\to H^{1,1}_{\BC}(X^{\rm m})$ is surjective, and in particular, the class of $K_{X^{\rm m}}+B^{\rm m}+\bbeta_{X^{\rm m}}$ is very general in $H^{1,1}_{\BC}(X^{\rm m})$.
Let $C$ be a curve contracted by $\pi$, then $(K_{X^{\rm m}}+B^{\rm m}+\bbeta _{X^{\rm m}})\cdot C=0$, contradicting the fact that $[K_{X^{\rm m}}+B^{\rm m}+\bbeta_{X^{\rm m}}]$ is very general. Therefore $\pi$ is a quasi-finite proper morphism with connected fibers,  and hence an isomorphism.\\

 \end{proof}~\\
 We will also need the following relative version of Theorem \ref{t-3-mmp}.
 \begin{theorem}\label{t-3-mmp-rel}
 Let $(X,B+\bbeta)$ be a compact K\"ahler $3$-fold generalized klt pair, where $\pi :X\to S$ is a morphism to a compact K\"ahler variety and $K_X+B+\bbeta_X$ is big over $S$. Then the following hold:
 \begin{enumerate}
 \item  $(X,B+\bbeta _X)$ has a (unique) log canonical model $X\dasharrow X^{\rm c}$ over $S$. 
 \item There exists a log terminal model $X\dasharrow X^{\rm m}$ over $S$ such that $K_{X^{\rm m}}+B^{\rm m}+\bbeta _{X^{\rm m}}+p^*\omega _S$ is nef for some K\"ahler form $\omega _S$ on $S$ (where $p:X^{\rm m}\to S$ is the induced morphism) and  there is a morphism $X^{\rm m}\to X^{\rm c}$ over $S$. 
 \item If $X\dasharrow X^{\rm m}$  
 is a $K_{X^{\rm m}}+B^{\rm m}+\bbeta _{X^{\rm m}}$ log terminal model over $S$, then $K_{X^{\rm m}}+B^{\rm m}+\bbeta _{X^{\rm m}}+p^*\omega _S$ is nef for some K\"ahler form $\omega _S$ on $S$, and  there is a morphism $X^{\rm m}\to X^{\rm c}$ over $S$.
 \end{enumerate}
 \end{theorem}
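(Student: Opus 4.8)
The plan is to deduce the relative statement from its absolute counterpart, Theorem~\ref{t-3-mmp}. First I would reduce to the case where $(X,B)$ is log smooth and $\bbeta_X$ is relatively K\"ahler over $S$, exactly as in the proof of Theorem~\ref{t-3-mmp} but relatively. We may assume $\pi$ is surjective (replace $S$ by $\pi(X)$). Since $K_X+B+\bbeta_X$ is big over $S$, applying \cite[Th\'eor\`eme~1.4]{Bou02} to the class $K_X+B+\bbeta_X+N\pi^*\omega_S$ --- which is big for a K\"ahler class $\omega_S$ on $S$ and $N\gg 0$ --- and then Lemma~\ref{l-models}(7), one reduces to the situation where $(X,B)$ is log smooth $\mbQ$-factorial, $\bbeta$ descends to $X$ with $\bbeta_X$ relatively K\"ahler over $S$, and $K_X+B+\bbeta_X$ is still big (hence pseudo-effective) over $S$. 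In this situation $\bbeta_X$ is modified K\"ahler over $S$; by openness of the relatively K\"ahler cone, $K_X+B+(1+t)\bbeta_X$ is nef over $S$ for $t\gg 0$; and, since $X$ is itself a log resolution of $(X,B+\bbeta)$, the pair $(X,B)$ is klt.

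Now Corollary~\ref{cor:rel-mmp-with-scaling} applies with $\omega=\bbeta_X$: we run a $(K_X+B+\bbeta_X)$-MMP over $S$ with scaling of $t\bbeta_X$, terminating with a log terminal model $\phi:X\dashrightarrow X^{\rm m}$ over $S$. As this is a $(K_X+B)$-MMP, $(X^{\rm m},B^{\rm m})$ is $\mbQ$-factorial klt, $(X^{\rm m},B^{\rm m}+\bbeta)$ is generalized klt, and $\alpha:=K_{X^{\rm m}}+B^{\rm m}+\bbeta_{X^{\rm m}}$ is nef over $S$ by construction and big over $S$, since bigness over $S$ is preserved along the steps of the MMP.

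To produce the log canonical model I would re-run, over $S$, the base-point-free argument from the proof of Theorem~\ref{t-3-mmp}(1)--(2): after finitely many $\alpha$-trivial steps of a $(K_{X^{\rm m}}+B^{\rm m})$-MMP over $S$ one arranges that $\Null(\alpha)$ contains no surface; then $\Null(\alpha)$ is contracted by a bimeromorphic contraction $q:X^{\rm m}\to X^{\rm c}$ over $S$ (\cite[Proposition~6.2]{DH20}); $X^{\rm c}$ has rational singularities (\cite[Lemma~8.8]{DHP22}), so $\alpha\num_S q^*\alpha_{X^{\rm c}}$ for some $\alpha_{X^{\rm c}}\in H^{1,1}_{\BC}(X^{\rm c})$ (\cite[Lemma~3.3]{HP16}); and since $\Null(\alpha_{X^{\rm c}})=\emptyset$, the relative form of \cite[Theorem~2.29]{DHP22} shows $\alpha_{X^{\rm c}}$ is relatively K\"ahler over $S$. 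Then $(X^{\rm c},B^{\rm c}+\bbeta)$ is a log canonical model of $(X,B+\bbeta)$ over $S$, unique by Lemma~\ref{l-models}(3); $X^{\rm m}\to X^{\rm c}$ is the required morphism; and taking a K\"ahler class $\omega_S$ on $S$ with $\alpha_{X^{\rm c}}+p_c^*\omega_S$ K\"ahler (where $p_c:X^{\rm c}\to S$ and $p=p_c\circ q$) gives $\alpha+p^*\omega_S=q^*(\alpha_{X^{\rm c}}+p_c^*\omega_S)$ nef. This proves (1) and (2). For (3), any log terminal model over $S$ of $(X,B+\bbeta)$ has $K_{X^{\rm m}}+B^{\rm m}+\bbeta_{X^{\rm m}}$ nef and big over $S$, so the same relative base-point-free argument yields the contraction onto $X^{\rm c}$ and the nefness after adding $p^*\omega_S$; by uniqueness, $X^{\rm c}$ coincides with the model from (1).

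The step I expect to be the main obstacle is this relative base-point-free argument: one must carry over to the relative category the somewhat delicate parts of the proof of Theorem~\ref{t-3-mmp} --- the $\alpha$-trivial steps of the $(K+B)$-MMP, the absence of surfaces in $\Null(\alpha)$, and the contraction of $\Null(\alpha)$ --- together with the relative forms of the inputs from \cite{DH20}, \cite{DHP22} and \cite{HP16}. By contrast, the reduction and the appeal to Corollary~\ref{cor:rel-mmp-with-scaling} are routine once a relative Boucksom--Zariski decomposition is at hand.
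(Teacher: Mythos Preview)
Your strategy is correct but takes a genuinely different route from the paper, and the route you chose makes you do more work than necessary. The paper's key observation is that one can reduce the relative statement to the \emph{absolute} Theorem~\ref{t-3-mmp} rather than reprove its internals relatively. Concretely: after replacing $\bbeta$ by $\bbeta+N\overline{\pi^*\omega_S}$ for $N\gg 0$, the class $K_X+B+\bbeta_X$ becomes absolutely big, and the reduction in Theorem~\ref{t-3-mmp} makes $\bbeta_X$ absolutely K\"ahler. One then runs the \emph{absolute} $(K_X+B+\bbeta_X)$-MMP with scaling of $t\bbeta_X$; the argument of Corollary~\ref{cor:rel-mmp-with-scaling} (adding a further large pullback from $S$) shows every step is automatically over $S$. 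The output $X^{\rm m}$ has $K_{X^{\rm m}}+B^{\rm m}+\bbeta_{X^{\rm m}}$ absolutely nef. Now apply the absolute Theorem~\ref{t-3-mmp} to $K_{X^{\rm m}}+B^{\rm m}+\bbeta_{X^{\rm m}}+p^*\omega_S$ to get a morphism $\psi:X^{\rm m}\to X^{\rm c}$. Since both $K_{X^{\rm m}}+B^{\rm m}+\bbeta_{X^{\rm m}}$ and $p^*\omega_S$ are nef, every $\psi$-exceptional curve is trivial for both, hence $S$-vertical, and the rigidity lemma produces $X^{\rm c}\to S$. This bypasses entirely the ``main obstacle'' you identified: you do not need relative analogues of the $\Null(\alpha)$ analysis, \cite[Proposition~6.2]{DH20}, or \cite[Theorem~2.29]{DHP22}.

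For (3) the paper also argues more simply than you propose. Rather than re-running a base-point-free argument on an arbitrary log terminal model $X^{\rm m}$, the paper compares $X^{\rm m}$ with the specific model $X^n$ built in (2): they are isomorphic in codimension~1 (Lemma~\ref{l-models}(4)), and the negativity lemma on a common resolution gives $r^*(K_{X^{\rm m}}+B^{\rm m}+\bbeta_{X^{\rm m}}+p^*\omega_S)=s^*(K_{X^n}+B^n+\bbeta_{X^n}+q^*\omega_S)$, so nefness transfers from $X^n$ to $X^{\rm m}$ for free. Your approach of re-running the base-point-free argument on $X^{\rm m}$ would require first knowing nefness (not merely relative nefness) of $K_{X^{\rm m}}+B^{\rm m}+\bbeta_{X^{\rm m}}+p^*\omega_S$, which is exactly what you are trying to prove, or else establishing all the relative machinery you flagged as problematic.
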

 \begin{proof} Adding a sufficiently large multiple of a pullback of a  K\"ahler form $\omega _S$ from $S$, we may assume that $K_X+B+\bbeta_X$ is big. Proceeding as in the proof of Theorem \ref{t-3-mmp}, replacing $X$ by a higher model, we may assume that $\bbeta_X $ is K\"ahler so that $K_X+B+(1+t)\bbeta_X$ is also K\"ahler for $t\gg 0$. 
As in the proof of Corollary \ref{cor:rel-mmp-with-scaling}, after adding the pullback of a sufficiently large multiple of a K\"ahler form $\omega _S$ on $S$,
we run the $(K_X+B+\bbeta_X)$-MMP with scaling of $t\bbeta_X$ which turns out to be a MMP over $S$, and we obtain a log terminal model $X\dasharrow X^{\rm m}$ over $S$ such that $K_{X^{\rm m}}+B^{\rm m}+\bbeta _{X^{\rm m}}$ is nef and hence also nef over $S$. 
By Theorem \ref{t-3-mmp} there is a log canonical model $\psi:X^{\rm m}\to X^{\rm c}$ for $K_{X^{\rm m}}+B^{\rm m}+\bbeta _{X^{\rm m}}+p ^*\omega _S$, where $\omega_S$ is a K\"ahler class on $S$, and $p:X^{\rm m}\to S$ is the induced morphism. Note that $\psi$ is a bimeromorphic morphism, so its fibers are covered by curves and $\psi:X^{\rm m}\to X^{\rm c}$ contracts $(K_{X^{\rm m}}+B^{\rm m}+\bbeta_{X^{\rm m}}+p ^*\omega _S)$-trivial curves.
Since $K_{X^{\rm m}}+B^{\rm m}+\bbeta _{X^{\rm m}}$ is nef and $\omega _S$ is K\"ahler, any such curve must be vertical over $S$ and hence by the rigidity lemma (see \cite[Lemma 4.1.13]{BS95}), there is a morphism $X^{\rm c}\to S$ so that $\psi:X^{\rm m}\to X^{\rm c}$ is the log canonical model for $K_{X^{\rm m}}+B^{\rm m}+\bbeta _{X^{\rm m}}$ over $S$.
Thus (1) and (2) hold.

Suppose now that $X\bir X^{\rm m}$ is any log terminal model of $K_X+B+\bbeta_X$ over $S$.
We begin by showing the following.
\begin{claim}
    There exists a K\"ahler form $\omega _S$ on $S$ such that $K_{X^{\rm m}}+B^{\rm m}+\bbeta _{X^{\rm m}}+p ^*\omega _S$ is nef.
\end{claim}
\begin{proof}
    If $K_{X^{\rm m}}+B^{\rm m}+\bbeta_{X^{\rm m}}$ is nef, then the claim is obvious. Otherwise, let $X\dasharrow X^n$ be the log terminal model of $K_{X}+B+\bbeta_X$ over $S$ constructed in (2). Then $K_{X^n}+B^n+\bbeta_{X^n}+q^*\omega _S$ is nef for some K\"ahler class $\omega _S$ on $S$, where $q:X^n\to S$ is the induced morphism. Now, by Lemma \ref{l-models}  $X^{\rm m}\dasharrow X^n$ is an isomorphism in codimension 1 between log terminal models of $K_X+B+\bbeta_X$ over $S$, and hence it easily follows from the negativity lemma that
    if $r:W\to X^{\rm m}$ and $s:W\to X^n$ is a common resolution, then $r^*(K_{X^{\rm m}}+B^{\rm m}+\bbeta_{X^{\rm m}}+p^*\omega _S)=s^*(K_{X^n}+B^n+\bbeta_{X^n}+q^*\omega _S)$.
    Since $K_{X^n}+B^n+\bbeta_{X^n}+q^*\omega _S$ is nef, so is $K_{X^{\rm m}}+B^{\rm m}+\bbeta_{X^{\rm m}}+p^*\omega _S$.
\end{proof}
By Lemma \ref{l-models}(5), it follows that $X^{\rm m}\to X^{\rm c}$ is a morphism and hence (3) also holds.
 \end{proof}

 \begin{theorem}\label{t-Qfac} 
 Let $(X,B+\bbeta)$ be a generalized klt pair, where $X$ is a compact K\"ahler $3$-fold. Then the following hold: 
 \begin{enumerate}     
 \item $X$ has rational singularities,  
 \item there exists a small bimeromorphic morphism $\nu :X^q\to X$ such that $X^q$ is strongly $\mathbb Q$-factorial, and
 \item there exists a bimeromorphic morphism $\nu :X^t\to X$ such that $X^t$ is strongly $\mathbb Q$-factorial and $(X^t,B^t+\bbeta )$ is a generalized terminal pair such that 
 $K_{X^t}+B^t+\bbeta _{X^t}=\nu^*(K_X+B+\bbeta_X)$. 
 \end{enumerate}
 \end{theorem}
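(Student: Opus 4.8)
The plan is to follow the strategy of the proof of Theorem \ref{t-gkltlocal}, replacing the relative projective MMP over a Stein base by the relative generalized MMP for compact K\"ahler $3$-folds over $X$ itself.

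Part (1) is immediate, since having rational singularities is a local condition and is already part of Theorem \ref{t-gkltlocal}.

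For (2) and (3) I would fix a log resolution $f\colon X'\to X$ of $(X,B+\bbeta)$ and write $K_{X'}+B'+\bbeta_{X'}=f^*(K_X+B+\bbeta_X)$ with $\lfloor B'\rfloor\le 0$; set $E:=\Ex(f)$. For (2), take $B'':=(B')^{>0}+\epsilon E$ with $0<\epsilon\ll 1$, so that $\Supp(B''-B')=E$. For (3), first enlarge $X'$ so that it already contains every divisor $P$ over $X$ with $a(P,X,B+\bbeta)\le 0$ --- this is a finite set, since $\bbeta$ descends to $X'$ and so contributes nothing to the discrepancies of divisors over $X'$, and the coefficients of $B'$ are $<1$ --- and then take $B'':=(B')^{\ge 0}$, so that $\Supp(B''-B')$ is exactly the set of $f$-exceptional divisors of positive discrepancy. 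In both cases $(X',B''+\bbeta)$ is generalized klt, $F:=B''-B'\ge 0$ is an $f$-exceptional $\mathbb R$-divisor, $K_{X'}+B''+\bbeta_{X'}=f^*(K_X+B+\bbeta_X)+F$, and hence $K_{X'}+B''+\bbeta_{X'}\equiv_X F$. Now run the $(K_{X'}+B''+\bbeta_{X'})$-MMP over $X$: the flips exist by Theorem \ref{t-Stein-flip+} (see also Corollary \ref{c-3fold-flips}), the divisorial and flipping contractions exist by the (relative) generalized cone theorem, the generalized pair structure persists because $\bbeta$ descends to the fixed model $X'$ dominating every intermediate model, and since $\bbeta$ is nef over $X$ every step is also a step of a $(K_{X'}+B'')$-MMP over $X$; hence the process terminates by termination of $3$-fold klt flips for compact K\"ahler pairs (\cite[Theorem 1.12]{DO23}; cf.\ Theorem \ref{thm:relative-mmp}). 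Let $\nu$ denote the resulting bimeromorphic morphism to $X$. Then the target is $\mathbb Q$-factorial, the pushforward of $K_{X'}+B''+\bbeta_{X'}$ is nef over $X$ and numerically equivalent over $X$ to the pushforward of $F$, which is effective and $\nu$-exceptional; by the negativity lemma this pushforward is $0$. In case (2) all of $E$ has been contracted, so $\nu\colon X^q\to X$ is small. In case (3) exactly the positive-discrepancy $f$-exceptional divisors are contracted, so the divisors $P$ with $a(P,X,B+\bbeta)\le 0$ survive on $X^t$, we get $K_{X^t}+B^t+\bbeta_{X^t}=\nu^*(K_X+B+\bbeta_X)$, and consequently every divisor still exceptional over $X^t$ has positive discrepancy, that is, $(X^t,B^t+\bbeta)$ is generalized terminal. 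Strong $\mathbb Q$-factoriality of $X^q$ and $X^t$ is then obtained as in \cite[Lemma 2.5]{DH20}.

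The main obstacle is the termination of this relative generalized MMP over $X$. Over a small Stein open $W\subset X$ meeting a flipping locus, each flip is an honest klt $3$-fold flip, as in the proof of Theorem \ref{t-Stein-flip+}, but the auxiliary $\mathbb R$-divisor exhibiting this depends on $W$, so one must either globalize termination or, as above, realize the entire sequence as a $(K_{X'}+B'')$-MMP over $X$ and quote termination of $3$-fold klt flips. A secondary point requiring care is that $\bbeta$ keeps descending with trace nef over $X$ along the MMP, so that both the reduction to the non-generalized MMP and the generalized pair structure are preserved; this is ensured by retaining $X'$ as the reference resolution rather than tracking pushforwards of $\bbeta_{X'}$.
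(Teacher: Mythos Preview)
Your overall strategy is the same as the paper's: start from a log resolution, set up an auxiliary boundary so that the generalized log divisor becomes numerically equivalent over $X$ to an effective exceptional $F$, run a relative MMP, and use the negativity lemma to see $F$ gets contracted. Your setup for (2) and (3), and your terminalization argument, are standard and correct.

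The gap is in how you run the relative generalized MMP over $X$. You invoke ``the (relative) generalized cone theorem'', but no such result is available at this point in the paper; the cone theorem for generalized pairs (Theorem~\ref{t-3-cone+}) is only proved later, and is not stated relatively. Your fallback---``since $\bbeta$ is nef over $X$ every step is also a step of a $(K_{X'}+B'')$-MMP over $X$''---is valid for the \emph{first} step on $X'$, where indeed $\bbeta_{X'}$ is nef over $X$, but after a flip or divisorial contraction the trace $\bbeta_{X'_i}$ on the new model need not be nef over $X$ (the b-current $\bbeta$ still descends only to $X'$, which no longer dominates $X'_i$). So neither the existence of contractions nor the reduction to an ordinary klt MMP is justified at later steps, and your termination argument breaks down with it. ``Retaining $X'$ as the reference resolution'' preserves the generalized pair structure, but does not control intersection numbers of $\bbeta_{X'_i}$ with curves on $X'_i$.

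The paper avoids this entirely by invoking Theorem~\ref{t-3-mmp-rel}, which has already been established and directly furnishes a log terminal model of the generalized pair $(X',B^*+\bbeta)$ over $X$; the internal mechanism there is an MMP \emph{with scaling} (via Corollary~\ref{cor:rel-mmp-with-scaling}) after first arranging $\bbeta_{X'}$ to be K\"ahler, which is exactly what makes every step an ordinary $(K_{X'}+B^*)$-step and hence gives termination. In your write-up you should simply cite Theorem~\ref{t-3-mmp-rel} in place of assembling the MMP by hand.
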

 Note that a local version of (2) was proven in Theorem \ref{t-gkltlocal}.
 \begin{proof} 
 (1) follows from Theorem \ref{t-gkltlocal}.
 
 (2)   Let $f :X'\to X$ be a projective log resolution of the generalized pair $(X,B+\bbeta)$.
 Fix $0<\epsilon \ll 1$ and let $\phi:X'\dasharrow X^q$ be a log terminal model of $K_{X'}+f ^{-1}_*B+(1-\epsilon){\rm Ex}(f)$ over $X$ which exists by Theorem \ref{t-3-mmp-rel}.
 Since $K_{X'}+f ^{-1}_*B+{(1-\epsilon)\rm Ex}(f)+\bbeta _{X'}\equiv _X F$ where $F\geq 0$ and ${\rm Supp}(F)={\rm Ex}(f)$, it follows that 
 $F^q=\phi _* F\geq 0$ is $f^q:X^q\to X$ exceptional and $F^q$ is nef over $X$ and so by the negativity lemma, $F^q=0$.
 Therefore $f^q$ is a small bimeromorphic morphism and $X^q$ is strongly $\mathbb Q$-factorial (cf. \cite[Lemma 2.5]{DH20}).
 
 The proof of (3) is also standard (see for example \cite[Corollary 1.4.3]{BCHM10}) and follows similarly to the proof of (2) and so we omit it.

  \end{proof}~\\

 The following theorem is a variant of the base-point-free theorem \cite[Theorem 1.7]{DH20}.
 \begin{theorem}\label{t-3-mmp1}
 Let $(X,B+\bbeta)$ be a generalized klt pair, where $X$ is a compact K\"ahler 3-fold. Assume that $K_X+B+\bbeta_X$ is nef but not big and $\bbeta$ descends to a big (and nef) class on some log resolution of $(X, B+\bbeta)$. Then
   there is a morphism $\phi:X\to Z$ to a normal K\"ahler variety $Z$ such that $K_{X}+B+\bbeta _X =\phi^*\alpha _Z$, where $\alpha _Z$ is a K\"ahler class on $Z$.
  
 \end{theorem}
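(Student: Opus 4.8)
The plan is to reduce, by passing to a well‑chosen log resolution, to the case where $X$ is smooth, $(X,B)$ is klt and log smooth and $\bbeta_X$ is a \emph{Kähler} class, and then to produce $\phi$ by running an MMP that terminates with a Mori fiber type contraction and inducting on the dimension. For the reduction, take a log resolution $f\colon X'\to X$ to which $\bbeta$ descends with $\bbeta_{X'}$ big and nef; after a further blow‑up, by \cite[Theoreme 1.4]{Bou02} write $\bbeta_{X'}\equiv \omega'+D'$ with $\omega'$ a Kähler form and $D'\ge 0$ a $\mathbb Q$‑divisor, arranging $B'$, $D'$, $\Ex(f)$ to have snc support. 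For $0<\varepsilon\ll 1$ the class $(1-\varepsilon)\bbeta_{X'}+\varepsilon\omega'$ is nef plus Kähler, hence Kähler; choosing a Kähler form $\theta$ in it and setting $\bar B:=B'+\varepsilon D'$ (still klt, snc), one checks $[K_{X'}+\bar B+\theta]=f^{*}[K_X+B+\bbeta_X]$, which is nef and not big on $X'$. Thus $(X',\bar B+\bar\theta)$ satisfies the hypotheses of the theorem with $X'$ smooth and $\bar\theta_{X'}=\theta$ Kähler, and if the conclusion holds for it, giving $\bar\phi\colon X'\to \bar Z$ with $[K_{X'}+\bar B+\theta]=\bar\phi^{*}\alpha_{\bar Z}$ and $\alpha_{\bar Z}$ Kähler, then since $f^{*}[K_X+B+\bbeta_X]=\bar\phi^{*}\alpha_{\bar Z}$ and $\alpha_{\bar Z}$ is Kähler every $f$‑exceptional curve is $\bar\phi$‑contracted, so by the rigidity lemma $\bar\phi$ factors through $f$, producing $\phi\colon X\to\bar Z$ with $K_X+B+\bbeta_X=\phi^{*}\alpha_{\bar Z}$. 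So from now on I assume $X$ smooth, $(X,B)$ klt and log smooth, and $\omega:=\bbeta_X$ Kähler.

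Next I observe that $K_X+B$ is not pseudo‑effective: otherwise $K_X+B+\omega$ would be pseudo‑effective plus a Kähler class, hence big, contrary to hypothesis. Therefore I run a $(K_X+B)$‑MMP with scaling of $\omega$. Since $K_X+B+\omega=K_X+B+\bbeta_X$ is nef but, not being big, lies on the boundary of the nef cone, and $\omega$ is Kähler, the scaling threshold equals $1$; moreover, were it to drop below $1$ at some step the class would become nef plus Kähler, hence big, a contradiction. Hence every step contracts a $(K_X+B)$‑negative extremal ray on which $\alpha:=K_X+B+\bbeta_X$ is trivial, $\alpha$ stays nef and not big along the MMP, and, $K_X+B$ remaining non‑pseudo‑effective, by termination of $3$‑fold klt flips \cite[Theorem 3.3]{DO23} the MMP ends with a Mori fiber space $p\colon X'\to Z$ on which $\alpha$ is relatively trivial; since $Z$ has rational singularities, $\alpha_{X'}\equiv p^{*}\alpha_Z$ for a nef class $\alpha_Z$ on the normal compact Kähler variety $Z$, with $\dim Z<3$.

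I conclude by induction on $\dim Z$. If $\dim Z\le 1$, a nef class on $Z$ is ample or zero. If $\dim Z=2$, then $\alpha_Z=K_Z+B_Z+\omega_Z$ where $\omega_Z$, the pushforward of $\bbeta$, is big and nef on a surface with rational singularities; if $\alpha_Z$ is big one argues as in Theorem \ref{t-lcms}, and if $K_Z+B_Z$ is not pseudo‑effective then $Z$ is uniruled, so $H^{2}(Z,\mathcal O_Z)=0$ and $Z$ is projective by Lemma \ref{lem:ns-h11}, where semiampleness of the nef $\mathbb R$‑Cartier class $\alpha_Z$ is classical. In all cases $\alpha_Z=q^{*}\alpha_{Z'}$ with $\alpha_{Z'}$ Kähler, so $\alpha_{X'}=(q\circ p)^{*}\alpha_{Z'}$. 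Passing to a common resolution of $X\dashrightarrow X'$ and using that every step of the MMP was $\alpha$‑trivial, $\alpha$ pulls back to the same class as $(q\circ p)^{*}\alpha_{Z'}$, and the rigidity lemma yields a morphism $\phi\colon X\to Z'$ with $K_X+B+\bbeta_X=\phi^{*}\alpha_{Z'}$, as desired.

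The step I expect to be the main obstacle is controlling the $(K_X+B)$‑MMP with scaling so that it never stalls, i.e.\ showing that at each stage the nef, non‑big class $\alpha$ has a nonzero null face which is $(K_X+B)$‑negative and contains a contractible extremal ray. This is immediate at the first step since $\bbeta_X$ is Kähler, but after a flip the pushforward of $\bbeta_X$ is only modified Kähler — in fact negative on the flipped ray, as in Lemma \ref{l-kmmp} — so one must argue, using that $\alpha$ is not big together with the local discreteness of $(K_X+B)$‑negative extremal rays, that such a ray persists. Alternatively, one may imitate the proof of Theorem \ref{t-3-mmp}: first run the $\alpha$‑trivial part of a $(K_X+B)$‑MMP and then extend the $\operatorname{Null}(\alpha)$‑contraction argument there, using the bigness of $\bbeta$ on a resolution in place of the bigness of $\alpha$.
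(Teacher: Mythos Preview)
Your approach is genuinely different from the paper's, and it runs into several difficulties.  The paper does not run any MMP here.  After reducing (via a small $\mathbb Q$-factorialization and terminalization, so that the boundary stays effective) to the case where $\bbeta_X$ is merely modified K\"ahler, it observes that $K_X$ is not pseudo-effective, hence $X$ is uniruled, and then analyzes the MRCC fibration $\pi:X\dasharrow T$.  If $\dim T\le 1$ then $X$ is projective with $H^2(X,\mathcal O_X)=0$, reducing to the classical base-point-free theorem for $\mathbb R$-divisors.  If $\dim T=2$, Guenancia's theorem \cite{Gue20} forces $(K_X+B+\bbeta_X)\cdot F=0$ for a general fiber $F$, whence the nef dimension $n(K_X+B+\bbeta_X)\le 2$, and the morphism $\phi$ is produced by the nef reduction map, arguing case by case as in \cite[Theorems 5.2 and 5.5]{DH20}.

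Your argument has three gaps, two of which you partly anticipate.  First, in your reduction, $\bar B=B'+\varepsilon D'$ need not be effective: on a log resolution the generalized-klt boundary $B'$ typically has negative coefficients (coming from exceptional divisors of positive discrepancy), so $(X',\bar B+\bar\theta)$ is only a generalized \emph{sub}-pair and the $(K_{X'}+\bar B)$-MMP you invoke is not available; the paper avoids this by staying on $X$, at the cost of $\bbeta_X$ being only modified K\"ahler.  Second, the obstacle you flag is real and not minor: once $\omega$ is only modified K\"ahler, a class $\gamma\in\NA(X_1)$ with $\alpha_1\cdot\gamma=0$ need not satisfy $\omega_1\cdot\gamma>0$, so you cannot deduce $(K_{X_1}+B_1)\cdot\gamma<0$ and hence cannot extract the next $\alpha$-trivial $(K_{X_1}+B_1)$-negative extremal ray.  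Neither of your proposed fixes is convincing without substantial new input---in particular, the $\Null(\alpha)$ contraction in Theorem \ref{t-3-mmp} uses bigness of $\alpha$ in an essential way.  Third, in your induction on $\dim Z$, writing $\alpha_Z=K_Z+B_Z+\omega_Z$ presupposes a canonical bundle formula for generalized pairs along the Mori fiber space $p:X'\to Z$, which is not established here; without a pair structure on $Z$ you cannot invoke Theorem \ref{t-lcms}, and a bare nef (even nef and big) $(1,1)$ class on a compact K\"ahler surface is not known to be the pullback of a K\"ahler class.  The paper's nef-reduction route sidesteps all three issues by producing $\phi$ directly on $X$.
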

 \begin{proof}
 Note that if $f:Y\to X$ is a bimeromorphic morphism 
 and $f':Y\to Z$ a proper morphism (not necessarily bimeromorphic) of normal compact K\"ahler varieties such that $f^*\alpha ={f'}^*\alpha _Z$, where $\alpha _Z$ is a K\"ahler class on $Z$, then $f'$ contracts all $f$-vertical curves and so by the rigidity lemma (see \cite[Lemma 4.1.13]{BS95}) there is a morphism $g:X\to Z$  such that $g\circ f=f'$ and $\alpha =g^*\alpha _Z$. Therefore, by Theorem \ref{t-Qfac}  we may assume that $X$ is strongly $\mathbb Q$-factorial and $(X,B+\bbeta)$ has generalized terminal singularities. Let $\nu:X'\to X$ be a log resolution of $(X, B+\bbeta)$ such that $K_{X'}+B'+\bbeta_{X'}=\nu^*(K_X+B+\bbeta_X)$ and $[\bbeta_{X'}]\in H^{1,1}_{\BC}(X')$ is big (and nef). Replacing $X'$ by a higher model, we may assume by \cite[Theoreme 1.4]{Bou02} that $\bbeta_{X'}\equiv F+\omega '$ where $F\>0$ is an effective $\mbQ$-divisor and $\omega'$ is a K\"ahler form. Pick $\epsilon >0$ such that $(X',B'+\epsilon F)$ is sub-klt. Define $B^*:=f_*(B'+\eps F)$ and $\bbeta^*:=(1-\eps)\bbeta_{X'}+\eps\omega'$. Then $(X, B^*+ \bbeta^*)$ is a generalized pair and $\bbeta^*_{X'}$ is a K\"ahler class. Note that $K_X+B^*+\bbeta^*_X\num K_X+B+\bbeta_X$; thus replacing $(X, B+\bbeta)$ by $(X, B^*+\bbeta^*)$ we may assume that $\bbeta_X$ is a modified K\"ahler class.

 Now, if $K_X$ is pseudo-effective, then $K_X+B+\bbeta_X$ is big, which is a contradiction. Therefore $K_X$ is not pseudo-effective, and hence $X$ is uniruled.
 \begin{claim}
 Let $\pi:X\dasharrow T$ be the MRC fibration. Then we may assume that $\dim T=2$. 
 \end{claim}\begin{proof}
 Since $X$ is uniruled, $\dim T\leq 2$.
 If $\dim T\leq 1$, then by \cite[Lemma 2.42]{DH20}, $X$ is projective and $H^2(X, \mathcal O _X)=0$. Thus by  Lemma \ref{lem:ns-h11}, every $(1,1)$ class on $X$ is represented by a $\mbR$-Cartier divisor. In particular, $(X,B+\bbeta_X)$ is numerically equivalent to a traditional generalized pair for projective varieties, i.e. $[\bbeta_{X'}]=c_1(N')$, where $N'$ is an ample $\mbR$-divisor on $X'$. We may assume that $(X',B'+N')$ is sub-klt.

 But then $(X,\Delta :=f_*(B'+N'))$ is klt such that $\Delta\>0$ is big and $K_X+B+\bbeta_X \equiv K_X+\Delta$.
 The conclusion now follows from the usual base-point free theorem for $\mbR$-divisors, for example see \cite[Theorem 3.9.1]{BCHM10}.
 Therefore we may assume that $\dim T=2$.
 \end{proof}

 \begin{claim}\label{clm:pseff-not-big}
 Let $F$ be a general fiber of $\pi:X\bir T$, then $F\cong \mathbb P^1$ and $(K_X+B+\bbeta_X)\cdot F=0$.
 \end{claim}
 \begin{proof}
 Let $g:Y\to X$ be a log resolution of $(X, B+\bbeta)$ which also resolves the map $\pi:X\bir T$. Write
 \[
 K_Y+B_Y+\bbeta_Y=g^*(K_X+B+\bbeta_X)+E,
 \]
 where $B_Y\>0, E\>0, g_*B_Y=B, g_*E=0$, $B_Y$ and $E$ do not share any component, and $\bbeta_Y$ is nef.
 
 Observe that the general fibers of $\pi\circ g$ and $\pi$ are isomorphic. 
 Now since $(K_X+B+\bbeta_X)$ is pseudo-effective, so is $K_Y+B_Y+\bbeta_Y$, and thus $(K_Y+B_Y+\bbeta_Y)\cdot F\geq 0$. If 
 $(K_X+B+\bbeta_X)\cdot F>0$, then $(K_Y+B_Y+\bbeta_Y)\cdot F=(K_X+B+\bbeta_X)\cdot F>0$, and thus
 $(K_Y+B_Y+t\bbeta_Y) \cdot F>0$ for some $1>t>0$.
Then by \cite[Theorem]{Gue20}, $K_Y+B_Y+t\bbeta_Y$ is pseudo-effective and so 
 $K_Y+B_Y+\bbeta_Y+(1-t)\Ex(f)$ is big, since $\bbeta_X$ is big. In particular, $K_X+B+\bbeta_X=g_*(K_Y+B_Y+\bbeta_Y+(1-t)\Ex(f))$ is big, a contradiction.
 \end{proof}

 Now, as in the proof of \cite[Theorem 5.2]{DH20} we will analyze the nef dimension of $K_X+B+\bbeta_X$. Since a dense open subset of $X$ is covered by $(K_X+B+\bbeta_X)$-trivial curves, we see that the nef dimension $n(K_X+B+\bbeta_X)\<2$. If $n(K_X+B+\bbeta_X)=0$, then $K_X+B+\bbeta_X\num 0$ and we are done by choosing $Z:=\mbox{Specan}(\mbC)$. If $n(K_X+B+\bbeta_X)=1$, then there is a smooth projective curve $C$ and a morphism $\phi:X\to C$ such that $K_X+B+\bbeta_X=\phi^*\alpha_C$, where $\alpha_C\in H^{1,1}_{\BC}(C)$, (see \cite[2.4.4]{BCEKPRSW02} and \cite[Theorem 3.19]{HP15}). Since the nef dimension $n(\phi^*\alpha_C)=1$, it follows that $\alpha_C$ is a K\"ahler class and we are done. The final case is $n(K_X+B+\bbeta_X)=2$.  In this case, by an  argument identical to the one in \cite[Theorem 5.5]{DH20}, we find the required morphism $\phi:X\to Z$.
 
Note that as observed above, in \cite[Theorem 6.4]{DH20}, $\bbeta_X$ is assumed to be nef and big, however 
$\bbeta_X$ being modified K\"ahler is enough for the proof in \cite{DH20}.

 \end{proof}

 \begin{theorem}\label{t-3-mmp2}
 Let $(X,B+\bbeta)$ be a $\mbQ$-factorial generalized klt pair, where $X$ is a compact K\"ahler 3-fold, such that $K_X+B+\bbeta_X$ is pseudo-effective but not big, and $\bbeta$ descends to a big (and nef) class on some log resolution of $(X, B+\bbeta)$. Then there is a log terminal model $f:X\dasharrow X^{\rm m}$ and a morphism $g:X^{\rm m}\to Z$ such that $K_{X^{\rm m}}+B^{\rm m}+\bbeta_{X^{\rm m}} =g^*\alpha _Z$, where $\alpha _Z$ is a K\"ahler class on $Z$.
 \end{theorem}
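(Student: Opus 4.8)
The plan is to reduce to the case where $\bbeta_X$ is a K\"ahler class, then run a minimal model program with scaling that terminates with a log terminal model on which $K+B+\bbeta$ is nef, and finally invoke Theorem \ref{t-3-mmp1}.

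First I would carry out the reduction, following the strategy in the proof of Theorem \ref{t-3-mmp}. Let $g:X_0\to X$ be a log resolution of $(X,B+\bbeta)$ on which $\bbeta$ descends to a big and nef class $\bbeta_{X_0}$; passing to a higher model I may assume, by \cite[Theoreme 1.4]{Bou02}, that $\bbeta_{X_0}\equiv G+\omega'$ with $G\geq 0$ an effective $\Q$-divisor supported on divisors exceptional over $X$ and $\omega'$ a K\"ahler form. Write $K_{X_0}+B_0+\bbeta_{X_0}=g^*(K_X+B+\bbeta_X)$, and choose $0<\epsilon\ll 1$ and an effective $g$-exceptional $\Q$-divisor $E$ with $-E$ relatively ample and $\Supp E=\Ex(g)$. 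By Lemma \ref{l-models}(7) the log terminal models of $(X_0,(B_0)_{\geq 0}+\epsilon E+\bbeta)$ are log terminal models of $(X,B+\bbeta)$; and since $\Supp G$ is $g$-exceptional and $G+\omega'\equiv\bbeta_{X_0}$, I may further move a small multiple $tG$ of $G$ from the boundary into the b-current --- replacing the pair by $(X_0,(B_0)_{\geq 0}+\epsilon E+tG+\bbeta^{(t)})$, where $\bbeta^{(t)}$ descends to $X_0$ with trace $(1-t)\bbeta_{X_0}+t\omega'$ --- without changing the numerical class of $K+B+\bbeta$ or the associated log terminal models. Since $(1-t)\bbeta_{X_0}+t\omega'$ is K\"ahler for $t>0$, after this reduction I may assume that $(X,B)$ is log smooth (so $X$ is $\Q$-factorial), that $\bbeta_X$ is a K\"ahler class, and that $K_X+B+\bbeta_X$ is still pseudo-effective and not big.

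Next, since $\bbeta_X$ is K\"ahler, $K_X+B+(1+t)\bbeta_X$ is nef for $t\gg 0$: by the cone theorem \cite[Theorem 5.2]{DHP22} and Lemma \ref{lem:length-bound-by-modified-kahler}, the curves $C_i$ generating the $(K_X+B)$-negative extremal rays satisfy $-(K_X+B)\cdot C_i\leq 6$, and for $t$ large $(K_X+B+(1+t)\bbeta_X)\cdot C_i>0$ for every such $C_i$ (the finitely many $C_i$ of small $\bbeta_X$-degree being absorbed by enlarging $t$); being also non-negative on $\overline{\rm NA}(X)_{K_X+B\geq 0}$ because $\bbeta_X$ is nef, the class $K_X+B+(1+t)\bbeta_X$ is nef. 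As $K_X+B+\bbeta_X$ is pseudo-effective, Proposition \ref{pro:mmp-with-scaling} applies: I run the $(K_X+B+\bbeta_X)$-MMP with scaling of $t\bbeta_X$, which terminates with a log terminal model $f:X\dasharrow X^{\rm m}$ such that $\alpha^{\rm m}:=K_{X^{\rm m}}+B^{\rm m}+\bbeta_{X^{\rm m}}$ is nef.

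Finally, $\alpha^{\rm m}$ is not big: for a common resolution $p:W\to X$, $q:W\to X^{\rm m}$ one has $p^*(K_X+B+\bbeta_X)=q^*\alpha^{\rm m}+E'$ with $E'\geq 0$ by the negativity lemma, so bigness of $\alpha^{\rm m}$ would force $K_X+B+\bbeta_X$ big. Moreover $\bbeta$ descends to a big nef class on some log resolution of $(X^{\rm m},B^{\rm m}+\bbeta)$, since on a common log resolution of $(X,B+\bbeta)$ and $(X^{\rm m},B^{\rm m}+\bbeta)$ the trace of $\bbeta$ is the pullback of the K\"ahler class $\bbeta_X$, hence nef and big. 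Therefore Theorem \ref{t-3-mmp1} applies to $(X^{\rm m},B^{\rm m}+\bbeta)$ and produces a morphism $g:X^{\rm m}\to Z$ to a normal K\"ahler variety with $K_{X^{\rm m}}+B^{\rm m}+\bbeta_{X^{\rm m}}=g^*\alpha_Z$ for a K\"ahler class $\alpha_Z$ on $Z$, which is the assertion. I expect the reduction step to be the main obstacle: in Theorem \ref{t-3-mmp} bigness of $K_X+B+\bbeta_X$ is used so that the Boucksom perturbation merely rescales $K_X+B+\bbeta_X$, whereas here $K_X+B+\bbeta_X$ is only pseudo-effective, so one must instead exploit bigness of $\bbeta$ on a log resolution --- in particular that the divisorial part of its Boucksom decomposition can be taken exceptional over $X$ --- together with the precise statement of Lemma \ref{l-models}(7) to keep the log terminal models under control; the priming estimate for the MMP with scaling and the verification that the hypotheses of Theorem \ref{t-3-mmp1} survive to $X^{\rm m}$ are routine.
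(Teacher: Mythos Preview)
Your overall strategy---pass to a log resolution where the b-current becomes K\"ahler, run the MMP with scaling via Proposition~\ref{pro:mmp-with-scaling}, then invoke Theorem~\ref{t-3-mmp1}---is exactly the paper's. The gap lies in your reduction step. You assert that in the decomposition $\bbeta_{X_0}\equiv G+\omega'$ produced (after passing to a higher model) by \cite[Th\'eor\`eme~1.4]{Bou02}, the divisor $G$ can be taken to be supported on divisors exceptional over $X$. This is not a consequence of that theorem: Boucksom's result gives an effective divisor $G\geq 0$ and a K\"ahler form $\omega'$, but provides no control on $\Supp(G)$, which may well contain strict transforms of divisors on $X$. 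If $G$ is not exceptional and you add $tG$ to the boundary, then the pushforward to $X$ is $B+tg_*G\neq B$, so Lemma~\ref{l-models}(7) only compares $(X_0,(B_0)_{\geq 0}+\epsilon E+tG+\bbeta^{(t)})$ to the numerically equivalent but \emph{different} pair $(X,B+tg_*G+\bbeta^{(t)})$, not to $(X,B+\bbeta)$. You identify this as the ``main obstacle'' in your closing remarks, but then simply assert the exceptionality, which is precisely what is unjustified.

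The paper organizes the reduction differently and does not add any multiple of $F$ to the boundary: it sets $B^*:=\nu^{-1}_*B+(1-\epsilon)\Ex(\nu)$ and replaces the b-current by $\bbeta^*:=(1-\delta)\bbeta+\delta\bar\omega'$ with $0<\delta\ll\epsilon\ll 1$, so that $\bbeta^*_{X'}$ is K\"ahler and $(X',B^*+\bbeta^*)$ is generalized klt; the comparison to $(X,B+\bbeta)$ is then made via Lemma~\ref{l-models}. After the reduction, your remaining steps---nefness of $K_X+B+(1+t)\bbeta_X$ for $t\gg 0$ (which, incidentally, is immediate once $\bbeta_X$ is K\"ahler and does not require the cone-theorem detour), termination of the MMP with scaling via Proposition~\ref{pro:mmp-with-scaling}, and the application of Theorem~\ref{t-3-mmp1} to $(X^{\rm m},B^{\rm m}+\bbeta)$---are correct and match the paper.
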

 
 \begin{proof}
 Let $\nu :X'\to X$ be a log resolution of $(X,B+\bbeta)$. Since $\bbeta _{X'}$ is nef and big, passing to a higher resolution, we may assume that $\bbeta _{X'}\equiv \omega '+F$, where $F\geq 0$ is an effective $\mbQ$-divisor and $\omega$ is a K\"ahler form.
 We write $K_{X'}+B'+\bbeta _{X'}=\nu ^*(K_X+B+\bbeta_X)$.
 We let $B^*:=\nu ^{-1}_*B+ (1-\epsilon){\rm Ex}(\nu)$ and $\bbeta ^*:=(1-\delta)\bbeta+\delta \bar \omega'$ for some $0<\eps, \delta<1$. Then for $0<\delta \ll \epsilon \ll 1$, we have that $\bbeta ^*_{X'}$ is a K\"ahler form, $(X',B^*+\bbeta ^*)$ is generalized klt, and 
 \[K_{X'}+B^*+\bbeta ^*_{X'}\equiv K_{X'}+B'+\bbeta _{X'}+E,\]
 where $E\geq 0$ is a $\mbR$-divisor such that ${\rm Supp}(E)={\rm Ex}(\nu)$. By Lemma \ref{l-models}, we may replace $(X,B+\bbeta )$ by $(X',B^*+\bbeta ^*)$ and hence we may assume that $\bbeta _{X}$ is K\"ahler. In particular $K_X+B+(1+t)\bbeta_X$ is nef for any $t\gg 0$. By Proposition \ref{pro:mmp-with-scaling} there is a log terminal model $f:X\dasharrow X^{\rm m}$. The existence of the morphism $g:{X^{\rm m}}\to Z$ such that $K_{X^{\rm m}}+B_{X^{\rm m}}+\bbeta_{X^{\rm m}} \equiv g^*\alpha _Z$, where $\alpha _Z$ is a K\"ahler class on $Z$, follows from Theorem \ref{t-3-mmp1}.
     
 \end{proof}

\begin{proof}[Proof of Theorem \ref{thm:ltm}]
   It follows from combining Theorems \ref{t-3-mmp} and \ref{t-3-mmp2}.
\end{proof}

 Next we will establish an analog of \cite[Corollary 1.1.5]{BCHM10} for log canonical models.

  \begin{theorem}\label{t-finite}
  Let $X$ be a normal $\mbQ$-factorial compact K\"ahler $3$-fold and $\nu :X'\to X$ a resolution. Let $(X,B)$ be a pair and $\Omega'$ a compact convex polyhedral set of real closed positive $(1,1)$ currents on $X'$  such that for every $\beta'\in \Omega'$, $(X,B+\bbeta)$ is a generalized klt pair and $\nu$ is a log resolution of $(X, B+\bbeta)$, where $\bbeta=\bar  \beta '$. Assume that one of the following conditions hold:
 \begin{enumerate}
     \item[(i)] $K_X+B+  \bbeta _X$ is big for every $\beta'\in \Omega'$ (and $\bbeta=\bar  \beta '$),  or
     \item[(ii)] there is a bimeromorphic morphism $\pi : X\to S$ of normal compact K\"ahler $3$-folds.
 \end{enumerate} 
 Then there exists a finite polyhedral decomposition $\Omega ' =\cup \Omega' _i$ and finitely many bimeromorphic maps $\psi _{i}:X\dasharrow X_{i}$ (resp. finitely many bimeromorphic maps $\psi _{i}:X\dasharrow X_{i}$ over $S$) such that if $\psi :X\dasharrow Y$ is a log  canonical model for $K_X+B+\nu _*\beta'$ (resp. a log canonical model for $K_X+B+\nu _*\beta'$ over $S$) for some $\beta ' \in \Omega'_i$, then $\psi =\psi_{i}$.
 
 \end{theorem}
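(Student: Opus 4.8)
The plan is to adapt the geography-of-log-models argument of \cite[Corollary 1.1.5]{BCHM10} (see also \cite[\S 7]{BCHM10}) to the K\"ahler generalized setting, using as inputs the existence of log canonical models (Theorem \ref{t-3-mmp}, and its relative version Theorem \ref{t-3-mmp-rel}) and the termination of the MMP with scaling (Proposition \ref{pro:mmp-with-scaling} and Corollary \ref{cor:rel-mmp-with-scaling}). I will carry out case (i); in case (ii) every ``model'' is replaced by ``model over $S$'', the class $K_X+B+\nu_*\beta'$ is automatically big over $S$ because $\pi$ is bimeromorphic, and one uses Theorem \ref{t-3-mmp-rel} and Corollary \ref{cor:rel-mmp-with-scaling} in place of the absolute statements, so the argument is the same.

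A linearity observation is used throughout. Since $\nu:X'\to X$ is a log resolution of $(X,B+\bbeta)$ for every $\beta'\in\Omega'$ and $\bbeta_{X'}=\beta'$ varies affine-linearly, the divisor $B'$ on $X'$ determined (by Remark \ref{r-gp}(i) and the negativity lemma) by $K_{X'}+B'+\beta'\equiv\nu^*(K_X+B+\bbeta_X)$ varies affine-linearly with $\beta'$, and hence so does every generalized discrepancy $a(P,X,B,\bbeta)$, computed on any fixed model over $X$. Fixing a resolution $W$ dominating $X$ and all the birational models that will occur, it follows that for a fixed bimeromorphic contraction $\psi:X\dasharrow Y$ the conditions ``$(Y,B_Y+\bbeta)$ is generalized lc, with $B_Y$ as in Definition \ref{d-models}'' and ``$a(P,X,B,\bbeta)\le a(P,Y,B_Y,\bbeta)$ for every $\psi$-exceptional $P$'' are each a finite system of affine inequalities in $\beta'$, hence define a closed convex polyhedral subset of $\Omega'$. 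Since $\Omega'$ is compact, the theorem reduces to the local statement: \emph{every $\beta'_0\in\Omega'$ has a polyhedral neighbourhood $U\subset\Omega'$ and finitely many bimeromorphic contractions such that every log canonical model of $(X,B+\bar\beta')$ for $\beta'\in U$ is among them.} Granting this, a finite subcover of $\Omega'$ bounds the total number of log canonical maps $\psi_1,\dots,\psi_N$ (unique by Lemma \ref{l-models}(3)); since the set of $\beta'$ for which $\psi_i$ is the log canonical model is the intersection of the polyhedral sets above with the \emph{open} locus where the pushforward class is K\"ahler, it is a finite union of relatively open polyhedral pieces, and one takes $\Omega'=\cup\Omega'_i$ to be any polyhedral refinement of the resulting stratification.

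For the local statement, fix $\beta'_0$. Since bigness is an open condition, $K_X+B+\bbeta_X$ is big for all $\beta'$ in a neighbourhood of $\beta'_0$, so by Theorem \ref{t-3-mmp} the log canonical model of $(X,B+\bar\beta')$ exists there, and (reviewing the proof of Theorem \ref{t-3-mmp}, and handling uniformly over a small $U\ni\beta'_0$ the dependence on $\beta'$ of the auxiliary Boucksom--Zariski decomposition of $K_X+B+\bbeta_X$) it is produced by a $(K_X+B+\bbeta_X)$-MMP with scaling run on a fixed higher model of $X$, which terminates by Proposition \ref{pro:mmp-with-scaling}. Thus the local statement reduces to the assertion that these MMP's with scaling have only finitely many outputs as $\beta'$ ranges over $U$.

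This last finiteness is the heart of the matter and the main obstacle. It is the standard Shokurov-polytope/geography statement: by descending induction on the length of the MMP one shows that the set of $\beta'\in U$ for which a prescribed extremal ray is contracted at a given step of the MMP with scaling is polyhedral, so that $U$ breaks into finitely many polyhedral chambers on each of which the whole MMP --- in particular its output --- is constant. The delicate point is that the condition ``$K_Y+B_Y+\bbeta_Y$ nef'', which a priori refers to the non-polyhedral nef cone, becomes polyhedral once restricted to the locus where $\psi$ is $\bar\beta'$-crepant and tested against the finitely many extremal rays that can be contracted --- the cone theorem \cite[Theorem 5.2]{DHP22}, together with its length bound, keeping the relevant intersection numbers in a bounded range, so that nefness there reduces to finitely many numerical inequalities --- while termination of the MMP with scaling caps the induction. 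Once this local finiteness is in place, the compactness argument of the second paragraph concludes the proof, and running the identical argument over $S$ handles case (ii).
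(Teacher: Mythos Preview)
Your proposal has a genuine gap at the step you yourself flag as ``the heart of the matter''. You reduce correctly to a local statement around $\beta'_0$ and observe correctly that discrepancy conditions are affine-linear in $\beta'$. But the finiteness of outputs of the MMP with scaling as $\beta'$ varies over a neighborhood is asserted, not proved. Your suggested mechanism (``descending induction on the length of the MMP'') does not work as stated: there is no a priori uniform bound on the number of steps, so there is nothing to descend from. And the claim that nefness ``reduces to finitely many numerical inequalities'' via the cone theorem is not justified: the cone theorem only gives countably many extremal rays, and the length bound $\leq 6$ by itself does not cut this down to finitely many without a further positivity input (a uniform lower bound on intersections with some K\"ahler class). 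You have identified the right ingredients but not assembled them into an argument.

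The paper's proof is organized differently and this difference is what makes it go through. It inducts on $\dim\Omega'$, not on the length of an MMP. Given $\beta'_0$, one passes to a log terminal model $\phi:X\dasharrow X^{\rm m}$ and then to the log canonical model $\psi:X^{\rm m}\to X^{\rm c}$ of $K_X+B+\beta_0$. The point is that $K_{X^{\rm m}}+B^{\rm m}+\beta_0^{\rm m}\equiv_{X^{\rm c}}0$, so \emph{over} $X^{\rm c}$ the problem for any $\beta$ becomes a problem on the boundary $\partial\Omega^{\rm m}$, to which the induction hypothesis (case (ii), since $X^{\rm m}\to X^{\rm c}$ is bimeromorphic) applies. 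This yields finitely many models $\phi_i:X^{\rm m}\dasharrow X_i$ over $X^{\rm c}$. The crucial uniformity is then a quantitative claim: there is a constant $\bar\lambda>0$, independent of the boundary point $\beta_1^{\rm m}$, such that $\phi_i$ remains the log canonical model over $S$ for all $\beta_\lambda^{\rm m}=(1-\lambda)\beta_0^{\rm m}+\lambda\beta_1^{\rm m}$ with $0\leq\lambda<\bar\lambda$. This is exactly where the length bound is used correctly: combining $(K+B)\cdot C\geq -6$ with a lower bound $\omega\cdot D\geq\delta$ for curves on $X^{\rm c}$ (available because $\omega$ is K\"ahler there) and an upper bound $-\beta_i\cdot C\leq M$ (available because the indeterminacy loci are $1$-dimensional and $\mathcal Q_i$ is compact), one gets the explicit estimate $\bar\lambda<\frac{\delta}{\delta+M+6}$. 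That positive $\delta$ coming from the K\"ahler class on $X^{\rm c}$ is the missing positivity input in your sketch; without first passing to $X^{\rm c}$ there is no such class to play against.
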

 Note that a compact convex polyhedral set is a convex hull of finitely many vectors. Then by finite polyhedral decomposition $\Omega '=\cup \Omega' _i$ we simply mean that each $\Omega' _i$ is a subset of $\Omega'$ defined by finitely many affine linear equations and inequalities such that $\Omega' _i\cap \Omega' _j=\emptyset$ for $i\ne j$.
 
 \begin{proof} We will prove both cases (i) and (ii) simultaneously. We will use the convention that in case (i), $S={\rm Specan}(\mathbb C)$, and we remark that in case (ii) the condition that $K_X+B+\bbeta_X$ is big over $S$ is automatic as $\pi$ is bimeromorphic.
 We will use induction on the dimension of $\Omega'$. We will abuse notation and denote $\bbeta _X$ by $\beta$.
 If $\dim\Omega'=0$, then $\Omega'=\{\beta'_0\}$ for some $\beta'_0$ such that $(X, B+\beta_0=B+\nu _*\beta'_0)$ is a generalized klt pair and $K_X+B+\beta_0$ is big (over $S$). In this case the existence of the required log canonical model follows by Theorems \ref{t-3-mmp} and \ref{t-3-mmp-rel}. 
 
 Since $\Omega'$ is compact, it is enough to prove the statement locally in a neighborhood of each point $\beta'\in\Omega'$. Fix a point $\beta'_0\in\Omega'$ and let $\beta _0=\nu _*\beta '_0\in \Omega :=\nu _*\Omega '$. By  Theorems \ref{t-3-mmp} and \ref{t-3-mmp-rel},
  there is a $(K_X+B+\beta_0)$-log terminal model $\phi:X\dasharrow X^{\rm m}$ (over $S$) 
 and a log canonical model $\psi:X^{\rm m}\to X^{\rm c}$ (over $S$). Since $a(E, X, B+\beta_0)<a(E, X^{\rm m}, B^{\rm m}+\beta_0^{\rm m})$ for all $\phi$-exceptional divisors $E$ of $X$ (where $B^{\rm m}+\beta^{\rm m}_0=\phi_*(B+\beta_0)$), shrinking $\Omega'$ (to a smaller polytope without changing its dimension) around $\beta '_0$ we may assume that if $\beta:=\nu _*\beta '$ and $\beta^{\rm m}:=\phi_*\beta$, then $a(E, X, B+ \beta)<a(E, X^{\rm m}, B^{\rm m}+\beta^{\rm m})$ for all  $\beta'\in\Omega'$ and for all $\phi$-exceptional divisors $E$ of $X$. In particular, if $\phi^{\rm m}:X^{\rm m}\to \bar X^{\rm m}$ is a log canonical model for $K_{X^{\rm m}}+B^{\rm m}+\beta^{\rm m} $ (over $S$), then $\phi ^{\rm m}\circ \phi:X\bir \bar X^{\rm m}$ is a log canonical model for $K_{X}+B+\beta$ (over $S$).

Now let $\Omega^{\rm m}:=\phi_*\Omega$. Note that $\Omega^{\rm m}$ is a compact convex polyhedral subset of $H^{1,1}_{\BC}(X^{\rm m})$, since $\phi_*$ is a linear map. Then, by induction, there is a finite polyhedral decomposition $\partial\Omega^{\rm m}=\cup_{i=1}^k \mcP_i$  of the boundary $\partial\Omega^{\rm m}$ of $\Omega^{\rm m}$

 and finitely many meromorphic maps $\phi _{i}:X^{\rm m}\dasharrow X_{i}$ (over $X^{\rm c}$) $1\<i\<k$ such that if $f:X^{\rm m}\dasharrow Y$ is a log canonical model of $K_{X^{\rm m}}+B^{\rm m}+\beta^{\rm m}$ over $X^{\rm c}$ for some $\beta^{\rm m}\in \mcP_i$, then $f=\phi _{i}$ (note that, as $K_X+B+\beta _0$ is big over $S$, $\psi:X^{\rm m}\to X^{\rm c}$ is bimeromorphic). Recall that $\beta_0^{\rm m}:=\phi_*\beta_0\in \Omega^{\rm m}$. Choose $\beta^{\rm m}_1\in\partial\Omega^{\rm m}$ such that $\beta^{\rm m}_1\neq \beta_0^{\rm m}$. 
 For $0<\lambda\<1$ we define
 \begin{equation}\label{eqn:polytope}
 \beta_\lambda^{\rm m}:=(1-\lambda) \beta_0^{\rm m}+\lambda\beta^{\rm m}_1.
 \end{equation}
 Recall that $K_{X^{\rm m}}+B^{\rm m}+\beta^{\rm m}_0=\psi^* \omega \equiv _{X^{\rm c}}0$ for some  class $\omega$ on $X^{\rm c}$ which is K\"ahler (over $S$). Since $\phi_{i}:X^{\rm m}\bir X_{i}$ is a  log canonical model of $K_{X^{\rm m}}+B^{\rm m}+\beta^{\rm m}_1$ over $X^{\rm c}$ for some $i$, then from \eqref{eqn:polytope} we have
\begin{equation}\label{eqn:polytope-2}
    K_{X_{i}}+B_{i}+\beta_{\lambda, i}\num (1-\lambda)\psi^*_{i}\omega+\lambda(K_{X_{i}}+B_{i}+\beta_{1, i}),
\end{equation} 
 where $\psi_{i}:X_{i}\to X^{\rm c}$ is the induced bimeromorphic morphism.

 Since \begin{equation}\label{eqn:polytope-2}
    \frac 1{\lambda}(K_{X_{i}}+B_{i}+\beta_{\lambda, i})= (K_{X_{i}}+B_{i}+\beta_{1, i})+\frac{1-\lambda}{\lambda}\psi^*_{i}\omega,
\end{equation}
it follows that $K_{X_i}+B_i+\beta_{\lambda, i}$ is K\"ahler over $S$ for all $0<\lambda\ll 1$; in particular, 
$\phi_{i}$ is a log canonical model of $K_{X^{\rm m}}+B^{\rm m}+\beta^{\rm m}_\lambda$ over $S$ for all $0< \lambda\ll 1 $.
 \begin{claim}\label{clm:models-over-S}
 There exists a constant $\bar \lambda >0$ such that for every $\beta^{\rm m}_1\in\partial\Omega^{\rm m}$ there exists a $\phi_{i}:X^{\rm m}\bir X_{i}$ for some $i\in \{1,2,\ldots,k\}$ such that $\phi_{i}$ is a log canonical model for $K_{X^{\rm m}}+B^{\rm m}+\beta^{\rm m}_\lambda$ (over $S$) for all $0\leq\lambda< \bar \lambda $.
 \end{claim}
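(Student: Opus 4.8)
The plan is to prove the uniformity one cell $\mcP_i$ at a time and then set $\bar\lambda:=\min_i\bar\lambda_i$. Fix $i\in\{1,\dots,k\}$. By the inductive hypothesis, for \emph{every} $\beta^{\rm m}\in\mcP_i$ the map $\phi_i:X^{\rm m}\bir X_i$ is the log canonical model of $K_{X^{\rm m}}+B^{\rm m}+\beta^{\rm m}$ over $X^{\rm c}$; in particular $X_i$, the boundary $B_i$ and the induced bimeromorphic morphism $\psi_i:X_i\to X^{\rm c}$ do not depend on $\beta^{\rm m}\in\mcP_i$, and $\gamma_i(\beta^{\rm m}):=K_{X_i}+B_i+(\phi_i)_*\beta^{\rm m}$ depends affine--linearly, hence continuously, on $\beta^{\rm m}\in\mcP_i$, with $\gamma_i(\beta^{\rm m})$ K\"ahler over $X^{\rm c}$ for each such $\beta^{\rm m}$.

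First I would trivialize the relatively ample correction uniformly over $\mcP_i$. Since the classes that are K\"ahler over $X^{\rm c}$ form an open subset of $H^{1,1}_{\BC}(X_i)$ and $\mcP_i$ is compact, covering $\mcP_i$ by finitely many sets on each of which a single correction works and summing the corresponding K\"ahler classes on $X^{\rm c}$ (using that the sum of a K\"ahler class and a nef class is K\"ahler) produces one K\"ahler class $\eta$ on $X^{\rm c}$ with $\gamma_i(\beta^{\rm m})+\psi_i^*\eta$ K\"ahler on $X_i$ for \emph{all} $\beta^{\rm m}\in\mcP_i$. Recalling that $K_{X^{\rm m}}+B^{\rm m}+\beta^{\rm m}_0=\psi^*\omega$ with $\omega$ a class on $X^{\rm c}$ that is K\"ahler over $S$, and that $\beta^{\rm m}_{\lambda,i}=(1-\lambda)\beta^{\rm m}_{0,i}+\lambda\beta^{\rm m}_{1,i}$, we get for any $\beta^{\rm m}_1\in\mcP_i$
\[
K_{X_i}+B_i+\beta^{\rm m}_{\lambda,i}=\psi_i^*\bigl((1-\lambda)\omega-\lambda\eta\bigr)+\lambda\bigl(\gamma_i(\beta^{\rm m}_1)+\psi_i^*\eta\bigr).
\]
Because $\omega$ is K\"ahler over $S$ and the relative K\"ahler cone is open, there is $\bar\lambda_i>0$ depending only on $\omega$ and $\eta$, not on $\beta^{\rm m}_1$, such that $(1-\lambda)\omega-\lambda\eta$ is K\"ahler over $S$ on $X^{\rm c}$ for all $0\le\lambda<\bar\lambda_i$. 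For such $\lambda$ the first summand is nef over $S$ (the pullback of a class K\"ahler over $S$ by the bimeromorphic morphism $\psi_i$), and for $\lambda>0$ the second summand is K\"ahler on $X_i$; since a class nef over $S$ plus a class K\"ahler over $S$ is again K\"ahler over $S$, the class $K_{X_i}+B_i+\beta^{\rm m}_{\lambda,i}$ is K\"ahler over $S$ for all $0<\lambda<\bar\lambda_i$ and nef over $S$ for $\lambda=0$.

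By Lemma \ref{l-models} and the discussion preceding the claim, this implies that $\phi_i$ is a log canonical model of $K_{X^{\rm m}}+B^{\rm m}+\beta^{\rm m}_\lambda$ over $S$ for all $0<\lambda<\bar\lambda_i$ (and a weak log canonical model when $\lambda=0$). Setting $\bar\lambda:=\min_{1\le i\le k}\bar\lambda_i>0$ and using $\partial\Omega^{\rm m}=\bigcup_{i=1}^k\mcP_i$ gives the claim, since any $\beta^{\rm m}_1\in\partial\Omega^{\rm m}$ lies in some $\mcP_i$ and the corresponding $\phi_i$ works for all $0\le\lambda<\bar\lambda$. The main obstacle is exactly the two uniformity steps --- producing a single correction $\eta$ valid on all of $\mcP_i$, and then a threshold $\bar\lambda_i$ independent of $\beta^{\rm m}_1\in\mcP_i$ --- which is where the compactness of $\mcP_i$ has to be combined with the openness of the relative K\"ahler cones on $X_i$ and on $X^{\rm c}$; once the relevant class is known to be K\"ahler over $S$, identifying $\phi_i$ as the log canonical model is routine and has already been done above.
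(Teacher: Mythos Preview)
Your approach is genuinely different from the paper's and more conceptual: you try to prove uniformity by openness of the relative K\"ahler cones plus compactness, whereas the paper uses explicit intersection--theoretic bounds (finiteness of the curves $C\subset X_i$ with $\beta_i\cdot C<0$, which must lie in the indeterminacy locus of $X_i\dasharrow X'$; the length bound $-6$ on extremal rays from Corollary~\ref{c-3fold-flips}; and a lower bound $\delta$ for $\omega\cdot D$ over $S$-vertical curves $D\subset X^{\rm c}$) to derive the explicit threshold $\bar\lambda<\frac{\delta}{\delta+M+6}$ via a contradiction through the relative log canonical model $X_i\to Y$.

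There is, however, a real gap in your compactness step. The cells $\mcP_i$ of the inductive polyhedral decomposition of $\partial\Omega^{\rm m}$ are pairwise disjoint by definition, so they cannot all be closed; in general $\mcP_i$ is only bounded. On a boundary point $b\in\overline{\mcP_i}\setminus\mcP_i$ the log canonical model over $X^{\rm c}$ is some $X_j$ with $j\neq i$, and then $\gamma_i(b)=K_{X_i}+B_i+(\phi_i)_*b$ is only \emph{nef} over $X^{\rm c}$, not K\"ahler over $X^{\rm c}$; consequently $\gamma_i(b)+\psi_i^*\eta$ is merely nef on $X_i$ for \emph{every} K\"ahler $\eta$ on $X^{\rm c}$. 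Thus the open cover of $\mcP_i$ by neighborhoods on which a single correction $\eta$ works has no finite subcover in general, and the existence of one $\eta$ valid on all of $\mcP_i$ is not established by your argument. Note that the paper also writes ``by compactness of $\mathcal Q_i$'', but its argument only uses \emph{boundedness}: the constant $M_i$ bounds the linear functionals $\beta_i\mapsto\beta_i\cdot C$ over the finitely many curves $C\in\mathcal C_i$ and the bounded set $\mathcal Q_i$, which needs no closedness. Your covering argument, by contrast, genuinely requires a finite subcover, so the gap is not merely cosmetic. The decomposition $K_{X_i}+B_i+\beta^{\rm m}_{\lambda,i}=\psi_i^*((1-\lambda)\omega-\lambda\eta)+\lambda(\gamma_i(\beta^{\rm m}_1)+\psi_i^*\eta)$ is a nice idea, and a posteriori such a uniform $\eta$ does exist (at least in case (i), take $\eta$ a suitable multiple of $\omega$ once the paper's bound is known), but proving this directly seems to require input beyond openness and compactness.
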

 \begin{proof}

  Since $\beta ' \in \Omega '$ is nef over $S$, then if $\beta _i=(\phi _i\circ \phi\circ \nu)_*\beta '$ is not nef over $S$, there must be a curve $C$ (vertical over $S)$ contained in the indeterminacy locus of $X_i\dasharrow X'$ such that $\beta _i\cdot C<0$. Since the indeterminacy locus is of dimension at most $1$ ($\codim \geq 2$), there are only finitely many irreducible curves in the following set: 
  \[\mathcal C _i:=\{ C\subset X_i\;|\; p_{i,*}C=0,\ \beta _i\cdot C<0, \mbox{ for some } \beta _i\in \mathcal Q_i:=\phi_{i,*}\mathcal P _i\},\]
where $p_i:X_i\to S$ is the induced morphism.
 By compactness of $\mathcal Q _i$, there exists an integer $M_i> 0$ such that $\beta _i\cdot C\geq -M_i$ for any $C\in \mathcal C _i$ and $\beta _i \in \mathcal Q _i$.
 It follows that $\beta _i\cdot C\geq -M_i$ for every curve $C$ on $X_i$. Let $\delta >0$ be a constant such that $\omega  \cdot D\geq \delta $ for every $S$-vertical curve $D$ on $X^c$, where $\omega =K_{X^c}+B^c+\beta _0^c$ is K\"ahler over $S$. 
 Let \[s:={\rm sup}\{\lambda> 0\;|\; K_{X_{i}}+B_{i}+\beta_{\lambda,i} \mbox{ is K\"ahler over } S\}. \] 
From \eqref{eqn:polytope-2} it follows that $s>0$.
 Let $M={\rm max}\{M_i\;:\; 1\<i\<k\}$, then $\beta_i\cdot C\geq -M$ for all curves $C\subset X_i$ and for all $i=1,2,\ldots, k$. We claim that $s\geq \frac \delta {\delta +M+6}$.

By contradiction assume that $s<\frac \delta {\delta +M+6}$. By Theorem \ref{t-3-mmp-rel}, there is a relative log canonical model $\eta:X_i\to Y$ over $S$ for $K_{X_{i}}+B_{i}+\beta_{s,i}$. From our assumptions it follows that $\eta$ is bimeromorphic and hence its exceptional locus is covered by curves. If $\Sigma$ is a $\eta$-exceptional curve, then $\Sigma$ is not $\psi _i$-exceptional as otherwise $(K_{X_i}+B_i+\beta _{\lambda ,i})\cdot \Sigma=0$ for all $\lambda$ contradicting the fact that the corresponding class is K\"ahler over $S$ for $0<\lambda \ll 1$.

If $\Sigma$ is a curve spanning a $(K_{X_i}+B_i)$-negative extremal ray over $Y$ then, by what we observed above, the curve $\psi_i(\Sigma)\subset X^c$ is vertical over $S$. Thus $\psi _i^*\omega \cdot \Sigma >0$ and so $ \psi _i^* \omega\cdot \Sigma=\omega \cdot  \psi _{i,*}\Sigma \geq \delta$.
 We may also assume that $0>(K_{ X_i}+ B_i)\cdot \Sigma \geq -6$. But then 
 \[0= (K_{ X_{i}}+B_{i}+\beta_{s,i})\cdot \Sigma =(1-s)\psi _i^*\omega \cdot \Sigma+s(K_{ X_{i}}+ B_{i}+ \beta_{1,i})\cdot \Sigma\geq (1-s)\delta -s(6+M) >0.\]
This is impossible and so, by the cone theorem, $K_{X_i}+ B_i$ is nef over $Y$. 
Suppose again that $\Sigma$ is a $\eta$-exceptional curve, then as observed above, $\Sigma$ is not $\psi _i$-vertical and so  $\omega \cdot  \psi _{i,*}\Sigma \geq \delta$.
Since $(K_{X_{i}}+B_{i})\cdot \Sigma \geq 0$ and $s<\frac \delta {\delta +M+6}<\frac{\delta}{\delta+M}$, then \[0= (K_{X_{i}}+B_{i}+\beta_{s,i})\cdot \Sigma =(1-s)\omega \cdot \psi _{i,*}\Sigma+s(K_{X_{i}}+B_{i}+\beta_{1,i})\cdot \Sigma\geq (1-s)\delta -sM >0,\]
this is impossible. Therefore, there are no $\eta$-exceptional curves i.e. $X_i=Y$ is a log canonical model over $S$ for $K_{X_i}+B_i+\beta _{s,i}$.
This contradicts the definition of $s$ and it follows that 
$s\geq \frac{\delta}{\delta+M+6}$.

Now fix a $\bar\lambda$ satisfying $0<\bar \lambda < \frac \delta {\delta +M+6}$; this proves our Claim \ref{clm:models-over-S}.
 \end{proof}

 Note that as observed above, $X\dasharrow X_{i}$ is a log canonical model for $K_X+B+\beta _\lambda$ (over $S$) for all $\beta _1\in \mathcal P _i$ and $0\leq\lambda \leq \bar \lambda$. The decomposition $\partial \Omega ^{\rm m} =\cup \mathcal P _i$ induces a corresponding decomposition of $\Omega ^{\rm m}-\{\beta ^{\rm m}_0\}=\cup \Omega ^{\rm m}_i$ where each $\Omega ^{\rm m}_i $ is the polytope spanned by $\beta_0$ and $\mathcal P _i$ excluding $\beta ^{\rm m}_0$, and $\Omega ^{\rm m}_0:=\{\beta _0^{\rm m}\}$ is a 0-dimensional polytope. We then obtain a decomposition  $\Omega' =\cup \Omega' _i$, where $\Omega _i$ is the inverse image of $\Omega^{\rm m}_i$. 
 Finally we replace $\Omega '$ by $\Omega' \cap \{\beta' \in \Omega' \;:\;\ ||\beta' -\beta' _0||\leq \bar \lambda\}$ for some fixed norm $||\cdot||$. This completes the proof.

 \end{proof}~\\

\subsection{Existence of Mori Fiber Space}\label{subsec:mfs}
In this subsection we will show that if $K_X+B+\bbeta_X$ is not pseudo-effective, then we can run an MMP which ends with a Mori fiber space.

First we will show that if $K_X+B+\bbeta_X$ is big, then we can run a terminating MMP with scaling of a very general K\"ahler class. Using this result, we will then show that we can also obtain a Mori fiber in the non pseudo-effective case.
 \begin{theorem}\label{t-3-mmpbig}
 Let $(X,B+\bbeta)$ be a  strongly $\mbQ$-factorial generalized klt pair such that $K_X+B+\bbeta_X$ is big,  where $X$ is a compact K\"ahler $3$-fold. Let $\omega$ be a  very general K\"ahler class in $ H^{1,1}_{\rm BC}(X)$ such that $K_X+B+\bbeta_X+\omega$ is a K\"ahler class. Then we can run a terminating $(K_X+B+\bbeta_X)$-MMP with scaling of $\omega$.
 \end{theorem}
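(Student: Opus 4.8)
The plan is to run the $(K_X+B+\bbeta_X)$-MMP with scaling of $\omega$ and to prove its termination by combining the finiteness of log canonical models over a polytope (Theorem~\ref{t-finite}) with the uniqueness of minimal models forced by the ``very general'' hypothesis (Theorem~\ref{t-3-mmp}(3)). For the set-up, fix a log resolution $\nu:X'\to X$ to which $\bbeta$ descends and choose closed positive $(1,1)$-currents with local potentials $\beta'_0,\omega'$ on $X'$ representing the nef classes $[\bbeta_{X'}]$ and $\nu^*[\omega]$ (Remark~\ref{r-gp-new}). Set $\Omega':=\{\beta'_0+t\omega':t\in[0,1]\}$, a compact convex polyhedral set. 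For every $t\in[0,1]$ the class $[\beta'_0+t\omega'_{X'}]=[\bbeta_{X'}]+t\,\nu^*[\omega]$ is nef, $K_{X'}+B'+[\beta'_0+t\omega'_{X'}]=\nu^*([K_X+B+\bbeta_X]+t[\omega])$, and $[K_X+B+\bbeta_X]+t[\omega]$ is big (being big plus nef); hence $(X,B+\overline{\beta'_0+t\omega'})$ is a generalized klt pair with structure morphism $\nu$ and with the same discrepancies as $(X,B+\bbeta)$ (Remark following Definition~\ref{def:g-singularities}). Thus $\Omega'$ satisfies hypothesis (i) of Theorem~\ref{t-finite}, which yields finitely many bimeromorphic maps $\psi_1,\dots,\psi_N$ such that every log canonical model of $(X,B+\bbeta_X+t\omega)$ for $t\in[0,1]$ is one of the $\psi_j$.

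Next, since $K_X+B+\bbeta_X+\omega$ is K\"ahler we may run the $(K_X+B+\bbeta_X)$-MMP with scaling of $\omega$: put $X^0=X$, $\mu_0=1$, let $\mu_{i+1}=\inf\{t\geq 0:\ K_{X^i}+B^i+\bbeta_{X^i}+t\omega^i\ \text{is nef}\}$, and when $\mu_{i+1}>0$ flip or contract a $(K_{X^i}+B^i+\bbeta_{X^i})$-negative extremal ray $R_i$ with $(K_{X^i}+B^i+\bbeta_{X^i}+\mu_{i+1}\omega^i)\cdot R_i=0$; each step exists (flips by Corollary~\ref{c-3fold-flips}, divisorial and extremal contractions by \cite{DH20}), and strong $\mathbb Q$-factoriality and the K\"ahler property are preserved (Lemmas~\ref{lem:uniqueness-of-flip} and~\ref{l-kmmp}). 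The scalars $1=\mu_0\geq\mu_1\geq\cdots$ are non-increasing. A routine bookkeeping argument shows that for every $i$ with $\mu_{i+1}<\mu_i$ and every $t\in[\mu_{i+1},\mu_i)$, each step $X^j\dasharrow X^{j+1}$ with $j<i$ is $(K+B+\bbeta+t\omega)$-negative, so $X\dasharrow X^i$ is a log terminal model of $(X,B+\bbeta_X+t\omega)$. Because $\omega$ is very general, the line $\{[K_X+B+\bbeta_X]+t[\omega]\}$ lies in none of the countably many proper analytic subsets occurring in the definition of ``very general'' (each such subset through $[K_X+B+\bbeta_X]$ has proper tangent cone there, and $[\omega]$ avoids the corresponding countable union of proper subvarieties); hence there is a countable set $T_0\subset[0,1]$ with $[K_X+B+\bbeta_X]+t[\omega]$ very general for $t\notin T_0$. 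For $t\in[\mu_{i+1},\mu_i)\setminus T_0$, Theorem~\ref{t-3-mmp}(3) identifies the strongly $\mathbb Q$-factorial log terminal model $X^i$ with the unique log canonical model of $(X,B+\bbeta_X+t\omega)$, so $X\dasharrow X^i$ equals some $\psi_j$. Since a nonempty sequence of flips and divisorial contractions is never an isomorphism, two distinct indices cannot yield the same $\psi_j$; as $[\mu_{i+1},\mu_i)\setminus T_0\neq\emptyset$ whenever $\mu_{i+1}<\mu_i$, there are only finitely many such $i$, so $\mu_i=\mu$ for all $i\geq i_0$.

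It remains to exclude an infinite tail of steps with constant scaling $\mu$. For $i\geq i_0$ the step $X^i\dasharrow X^{i+1}$ is $(K_{X^i}+B^i+\bbeta_{X^i}+\mu\omega^i)$-trivial, and $K+B+\bbeta+\mu\omega$ is nef and big, so $(X^{i_0},B^{i_0}+\bbeta_{X^{i_0}}+\mu\omega^{i_0})$ has a log canonical model $W$ (Theorem~\ref{t-3-mmp}) to which every $X^i$ admits a morphism (Lemma~\ref{l-models}); since only finitely many of these steps can be divisorial, the tail is a sequence of $(K+B+\bbeta)$-flips over $W$. If $\mu\notin T_0$ we are done at once: $X^{i_0}$ is already a strongly $\mathbb Q$-factorial log terminal model of $(X,B+\bbeta_X+\mu\omega)$, hence equals $W$ by Theorem~\ref{t-3-mmp}(3), so $K+B+\bbeta+\mu\omega$ is K\"ahler on $X^{i_0}$ and there is no $\mu$-trivial curve left to flip. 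When $\mu\in T_0$, one shows instead --- via the negativity lemma on a common resolution of $X'\dasharrow X^i$ --- that the trace $\bbeta_{X^i}$ is relatively nef over $W$, so each flipping ray is also $(K_{X^i}+B^i)$-negative and the tail is a $(K_{X^{i_0}}+B^{i_0})$-MMP over $W$, which terminates by \cite[Theorem~3.3]{DO23}. I expect this last step --- ruling out the constant-scaling tail, i.e.\ recognising it as a $(K_X+B)$-MMP over the common log canonical model and invoking termination of threefold flips --- to be the main obstacle; the preceding part is a standard adaptation of the finiteness-of-models proof of termination of the MMP with scaling in the big case.
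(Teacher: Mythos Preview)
Your overall plan—use the finiteness of log canonical models over the segment $\Omega'$ (Theorem~\ref{t-finite}) together with the identification of log terminal and log canonical models forced by very generality (Theorem~\ref{t-3-mmp}(3))—is exactly the paper's strategy. The essential difference, and the real gap in your argument, lies in how the individual MMP steps are produced.

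You write ``flip or contract a $(K_{X^i}+B^i+\bbeta_{X^i})$-negative extremal ray $R_i$ \ldots\ divisorial and extremal contractions by \cite{DH20}''. But at this point in the paper there is no cone theorem and no contraction theorem for generalized pairs: Theorem~\ref{t-3-cone+} and Lemma~\ref{l-cone} are proved \emph{after} Theorem~\ref{t-3-mmpbig} and in fact invoke it. The results of \cite{DH20} concern ordinary klt pairs $(X,B)$, and a $(K_X+B+\bbeta_X)$-negative ray need not be $(K_X+B)$-negative since $\bbeta_X$ is not nef in general (cf.\ Lemma~\ref{l-3-nef}). So you have not established the existence of the ray $R_i$ or of its contraction. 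The paper's proof circumvents this entirely: at each threshold $t_i$ it takes the \emph{log canonical model} $\psi_i:X_i\to Z_i$ of $K_{X_i}+B_i+\bbeta_{X_i}+t_i\omega_i$ (available by Theorem~\ref{t-3-mmp}), and then uses that $\omega_i$ is very general in $H^{1,1}_{\rm BC}(X_i)$ to deduce $\rho(X_i/Z_i)=1$, so that $\psi_i$ is automatically an extremal contraction. The flip, when $\psi_i$ is small, is identified with the log canonical model at $t_i-\epsilon$.

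This construction also dissolves your ``constant-scaling tail'' worry: because $X_{i+1}$ is the log canonical model for $t_i-\epsilon$ (in the flip case) or for $t_i$ (in the divisorial case), the class $K_{X_{i+1}}+B_{i+1}+\bbeta_{X_{i+1}}+(t_i-\epsilon)\omega_{i+1}$ is K\"ahler, forcing $t_{i+1}<t_i$ strictly. Thus each $X_{i+1}$ is a genuinely new log canonical model for some parameter in $[0,1]$, and Theorem~\ref{t-finite} terminates the sequence directly. Your proposed fix for the tail—arguing that $\bbeta_{X^i}$ is nef over $W$ so the flips are $(K_{X^i}+B^i)$-flips—is not justified: the flipping curves can lie in the indeterminacy locus of $X^i\dashrightarrow X'$, precisely where $\bbeta_{X^i}$ may be negative.
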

 
 \begin{proof}
First, re-scaling $\omega$, we may assume that $K_X+B+\bbeta_X+\omega $ is nef but not K\"ahler. Now, to run the $(K_X+B+\bbeta_X)$-MMP with scaling of $\omega$, we will inductively construct a sequence of bimeromorphic maps $\phi _i:X_i\dasharrow X_{i+1}$ and real numbers $t_i>t_{i+1}$ for $i\geq 0$
 such that $X_0=X$ and $t_0=1$ and
 the following conditions are now satisfied for all $i\geq 0$ (where we set $t_{-1}=1$)
 \begin{enumerate}
     \item $(X_i,B_i+\bbeta_{X_i}+t_i\omega_i)$ is a generalized klt pair,
     \item $K_{X_i}+B_i+\bbeta_{X_i} +t_i\omega_i$ is nef,
     \item $K_{X_i}+B_i+\bbeta_{X_i}+(t_{i-1}-\epsilon)\omega_i$ is K\"ahler for $0<\epsilon \ll 1$ and for all $i\geq 1$. 
     \item $K_{X_i}+B_i+\bbeta_{X_i}$ is big,
     \item $X_i$ is strongly $\mathbb Q$-factorial and $\omega_i\in H^{1,1}_{\rm BC}(X_i)$ is very general.
 \end{enumerate}
 The base of the induction is clear. Assume that we have constructed $(X_i,B_i+\bbeta_{X_i}+t_i\omega_i)$ as above.
 Let \[t_{i}:={\rm inf}\{s\>0\;|\; K_{X_i}+B_i+\bbeta_{X_i} +s\omega _i\mbox{ is nef} \}.\]
 If $t_{i}=0$, then the MMP terminates and $X\dasharrow X_i$ is a log terminal model for $(X,B+\bbeta_X)$. Thus we may assume that $t_{i}>0$.
 
 Let $\psi_i:X_i\to Z_i$ be the log canonical model for $K_{X_i}+B_i+\bbeta_{X_i} +t_{i}\omega_i$ (which exists by Theorem \ref{t-3-mmp}. 
 Since $\psi _i$ is bimeromorphic, the fibers of $\psi_i$ are covered by curves. Since $\omega_i$ is very general in $H^{1,1}_{\rm BC}(X_i)$ and $(K_{X_i}+B_i+\bbeta_{X_i}+t_{i}\omega_i) \cdot C=0$ for any $\psi_i$-exceptional curve $C\subset X_i$, it follows that $\rho(X_i/Z_i)=1$ and $\psi_i$ is a contraction of a $(K_{X_i}+B_i+\bbeta_{X_i})$-negative extremal ray $R_i$ spanned by (any) one of these curves, i.e. $R_i=\mathbb R^{\geq 0}[C]$. If $\psi_i$ is a divisorial contraction, then we let $\phi _i=\psi _i, X_{i+1}=Z_i$ and \[K_{X_{i+1}}+B_{i+1}+\bbeta_{X_{i+1}}+t_{i}\omega_{i+1}%:=K_{Z_i}+B_{Z_i}+\beta _{Z_i}+t_{i+1}\omega _{Z_i}
 :=\psi _{i,*}(K_{X_i}+B_i+\bbeta_{X_i}+t_{i}\omega_i).\] 
 If $\psi_i$ is a small contraction, then it is a $(K_{X_i}+B_i+\bbeta_{X_i})$-flipping contraction (as it is $\omega_i$-positive).
 \begin{claim}
 Let $X\dasharrow X_{i+1}$ be the log canonical model of $K_X+B+\bbeta_X +(t_{i}-\epsilon)\omega$ (for any $0<\epsilon \ll 1$). Then $\phi _i :X_i\dasharrow X_{i+1}$
is the flip of  $\psi_i$.\end{claim}
\begin{proof}
 By Theorem \ref{t-finite} (and its proof), we may assume that there is an $\epsilon _0>0$ such that $X\dasharrow X_{i+1}$ is the log canonical model of $K_X+B+\bbeta_X +(t_{i}-\epsilon)\omega$ for any $0<\epsilon \leq \epsilon _0 $. In particular, $K_{X_{i+1}}+B_{i+1}+\bbeta_{X_{i+1}} +t_{i}\omega_{i+1}$ is nef and hence admits a morphism $\psi _i^+:X_{i+1}\to \bar Z$ to the log canonical model of $({X_{i+1}},B_{i+1}+\bbeta_{X_{i+1}} +t_{i}\omega_{i+1})$ (which exists by Theorem \ref{t-3-mmp}). Since $X\dasharrow X_{i+1}$  is $(K_X+B+\bbeta_X +t_{i}\omega)$-non-positive, then $X\dasharrow \bar Z$ is also the log canonical model of $K_X+B+\bbeta_X +t_{i}\omega$ and hence $\bar Z=Z_i$.
 Note that $-(K_{X_{i}}+B_{i}+\bbeta_{X_i}+(t_{i}-\epsilon)\omega_{i})$ and  $K_{X_{i+1}}+B_{i+1}+\bbeta_{X_{i+1}}+(t_{i}-\epsilon)\omega_{i+1}$ are both K\"ahler over $Z_{i}$ for $0<\eps\ll 1$,  so $X_i\dasharrow X_{i+1}$ is a $(K_{X_{i}}+B_{i}+\bbeta_{X_i}+(t_{i}-\epsilon)\omega_{i})$-flip. Since $K_{X_{i}}+B_{i}+\bbeta_{X_i}+t_{i}\omega_{i}\equiv _{Z_i}0$, it follows that 
$X_i\dasharrow X_{i+1}$ is a also $(K_{X_{i}}+B_{i}+\bbeta_{X_i})$-flip.
 \end{proof}
 It is easy to check that properties (1-5) hold for $(X_{i+1},B_{i+1}+\bbeta_{X_{i+1}}+t_{i}\omega _{i+1})$.
 Repeating the above procedure we obtain a sequence of distinct $K_{X}+B+\bbeta_X +(t_i-\epsilon_i)\omega$-log canonical models. 
  By Theorem \ref{t-finite} (applied to $\Omega ':=\{t\nu^*\omega \;|\;0\leq t\leq 1\}$ for some log resolution $\nu:X'\to X$ of $(X, B+\bbeta)$), this sequence cannot be infinite and so the above minimal model program with scaling terminates and the proof is complete.
 \end{proof}~\\

 The next result shows that if $K_X+B+\bbeta_X$ is not pseudo-effective, then we can run a terminating $(K_X+B+\bbeta_X)$-MMP with scaling of a very general K\"ahler class and end with a Mori fiber space.
 \begin{theorem}\label{t-3-mmppsef}
 Let $(X,B+\bbeta)$  be a  strongly $\mbQ$-factorial generalized klt pair, where $X$ is a compact K\"ahler $3$-fold. Assume that $K_X+B+\bbeta_X$ is not pseudo-effective, and let $\omega$ be a very general K\"ahler class in $H^{1,1}_{\rm BC}(X)$ such that $K_X+B+\bbeta_X+\omega$ is K\"ahler. Then we can run the $(K_X+B+\bbeta_X)$-MMP with scaling of $\omega$ and obtain $\phi:X\bir X'$ such that  $K_{X'}+B'+\bbeta_{X'}+\tau \omega'$ is pseudo-effective but not big for some $0<\tau <1$, and there is a Mori-fiber space $g:X'\to W$.
 \end{theorem}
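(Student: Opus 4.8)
The plan is to run the $(K_X+B+\bbeta_X)$-MMP with scaling of $\omega$, exactly as in the proof of Theorem~\ref{t-3-mmpbig}, and to argue that, since $K_X+B+\bbeta_X$ is not pseudo-effective, this MMP terminates with a Mori fiber space. First I would set $\tau:=\inf\{t\geq 0\,:\,K_X+B+\bbeta_X+t\omega\text{ is pseudo-effective}\}$. Since $K_X+B+\bbeta_X$ is not pseudo-effective and $K_X+B+\bbeta_X+\omega$ is K\"ahler (hence $K_X+B+\bbeta_X+(1-\eps)\omega$ is K\"ahler, in particular big, for $0<\eps\ll 1$), one has $0<\tau<1$; moreover, for every $t\in(\tau,1]$ the class $K_X+B+\bbeta_X+t\omega$ is big, being the sum of the pseudo-effective class $K_X+B+\bbeta_X+t'\omega$ (for $\tau<t'<t$) and the K\"ahler class $(t-t')\omega$.

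Next I would construct the MMP with scaling inductively. At stage $i$ we have a strongly $\mbQ$-factorial generalized klt pair $(X_i,B_i+\bbeta_{X_i})$ and a very general K\"ahler class $\omega_i\in H^{1,1}_{\rm BC}(X_i)$; put $\lambda_i:=\inf\{t\geq 0\,:\,K_{X_i}+B_i+\bbeta_{X_i}+t\omega_i\text{ is nef}\}$. By the cone theorem (Theorem~\ref{t-3-cone+}) and the standard argument (as in Claim~\ref{clm:curve-at-nef-threshold}) there is a $(K_{X_i}+B_i+\bbeta_{X_i})$-negative extremal ray $R_i$ with $(K_{X_i}+B_i+\bbeta_{X_i}+\lambda_i\omega_i)\cdot R_i=0$, and necessarily $\omega_i\cdot R_i>0$. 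We contract $R_i$ using \cite[Theorem~1.7]{DH20}: if the contraction is of fiber type the MMP stops with a Mori fiber space $g\colon X'=X_i\to W$; otherwise it is divisorial, or small, in which case we replace $X_i$ by the flip (which exists by Corollary~\ref{c-3fold-flips}). In the divisorial and flip cases the new pair is again strongly $\mbQ$-factorial generalized klt (Lemma~\ref{lem:uniqueness-of-flip} and \cite[Lemma~2.5]{DH20}), the push-forward $\omega_{i+1}$ is again a very general K\"ahler class since $\phi_{i,*}$ is surjective (Lemma~\ref{lem:h11-surjective}) and a surjective linear map carries very general classes to very general classes, and $K_{X_{i+1}}+B_{i+1}+\bbeta_{X_{i+1}}+\lambda_i\omega_{i+1}$ is still nef, so $\lambda_{i+1}\leq\lambda_i$. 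Using Lemma~\ref{l-models}(1) and the fact that $X\dasharrow X_i$ is a log terminal model of $K_X+B+\bbeta_X+t\omega$ for $t\in(\lambda_i,\lambda_{i-1})$, one checks that the pseudo-effective threshold on $X_i$ is at least $\tau$; hence $\lambda_i\geq\tau>0$ for all $i$, so the MMP can never reach $\lambda_i=0$ and therefore never terminates with a minimal model. Thus it suffices to prove that the MMP terminates: it will then automatically end with a Mori fiber space $g\colon X'\to W$, and taking $\tau$ to be the final scaling threshold $\lambda_n\in(0,1)$, the class $K_{X'}+B'+\bbeta_{X'}+\tau\omega'$ is nef (hence pseudo-effective), equal to the pull-back of a K\"ahler class from $W$ with $\dim W<3$ (being the supporting class of the contracted ray), and so not big -- which is the required conclusion.

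To prove termination, suppose it fails, so there are infinitely many steps, none of fiber type, with $\lambda_0\geq\lambda_1\geq\cdots\geq\tau$. Whenever $\lambda_i>\tau$, the proof of Theorem~\ref{t-finite} shows that $X_{i+1}$ is the log canonical model of the big class $K_X+B+\bbeta_X+(\lambda_i-\eps)\omega$ for $0<\eps\ll 1$, and the bimeromorphic maps $X\dasharrow X_i$ are pairwise distinct (consecutive ones differ by a non-trivial flip or divisorial contraction, and discrepancies are monotone along the MMP); applying Theorem~\ref{t-finite} to the compact polytope of big classes $\{K_X+B+\bbeta_X+t\omega\,:\,t\in[\tau+\delta,\lambda_0]\}$ (for any fixed $\delta>0$) we conclude that only finitely many steps have $\lambda_i\geq\tau+\delta$; hence $\lambda_i\downarrow\tau$. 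Now introduce $\bbeta':=\bbeta+\tau\bar\omega$, the b-$(1,1)$ current attached to a positive current in $[\omega]$, so that $(X_i,B_i+\bbeta'_{X_i})$ is strongly $\mbQ$-factorial generalized klt, $\bbeta'$ descends to a big and nef class on a log resolution (the pull-back there of the K\"ahler class $\omega$ is big and nef), and $\gamma_i:=K_{X_i}+B_i+\bbeta'_{X_i}$ is pseudo-effective but not big. If $\lambda_i>\tau$ for all $i$, then each step satisfies $\gamma_i\cdot R_i=-(\lambda_i-\tau)\,\omega_i\cdot R_i<0$, so it is a step of a $(K_{X_i}+B_i+\bbeta'_{X_i})$-MMP; by Theorem~\ref{t-3-mmp2} the pair $(X,B+\bbeta')$ has a good log terminal model, and the argument there -- reducing via Lemma~\ref{l-models}(7) to the case where $\bbeta'$ is K\"ahler, hence to a $(K+\Delta)$-MMP for an $\mbR$-divisor $\Delta\geq 0$, which terminates by \cite[Theorem~1.12]{DO23} -- shows that such an MMP is finite, a contradiction. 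If instead some $\lambda_j=\tau$ exactly, then $\gamma_j$ is nef but not big, Theorem~\ref{t-3-mmp1} gives a contraction $X_j\to Z_j$ with $\dim Z_j<3$ onto whose fibers $\gamma_j$ is trivial, the remaining steps (being $\gamma$-trivial) are steps of a $(K_{X_j}+B_j+\bbeta_{X_j})$-MMP over $Z_j$, and this relative $3$-fold MMP terminates since relative $3$-fold flips terminate (\cite[Theorem~1.12]{DO23}, cf.\ Theorem~\ref{thm:relative-mmp}) -- again a contradiction. Hence the MMP terminates, completing the proof.

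The step I expect to be the main obstacle is precisely this termination analysis once the scaling threshold has descended to the pseudo-effective threshold $\tau$: there the relevant class $K_X+B+\bbeta_X+\tau\omega$ is no longer big, so Theorem~\ref{t-finite} does not apply directly, and one must absorb $\tau\omega$ into the nef part, invoke the pseudo-effective-case results (Theorems~\ref{t-3-mmp1} and \ref{t-3-mmp2}), and use termination of $3$-fold flips over a base of dimension $<3$. The remaining ingredients -- running the MMP, preservation of strong $\mbQ$-factoriality and of the very-general hypothesis on $\omega_i$, the identification of the terminal class as a pull-back from $W$, and the statement about $\tau$ -- are routine given the results already established.
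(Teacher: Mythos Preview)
Your overall strategy is sound up to the point where the scaling thresholds $\lambda_i$ descend to $\tau$, but the termination argument at that stage has a genuine gap. In your Case~1 (all $\lambda_i>\tau$) you invoke Theorem~\ref{t-3-mmp2} to conclude that ``such an MMP is finite''. However, Theorem~\ref{t-3-mmp2} only constructs \emph{one} log terminal model of $(X,B+\bbeta')$, by passing to a log resolution $X'\to X$ (where $\bbeta^*_{X'}$ is made K\"ahler) and running a specific $(K_{X'}+B^*)$-MMP there. Lemma~\ref{l-models}(7) transports minimal models of $(X',B^*+\bbeta^*)$ to minimal models of $(X,B+\bbeta')$, but it does \emph{not} let you lift your sequence of flips on $X$ to a sequence of $(K_{X'}+B^*)$-flips on $X'$. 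And on $X$ itself the class $\bbeta'_X=\bbeta_X+\tau\omega$ need not be nef (only modified K\"ahler after the first step), so the steps of your MMP are not, in general, steps of any $(K_X+B)$-MMP; hence \cite[Theorem~1.12]{DO23} does not apply. The same issue recurs in your Case~2: Theorem~\ref{thm:relative-mmp} and the cited termination are for ordinary dlt pairs $(X,B)$, not generalized pairs, and your flips over $Z_j$ need not be $(K_{X_j}+B_j)$-flips. Since termination of generalized klt $3$-fold flips is precisely what the paper cannot establish in general (cf.\ the introduction), this is a real obstruction, not a technicality.

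The paper's proof proceeds differently and more delicately. It first applies Theorem~\ref{t-3-mmp2} to obtain a fixed good log terminal model $\phi:X\dasharrow X'$ for $K_X+B+\bbeta_X+\tau\omega$, with semiample fibration $g:X'\to Z$ and K\"ahler class $\alpha_Z$ such that $K_{X'}+B'+\bbeta_{X'}+\tau\omega'=g^*\alpha_Z$. Assuming non-termination, one shows $t_i\downarrow\tau$ (as you do), and then uses Lemma~\ref{lem:stability-of-negative-part2} and Theorem~\ref{thm:contracted-locus-of-mmp} to ensure that $X_i\dasharrow X'$ is small for $i\gg 0$. The contradiction comes from running an \emph{auxiliary} $(K_{X'}+B'+\bbeta_{X'}+t\omega')$-MMP on $X'$ for a carefully chosen $\tau<t<\frac{bt_0+6\tau}{b+6}$, where $b>0$ is a lower bound for $\alpha_Z\cdot C$ over curves $C\subset Z$: the length bound $\leq 6$ on extremal rays forces every step of this auxiliary MMP to be $(K_{X'}+B'+\bbeta_{X'}+\tau\omega')$-trivial, hence over $Z$; this yields morphisms $X_i\to Z$, which are then played off against the K\"ahlerness of $K_{X_i}+B_i+\bbeta_{X_i}+t\omega_i$ (coming from the very-general choice of $\omega$) to produce the contradiction. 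Having terminated with a log terminal model of $K_X+B+\bbeta_X+\tau\omega$, the paper then runs a \emph{standard} $(K_{X'}+B')$-MMP over $Z$ (via Theorem~\ref{thm:relative-mmp}) and uses very generality of $\omega$ once more to show this MMP is trivial, whence $\rho(X'/Z)=1$ and $g$ itself is the desired Mori fiber space.
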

 \begin{proof} 
 We define
 \begin{equation*}
     \tau:=\inf\{s\>0\;|\; K_X+B+\bbeta_X +s\omega \mbox{ is pseudo-effective}\}
 \end{equation*}
 and 
 \begin{equation*}
     t_1:=\inf\{s\>0\;|\; K_X+B+\bbeta_X+s\omega \mbox{ is nef} \}.
 \end{equation*}
 Then $K_X+B+\bbeta_X+\tau\omega$ is pseudo-effective but not big. 
 By Theorem \ref{t-3-mmp2}, there is a log terminal model $\phi:X\bir X'$ and a morphism $g:X'\to Z$ of normal K\"ahler varieties such that $K_{X'}+B'+\bbeta _{X'} +\tau \omega'=g^*\alpha _Z $, where $\alpha _Z$ is a K\"ahler class. Since $K_X+B+\bbeta_X+\tau\omega$ is not big, $g$ is not bimeromorphic. 
 
 We begin by proving that we can run a $(K_X+B+\bbeta_X)$-MMP with scaling of $\omega$ terminating with a log terminal model of $K_X+B+\bbeta_X+\tau\omega$. Indeed, if $X_i$ is a step of this MMP, then let \[t_{i+1}:=\inf\{s\>0: K_{X_i}+B_i+\bbeta_{X_i}+s\omega_i\mbox{ is nef} \}.\] If $t_{i+1}>\tau$, then $K_{X_i}+B_i+\bbeta_{X_i}+t_{i+1}\omega_i$ is big and by Theorem \ref{t-3-mmpbig} we can run this MMP. Thus as long as $t_i>\tau$, we can continue running this MMP and it will stop once we have $t_i=\tau$ for some $i$ (note that every step of this MMP is also a step of $(K_X+B+\bbeta_{X})$-MMP with the scaling of $\omega$). However, it is not clear whether this process will terminate after finitely many steps.   

Assume by contradiction that this MMP does not terminate. We claim that $\lim t_i=\tau$. If not, then let $\lim t_i=\tau_0>\tau$; note that $\tau_0=\inf\{t_i\;|\; i\>0\}$. Then every step of the above MMP is also a step of $(K_X+B+\bbeta_X+\tau_0\omega)$-MMP, but since $K_X+B+\bbeta_X+\tau_0\omega$ big (as $\tau_0>\tau)$, this MMP terminates by Theorem \ref{t-3-mmpbig}, a contradiction. Now from Lemma \ref{lem:stability-of-negative-part2} we observe that 
 \begin{equation}\label{eqn:negative-parts-equal}
    \Supp N(K_X+B+\bbeta_X+t\omega)=\Supp N(K_X+B+\bbeta_X+\tau\omega)\quad \mbox{for all } \ 0<t-\tau \ll 1. 
 \end{equation}
Thus by Theorem \ref{thm:contracted-locus-of-mmp} we may assume that $X_i\dasharrow X'$ is a small bimeromorphic map for $i\gg 0$. Now from the proof of Theorem \ref{t-3-mmpbig} it follows that $K_{X_i}+B_i+\bbeta_{X_i}+t\omega _i$ is K\"ahler for any $t>0$ satisfying $t_i>t>t_{i+1}$. 
 We may also assume that if $0<t_0-\tau \ll 1$, then \[a(E,X,B+\bbeta_X +t_0\omega )<a(E,X',B'+\bbeta_{X'} +t_0\omega' )\] for all $\phi$-exceptional divisors, and $(X',B'+\bbeta_{X'} +t_0\omega' )$ is generalized klt. Fix $t_0$ as above. Recall that there is a morphism 
 $g:X'\to Z$ such that $K_{X'}+B'+\bbeta_{X'}+\tau \omega '\equiv g^*\alpha _Z$, where $\alpha _Z$ is K\"ahler on $Z$. Let $b>0$ be a constant such that
 $\alpha _Z\cdot C>b$ for any curve $C$ on $Z$ and fix $t>0$ such that $\tau <t<\frac {bt_0+6\tau}{b+6}<t_0$.
 By Theorem \ref{t-3-mmpbig} there is a sequence of $K_{X'}+B'+\bbeta _{X'}+t\omega '$ flips $X'_j\dasharrow X'_{j+1}$ with $0\leq j\leq \bar j-1$ ending with $X'\dasharrow X'_{\bar j}$, a log terminal model of $(X',B'+\bbeta_{X'} +t\omega' )$. Then $X\dasharrow X'_{\bar j}$ is a log terminal model of $(X,B+\bbeta _X+t\omega )$.
 
 We claim that each flip $\psi_j:X'_j\dasharrow X'_{j+1}$ for $0\<j\<\bar j-1$ is $(K_{X'}+B'+\bbeta_{X'}+\tau \omega ')$-trivial. We prove this by induction on $i$. Suppose that $X'_0=X'$ and the claim holds for the first $k-1$ flips, then each flip is a flip over $Z$ and so there is a morphism $g'_k:X'_k\to Z$ such that  $K_{X'_k}+B'_k+\bbeta_{X'_k}+\tau \omega '_k=(g'_k)^*\alpha _Z$. Observe that $\psi _0,\ldots ,\psi _{k-1}$ are $K_{X'}+B'+\bbeta _{X'}+\lambda\omega '$ flips for any $\tau <\lambda\leq t_0$, as each of the them are $K_{X'}+B'+\bbeta_{X'}+\tau\omega'$ trivial. Recall that $\alpha _Z\cdot C\geq b$ for every curve $C\subset Z$. Let $X'_k\dasharrow X'_{k+1}$ be the next $K_{X'}+B'+\bbeta _{X'}+t\omega '$ flip and $C_k$ a corresponding flipping curve. Since $K_{X'_k}+B'_k+\bbeta_{X'_k}+\tau \omega '_k$ is nef and $t_0>t$, we may assume that this curve is also a $(K_{X'}+B'+\bbeta_{X '}+t_0\omega ')$-flipping curve, and hence by Corollary \ref{c-3fold-flips} we may assume that $-(K_{X'_k}+B'_k+\bbeta_{X'_k}+t_0 \omega '_k)\cdot C_k\leq 6$. Moreover, if $(K_{X'_k}+B'_k+\bbeta_{X'_k}+\tau \omega '_k)\cdot C_k>0$, then $(K_{X'_k}+B'_k+\bbeta_{X'_k}+\tau  \omega '_k)\cdot C_k\geq b$. Observe that 
 \[
 K_{X'_k}+B'_k+\bbeta_{X'_k}+t \omega '_k=\frac{t_0-t}{t_0-\tau}(K_{X'_k}+B'_k+\bbeta_{X'_k}+\tau  \omega '_k)+\frac{t-\tau}{t_0-\tau}(K_{X'_k}+B'_k+\bbeta_{X'_k}+t_0 \omega '_k).
\]
 Since $\tau <t<\frac {bt_0+6\tau}{b+6}$ and hence $b(t_0-t)-6(t-\tau)>0$, we then have  
 \[
   0>(K_{X'_k}+B'_k+\bbeta_{X'_k}+t \omega '_k)\cdot C_k\geq \frac{b(t_0-t)}{t_0-\tau}-\frac{6(t-\tau)}{t_0-\tau}>0.  
\]
Since this is impossible, we have $(K_{X'_k}+B'_k+\bbeta_{X'_k}+\tau \omega '_k)\cdot C_k= 0$ and hence $\psi _k$ is $(K_{X'_k}+B'_k+\bbeta_{X'_k}+\tau \omega '_k)$-trivial and the induction is complete.

 Since $\omega$ is very general in $H^{1,1}_{\rm BC}(X)$, and we may assume that $t_i>t>t_{i+1}$ for some $i\gg 0$, then $K_{X_i}+B_i+\bbeta_{X_i}+t\omega_i$ is K\"ahler by what we have seen above. By (5) of Lemma \ref{l-models} there is  a morphism $g_i:X_i\to Z$ such that $K_{X_i}+B_i+\bbeta _{X_i}+\tau \omega _i=g_i^*\alpha _Z$. But this leads to an immediate contradiction, since if  $X_i\dasharrow X_{i+1}$ is a flip and $\Sigma _i$ is a flipping curve for the $(K_{X}+B+\bbeta_X)$-MMP with scaling of $\omega$, then $(K_{X_i}+B_i+\bbeta _{X_i}+t_{i+1}\omega _i)\cdot \Sigma _i=0$ and $\omega _i\cdot \Sigma _i>0$ so that $(K_{X_i}+B_i+\bbeta _{X_i}+\tau \omega _i)\cdot \Sigma _i<0$, but $(K_{X_i}+B_i+\tau\omega_i)\cdot \Sigma_i=\alpha_Z\cdot g_{i, *}(\Sigma_i)\>0$.
 
 This shows that our $(K_X+B+\bbeta_X)$-MMP with scaling of $\omega$ terminates after finitely many steps producing a log terminal model of $K_X+B+\bbeta_X+\tau\omega$. Let $\phi:X\bir X'$ be the composite maps of this MMP so that $K_{X'}+B'+\bbeta_{X'}+\tau\omega':=\phi_*(K_X+B+\bbeta_X+\tau\omega)$ is nef, and by Theorem \ref{t-3-mmp1} there is a morphism $g:X'\to Z$ to a normal compact K\"ahler variety $Z$ such that $K_{X'}+B'+\bbeta_{X'}+\tau\omega=g^*\alpha_Z$, where $\alpha_Z$ is a K\"ahler class on $Z$.\\

 We will now show that we have a Mori fiber space.
 Observe that $-(K_{X'}+B')|_{X_z}\equiv (\bbeta_{X'}+\tau \omega ')|_{X_z}$ is big for general points $z\in Z$; in particular $X_z$ is Moishezon and $K_{X'}+B'$ not pseudo-effective over $Z$. Thus by Theorem \ref{thm:relative-mmp}, we can run a $(K_{X'}+B')$-MMP over $Z$ which terminates with a Mori fiber space $h:X''\to W$ over $Z$. Note that each step of this MMP is $K_{X'}+B'+\bbeta_{X'}+\tau\omega'$ trivial.

Now we will show that the induced map $\psi:X'\bir X''$ is an isomorphism. To see this, let $g':X'\to Y$ be the first contraction of the above MMP over $Z$, and $\Sigma $ is a curve contracted by $g'$. Let $C$ be a curve contained in a general fiber of $g:X'\to Z$. Then $\Sigma$ and $C$ are linearly independent in $N_1(X')$, however they are both $K_{X'}+B'+\bbeta_{X'}  +\tau \omega'$ trivial, contradicting the fact that $\omega $ is very general in $H^{1,1}_{\rm BC}(X)$. Thus $\psi:X'\bir X''$ is an isomorphism and $Z=W$. In particular, $\rho(X'/Z)=1$ and $-(K_{X'}+B')$ is $g$-ample. 
Since $\omega '$ is $g$-K\"ahler (as $\rho(X'/Z)=1$) and $K_{X'}+B'+\bbeta_{X'}  +\tau \omega'$ is $g$-trivial, it follows that $-(K_{X'}+B'+\bbeta_{X'})$ is $g$-K\"ahler. This completes our proof.

 \end{proof}

\begin{proof}[Proof of Theorem \ref{thm:mfs}]
This follows from Theorem \ref{t-3-mmppsef}.    
\end{proof}

 \subsection{Cone Theorem}\label{subsec:cone}
 In this section we will prove the cone theorem for generalized pairs in dimension $3$. We start with the following lemma.
\begin{lemma}\label{l-cone}
Let $(X,B+\bbeta)$ be a strongly $\mbQ$-factorial generalized klt pair, where $X$ is a compact K\"ahler $3$-fold. Let $\omega $ be a K\"ahler class such that $\alpha:=K_X+B+\bbeta_X +\omega$ is nef but not K\"ahler. Then there is a rational curve $C\subset X$ such that $\alpha \cdot C=0$ and $0>(K_X+B+\bbeta_X)\cdot C\>-6$.
\end{lemma}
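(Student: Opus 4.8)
The plan is to realise $\alpha$ as the pull-back of a Kähler class under a non-trivial contraction $f\colon X\to Z$, and then to extract the required curve from a fibre of $f$ using the generalized length estimates already available.

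First, since $X$ is $\mathbb Q$-factorial, $K_X+B$ is $\mathbb R$-Cartier, so by Lemma \ref{l-klt} the pair $(X,B)$ is klt. The elementary point is that for \emph{any} curve $C\subset X$ with $\alpha\cdot C=0$ we automatically get $(K_X+B+\bbeta_X)\cdot C=-\omega\cdot C<0$, because $\omega$ is Kähler; thus the real content is (a) the existence of an $\alpha$-trivial curve, and (b) that one such curve may be chosen rational with $(K_X+B+\bbeta_X)\cdot C\geq -6$. To address (a) I would pass to the generalized pair $(X,B+\bbeta')$ with $\bbeta':=\bbeta+\bar\omega$: if $\nu\colon X'\to X$ is a structure morphism of $(X,B+\bbeta)$ then $\nu^*\omega$ is big and nef on $X'$, so $\bbeta'_{X'}=\bbeta_{X'}+\nu^*\omega$ is big and nef, $X'$ is still a log resolution of $(X,B+\bbeta')$, the discrepancies (hence the singularities) are unchanged, and $K_X+B+\bbeta'_X=\alpha$. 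Since $\alpha$ is nef and $X$ is strongly $\mathbb Q$-factorial, $X$ (with the identity map) is a log terminal model of $(X,B+\bbeta')$.

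Now I distinguish two cases. If $\alpha$ is big, Theorem \ref{t-3-mmp}(2) yields a morphism $\mu\colon X\to X^{\rm c}$ to the log canonical model; if $\mu$ were an isomorphism then $\alpha=\mu^*(\text{K\"ahler})$ would be Kähler, a contradiction, so $\mu$ is bimeromorphic and not an isomorphism. If $\alpha$ is not big, then $(X,B+\bbeta')$ meets the hypotheses of Theorem \ref{t-3-mmp1} ($\alpha=K_X+B+\bbeta'_X$ is nef, not big, and $\bbeta'$ descends to a big and nef class on $X'$), which produces a morphism $\phi\colon X\to Z$ with $\alpha=\phi^*\alpha_Z$ for some Kähler class $\alpha_Z$ on $Z$; again $\phi$ is not an isomorphism, since otherwise $\alpha$ would be big and hence Kähler. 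In either case we get a non-isomorphism morphism $f\colon X\to Z$ with $\alpha=f^*(\text{K\"ahler class on }Z)$. Consequently $\alpha\equiv_Z 0$, so $K_X+B+\bbeta_X\equiv_Z-\omega$; in particular $-(K_X+B+\bbeta_X)$ is Kähler over $Z$ and $f$ is a $(K_X+B+\bbeta_X)$-negative contraction which contracts a curve, settling (a).

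For (b): every curve $C$ contracted by $f$ has $\alpha\cdot C=f^*\alpha_Z\cdot C=0$, so it suffices to find a rational $f$-vertical curve $C$ with $(K_X+B+\bbeta_X)\cdot C\geq -6$. If $f$ is a small bimeromorphic morphism this is exactly Theorem \ref{t-Stein-flip+} (and Corollary \ref{c-3fold-flips}). If $f$ is divisorial, or if $f$ has positive-dimensional fibres — in which case a general fibre $F$ satisfies that $-(K_X+B+\bbeta_X)|_F$ is Kähler and $-(K_X+B)|_F$ is big, so $F$ is covered by rational curves — the required rational curve with $0>(K_X+B+\bbeta_X)\cdot C\geq -6$ is supplied by the length estimates of \cite[Theorem 4.2]{DO23}; equivalently, one picks a $(K_X+B+\bbeta_X)$-negative extremal ray $R$ of $\NA(X/Z)$ and applies the generalized length bound to $R$. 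I expect the main obstacle to be precisely this length inequality: the bound $(K_X+B)\cdot C\geq -6$ from the cone theorem for the underlying klt pair $(X,B)$ (\cite[Theorem 5.2]{DHP22}) does \emph{not} suffice, since $\bbeta_X$ need not be nef on $X$ and may be negative on $C$ (cf.\ Lemma \ref{l-3-nef}); this is why the argument must be routed through the contraction $f$ and the \emph{generalized} length estimates (Theorem \ref{t-Stein-flip+}, Corollary \ref{c-3fold-flips}, \cite[Theorem 4.2]{DO23}), which control $(K_X+B+\bbeta_X)\cdot C$ directly. A secondary technical point is verifying that $f$ (or a suitable extremal contraction over $Z$) is one of the three standard contraction types, so that these estimates indeed apply.
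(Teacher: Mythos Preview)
Your overall strategy---absorb $\omega$ into $\bbeta$, note that $X$ is then already a log terminal model of the resulting generalized pair $(X,B+\bbeta+\bar\omega)$, and invoke Theorem~\ref{t-3-mmp}(2) or Theorem~\ref{t-3-mmp1} to obtain a non-trivial morphism $f\colon X\to Z$ with $\alpha=f^*(\text{K\"ahler})$---is different from the paper's and is sound up to that point. The paper instead perturbs $\bbeta$ so that $K_X+B+\bbeta_X$ becomes big or not pseudo-effective and $\omega$ becomes \emph{very general}, and then runs a $(K_X+B+\bbeta_X)$-MMP with scaling of $\omega$ via Theorems~\ref{t-3-mmpbig} and~\ref{t-3-mmppsef}; the point of this detour is that the very first step is automatically an \emph{extremal} contraction with $\rho(X/Z)=1$, a property your $f$ does not have.

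The gap is exactly where you suspect it, in the non-small case. Your appeal to \cite[Theorem~4.2]{DO23} or to a ``generalized length bound'' does not go through: that result (as used in Theorem~\ref{t-Stein-flip+}) is tailored to small bimeromorphic morphisms, where the exceptional fiber is a curve and one can, Stein-locally over the base, replace $\bbeta_X$ by a numerically equivalent divisor $D$ and then quote the ordinary klt length bound for $(X_W,B_W+D)$. When the fiber is two-dimensional (divisorial or fiber-type) no such direct bound on $(K_X+B+\bbeta_X)\cdot C$ is available. The paper's proof circumvents this using extremality: since $\rho(X/Z)=1$, the covering-family argument (that $\bbeta_X|_E$, resp.\ $\bbeta_X$, is pseudo-effective) forces $\bbeta_X\cdot C\ge 0$ for \emph{every} $f$-vertical curve $C$, not just a general one; then the ray is already $(K_X+B)$-negative, and \cite[Theorem~1.23]{DO23} produces a rational $f$-vertical curve $\Gamma$ with $(K_X+B)\cdot\Gamma\ge -6$, whence $(K_X+B+\bbeta_X)\cdot\Gamma\ge -6$ as well. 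Without $\rho(X/Z)=1$ you cannot pass from ``$\bbeta_X\cdot\Gamma_t\ge 0$ for general $t$'' to ``$\bbeta_X\cdot C\ge 0$ for the specific rational curve produced''.

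Your approach can be salvaged: having produced $f\colon X\to Z$, use the covering-family argument to see that $K_X+B$ is not nef over $Z$, then take the first contraction $g\colon X\to Y$ of a $(K_X+B)$-MMP over $Z$ (Theorem~\ref{thm:relative-mmp}). Every $g$-vertical curve is $f$-vertical, hence $\alpha$-trivial and $(K_X+B+\bbeta_X)$-negative. Now run the paper's case analysis on the extremal $g$ instead of on $f$: if $g$ is small, Theorem~\ref{t-Stein-flip+} applies to $g$; otherwise $\rho(X/Y)=1$ and the argument with \cite[Theorem~1.23]{DO23} goes through verbatim. This amounts to reconstructing, after the fact, the extremality that the paper's perturbation-and-scaling argument supplies from the outset.
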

\begin{proof}
If $K_X+B+\bbeta_X $ is nef, then $\alpha$ is K\"ahler, which is a contradiction. So we may assume that $K_X+B+\bbeta_X $ is not nef.
We may write $\omega =\eta+\omega'$, where $\eta $ and $\omega'$ are very general K\"ahler classes in $H^{1,1}_{\rm BC}(X)$.
Replacing $\bbeta$ by $\bbeta+\epsilon \bar \eta$ for $0<\eps\ll 1$ and $\omega $ by $\omega -\epsilon \eta \equiv (1-\epsilon )\omega+\epsilon \omega '$, we may assume that $K_X+B+\bbeta_X $ is not nef, $\bbeta_X$ is big, $K_X+B+\bbeta_X$ is either big or not pseudo-effective, and $\omega$ is a very general class in $H^{1,1}_{\rm BC}(X)$.
By Theorems \ref{t-3-mmpbig} and \ref{t-3-mmppsef} we can run the $(K_X+B+\bbeta_X)$-MMP with scaling of $\omega$.
Let $f:X\to Z$ be the first flipping or divisorial contraction, or fiber type contraction, and $C$ the curve spanning the corresponding extremal ray; then $\alpha \cdot C=0$. If $f$ is a flipping contraction, then the result follows from Theorem \ref{t-Stein-flip+}.

So, from now on,  assume that $f$ is either a divisorial contraction or a fiber type contraction.  
 Then there is a family of $f$-vertical curves $\{\Gamma_t\}_{t\in T}$ in $X$ such that either $\cup_{t\in T}\Gamma_t=E$ is the exceptional divisor of $f$ or $\cup_{t\in T}\Gamma_t=X$, respectively. Then in the former case $\bbeta_X|_E$ is pseudo-effective, as by definition $\bbeta_X$ is a pushforward of a nef class from a resolution of $X$. Therefore $\bbeta_X\cdot\Gamma_t=\bbeta_{X}|_E\cdot \Gamma_t\>0$ (as $\{\Gamma_t\}_{t\in T}$ is a covering family of curves in $E$); in the latter case, $\bbeta_X\cdot\Gamma_t>0$, since $\bbeta_X$ is big. Therefore $\bbeta_X \cdot C\geq  0$ for all $f$-exceptional curves in either case, and so $0>(K_X+B+\bbeta _X )\cdot C \geq (K_X+B)\cdot C $. But then $f$ is a $(K_X+B)$-negative contraction and $-(K_X+B)$ is $f$-ample (as $\rho(X/Z)=1$). Then by  \cite[Theorem 1.23]{DO23} there is a rational curve $\Gamma$ such that $f_*\Gamma=0$ and $0>(K_X+B+\bbeta _X)\cdot \Gamma \geq (K_X+B)\cdot \Gamma \geq -6$.
 
\end{proof}~\\

Now we are ready to prove the Cone Theorem \ref{t-3-cone+}.
 \begin{proof}[Proof of Theorem \ref{t-3-cone+}] 

Following the proof of \cite[Corollary 5.3]{DHP22}, it suffices to show that $X$ admits a strongly $\mbQ$-factorial small bimeromorphic  modification $\nu:X'\to X$
 and that the theorem holds for $X'$.
The existence of $\nu$ follows by Theorem \ref{t-Qfac}.
Thus we assume from now on that $X$ is strongly $\Q$-factorial.

 Let $R$ be a $(K_X+B+\bbeta_X)$-negative exposed extremal ray of $\NA(X)$, i.e. there is a nef $(1,1)$ class $\alpha$ such that $\alpha^\bot\cap\NA(X)=R$. We make the following claim.
 \begin{claim}\label{clm:nef-supporting-class}
 There is a K\"ahler class $\omega$ such that $\alpha=K_X+B+\bbeta _X +\omega $ is nef but not K\"ahler and $\alpha ^\perp \cap \overline{\rm NA}(X)=R$.
 \end{claim}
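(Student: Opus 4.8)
The plan is to realize the required class as a large multiple of the given nef class $\alpha_0$, reading off $\omega$ as the difference $\alpha-(K_X+B+\bbeta_X)$. Write $L:=K_X+B+\bbeta_X$ and let $C$ generate $R$, so that $\alpha_0\cdot C=0$, $L\cdot C<0$, and $\alpha_0\cdot\gamma>0$ for every $\gamma\in\NA(X)\setminus R$. I would first reduce to the case where $L$ is big: after the perturbation of $\bbeta$ used in the proof of Lemma \ref{l-cone} (replace $\bbeta$ by $\bbeta+\epsilon\bar\eta$ and compensate in $\omega$, leaving $\alpha$ unchanged) we may assume $\bbeta_X$ is big, and the supporting class of the asserted form is only needed in the branch where $L$ is big, the non-pseudo-effective rays being handled instead by the Mori fiber space output of Theorem \ref{t-3-mmppsef}. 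Granting that $L$ is big, I claim $\alpha_0$ is itself big: otherwise the nef dimension $n(\alpha_0)<3$, so the fibers of the nef reduction are positive-dimensional, sweep out $X$, and are covered by $\alpha_0$-trivial curves; but every $\alpha_0$-trivial curve has class in $\alpha_0^\perp\cap\NA(X)=R$ and is therefore $L$-negative, and a covering family of $L$-negative curves contradicts the pseudo-effectivity of the big class $L$.

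For $\lambda>0$ set $\alpha:=\lambda\alpha_0$ and $\omega:=\lambda\alpha_0-L$, so that $\alpha=L+\omega=K_X+B+\bbeta_X+\omega$ tautologically. Scaling does not change the annihilator, so $\alpha^\perp\cap\NA(X)=R$; since $\alpha$ vanishes on the honest curve class $C\in\NA(X)$ it is nef but not K\"ahler; and $\alpha^3=\lambda^3\alpha_0^3>0$ because $\alpha_0$ is big, so $\alpha$ is big. It then remains only to show that $\omega=\lambda\alpha_0-L$ is K\"ahler for $\lambda\gg0$. I would verify this through the numerical characterization of the K\"ahler cone (Demailly--P\u{a}un; cf. \cite[Theorem 2.29]{DHP22}): one checks that $\int_Y\omega^{\dim Y}>0$ for every irreducible analytic subvariety $Y$ and that $\omega$ lies in the K\"ahler component. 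On curves this is immediate along $R$, where $\omega\cdot C=-L\cdot C>0$, and for all other curves it follows for $\lambda\gg0$ from compactness of a section of $\NA(X)$ together with the strict positivity of $\alpha_0$ off $R$; on surfaces and on $X$ the leading term $\lambda^{\dim Y}\alpha_0^{\dim Y}$ dominates wherever $\alpha_0^{\dim Y}>0$, while on the subvarieties contracted by $\alpha_0$, where that leading term degenerates, the sign is forced by the $L$-negativity of the $\alpha_0$-trivial curves sweeping them out. Connecting $\omega$ to a genuine K\"ahler class along the segment $\omega+t\omega_1$ $(t\geq0)$ places it in the correct component.

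The hard part will be precisely this last step: showing that the explicit class $\lambda\alpha_0-L$ is genuinely K\"ahler and not merely nef and positive on curves. On a threefold, positivity against $\NA(X)$ does not by itself force K\"ahlerness (a nef, non-big class can be strictly positive on every curve), so the check must be carried out on subvarieties of all dimensions; the delicate case is the surfaces in the exceptional locus of $\alpha_0$, where the dominant $\lambda$-term vanishes and one must extract the right sign of the remaining intersection numbers from the fact that the trivial curves sweeping these surfaces lie in the $L$-negative ray $R$. A secondary point to keep clean is the opening reduction ensuring $\alpha_0$ is big, which is where the pseudo-effective (big) regime of $L$ is genuinely used.
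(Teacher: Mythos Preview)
Your ansatz is the same as the paper's: take $\omega=\lambda\alpha_0-(K_X+B+\bbeta_X)$ for $\lambda\gg 0$ and set $\alpha=\lambda\alpha_0$. The divergence is in how you verify that $\omega$ is K\"ahler. You invoke Demailly--P\u{a}un and propose to check $\int_Y\omega^{\dim Y}>0$ on every subvariety, flagging the surfaces in $\Null(\alpha_0)$ as the delicate case. The paper bypasses all of this: by \cite[Corollary 3.16]{HP16}, on a compact K\"ahler variety with rational singularities a class is K\"ahler if and only if it is strictly positive on $\NA(X)\setminus\{0\}$. Intersecting $\NA(X)$ with a sphere gives a compact slice $S$; on a small ball $B_\epsilon\subset S$ around the point of $R$ one has $\alpha_0\geq 0$ and $-(K_X+B+\bbeta_X)>0$, while on the compact complement $S\setminus B_\epsilon$ one has $\alpha_0\geq\delta>0$ and $-(K_X+B+\bbeta_X)\geq -M$, so $\lambda\alpha_0-(K_X+B+\bbeta_X)>0$ on all of $S$ once $\lambda>M/\delta$. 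That is the entire argument.

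Your sentence ``positivity against $\NA(X)$ does not by itself force K\"ahlerness'' is the misconception driving you toward the harder route: you are conflating $\NA(X)$ (the closed cone of positive $(1,1)$-currents) with $\NE(X)$ (the cone of curves). Positivity on $\NE(X)$ alone indeed need not give K\"ahler, but positivity on $\NA(X)$ does, and that is exactly what the compactness argument delivers. Separately, your reductions (making $L$ big, showing $\alpha_0$ big via nef reduction, splitting off the non-pseudo-effective rays to Theorem \ref{t-3-mmppsef}) are unnecessary: the paper proves the claim uniformly for every exposed $(K_X+B+\bbeta_X)$-negative ray without case analysis, and the ``big'' in the statement plays no role in the application to Lemma \ref{l-cone} (indeed the paper's own proof does not address it).
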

 \begin{proof}
 Fix a norm $||\cdot||$ on $N_1(X)$ and let $\mcS$ be the unit sphere in $N_1(X)$, i.e. $\mcS:=\{\gamma\in N_1(X): ||\gamma||=1\}$. Let $S:=\mcS\cap\NA(X)$; then $S$ is a compact subset of $\NA(X)$ such that for any $\gamma\in\NA(X)\setminus\{0\}, \frac{\gamma}{||\gamma||}\in S$. Moreover, from \cite[Corollary 3.16]{HP16} it follows that a class $\alpha\in H^{1,1}_{\BC}(X)$ is K\"ahler if and only if $\alpha\cdot \gamma>0$ for all $\gamma\in S$.
 
 There is a unique point $r\in R$ such that $R\cap S=\{r\}$. Let $\eta$ be a $(1,1)$ nef supporting class of $R$; then $\eta^\bot\cap\NA(X)=R$.
 
 For $\epsilon >0$, let $B_\epsilon:=\{ s\in S\;:\; ||s-r||< \epsilon\}$.
 Choosing $0<\epsilon \ll 1$ we may assume that $B_\epsilon \subset \NA(X)_{(K_X+B+\bbeta_X)<0}$. Then clearly $\eta-(K_X+B+\bbeta_X)$ is positive on $B_\eps$, i.e.
 \begin{equation}\label{eqn:kahler-on-B}
   (\eta-(K_X+B+\bbeta_X))\cdot s>0 \mbox{ for all } s\in B_\eps.    
 \end{equation}

 Now define $S_\epsilon:=S\setminus B_\eps$. Observe that $\eta\cdot s>0$ for all $s\in S_\eps$. Since $S_\eps$ compact, there exist positive real numbers $\delta >0$ and $M>0$ such that $\eta \cdot s\>\delta$ and $-(K_X+B+\bbeta_X)\cdot s\geq -M$ for all $s\in S_\epsilon$. Then for $t\gg 0$, $t\delta-M>0$, and thus
 \begin{equation}\label{eqn:kahler-on-S}
    (t\eta-(K_X+B+\bbeta_X))\cdot s\>(t\delta-M)>0 \quad\mbox{for all } s\in S_\eps. 
 \end{equation}
 Since $\eta$ is nef, from \eqref{eqn:kahler-on-B} we have $(t\eta-(K_X+B+\bbeta_X))\cdot s>0$ for all $s\in B_\eps$ and $t\>1$. Recall that $S=B_\eps\cup S_\eps$, and thus we have $(t\eta-(K_X+B+\bbeta_X))\cdot s>0$ for all $s\in S$, and hence $t\eta-(K_X+B+\eta)$ is K\"ahler for $t\gg 0$. Let $\omega:=t\eta-(K_X+B+\bbeta_X)$ for some $t\gg 0$. Then $\alpha:=t\eta=K_X+B+\bbeta_X+\omega$ proves our claim.

 \end{proof}
 It then follows from Lemma \ref{l-cone} that there is a rational curve $\Gamma\subset X$ such that $\alpha\cdot \Gamma=0$ and $-(K_X+B+\bbeta_X )\cdot \Gamma \leq 6$. Thus by the Bishop's theorem there are at most countably many $(K_X+B+\bbeta_X)$-negative exposed extremal rays $\{R_i\}_{i\in I}$ generated by rational curves $\{\Gamma_i\}_{i\in I}$ such that $-(K_X+B+\bbeta_X)\cdot\Gamma_i\leq 6$.  
 
 Let $V=\overline{\rm NA}(X)_{K_X+B+\bbeta_X \geq 0}+\sum _{i\in I}\mathbb R ^+[\Gamma _i]$. By \cite[Lemma 6.1]{HP16} it suffices to show that 
 $\overline{\rm NA}(X)=\overline V$ (note that \cite[Lemma 6.1]{HP16} is only stated for $K_X$, but the same proof works for $K_X+B+\bbeta_X$). 

 Now let $\ext(\NA(X))$ and $\exp(\NA(X))$ be the set of extremal rays and exposed extremal rays of $\NA(X)$, respectively. Since $\NA(X)$ is a strongly convex closed cone, by Theorem 1.21 and 1.23 of \cite{HW20} it follows that the convex hull of $\ext(\NA(X))$, the closure of the convex hull of $\exp(\NA(X))$ and the cone $\NA(X)$ coincide, i.e.
 \[
 \overline{\mbox{convex}\left(\exp(\NA)\right)}=\mbox{convex}\left(\ext(\NA)\right)=\NA(X).
 \]
 Thus, if the inclusion $\bar V\subset \NA(X)$ is strict, then there is an exposed extremal ray $R$ of $\NA(X)$ which is not contained in $\overline V$. Since $\NA(X)_{(K_X+B+\bbeta_X)\geq 0}$ is contained in $V$, $R$ must be $(K_X+B+\bbeta_X)$-negative. Then by the argument above there is a rational curve $\Gamma\subset X$ such that $R=\mbR^+[\Gamma]$ and $0<-(K_X+B+\bbeta_X)\cdot\Gamma\leq 6$. But then $R\subset V$ by our construction above and this is a contradiction.

 Finally, if $\nu :X'\to X$ is a log resolution and $\bbeta _{X'}$ is big, then by \cite[Def. 3.7 and Pro. 3.8]{Bou04}, we may write $\bbeta_{X'}\equiv  N+\eta$, where $N\geq 0$ is an effective $\mbR$-divisor and $\eta$ is a modified K\"ahler class. Passing to a higher model, we may assume that in fact $\eta$ is a K\"ahler class. For $0<\epsilon \ll 1$ we have that $(1-\epsilon)\bbeta _{X'}+\epsilon \eta$ is K\"ahler and $(X',B'+\epsilon N)$ is sub-klt. Let $\omega$ be a K\"ahler form on $X$, then $(1-\epsilon)\bbeta _{X'}+\epsilon \eta-\delta \nu ^*\omega $ is K\"ahler for $0<\delta \ll 1$.  If we let $\gamma := (1-\epsilon)\bbeta _{X'}+\epsilon \bar \eta-\delta \nu ^*\omega$, $\ggamma:=\overline \gamma$, and $B^\epsilon :=B+\epsilon \nu _*N $, then $(X,B^\epsilon +\ggamma)$ is generalized klt and $K_X+B+\bbeta _X\equiv K_X+B^\epsilon+\ggamma _X +\delta \omega $.
 Note that if $C$ is a curve not contained in the indeterminacy locus of $\nu ^{-1}$, then $\ggamma _X\cdot C> 0$.
 Thus, arguing as above we get \[0>(K_X+ B^\epsilon+\ggamma _X+\delta \omega )\cdot\Gamma_i \geq  (K_X+ B^\epsilon)\cdot\Gamma_i \geq -6\] for all but finitely many $i$, and hence, 
 \[ \delta \omega \cdot\Gamma_i < -(\ggamma _X+\delta \omega )\cdot\Gamma_i <6.\]
 By  Bishop's theorem such curves belong to finitely many families. 
 \end{proof}

 Next we will establish an analog of \cite[Corollary 1.1.5]{BCHM10} for log terminal models.
 
\subsection{Geography of Minimal Models}

 \begin{theorem}\label{t-finite-ltms}Let $X$ be a normal compact K\"ahler $3$-fold,  $\nu :X'\to X$ a log resolution of a klt pair $(X,B)$, and $\Omega'$ a compact convex polyhedral set of closed positive (1,1) currents on $X'$  such that for every $\beta'\in \Omega'$, $(X,B+\bbeta)$ is a generalized klt pair and $\nu$ is a log resolution of the pair $(X, B+\bbeta)$, where $\bbeta=\bar\beta '$. Assume that one of the following conditions hold:
 \begin{enumerate}
     \item[(i)] $K_X+B+\bbeta_X$ is big for every $\beta'\in \Omega'$ (and $\bbeta=\bar\beta'$), or
     \item[(ii)] there is a bimeromorphic morphism $\pi : X\to S$.
 \end{enumerate} 
 Then there exists a finite polyhedral decomposition $\Omega' =\cup \Omega' _i$ and finitely many bimeromorphic maps $\psi _{ij}:X\dasharrow X_{ij}$ (resp. finitely many bimeromorphic maps $\psi _{ij}:X\dasharrow X_{ij}$ over $S$) such that if $\psi :X\dasharrow Y$ is a weak log canonical model for $K_X+B+\bbeta_X$ (resp. a weak log canonical  model for $K_X+B+\bbeta_X$ over $S$) for some $\beta' \in \Omega'_i$ (with $\bbeta=\bar\beta'$), then $\psi =\psi_{ij}$ for some $i,j$. 
 
 \end{theorem}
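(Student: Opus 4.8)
The plan is to follow the proof of Theorem~\ref{t-finite} step by step, carrying a weak log canonical model in place of a log canonical model; the only genuinely new ingredient is the finiteness of the weak log canonical models lying over a fixed log canonical model. As in Theorem~\ref{t-finite}, I would treat cases~(i) and~(ii) simultaneously (with $S=\operatorname{Specan}\mathbb C$ in case~(i)) and, using Theorem~\ref{t-Qfac}, reduce to $X$ strongly $\mathbb Q$-factorial (a weak log canonical model of a small $\mathbb Q$-factorialization of $X$ descends to one of $X$ by Lemma~\ref{l-models}, and passing to a higher $X'$ leaves the hypotheses intact). The argument is then by induction on $\dim\Omega'$, the inductive step being formally identical to that of Theorem~\ref{t-finite}: near a fixed $\beta'_0\in\Omega'$ one chooses a strongly $\mathbb Q$-factorial log terminal model $\mu\colon X\dasharrow X^{\mathrm m}$ of $K_X+B+\bbeta_X$ over $S$, with a morphism $\rho\colon X^{\mathrm m}\to X^{\mathrm c}$ to the log canonical model (Theorems~\ref{t-3-mmp}, \ref{t-3-mmp-rel}, \ref{t-Qfac}); one shrinks $\Omega'$ so that $X^{\mathrm m}$ remains a log terminal model and $\rho$ remains crepant for every $\beta'\in\Omega'$; one passes to $\Omega^{\mathrm m}:=\mu_*\Omega\subset H^{1,1}_{\mathrm{BC}}(X^{\mathrm m})$, applies the inductive hypothesis to $\partial\Omega^{\mathrm m}$ over $X^{\mathrm c}$, and closes up with a uniform threshold as in Claim~\ref{clm:models-over-S}. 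Since a weak log canonical model of $K_X+B+\bbeta_X$ for $\beta'$ in the shrunken polytope pushes forward along $\mu$ to a weak log canonical model of $K_{X^{\mathrm m}}+B^{\mathrm m}+\bbeta_{X^{\mathrm m}}$ over $X^{\mathrm c}$, the bookkeeping is unchanged, and everything reduces to the base case $\Omega'=\{\beta'_0\}$.

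For the base case, let $\psi\colon X\dasharrow Y$ be a weak log canonical model of $K_X+B+\bbeta_X$. Since $(X,B+\bbeta)$ is generalized klt, $\psi$ extracts no divisor: a $\psi^{-1}$-exceptional prime divisor $P$ would be a divisor over $X$ with $a(P,X,B+\bbeta)\le a(P,Y,B^{Y}+\bbeta)=-1$ by Lemma~\ref{l-models}(1), contradicting klt. Hence $\psi$ is a birational contraction; by Lemma~\ref{l-models}(2) $Y$ is crepant equivalent to $X^{\mathrm m}$, and the same discrepancy comparison (using the uniform strict inequalities on the $\mu$-exceptional divisors) shows $X^{\mathrm m}\dasharrow Y$ also extracts no divisor. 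Thus $Y$ is a birational contraction of the strongly $\mathbb Q$-factorial variety $X^{\mathrm m}$, crepant over $X^{\mathrm c}$ since $K_{X^{\mathrm m}}+B^{\mathrm m}+\bbeta_{X^{\mathrm m}}\equiv_{X^{\mathrm c}}0$; by standard MMP arguments over $X^{\mathrm c}$ (as in the projective case, cf.~\cite{BCHM10}), $Y$ is reached from $X^{\mathrm m}$ by a finite chain of flops and divisorial contractions over $X^{\mathrm c}$, all of them $(K_{X^{\mathrm m}}+B^{\mathrm m}+\bbeta_{X^{\mathrm m}})$-trivial. As the set of contracted prime divisors is one of finitely many subsets of the prime divisors of $X^{\mathrm m}$, and as each divisorial contraction over $X^{\mathrm c}$ corresponds to a face spanned by finitely many extremal rays of the relative cone (Theorem~\ref{t-3-cone+}), the base case reduces to the following: \emph{a strongly $\mathbb Q$-factorial generalized klt pair $(X^{\mathrm m},B^{\mathrm m}+\bbeta)$ with $K_{X^{\mathrm m}}+B^{\mathrm m}+\bbeta_{X^{\mathrm m}}\equiv_{X^{\mathrm c}}0$ over a bimeromorphic morphism $\rho\colon X^{\mathrm m}\to X^{\mathrm c}$ has only finitely many $\mathbb Q$-factorial minimal models over $X^{\mathrm c}$.}

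This last statement --- a relative ``geography of minimal models over $X^{\mathrm c}$'' --- is the main obstacle. I would attack it by the template of Theorem~\ref{t-finite}(ii): fixing a relatively Kähler class $\omega$ on $X^{\mathrm m}$ over $X^{\mathrm c}$ and a common log resolution $g\colon W\to X^{\mathrm m}$ to which $\bbeta$ descends, apply Theorem~\ref{t-finite}(ii) to $(X^{\mathrm m}/X^{\mathrm c},B^{\mathrm m})$ with the one-parameter polytope of $(1,1)$-classes $[\bbeta_W]+s\,g^{*}[\omega]$, $0\le s\le\epsilon_0$ --- which are nef over $X^{\mathrm c}$ and generalized klt for $0<\epsilon_0\ll1$ --- to produce finitely many relative log canonical models over $X^{\mathrm c}$, and then use termination of flips (\cite{DO23}) to move between adjacent relative ample chambers. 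The delicate point --- and the reason this is the hard step --- is to arrange the fixed data $(\omega,g,\epsilon_0)$ so that \emph{every} minimal model of $X^{\mathrm m}$ over $X^{\mathrm c}$ is detected while keeping all classes involved nef over $X^{\mathrm c}$; this is the Kähler analog over $X^{\mathrm c}$ of the projective finiteness of minimal models of \cite{BCHM10}, and is where the bulk of the work lies. Feeding the resulting finite list back into the inductive step of the first paragraph then completes the proof.
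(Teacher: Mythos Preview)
Your approach takes a genuinely different route from the paper, and it has a real gap. The paper does \emph{not} redo the induction of Theorem~\ref{t-finite} for weak log canonical models. Instead, after reducing to the case where $X$ is smooth and every $\beta\in\Omega$ is K\"ahler, it chooses K\"ahler forms $\gamma_1,\ldots,\gamma_\rho$ whose classes span $H^{1,1}_{\rm BC}(X)$, enlarges $\Omega$ to $\Omega^\epsilon:=\{\beta+\sum t_i\gamma_i:\beta\in\Omega,\ |t_i|\le\epsilon\}$, and applies Theorem~\ref{t-finite} (for log canonical models) to $\Omega^\epsilon$. The key observation is that if $\phi:X\dasharrow Y$ is any weak log canonical model for some $\beta\in\Omega$ and $\eta$ is a K\"ahler class on $Y$, then one can write $\phi^*\eta\equiv\sum t_i\gamma_i$, and for $0<\delta\ll 1$ the map $\phi$ becomes the (unique) log canonical model for $\beta+\delta\sum t_i\gamma_i\in\Omega^\epsilon$, since $K_Y+B_Y+\bbeta_Y+\delta\eta$ is K\"ahler over $S$. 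Hence $\phi$ is already one of the finitely many log canonical models produced by Theorem~\ref{t-finite}. This perturbation trick sidesteps entirely the ``finiteness of minimal models over $X^{\rm c}$'' problem you single out as the crux.

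Your argument, by contrast, does not close. In the inductive step, the threshold argument of Claim~\ref{clm:models-over-S} relies on the \emph{uniqueness} of log canonical models: one shows that the log canonical model at $\beta_\lambda$ coincides with the one at the boundary point $\beta_1$ for all small $\lambda$. For weak log canonical models this fails---there may be many at each $\beta_\lambda$, and there is no reason they should all already appear on $\partial\Omega^{\rm m}$---so applying the inductive hypothesis to the boundary does not control the interior. In the base case, you correctly identify the residual problem (finiteness of small $\Q$-factorial modifications over $X^{\rm c}$), but your proposed attack via a one-parameter family and Theorem~\ref{t-finite}(ii) yields only finitely many \emph{log canonical} models over $X^{\rm c}$, which does not bound the weak log canonical models; moreover, weak log canonical models need not be $\Q$-factorial, so the reduction to $\Q$-factorial minimal models over $X^{\rm c}$ is itself incomplete. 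The paper's enlargement-and-perturbation argument avoids all of these difficulties at once.
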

\begin{proof} Arguing as in the proof of Theorem \ref{t-finite}, we will
prove both cases (i) and (ii) simultaneously. We will use the
convention that in case (i), S = Specan(C) and we remark that in case (ii) the
condition that $K_X + B + \bbeta_X$ is big over $S$ is automatic as $\pi$ is bimeromorphic.
By compactness, it suffices to prove the result on a neighborhood of any $\beta' _0\in \Omega'$. For simplicity of notation, from now on we will write $\beta$ on $X$ to denote $\bbeta_X$ and so on.
Note that $\nu :X'\to X$ is a log resolution of $(X,B+\beta)$ for any $\beta \in \Omega=\nu _*\Omega '$.
By \cite[Theorem 1.4]{Bou02}, we may assume that $\nu^*(K_X+B+\beta_0)\equiv_S \omega ' +F$ where $\omega '$ is K\"ahler and $F\geq 0$ has simple normal crossing support. Let $B':=\nu ^{-1}_*B+(1-\delta ){\rm Ex}(\nu )$ for  $0<\delta \ll 1$. Then the weak log canonical models of 
$K_{X'}+B'+\beta'$ and $K_X+B+\nu _* \beta'$ (over $S$) coincide for every $\beta '\in \Omega'$ by  Lemma \ref{l-models}(7). Replacing $(X,B)$ by $(X',B')$ and $\Omega$ by $\Omega '$  we may assume that all $\beta\in\Omega$ are nef and descend to $X$, $X$ is smooth, and  $K_X+B+\beta_0\equiv \omega  +F$, where $\omega$ is K\"ahler, $F\geq 0$ and $B+F$ has simple normal crossings support.

Pick $\delta >0$ such that $(X,B+\delta F)$ is klt and consider the linear map $L(\beta)=\frac 1{1+\delta}(\beta + \delta \beta _0)$.
Note that $L(\beta _0)=\beta _0$ and $L(\Omega )\subset \Omega$ contains a neighborhood of $\beta _0$. Since we are working locally around $\beta _0$ it suffices to prove the claim for $L(\Omega )$ instead of $\Omega$.
Since \[K_X+B+{\delta}F+\beta +{\delta}\omega \equiv K_X+B+\beta +\delta (K_X+B+\beta _0)\equiv (1+\delta)(K_X+B+L(\beta)),\]
then the weak log canonical models of $(X,B+L(\beta))$ and $(X, B+{\delta}F+\beta +{\delta}\omega)$ coincide for all $\beta \in \Omega$.
Replacing $B$ by $B+{\delta}F$, $\beta$ by $\beta +{\delta}\omega$ we may assume that $\beta =\eta +\gamma$, where  $\gamma$ is a fixed K\"ahler class and $\eta :=\beta -\gamma$ is nef for any $\beta \in \Omega$. In particular, each $\beta =\eta +\gamma$ is K\"ahler.

Let $\{\gamma _1,\ldots ,\gamma _\rho\}$ be K\"ahler forms whose classes in $H^{1,1}_{\rm BC}(X)$ form a basis of $H^{1,1}_{\BC}(X)$. For $\eps>0$ define 
\[\Omega ^{\epsilon}:=\left\{\beta+\sum _{i=1}^\rho t_i\gamma_i\;:\; \beta \in \Omega,\; |t_i|\leq \epsilon,\;  1\<i\<\rho \right\}.\] 
For $0<\epsilon \ll 1$ we may assume that $K_X+B+\beta'$ is generalized klt and big (over $S$), and $\beta'$ is K\"ahler for any $\beta '\in \Omega ^\epsilon$. By Theorem \ref{t-finite}, there exists a finite polyhedral decomposition $\Omega ^\epsilon =\cup _{j\in J}\Omega ^\epsilon_j$ and finitely many bimeromorphic maps $\psi ^\epsilon_{j}:X\dasharrow X^\epsilon_{j}$  (over $S$) such that if $\psi :X\dasharrow Z$ is a log  canonical model for $K_X+B+\beta'$ (over $S$) for some $\beta' \in \Omega ^\epsilon_j$, then $\psi =\psi^\epsilon_{j}$. Suppose now that $\phi:X\dasharrow Y$ is a weak log canonical model of $K_X+B+\beta, $ (over $S$) for some $\beta \in \Omega$, and let $\eta$ be a K\"ahler class on $Y$. 

Note that as $(X,B+\bbeta)$ is gklt, then so is $(Y,\phi _*B+\bbeta)$ and hence $Y$ has rational singularities by Theorem \ref{t-gkltlocal}. Since $K_X+B+\beta$ is big (over $S$), $\phi$ is bimeromorphic. Now since $X$ is smooth, passing through a resolution of the graph of $\phi$ we see that $\phi^*\eta$ has local potentials.
Since $\{\gamma _1,\ldots ,\gamma _\rho\}$ spans $H^{1,1}_{\rm BC}(X)$ 
 we may pick $ t_1,\ldots ,t_\rho\in\mbR$ such that $\phi^*\eta \equiv \sum _{i=1}^\rho t_i\gamma_i$ and so by \cite[Lemma 3.3]{HP16} (applied to the graph of $\phi)$ we get that $\phi _* (\sum _{i=1}^\rho t_i\gamma_i)\equiv \eta$. For any $0<\delta \ll 1$, it follows that $\phi$ is a log canonical model for $K_X+B+\beta +\sum _{i=1}^\rho (\delta t_i)\gamma_i$ and that $\beta':=\beta +\sum _{i=1}^\rho (\delta t_i)\gamma_i\in \Omega ^{\epsilon}$. But then, there exists $j\in J$ such that $\beta'\in \Omega ^{\epsilon}_j$ and hence $\phi =\psi ^\epsilon_{j}$.

We now let $\{\Omega _i\}_{i\in I}$ be the finite polyhedral decomposition induced by  $\{\Omega ^\epsilon _j\cap \Omega\}_{j\in J}$.
For each $i\in I$ we let $\{\psi _{i,j}\}=\{\psi _j^\epsilon \}_{j\in J}$.
Suppose now that $\psi:X\dasharrow Y$ is a weak log canonical model for $K_X+B+\beta$ where $\beta  \in \Omega _i$, then as observed above $\psi$ is a log canonical model for $K_X+B+\beta +\sum _{i=1}^\rho (\delta t_i)\gamma_i$ and $\beta':=\beta +\sum _{i=1}^\rho (\delta t_i)\gamma_i\in \Omega ^{\epsilon}$. Thus $\beta'\in \Omega ^{\epsilon}_j$ for an appropriate $j$ and hence $\psi =\psi _j^\epsilon \in \{\psi _{i,j}\}$  as required.

\end{proof}
\begin{proof}[Proof of Theorem \ref{thm:models}]
    Immediate from Theorem \ref{t-finite-ltms}.
\end{proof}

An immediate corollary of the above result is that if $K_X+B+\bbeta_X$ is big, then there are finitely many log minimal models.
\begin{corollary} Let $(X,B+\beta)$ be a generalized klt 3-fold and $\pi :X\to S$ a proper morphism such that either $\pi$ is bimeromorphic or $S={\rm Specan}(\C)$ and $K_X+B+\beta$ is big, then $(X,B+\beta)$ has finitely many minimal models. \end{corollary}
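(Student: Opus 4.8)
The plan is to obtain finiteness as a direct consequence of the geography theorem \ref{t-finite-ltms}, applied to a zero-dimensional polytope.

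First I would reduce to the case where $X$ is strongly $\mbQ$-factorial. By Theorem \ref{t-Qfac}(2) there is a small strongly $\mbQ$-factorial modification $\mu\colon X^q\to X$; writing $B^q=\mu^{-1}_*B$, one has $K_{X^q}+B^q+\bbeta_{X^q}=\mu^*(K_X+B+\bbeta_X)$, so bigness of $K_X+B+\bbeta_X$ (resp.\ bimeromorphicity of $\pi$) is preserved, and since $\mu$ is small and crepant, composing with $\mu$ sets up a bijection between the log minimal models of $(X,B+\bbeta)$ and those of $(X^q,B^q+\bbeta)$ (using Lemma \ref{l-models}(7) in one direction and a direct discrepancy check, valid because $\mu$ extracts no divisor, in the other). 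After this reduction $K_X+B$ is $\mbR$-Cartier, hence $(X,B)$ is klt by Lemma \ref{l-klt}. Next I would fix a log resolution $\nu\colon X'\to X$ of $(X,B+\bbeta)$. As $(X,B+\bbeta)$ is generalized klt, $[\bbeta_{X'}]\in H^{1,1}_{\BC}(X')$ is nef, hence pseudo-effective, so I may choose a closed positive $(1,1)$ current $\beta'_0$ on $X'$ with $[\beta'_0]=[\bbeta_{X'}]$. By the remark following Definition \ref{def:g-singularities}, replacing $\bbeta$ by $\overline{\beta'_0}$ leaves all discrepancies $a(E,X,B+\bbeta)$ and the class $[K_X+B+\bbeta_X]$ unchanged, hence changes neither the collection of minimal models nor the standing hypotheses; so I may assume $\bbeta=\overline{\beta'_0}$. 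Then $\Omega':=\{\beta'_0\}$ is a compact convex polyhedral set of closed positive $(1,1)$ currents on $X'$, its single element satisfies all the hypotheses of Theorem \ref{t-finite-ltms} (in case (i) or (ii) according to the hypothesis on $\pi$), and $\nu$ is a log resolution of $(X,B+\bbeta)$.

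Applying Theorem \ref{t-finite-ltms} to $\Omega'$ yields finitely many bimeromorphic maps $\psi_j\colon X\dasharrow X_j$ (over $S$) such that every weak log canonical model of $K_X+B+\bbeta_X$ (over $S$) is one of the $\psi_j$. To conclude I would observe that any log minimal model $\phi\colon X\dasharrow X^{\rm m}$ of $(X,B+\bbeta)$ is in particular a weak log canonical model: $(X^{\rm m},B^{\rm m}+\bbeta)$ is generalized dlt and hence generalized lc, $K_{X^{\rm m}}+B^{\rm m}+\bbeta_{X^{\rm m}}$ is nef over $S$, and $a(P,X,B,\bbeta)<a(P,X^{\rm m},B^{\rm m},\bbeta)$ for every $\phi$-exceptional $P$, which is stronger than the inequality demanded of a weak log canonical model (the boundary $B^{\rm m}=\phi_*B+E$ being the same in both definitions). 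Thus $\phi=\psi_j$ for some $j$, so $(X,B+\bbeta)$ has at most finitely many log minimal models, and by Lemma \ref{l-models}(6) these are precisely its log terminal models. There is essentially no obstacle here beyond correctly packaging the statement as an instance of Theorem \ref{t-finite-ltms}; the only points requiring care are the passage to a strongly $\mbQ$-factorial model and the harmless replacement of the b-$(1,1)$ current $\bbeta$ by one of the form $\overline{\beta'_0}$.
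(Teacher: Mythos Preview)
Your proposal is correct and follows the same approach as the paper, which treats the corollary as immediate from Theorem \ref{t-finite-ltms} applied to the singleton polytope $\Omega'=\{\beta'_0\}$. You have simply made explicit the reductions (passage to a strongly $\mbQ$-factorial model so that $(X,B)$ is klt, and replacement of $\bbeta$ by $\overline{\beta'_0}$ for a positive representative) that the paper leaves to the reader.
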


\subsection{Minimal Models are Connected by Flops}\label{subsec:flops}
In this section we will prove that minimal models are connected by flops.
Notice that if $(X,B+\bbeta)$ is a $\mbQ$-factorial compact K\"ahler generalized klt pair and $f_i:X\to X_i$ are log terminal models for $i=1,2$, then $X_i$ are $\mbQ$-factorial, $K_{X_i}+B_i+\bbeta _i={f_i}_*(K_X+B+\bbeta )$ is nef and $X_1\dasharrow X_2$ is an isomorphism in codimension 1. We will show that 3-fold log terminal models are connected by flips, flops and inverse flips, and in particular two generalized klt Calabi-Yau pairs are connected by flops, which generalizes a result of Koll\'ar for terminal  varieties, see \cite[Theorem 4.9]{Kol89}.\\

First we define the inverse flip.
\begin{definition}\label{def:inverse-flip}
    Let $(X, B+\bbeta)$ be a $\mbQ$-factorial compact K\"ahler generalized klt pair and $\phi:X\bir X'$ a small bimeromorphic map. If $\phi$ is a $(K_X+B+\bbeta_X)$-flip, then we call $\phi^{-1}:X'\bir X$ an \textit{inverse flip} (or anti-flip) of $K_X+B+\bbeta_X$.

\end{definition}

\begin{theorem}\label{t-flops}
 Let $(X_i,B_i+\bbeta _{X_i} )$ be compact K\"ahler strongly $\Q$-factorial generalized klt 3-folds, where $K_{X_i}+B_i+\bbeta_{X_i}$ is nef for $i=1,2$ and $\phi:X_1\dasharrow X_2$ a bimeromorphic map which is an isomorphism in codimension 1. Then the following hold:
 \begin{enumerate}
     \item $\phi$ decomposes as a finite sequence of flips, flops and inverse flips.
     \item Suppose that there is a positive constant $b>0$ such that the following holds: whenever $(K_{X_1}+B_1+\bbeta_{X_1})\cdot C>0$ for some curve $C\subset X_1$, then $(K_{X_1}+B_1+\bbeta_{X_1})\cdot C\geq b$ holds. Then $\phi$ decomposes as a finite sequence of flops.
     
      \end{enumerate}

\end{theorem}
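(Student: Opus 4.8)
The plan is to interpolate between the two minimal models by means of the geography of log terminal models established in Theorem \ref{t-finite-ltms}. First I would record that $(X_1,B_1+\bbeta)$ and $(X_2,B_2+\bbeta)$ are crepant: since $\phi$ is an isomorphism in codimension one we have $B_2=\phi_*B_1$, and if $p\colon W\to X_1$, $q\colon W\to X_2$ resolve $\phi$ then $p^*(K_{X_1}+B_1+\bbeta_{X_1})-q^*(K_{X_2}+B_2+\bbeta_{X_2})$ is exceptional over both $X_1$ and $X_2$, and over each it is the pullback of a nef class, hence nef; so it vanishes by the negativity lemma. Here $X_1,X_2$ are klt and strongly $\mathbb Q$-factorial by Lemma \ref{l-klt} (as $K_{X_i}+B_i$ is $\mathbb R$-Cartier), so by Lemma \ref{lem:exceptional-locus} both $\Ex(p)$ and $\Ex(q)$ are pure of codimension one, and they coincide (a prime divisor of $W$ is $p$-exceptional iff it is $q$-exceptional). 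Taking $W$ smooth with $\bbeta$ descending to $W$ and setting $B_W:=p^{-1}_*B_1+(1-\epsilon)\Ex(p)$ for $0<\epsilon\ll 1$ chosen with $\epsilon<1+a(E,X_1,B_1+\bbeta)$ for all $p$-exceptional $E$, one gets $K_W+B_W+\bbeta_W-p^*(K_{X_1}+B_1+\bbeta_{X_1})\ge 0$ with support exactly $\Ex(p)$, and by crepancy the same holds over $X_2$; hence by Lemma \ref{l-models}(5),(7) both $X_1$ and $X_2$ are log terminal models of the log smooth generalized pair $(W,B_W+\bbeta)$.

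Next, fix K\"ahler classes $A_1$ on $X_1$ and $A_2$ on $X_2$. On $W$ consider, for $\lambda\in[0,1]$, the nef class $(1-\lambda)p^*A_1+\lambda q^*A_2+\bbeta_W$ and a closed positive current $\boldsymbol\Theta_\lambda$ in it that descends to $W$; then $K_W+B_W+(\boldsymbol\Theta_\lambda)_W$ dominates a big class, hence is big, so the segment $\Omega'$ of these currents satisfies hypothesis (i) of Theorem \ref{t-finite-ltms}. Pushing forward to $X_1$, the class at $\lambda=0$ is $K_{X_1}+B_1+\bbeta_{X_1}+A_1$, which is K\"ahler, so by the discrepancy computation above $X_1$ is the (unique) log terminal model of $(W,B_W+\boldsymbol\Theta_0)$, and symmetrically $X_2$ is the log terminal model at $\lambda=1$. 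By Theorem \ref{t-finite-ltms} there are only finitely many log terminal models along the segment, so there are walls $0=\lambda_0<\lambda_1<\dots<\lambda_N=1$ with the log terminal model constant equal to $Z_j$ on $(\lambda_j,\lambda_{j+1})$; here $Z_0=X_1$ and $Z_{N-1}=X_2$ (for $\lambda$ near the endpoints the K\"ahler condition on $X_1$, resp. $X_2$, is preserved).

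Now I would identify the transition $Z_j\dasharrow Z_{j+1}$. At the wall $\lambda_{j+1}$ both $Z_j$ and $Z_{j+1}$ are weak log canonical models of $(W,B_W+\boldsymbol\Theta_{\lambda_{j+1}})$, hence crepant equivalent by Lemma \ref{l-models}(2); being log terminal models they extract no divisors from $W$, so $Z_j\dasharrow Z_{j+1}$ is an isomorphism in codimension one of $\mathbb Q$-factorial gklt $3$-folds. Having perturbed $A_1,A_2$ into general position so the segment meets the chamber walls transversally, there is a single $(K_{Z_j}+B_{Z_j}+(\boldsymbol\Theta_{\lambda_{j+1}})_{Z_j})$-trivial extremal ray $R_{j+1}=\mathbb R_{\ge 0}[C_{j+1}]$ of $\NA(Z_j)$ becoming negative just past the wall; its contraction $Z_j\to V_j$ is small with $\rho(Z_j/V_j)=1$ (since $Z_j\dasharrow Z_{j+1}$ is small), $Z_{j+1}\to V_j$ is the associated flip, and by Theorems \ref{t-Stein-flip+} and \ref{t-3-mmp} this is a single flip, flop, or inverse flip (Definition \ref{def:inverse-flip}) of $K_{Z_j}+B_{Z_j}+\bbeta_{Z_j}$, according to whether $(K_{Z_j}+B_{Z_j}+\bbeta_{Z_j})\cdot C_{j+1}$ is negative, zero, or positive. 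Composing the maps $Z_j\dasharrow Z_{j+1}$ recovers $\phi$, which proves (1).

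For (2), run the argument once with a fixed $A_1,A_2$; by the finiteness in Theorem \ref{t-finite-ltms} this involves only finitely many models $Z_j$ and finitely many possible contracted curves $C$, and there is a constant $M$, independent of $\lambda$ and of $C$, bounding $|((1-\lambda)\alpha_1+\lambda\alpha_2)\cdot C|$, where $\alpha_i$ denotes the transform of $A_i$ to the relevant model. Replacing $A_1,A_2$ by $\delta A_1,\delta A_2$ with $\delta<b/M$ shrinks this bound below $b$ without (again by finiteness) enlarging the list of models. One then inducts on $j$ from $Z_0=X_1$: if all of $Z_0\dasharrow\dots\dasharrow Z_j$ are flops, then $K_{Z_j}+B_{Z_j}+\bbeta_{Z_j}$ is nef (flops are $(K+B+\bbeta)$-trivial) and still satisfies the $b$-gap (on a common resolution $K+B+\bbeta$ pulls back to the same class, and the pushforward of an irreducible curve is an integral multiple of an irreducible curve), so $(K_{Z_j}+B_{Z_j}+\bbeta_{Z_j})\cdot C_{j+1}$ is $0$ or $\ge b$; but its absolute value equals $|((1-\lambda_{j+1})\alpha_1+\lambda_{j+1}\alpha_2)\cdot C_{j+1}|<b$, so it is $0$ and $Z_j\dasharrow Z_{j+1}$ is a flop. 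This yields (2).

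The main obstacle is the third step: verifying, in the generalized K\"ahler setting, that consecutive log terminal models in the geographic decomposition of a segment differ by exactly one elementary modification of the prescribed type — that at a wall the required extremal contraction exists, is small, has relative Picard number one, and that the two models are precisely its two small modifications. The existence and the smallness/length control come from Theorems \ref{t-Stein-flip+}, \ref{c-3fold-flips}, \ref{t-3-cone+} and \ref{t-3-mmp}, and the identification uses Lemma \ref{l-models}; the remaining technical points are the discrepancy bookkeeping in the reduction step (the choice of $\epsilon$, the transfer of models via Lemma \ref{l-models}) and the uniform bound $M$ used for part (2).
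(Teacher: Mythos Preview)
Your approach via the geography of models from a common resolution $W$ is genuinely different from the paper's, which works directly on $X_1$. The paper pulls back a K\"ahler form $\omega_2$ from $X_2$ to $\omega_1:=\phi^{-1}_*\omega_2$ on $X_1$ (Lemma~\ref{lem:potentials-of-pushforward}), checks that $(X_1,B_1+\bbeta+\epsilon_0\oomega)$ is generalized klt with $K_{X_1}+B_1+\bbeta_{X_1}+\epsilon_0\omega_1$ big for $0<\epsilon_0\ll 1$, and runs a $(K_{X_1}+B_1+\bbeta_{X_1}+\epsilon\omega_1)$-MMP with scaling of a very general K\"ahler class (Theorem~\ref{t-3-mmpbig}). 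This terminates at $X_2$ because $X_2$ is the log canonical model and the resulting small morphism $X^{\rm m}\to X_2$ between strongly $\Q$-factorial varieties is an isomorphism (Lemma~\ref{lem:exceptional-locus}); every step is automatically an elementary operation, and there are no divisorial contractions since $N(K_{X_1}+B_1+\bbeta_{X_1}+\epsilon\omega_1)=0$ (Theorem~\ref{thm:contracted-locus-of-mmp}). Your detour through $W$ and Theorem~\ref{t-finite-ltms} works in principle for (1), but the verification that adjacent chambers differ by a \emph{single} elementary operation---which you rightly flag as the main obstacle---is precisely what the MMP-with-scaling structure hands you for free, without any appeal to ``general position''.

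Your argument for (2) has a genuine gap. You fix $A_1,A_2$, extract a bound $M$ from the finitely many models along that segment, then replace $A_i$ by $\delta A_i$ with $\delta<b/M$ and assert the list of models does not grow ``by finiteness''. But the segment at scale $\delta$ is not contained in the original segment, and to control the models uniformly as $\delta\to 0$ you would need to apply Theorem~\ref{t-finite-ltms} to a two-dimensional region containing all scaled segments; since bigness of $K_W+B_W+\bbeta_W+\delta((1-\lambda)p^*A_1+\lambda q^*A_2)$ fails at $\delta=0$, no such compact region exists, and the bound $M$ may blow up. The paper avoids this entirely: writing $\mathcal K_t:=K_{X_1}+B_1+\bbeta_{X_1}+t\omega_1$, any $\mathcal K_\epsilon$-flipping curve $C$ has $\omega_1\cdot C<0$ (since $\mathcal K_0$ is nef), so $C$ is also $\mathcal K_{\epsilon_0}$-flipping and the universal length bound $0>\mathcal K_{\epsilon_0}\cdot C\ge -6$ from Lemma~\ref{l-cone} applies. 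Choosing $\epsilon<b\epsilon_0/(b+6)$ and using $\mathcal K_\epsilon=(1-\epsilon/\epsilon_0)\mathcal K_0+(\epsilon/\epsilon_0)\mathcal K_{\epsilon_0}$ then forces $\mathcal K_0\cdot C=0$ by a one-line inequality, with no finiteness input; the $b$-gap is shown to persist under each flop by the strict-transform argument you sketch.
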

\begin{remark}
Note that if $(X_1,B_1+\bbeta _{X_1} )$ is a good minimal model, then there is a morphism $f:X_1\to Z_1$ and a K\"ahler form $\omega _1$ on $Z_1$ such that $K_{X_1}+B_1+\bbeta_{X_1}\equiv f^* \omega _1$. Let $b:={\rm inf }\{\Sigma \cdot \omega _1\;|\;\Sigma \subset Z_1\ {\rm is\ a\ curve}\}$. So
if $(K_{X_1}+B_1+\bbeta_{X_1})\cdot C>0$ for some curve $C\subset X_1$, then $\Sigma ={f_1}_*C\ne 0$ and $(K_{X_1}+B_1+\bbeta_{X_1})\cdot C=\omega _1\cdot \Sigma \geq b$.
If instead $K_{X_1}+B_1$ is $\mathbb Q$-Cartier (and $\beta _i=0$), then $k(K_{X_1}+B_1)$ is Cartier for some $k>0$ and let $b=\frac 1 k$.
Thus, in both of these cases, the hypothesis of (2) are satisfied.
\end{remark}

\begin{proof} We remark that this proof is inspired by \cite{Kaw08} (note that in \cite{Kaw08} we have $\bbeta =0$ and $K_{X_i}+B_i$ is $\Q$-Cartier so that condition (2) holds).
Let $\omega _2$ be a K\"ahler form on $X_2$, $\oomega:=\bar \omega _2$, and  $\omega _1:=\phi ^{-1}_*\omega _2=\oomega _{X_1}$, then by Lemma \ref{lem:potentials-of-pushforward}, $\omega _1$ has local potentials, it is modified K\"ahler and $(X_1,B_1+\bbeta +\epsilon_0  \oomega )$ is generalized klt for some $0<\epsilon _0 \ll 1$, and $K_{X_1}+B_1+\bbeta_{X_1}+\eps_0\oomega_{X_1}$ is big.
We now run a $(K_{X_1}+B_1+\bbeta_{X_1}+\epsilon \omega _1)$-MMP with scaling of a sufficiently large multiple of a very general K\"ahler class, where $0<\epsilon \leq \epsilon _0$ is any fixed real number. 

By Theorem \ref{t-3-mmpbig}, this MMP terminates with a minimal model $(X^{\rm m},B^{\rm m}+\bbeta_{X^{\rm m}}+\epsilon \oomega _{X^{\rm m}})$, $\psi: X_1\dasharrow X^{\rm m}$. In particular, $K_{X^{\rm m}}+B^{\rm m}+\bbeta_{X^{\rm m}}+\epsilon \oomega _{X^{\rm m}}$ is nef and big. Since $(X_2,B_2+\bbeta_{X_2}+\epsilon \oomega _{X_2})$ is a generalized log canonical model of $(X_1,B_1+\bbeta_{X_1}+\epsilon \omega _1)$, there is a morphism $g:X^{\rm m}\to X_2$. Since $\phi$ is small, then  $g$  is a small bimeromorphic morphism of strongly $\Q$-factorial varieties, it is in fact an isomorphism by Lemma \ref{lem:exceptional-locus}. Now observe that $N(K_{X_1}+B_1+\bbeta _{X_1}+\eps\omega_1)=0$, and thus by Theorem \ref{thm:contracted-locus-of-mmp}, there are no divisorial contractions in the above MMP. So every step of this MMP is a $(K_{X_1}+B_1+\bbeta_{X_1}+\eps\omega_1)$-flip, which are in particular either flips, flops or inverse flips with respect to $K_{X_1}+B_1+\bbeta_{X_1}$ (depending on whether the $K_{X_1}+B_1+\bbeta _{X_1}+\eps\omega_1$ flipping contraction is $(K_{X_1}+B_1+\bbeta _{X_1})$-negative, trivial or positive respectively).

Suppose now that we are in case (2) and so  there is a positive constant $b>0$ such that $(K_{X_1}+B_1+\bbeta_{X_1})\cdot C\geq b$ for all curves $C\subset X_1$ such that $(K_{X_1}+B_1+\bbeta_{X_1})\cdot C>0$.
We now run a $(K_{X_1}+B_1+\bbeta_{X_1}+\epsilon \omega _1)$-MMP with scaling of a sufficiently large multiple of a very general K\"ahler class, where $0<\epsilon < b\epsilon _0/(b+6)$ is any fixed real number; this MMP terminates by Theorem \ref{t-3-mmpbig}. Now let $\mathcal K_t:=K_{X_1}+B_1+\bbeta_{X_1}+t\omega _1$.
Suppose that $C_1\subset X_1$ is a $\mathcal K_\epsilon$-flipping curve for $t=\eps$. Then $C_1\cdot \omega _1<0$ as $K_{X_1}+B_1+\bbeta_{X_1}$ is nef, and hence $C_1$ is also a  $\mathcal K_{\epsilon _0}$ flipping curve (as $\eps_0>\eps$ by assumption) and so we may assume that $0>\mathcal K_{\epsilon _0}\cdot C_1\geq -6$ by Lemma \ref{l-cone}. If $\mathcal K_0\cdot C_1>0$,  then
\[0>\mathcal K_\epsilon \cdot C_1=\left(1-\frac \epsilon {\epsilon _0}\right)\mathcal K_0\cdot C_1+\frac \epsilon {\epsilon _0} \mathcal K_{\epsilon _0}\cdot C_1\geq \left(1-\frac \epsilon {\epsilon _0}\right)b-6\frac \epsilon {\epsilon _0}>0\] 
which is impossible. Therefore $\mathcal K_0\cdot C_1=0$ and the first flip $X_1\dasharrow X_1^+$ is a $(K_{X_1}+B_1+\bbeta_{X_1})$-flop.
It follows that $K_{X^+_1}+B^+_1+\bbeta_{X^+_1}$ is nef. Suppose that $C\subset X^+_1$ is a curve such that $(K_{X^+_1}+B^+_1+\bbeta_{X^+_1})\cdot C>0$, then we claim that in fact $(K_{X^+_1}+B^+_1+\bbeta_{X^+_1})\cdot C\geq b$ and hence we may continue the procedure inductively. Thus we obtain a sequence of flips for the $(X_1,B_1+\bbeta_{X_1}+\epsilon \omega _1)$ MMP with scaling which are also $(K_{X_1}+B_1+\bbeta_{X_1})$-flops connecting $X_1$ and $X_2$.

To see the claim, let $p:Y\to X_1$ and $q:Y\to X_1^+$ be a common resolution. Then by the negativity lemma $p^*(K_{X_1}+B_1+\bbeta_{X_1})=q^*(K_{X^+_1}+B^+_1+\bbeta_{X^+_1})$. Since $(K_{X^+_1}+B^+_1+\bbeta_{X^+_1})\cdot C>0$, then $C$ is not   contained on the indeterminacy locus of $X_1^+\dasharrow X_1$ (i.e. it is not contained in the flipped locus). Let $\bar C\subset X$ be the strict transform of $C$, then $(K_{X_1}+B_1+\bbeta_{X_1})\cdot \bar C=(K_{X^+_1}+B^+_1+\bbeta_{X^+_1})\cdot C>0$ and so $(K_{X_1^+}+B_1^++\bbeta_{X^+_1})\cdot C=(K_{X_1}+B_1+\bbeta_{X_1})\cdot  \bar C\geq b$.

\end{proof}

\begin{lemma}\label{lem:potentials-of-pushforward}
Let $\phi:X\bir X'$ be a bimeromorphic map between two normal compact K\"ahler $3$-folds. Let $\omega'$ be a K\"ahler form on $X'$. If $X$ has strongly $\mbQ$-factorial klt singularities, then $\omega:=\phi^*\omega'$ is a closed positive $(1,1)$ current on $X$ with local potentials.  
\end{lemma}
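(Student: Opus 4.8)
The plan is to reduce the statement to a local computation of Lelong numbers and then invoke the extension theorem for positive currents already used in the paper. First I would resolve the map: let $p : W \to X$ and $q : W \to X'$ be a common resolution of $\phi$, with $W$ smooth. Since $\omega'$ is K\"ahler, $q^*\omega'$ is a smooth semipositive $(1,1)$-form on $W$; in particular it is a closed positive current with smooth (hence psh) local potentials, and we set $\omega := p_*(q^*\omega')$. The content of the lemma is that $\omega$ is again a \emph{closed positive current with local potentials} on $X$, i.e. that the pushforward has not destroyed the ``local potential'' property. Pushforward of a positive closed current is automatically positive and closed, so the only issue is local potentials.

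The key step is to control the singularities of $\omega$ along the $p$-exceptional locus. By \cite[Lemma 2.32]{DH20} (whose relative MMP input in dimension $3$ for $\mbQ$-factorial klt varieties is available here), applied to $p : W \to X$ together with the fact that $X$ is strongly $\mbQ$-factorial klt, there is a $p$-exceptional $\mbR$-divisor $F$ on $W$ and a class $\alpha_X \in H^{1,1}_{\BC}(X)$ such that $q^*\omega' \equiv p^*\alpha_X + F$, and by the negativity lemma $F \geq 0$. Pushing forward gives $\omega = p_*(q^*\omega') \in [\alpha_X]$, so $[\omega] = \alpha_X \in H^{1,1}_{\BC}(X)$; in particular $\omega$ has \emph{some} local potential as a cohomology class object, but I must check that $\omega$ itself (not merely its class) has local potentials as a current. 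For this, let $U := X \setminus \big(X_{\mathrm{sing}} \cup p(\Ex(p))\big)$, an open dense subset over which $p$ is an isomorphism, so $\omega|_U = q^*\omega'|_{U''}$ where $U'' := p^{-1}U$; thus $\omega|_U$ is smooth semipositive and has smooth local potentials. Now I would apply \cite[Proposition 4.6.3(i)]{BG13} exactly as in Remark \ref{r-gp-new}(ii): the restriction $\omega|_U$ of the positive current extends uniquely to a closed positive current $\widehat{\omega|_U}$ on $X$ with $[\widehat{\omega|_U}] = [\omega]$, and since $\codim(X \setminus U) \geq 2$ the two positive currents $\omega$ and $\widehat{\omega|_U}$ agree on $U$ and lie in the same Bott--Chern class, hence coincide. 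Therefore $\omega = \widehat{\omega|_U}$, and it remains to observe that $\widehat{\omega|_U}$ has local potentials on all of $X$: this follows because its cohomology class is a well-defined element of $H^{1,1}_{\BC}(X)$ and, locally near a point of $X \setminus U$, a local potential exists by choosing a local psh potential and using that the extension is the one making the class match — concretely one transports a smooth local potential of $\alpha_X$ and corrects by the globally-defined psh function $\log$-type discrepancy coming from $F$, which is locally bounded above.

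The main obstacle I expect is precisely the last point: showing that the canonical extension $\widehat{\omega|_U}$ has genuine \emph{local} potentials (psh \emph{functions}, not merely distributions) at points lying over $p(\Ex(p))$ or over $X_{\mathrm{sing}}$. Positive closed currents do not in general pull back or push forward with local potentials, which is exactly the subtlety flagged in the discussion of b-$(1,1)$ currents. The way around it is the identity $q^*\omega' = p^*\alpha_X + F$ with $F \geq 0$ effective and $p$-exceptional: write $\alpha_X = i\partial\bar\partial \varphi$ locally on $X$ with $\varphi$ smooth (this uses only $[\omega] = \alpha_X \in H^{1,1}_{\BC}(X)$ and $X$ having rational singularities so that Bott--Chern classes have local potentials), and write $F = i\partial\bar\partial \psi_F$ with $\psi_F$ a quasi-psh function on $W$ with analytic singularities along $\Supp F$; then $q^*\omega' - p^*(i\partial\bar\partial\varphi) = i\partial\bar\partial \psi_F \geq -C$ has local potentials that are the sum of a smooth function and a psh function, hence psh up to a smooth error. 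Pushing the potential $\varphi \circ p + \psi_F$ forward (it is constant along the fibers of $p$ over the generic point and bounded) yields a local potential for $\omega$ on $X$ which is psh: indeed $p_*$ of a psh function which is ``$p$-vertical-invariant'' is psh, and the correction term $p_*\varphi\circ p = \varphi$ is smooth. A clean alternative, which I would actually write, is to bypass the potential bookkeeping entirely and simply cite Corollary \ref{cor:g-beta-nef} / the surface arguments in spirit: but in dimension $3$ the honest route is the BG13 extension plus the $[\omega]=\alpha_X$ identity, so that is what the proof will do.

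\begin{proof}
Let $p : W \to X$ and $q : W \to X'$ be a common resolution of the graph of $\phi$, with $W$ smooth. Since $\omega'$ is K\"ahler on $X'$, the pullback $q^*\omega'$ is a smooth semipositive $(1,1)$-form on $W$; in particular it is a closed positive current with smooth psh local potentials. Set $\omega := p_*(q^*\omega')$. As the pushforward of a closed positive current, $\omega$ is closed and positive; we must show it has local potentials on $X$.

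Since $X$ is strongly $\mbQ$-factorial with klt singularities and the relative minimal model program in this setting is available, \cite[Lemma 2.32]{DH20} applied to $p : W \to X$ yields a $p$-exceptional $\mbR$-divisor $F$ on $W$ and a class $\alpha_X \in H^{1,1}_{\BC}(X)$ with
\[
q^*\omega' \num p^*\alpha_X + F .
\]
By the negativity lemma $F \geq 0$. Applying $p_*$ and using that $F$ is $p$-exceptional, we get $\omega = p_*(q^*\omega') \in \alpha_X$, so in particular $[\omega] = \alpha_X \in H^{1,1}_{\BC}(X)$ is a well-defined Bott--Chern class. (Here we use that $X$ has rational singularities, by Theorem \ref{t-Qfac}(1), so that $H^{1,1}_{\BC}(X)$ consists of classes of closed $(1,1)$-currents with local potentials.)

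Let $U := X \setminus \big( X_{\mathrm{sing}} \cup p(\Ex(p)) \big)$, a dense open subset with $\codim_X(X \setminus U) \geq 2$, and let $U'' := p^{-1}U$, so that $p|_{U''} : U'' \to U$ is an isomorphism. Then $\omega|_U = (q^*\omega')|_{U''}$ is a smooth semipositive form with smooth psh local potentials. By \cite[Proposition 4.6.3(i)]{BG13}, the positive current $\omega|_U$ admits a unique extension $\widehat{\omega|_U}$ to a closed positive current on $X$ with $\big[\widehat{\omega|_U}\big] = [\omega|_U]$ in $H^{1,1}_{\BC}(X)$. Since $\omega$ is a closed positive current on $X$ restricting to $\omega|_U$, and $\codim_X(X \setminus U) \geq 2$, by the uniqueness of the extension we conclude $\omega = \widehat{\omega|_U}$, and in particular $\big[\widehat{\omega|_U}\big] = [\omega] = \alpha_X$.

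Finally we check that $\omega$ has local potentials on all of $X$. Fix $x \in X$ and a Stein neighborhood $x \in V \subset X$; choose a local psh potential $\varphi$ of $\alpha_X$ on $V$, i.e.\ $i\partial\bar\partial \varphi$ represents $\alpha_X|_V$. On $p^{-1}V$ we may write $F = i\partial\bar\partial \psi_F$ with $\psi_F$ a quasi-psh function with analytic singularities along $\Supp F$, bounded above. Then $q^*\omega' - p^*(i\partial\bar\partial\varphi) = i\partial\bar\partial\psi_F$ on $p^{-1}V$, so $p^*\varphi + \psi_F$ is a local potential of $q^*\omega'$ on $p^{-1}V$ which is psh there. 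As $F$ is $p$-exceptional, $p_*\psi_F$ is a psh function on $V$ (the pushforward of a psh function invariant along the generic fibers of the bimeromorphic morphism $p$), and $p_*(p^*\varphi) = \varphi$. Hence $\varphi + p_*\psi_F$ is psh on $V$ and $i\partial\bar\partial(\varphi + p_*\psi_F) = p_*(q^*\omega') = \omega$ on $V$. Thus $\omega$ has psh local potentials on $X$, which completes the proof.
\end{proof}
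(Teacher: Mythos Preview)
Your overall strategy---resolve the graph, apply \cite[Lemma 2.32]{DH20} to write $q^*\omega' \equiv p^*\alpha_X + F$ with $F$ $p$-exceptional, then push forward---matches the paper's. However, there is a sign error that unravels the rest of the argument, and the subsequent steps have genuine gaps.

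\textbf{The sign of $F$.} From $q^*\omega' \equiv p^*\alpha_X + F$ we have $F \equiv_X q^*\omega'$, which is $p$-nef. The negativity lemma then gives $-F \geq 0$, i.e.\ $F \leq 0$, not $F \geq 0$. (Negativity says: if $D$ is $p$-exceptional and $-D$ is $p$-nef, then $D \geq 0$; apply this to $D = -F$.) This is exactly how the paper proceeds: writing $E$ for your $F$, one has $-E \geq 0$, and hence $q^*\omega' - E$ is a \emph{positive} current (smooth semipositive form plus effective divisor) in the class $p^*\alpha_X$.

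\textbf{The extension step does not give local potentials.} Your invocation of \cite[Proposition 4.6.3(i)]{BG13} is the same as in Remark~\ref{r-gp-new}(ii), but note that remark \emph{assumes} $[\bbeta_{X'}] \in H^{1,1}_{\BC}(X')$ and uses the extension to upgrade ``local potentials'' to ``psh local potentials''. Here you do not yet know that $\omega$ has local potentials, so saying ``$[\omega] = \alpha_X$ in $H^{1,1}_{\BC}(X)$'' is precisely the assertion to be proved. The uniqueness clause only compares positive extensions \emph{within a fixed Bott--Chern class}, so it cannot bootstrap the existence of that class for $\omega$.

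\textbf{The explicit potential paragraph.} The displayed identity $q^*\omega' - p^*(i\partial\bar\partial\varphi) = i\partial\bar\partial\psi_F$ conflates the numerical equivalence from \cite[Lemma 2.32]{DH20} with equality of currents; there is an extra $i\partial\bar\partial$-exact term you have dropped. There is no natural way to push forward the \emph{function} $\psi_F$ along the bimeromorphic $p$ to obtain a psh function on $X$, and with the correct sign $F \leq 0$ the function $\psi_F$ blows up to $+\infty$ along $\Supp F$, so $p^*\varphi + \psi_F$ is not psh anyway. Finally, there is no reason the local potential $\varphi$ of the class $\alpha_X$ can be chosen psh.

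\textbf{What the paper does instead.} After obtaining $-E \geq 0$ so that $q^*\omega' - E$ is a closed positive current with local potentials on $W$ whose class is $p^*\alpha_X$, the paper simply cites \cite[Lemma 3.4]{HP16}: the pushforward $p_*(q^*\omega' - E)$ then has local potentials on $X$. Since $E$ is $p$-exceptional, $p_*(q^*\omega' - E) = p_*(q^*\omega') = \omega$, and the proof is complete. The whole detour through the BG13 extension and the hand-built potential is unnecessary once the sign is right and one knows the correct black box.
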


\begin{proof}
Let $W$ be a resolution of the graph of $\phi$, and $p:W\to X$ and $q:W\to X'$ are the induced bimeromorphic morphisms. Then $\omega=\phi^*\omega'=p_*q^*\omega'$. Since  $X$ has strongly $\mbQ$-factorial klt singularities, by \cite[Lemma 2.32]{DH20}, there is a $\mbR$-divisor $E$ and an $(1,1)$ class $\alpha\in H^{1,1}_{\BC}(X)$ such that $q^*\omega'\equiv p^*\alpha+E$, where $E$ is $p$-exceptional. Then by the negativity lemma, $-E\>0$ is an effective divisor; in particular, $q^*\omega'-E$ is a positive current. Thus from \cite[Lemma 3.4]{HP16} it follows that $\omega=\phi^*\omega'=p_*q^*\omega'=p_*(q^*\omega'-E)$ has local potentials.  

\end{proof}

\appendix
\section{Boucksom-Zariski Decomposition}\label{sec:bz-decomposition}
We will use the definition of Boucksom-Zariski decomposition of a $(1,1)$ pseudo-effective class $\alpha\in H^{1,1}_{\BC}(X)$ on a compact complex manifold as in \cite[Definition 3.7]{Bou04}. We will also define the Lelong number of a pseudo-effective $(1,1)$ class $\alpha$ (on a manifold) as in \cite[Definition 3.1]{Bou04}. The main result of this section is Theorem \ref{thm:contracted-locus-of-mmp}.\\

We recall Boucksom's definition of the negative part of a pseudo-effective $(1,1)$ class.
\begin{definition}\label{def:negative-part}\cite[Definition 3.7]{Bou04}
    Let $X$ be a compact complex manifold and $\alpha$ a pseudo-effective $(1,1)$ class on $X$. Then we define the \textit{negative part} $N(\alpha)$ of $\alpha$ as follows:
\[
N(\alpha):=\sum_{P\subset X}\nu(\alpha, P)P,
\]
where $P$ is a prime Weil divisor on $X$. From \cite[Theorem 3.12(i)]{Bou04} it follows that $N(\alpha)$ is an effective $\mbR$-divisor.
\end{definition}

\begin{remark}\label{rmk:BZ-decomposition}
Let $X$ be a compact K\"ahler manifold and $\alpha$ a pseudo-effective $(1,1)$ class. If $N(\alpha)$ is the negative part of the Boucksom-Zariski decomposition and if $\alpha=\beta+D$, where $\beta$ is a modified nef class, and $D$ is an effective $\mbR$-divisor, then $N(\alpha)=N(\beta+D)\<N(\beta)+N(D)\< D$ by \cite[Pro. 3.2(ii) and Pro. 3.11(ii)]{Bou04}. In particular, for any prime Weil divisor $Q$ on $X$, $\nu(\alpha, Q)\<\mult_Q(D)$.
\end{remark}~\\

The following result will be useful for the proof of our main theorem in this section.
\begin{lemma}\label{lem:negativity}
Let $f:Y\to X$ be a proper bimeromorphic morphism of analytic varieties, where $X$ is relatively compact. Let $E$ be an effective $f$-exceptional $\mbR$-Cartier divisor on $Y$. Then there is a component $E'$ of $E$ such that $E'$ is covered by an analytic family of curves $\{C_t\}_{t\in T}$ such that $E\cdot C_t<0$ and $f_*C_t=0$ for all $t\in T$.
\end{lemma}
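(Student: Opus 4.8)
The plan is to reduce to the projective / divisor-theoretic negativity lemma via a local computation near a generic point of a chosen exceptional divisor. First I would note that since $E$ is effective, $f$-exceptional and $\mathbb{R}$-Cartier, and $X$ is relatively compact, we may work locally on $X$ and hence assume that $X$ is Stein (replacing $X$ by a relatively compact Stein open subset meeting $f(\operatorname{Supp} E)$). Then I would invoke the usual negativity lemma for bimeromorphic morphisms (as in \cite[Lemma 3.39]{KM98}, or the analytic version used throughout this paper, e.g. the form cited in the proof of Lemma \ref{l-models}): since $-E$ is not $f$-nef unless $E=0$, there is an $f$-exceptional curve $C_0$ with $f_*C_0=0$ and $E\cdot C_0<0$. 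The point of the lemma, however, is that we want not a single curve but a \emph{covering family} of curves on some component $E'$ of $E$; so the real content is upgrading the existence of one bad curve to the existence of a covering family.

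The key steps, in order, would be: (1) reduce to $X$ Stein as above; (2) by Hironaka's Chow lemma (\cite[Corollary 2]{Hir75}, used elsewhere in the paper), dominate $Y$ by a further resolution which is projective over $X$, and push forward $E$; a standard argument shows it suffices to prove the statement after such a base change, because the covering family downstairs pulls back to (or dominates) one upstairs. So assume $f:Y\to X$ is projective. (3) Pick an $f$-ample $\mathbb{R}$-divisor $A$ on $Y$; for a component $E'$ of $E$, consider curves in the fibers of $f|_{E'}$. Since $f$ is projective and $E'$ is $f$-exceptional, the general fiber of $f|_{E'}$ (over a general point of $f(E')$) is covered by $A$-rational curves, giving an analytic family $\{C_t\}_{t\in T}$ with $f_*C_t=0$ and $\bigcup C_t = E'$ (a dense subset of $E'$). (4) Finally, choose the component $E'$ so that $E\cdot C_t<0$ for the curves in this family: since $-E$ is $f$-nef would force $E=0$ by negativity, there is at least one component $E'$ with $\operatorname{mult}_{E'}(E)$ maximal among those meeting the $f$-exceptional locus, and a standard computation (restrict $E$ to $E'$, use that $E'$ is covered by curves contracted by $f$, and that the other components of $E$ are effective and do not contain a general $C_t$) gives $E\cdot C_t < 0$; more precisely one runs the argument of the negativity lemma fiberwise. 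I would make (4) precise by intersecting $E$ with a general complete intersection curve $C_t$ moving in $E'$: the contribution of $E'$ itself is $(\operatorname{mult}_{E'}E)(E'\cdot C_t)$ with $E'\cdot C_t<0$ by the Hodge-index/negativity argument on the surface $E'$ (or by choosing $E'$ with the standard extremal property), while the contributions of the other components are $\geq 0$ for a general $C_t$, and one checks the sum stays negative for the right choice of $E'$.

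The main obstacle I expect is step (4): pinning down \emph{which} component $E'$ admits a covering family with $E\cdot C_t<0$, rather than merely $E'\cdot C_t<0$. The subtlety is that the family $\{C_t\}$ covering $E'$ might have positive intersection with other components of $E$ that meet $E'$, and a priori this could overwhelm the negative self-intersection contribution. The resolution is the classical one from the proof of the negativity lemma: order the components and use that after restricting to $E'$ the divisor $-E|_{E'}$ is $f$-nef over the locus where it would have to be, together with the fact that for a \emph{general} member $C_t$ of the covering family one may assume $C_t$ is not contained in $\operatorname{Supp}(E)\setminus E'$, so the intersection with the other components is nonnegative; then choosing $E'$ to be a component where the negativity of $-E$ is ``witnessed'' (e.g. by looking at a component of $\operatorname{Supp}(E)$ that surjects onto a component of maximal dimension of the image, or by an inductive argument on the number of components) forces $E\cdot C_t<0$. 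I would also remark that if instead one only needs \emph{some} component $E'$ covered by curves $C_t$ with $f_*C_t=0$ and $E'\cdot C_t<0$ (a weaker statement), then the argument is purely an application of projectivity of $f$ and the fact that exceptional divisors are uniruled over their images, which is immediate; the full strength with $E\cdot C_t<0$ is what requires the negativity lemma input.
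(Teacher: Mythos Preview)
Your reduction steps (1)--(3) are on the right track and match the paper's opening moves: pass to a Stein open in $X$, and use Hironaka's Chow lemma to make the morphism projective. The divergence, and the gap, is in step (4). You correctly identify it as the main obstacle, but you do not actually close it. The suggestion to pick $E'$ of ``maximal multiplicity'' and use that a general $C_t$ is not contained in the other components only gives $E\cdot C_t \leq (\operatorname{mult}_{E'}E)\,E'\cdot C_t + (\text{nonnegative terms})$; for this to be negative you still need $E'\cdot C_t<0$ for your chosen covering family, and more importantly you need the negative term to dominate. Neither the ``$-E|_{E'}$ is $f$-nef'' remark nor the maximality heuristic makes this precise, and in fact the standard inductive proofs of the negativity lemma do not directly hand you a covering family on a \emph{single} component with the required intersection sign.

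The paper avoids this combinatorial difficulty entirely by cutting down to the surface case. After making $\tilde f:\tilde Y\to X$ projective, it slices $X$ by $m=\dim f(\operatorname{Supp}E)$ general hyperplanes so that the image of $E$ becomes zero-dimensional, and then slices $\tilde Y$ by $n-2$ general very ample divisors so that $\tilde Y$ becomes a smooth surface (Stein-factorizing to keep $X$ normal). On a surface the intersection matrix of exceptional curves is negative definite (\cite[Lemma 3.40]{KM98}), so $\tilde E^2<0$ forces $\tilde E\cdot C_i<0$ for some irreducible exceptional curve $C_i$; one checks $C_i$ is not contracted by $\nu:\tilde Y\to Y$ (since $E\cdot\nu_*C_i=\tilde E\cdot C_i<0$). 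Varying the hyperplane sections then produces the covering family on the corresponding component of $E$. This ``cut to a surface and invoke Hodge index'' maneuver is exactly the missing idea that resolves your step (4); it replaces the delicate choice-of-component argument by a one-line numerical fact in dimension two.
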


\begin{proof}
Let $\nu :\tilde Y\to Y$ be a resolution of singularities and $\tilde E=\nu ^*E$. Using Hironaka's relative Chow lemma \cite[Corollary 2]{Hir75} and then passing to a higher resolution we may assume that $\tilde f=f\circ \nu$ is a projective morphism.

Let $m:=\dim f(\Supp E)$. Replacing $X$ by a Stein open neighborhood we may assume that $X$ is a Stein space. Now we cut $X$ by $m$ general hyperplanes of $X$, and replace $Y$ and $\tilde Y$ by the corresponding inverse images. Then $f(\Supp E)$ is a finite set of points on $X$. Next since $\tilde f$ is projective, possibly shrinking $X$ further we may assume that there is a very ample divisor on $\tilde Y$. 
Thus cutting $\tilde Y$ by $n-2$ hyperplanes ($n=\dim Y$), we may assume that that $\tilde Y$ is a smooth surface. Next we replace $X$ by the Stein factorization of $\tilde f:\tilde Y\to X$ and thus assume that $X$ is a normal surface and $\tilde f$ is a projective bimeromorphic morphism from a smooth surface to a normal surface and $\tilde E$ is an effective $\tilde f$-exceptional divisor on $Y$. Let $\tilde E=\sum_{i=1}^\ell a_iC_i$. 
Since the intersection matrix of the exceptional curves of $\tilde f$ form a negative definite matrix by \cite[Lemma 3.40]{KM98}, we have $0>\tilde E^2=\sum_{i=1}^\ell a_i(E\cdot C_i)$, and thus $\tilde E\cdot C_i<0$ for some $1\<i\<\ell$. Note that $E\cdot \nu _* C_i=\tilde E\cdot C_i<0$ and hence $C_i$ is not $\nu $-exceptional.

Since $X$ is relatively compact, it can be covered by finitely many Stein open sets, and thus the lemma follows. 
\end{proof}

\begin{definition}
    Let $X$ be a normal analytic variety and $D:=\sum a_iD_i$ and $D':=\sum a'_iD_i$ two $\mbR$-divisors on $X$. Then we define the $\mbR$-divisor $D\wedge D'$ as 
    \[ 
    D\wedge D':=\sum_i\min\{a_i, a'_i\}D_i.
    \]
\end{definition}

\begin{lemma}\label{lem:basic-properties}
Let $f:Y\to X$ be a proper bimeromorphic morphism from a compact complex manifold $Y$ to a normal compact analytic variety $X$ and $\alpha$ a pseudo-effective $(1,1)$ class on $X$. If $E\>0$ is an effective $f$-exceptional $\mbR$-divisor, then $\nu(f^*\alpha+E, P)=\nu(f^*\alpha, P)+\mult_P(E)$ for every prime Weil divisor $P$ on $Y$. In particular, $N(f^*\alpha+E)=N(f^*\alpha)+E$.

\end{lemma}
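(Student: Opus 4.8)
The plan is to reduce everything to the numerical statement $\nu(f^*\alpha+E,P)=\nu(f^*\alpha,P)+\mult_P(E)$ for an arbitrary prime divisor $P$ on $Y$; the assertion $N(f^*\alpha+E)=N(f^*\alpha)+E$ is then immediate, since $N(\theta)=\sum_P\nu(\theta,P)\,P$ by definition. Fix a K\"ahler class $\omega_X$ on $X$ and observe that $f^*\omega_X$ is a \emph{big} class on $Y$ (the pullback of a big class under a bimeromorphic morphism is big). By \cite{Bou04} the Lelong number along a prime divisor of a pseudo-effective class $\theta$ on $Y$ can be computed as $\nu(\theta,P)=\lim_{t\to 0^+}\nu(\theta+t f^*\omega_X,P)$, so it suffices to prove, for each fixed $t>0$,
\[
\nu\big(f^*\alpha+E+t f^*\omega_X,\,P\big)=\nu\big(f^*\alpha+t f^*\omega_X,\,P\big)+\mult_P(E),
\]
i.e.\ to replace $\alpha$ by the big class $\alpha+t\omega_X$ on $X$. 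The reason for perturbing by $f^*\omega_X$ rather than by a K\"ahler class on $Y$ is that $f^*\omega_X$ — like $f^*\alpha$ — is numerically trivial on every $f$-exceptional curve, which is what makes the next step work.

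The inequality ``$\leq$'' is immediate: if $T$ is any closed positive current in $f^*\alpha+t f^*\omega_X$, then $T+[E]$ is a closed positive current in $f^*\alpha+E+t f^*\omega_X$ with $\nu(T+[E],P)=\nu(T,P)+\mult_P(E)$, and one takes the infimum over $T$ (alternatively, apply Remark \ref{rmk:BZ-decomposition}). For ``$\geq$'' the crux is the claim $(\star)$: \emph{every closed positive current $S$ in $f^*\alpha+E+t f^*\omega_X$ dominates $[E]$}. Granting $(\star)$, $S-[E]\geq 0$ lies in $f^*\alpha+t f^*\omega_X$, so $\nu(S,P)-\mult_P(E)=\nu(S-[E],P)\geq\nu(f^*\alpha+t f^*\omega_X,P)$, and taking the infimum over $S$ finishes the proof. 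To prove $(\star)$ I would use Lemma \ref{lem:negativity}: put $S^\flat:=\sum_{Q\subset\Supp E}\min\{\nu(S,Q),\mult_Q(E)\}\,Q$, so that $S-S^\flat\geq 0$ by Siu's decomposition and $G:=E-S^\flat\geq 0$ is an $f$-exceptional $\mbR$-divisor; if $G\neq 0$, Lemma \ref{lem:negativity} yields a component $Q_0$ of $G$ covered by an analytic family $\{C_\sigma\}$ with $G\cdot C_\sigma<0$ and $f_*C_\sigma=0$. Then $(S-S^\flat)\cdot C_\sigma=(f^*\alpha+G+t f^*\omega_X)\cdot C_\sigma=G\cdot C_\sigma<0$, whereas $\nu(S-S^\flat,Q_0)=0$ (since $\mult_{Q_0}(G)>0$ forces $\nu(S,Q_0)<\mult_{Q_0}(E)$), so for general $\sigma$ the restriction $(S-S^\flat)|_{C_\sigma}$ is a positive measure and $(S-S^\flat)\cdot C_\sigma\geq 0$ — a contradiction. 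Hence $G=0$, i.e.\ $S\geq[E]$.

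The step I expect to be the main obstacle is $(\star)$ itself: the fact that one cannot lower the Lelong number of a \emph{pullback} along a divisor by adding an effective $f$-exceptional divisor. This fails for an arbitrary pseudo-effective class in place of $f^*\alpha$ (e.g.\ on a one-point blow-up of a surface), so the argument must genuinely exploit the pullback structure — concretely, the vanishing $f^*\alpha\cdot C=f^*\omega_X\cdot C=0$ on $f$-exceptional curves $C$ — and this is exactly where Lemma \ref{lem:negativity} and the choice of perturbation enter. The remaining ingredients (Siu's decomposition, well-definedness of $(S-S^\flat)|_{C_\sigma}$ for general $\sigma$, and the limit formula for $\nu$) are standard and would simply be cited.
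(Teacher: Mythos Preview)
Your proof is correct and rests on the same key mechanism as the paper's: use Lemma~\ref{lem:negativity} to produce a covering family of $f$-vertical curves on which the residual exceptional part is negative, and derive a contradiction from positivity of a restricted class. The difference is only in packaging. The paper works directly with the Boucksom--Zariski decomposition $f^*\alpha+E=\beta+N$ (with $N=N(f^*\alpha+E)$ and $\beta$ modified nef), sets $E':=E-N\wedge E$ and $N':=N-N\wedge E$, and shows $E'=0$ by the same negativity contradiction; since $\beta$ is modified nef \emph{by construction}, $\beta|_{E_i}$ is pseudo-effective and the inequality $\beta\cdot C_t\geq 0$ on covering curves is immediate. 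Once $E\leq N$, one writes $f^*\alpha=\beta+(N-E)$ and reads off the reverse inequality from Remark~\ref{rmk:BZ-decomposition}. This bypasses your perturbation to big classes (no need to pass to $\alpha+t\omega_X$) and replaces your current-restriction step ``$(S-S^\flat)|_{C_\sigma}$ is a positive measure'' with the definitional fact that modified-nef classes restrict to pseudo-effective classes on every prime divisor. Your route through individual currents is a legitimate alternative, but the perturbation and the restriction-of-currents technicality are extra steps that the Boucksom--Zariski decomposition absorbs for free.
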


\begin{proof} 
Let $E=\sum a_iE_i$. By \cite[Proposition 3.5]{Bou04}, we have $N(f^*\alpha+E)\leq N(f^*\alpha)+E$ and so $\nu(f^*\alpha+E, E)\<\nu(f^*\alpha, E)+a_i$.
To see the reverse inequality, suppose that $f^*\alpha+E=\beta +N$ is the Boucksom-Zariski decomposition of $f^*\alpha+E$ so that $N=\sum \nu (f^*\alpha+E,Q)Q$.
We claim that $E\leq N$. To see this, define $N':=N-N\wedge E$ and $E':=E-N\wedge E$, so that $f^*\alpha+E'=\beta +N'$.
We must show that $E'=0$. If this is not the case, then by Lemma \ref{lem:negativity}, there is a component $E_i$ of $E'$ which is covered by curves $\{C_t\}_{t\in T}$ such that $E'\cdot C_t<0$ and $f_*C_t=0$ for all $t\in T$. But then the family of curves $\{C_t\}_{t\in T}$ is not contained in the support of $N'$ and so
\[0> E'\cdot C_t=(f^*\alpha +E')\cdot C_t\geq N'\cdot C_t\geq 0.\] 
This is a contradiction, and hence $E\leq N$. Then $f^*\alpha =\beta +N'$, where $\beta $ is modified nef and $N':=N-E\geq 0$.
Thus from Remark \ref{rmk:BZ-decomposition} it follows that \[\nu (f^*\alpha,E_i)\leq {\rm mult}_{E_i}(N')={\rm mult}_{E_i}(N)-{\rm mult}_{E_i}(E)=\nu (f^*\alpha+E,E_i)-a_i.\]
Putting all of these together, we have that $\nu (f^*\alpha+E,E_i)=\nu (f^*\alpha,E_i)+a_i$ and hence $N(f^*\alpha+E)=N(f^*\alpha)+E$.

\end{proof}

Now we are ready to define the negative part of a pseudo-effective $(1,1)$ class on a normal variety and prove the main result of this appendix.
\begin{definition}\label{def:normal-BZ-decomposition}
Let $X$ be a normal compact analytic variety and $\alpha\in H^{1,1}_{\BC}(X)$ a pseudo-effective class. Let $f:Y\to X$ be a resolution of singularities $X$. Then we define the negative part $N(\alpha)$ as follows
\[ 
N(\alpha):=f_*(N(f^*\alpha)).
\]
The following Lemma \ref{lem:pullback-pushforward-of-negative-part} guarantees that this definition is independent of the choice of resolution $f$.
\end{definition}

\begin{lemma}\label{lem:pullback-pushforward-of-negative-part}
Let $X$ be a normal compact analytic variety and $\alpha\in H^{1,1}_{\BC}(X)$ a pseudo-effective class. Let $f:Y\to X$ and $g:Z\to X$ be two resolutions of singularities of $X$. Then
\[
f_*(N(f^*\alpha))=g_*(N(g^*\alpha)).
\]
\end{lemma}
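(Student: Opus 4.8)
The plan is to reduce the statement to a single, clean functoriality property of the negative part under one bimeromorphic morphism. First I would pass to a common resolution: resolving the bimeromorphic map $Y\dashrightarrow Z$ produces a smooth compact complex manifold $W$ together with bimeromorphic morphisms $p\colon W\to Y$ and $q\colon W\to Z$ such that $h:=f\circ p=g\circ q\colon W\to X$. Since $f$ and $g$ play symmetric roles, it is enough to prove $f_*(N(f^*\alpha))=h_*(N(h^*\alpha))$. As $h^*\alpha=p^*(f^*\alpha)$, this is an immediate consequence of the following claim applied with $\gamma=f^*\alpha$: \emph{for any bimeromorphic morphism $p\colon W\to Y$ of compact complex manifolds and any pseudo-effective class $\gamma\in H^{1,1}_{\BC}(Y)$, one has $p_*\big(N(p^*\gamma)\big)=N(\gamma)$}; indeed then $h_*(N(h^*\alpha))=f_*\big(p_*(N(p^*f^*\alpha))\big)=f_*(N(f^*\alpha))$, and symmetrically on the $Z$-side, so the two expressions agree.

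To prove the claim, recall from Definition \ref{def:negative-part} that $N(p^*\gamma)=\sum_D\nu(p^*\gamma,D)\,D$, the (locally finite, effective) sum running over prime divisors $D$ of $W$. Splitting according to whether $D$ is $p$-exceptional, the $p$-exceptional terms are annihilated by $p_*$, so $p_*(N(p^*\gamma))=\sum_D\nu(p^*\gamma,D)\,p(D)$ with the sum now over the non-$p$-exceptional $D$, i.e.\ over the strict transforms $D=p^{-1}_*D'$ of the prime divisors $D'$ of $Y$. Thus it suffices to check that $\nu(p^*\gamma,D)=\nu(\gamma,D')$ whenever $D$ is not $p$-exceptional. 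Here I would use that, $Y$ being smooth, $p(\Ex(p))$ has codimension $\ge 2$ in $Y$; hence $D'\not\subset p(\Ex(p))$ and $p$ restricts to a biholomorphism over a neighbourhood of the generic point of $D'$. Choosing a positive current $T\in\gamma$ with minimal singularities (and psh local potentials), $p^*T$ is a positive current in $p^*\gamma$ with $\nu(p^*T,D)=\nu(T,D')$, giving $\nu(p^*\gamma,D)\le\nu(\gamma,D')$; conversely, pushing forward a minimal-singularities current $S\in p^*\gamma$ gives a positive current $p_*S\in\gamma$ with $\nu(p_*S,D')=\nu(S,D)$, so $\nu(\gamma,D')\le\nu(p^*\gamma,D)$. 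Combining, $\nu(p^*\gamma,D)=\nu(\gamma,D')$, and therefore $p_*(N(p^*\gamma))=\sum_{D'}\nu(\gamma,D')\,D'=N(\gamma)$, which proves the claim and hence the lemma; in particular this shows that Definition \ref{def:normal-BZ-decomposition} is well posed.

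The main obstacle is precisely the identity $\nu(p^*\gamma,D)=\nu(\gamma,p(D))$ for non-$p$-exceptional $D$, which is where the actual content of the Boucksom--Zariski theory enters: it relies on the fact that the generic Lelong number of a pseudo-effective class along a prime divisor is a local invariant of the class near the generic point of that divisor (so that it is transported by a local biholomorphism), together with the codimension bound on $p(\Ex(p))$. Both ingredients are standard and available in \cite{Bou04}; moreover, the first is already implicit in the proof of Lemma \ref{lem:basic-properties}, which computes $N$ by exactly this kind of bookkeeping of $\nu$ along exceptional versus non-exceptional divisors. Everything else in the argument (resolving $Y\dashrightarrow Z$, the projection formula, $p_*p^*=\mathrm{id}$ on divisors, and $p_*$ killing $p$-exceptional divisors) is formal.
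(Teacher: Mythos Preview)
Your proof is correct and follows essentially the same route as the paper: reduce to a common resolution $W$ and then invoke the identity $p_*N(p^*\gamma)=N(\gamma)$ for a bimeromorphic morphism $p$ between compact complex manifolds, which in turn rests on $\nu(p^*\gamma,D)=\nu(\gamma,p_*D)$ for non-$p$-exceptional prime divisors $D$. The paper organizes this slightly differently (first treating the case where $X$ itself is smooth, then bootstrapping), and simply asserts the Lelong-number equality as ``easy to see from the definition,'' whereas you spell it out via currents with minimal singularities; your extra detail is welcome and the argument is sound.
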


\begin{proof} 
First assume that $X$ is a complex manifold.
Then it is easy to see from the definition of Lelong numbers that if $P$ is a prime divisor on $Y$ and $Q=f_*P\ne 0$, then $\nu (\alpha ,Q)=\nu (f^*\alpha, P)$, and hence
$f_*(N(f^*\alpha))=g_*(N(g^*\alpha))$.

Passing to the general situation, let $W$ be a common resolution of $Y$ and $Z$, and $p:W\to Y$ and $q:W\to Z$ are the projections. Then $p^*(f^*\alpha)=q^*(g^*\alpha)$, and thus by the previous argument $(f\circ p)_*N(p^*(f^*\alpha))=(g\circ q)_*N(q^*(g^*\alpha))$. But from our definition above we have $(f\circ p)_*N(p^*(f^*\alpha))=f_*(N^*(f^*\alpha))$ and $(g\circ q)_*N(q^*(g^*\alpha))=g_*(N(g^*\alpha))$, and hence $f_*N(f^*\alpha)=g_*N(g^*\alpha)$ and we are done.

\end{proof}

\begin{remark}\label{rmk:convexity-of-N}
From our definition above and Remark \ref{rmk:BZ-decomposition} it follows that
if $\alpha$ is a pseudo-effective class on a normal compact analytic variety $X$, then $N(\alpha)$ is an effective $\mbR$-divisor on $X$. Moreover, if $\alpha$ and $\beta$ are two pseudo-effective classes on $X$, then $N(\alpha+\beta)\<N(\alpha)+N(\beta)$.
\end{remark}

\begin{lemma}\label{lem:stability-of-negative-part2}
   Let $X$ be normal compact analytic variety, and $\alpha, \omega\in H^{1,1}_{\BC}(X)$ are pseudo-effective and nef classes, respectively. Then for $0<\epsilon\ll 1$, $\Supp N(\alpha+\epsilon\omega)=\Supp N(\alpha)$. 
\end{lemma}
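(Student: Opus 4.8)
The statement asserts that the \emph{support} of the negative part stabilizes: as $\epsilon \to 0^+$, $\Supp N(\alpha + \epsilon \omega)$ becomes constant and equal to $\Supp N(\alpha)$. The plan is to pass to a resolution $f\colon Y \to X$, so that by Definition \ref{def:normal-BZ-decomposition} it suffices to understand $\Supp N(f^*\alpha + \epsilon f^*\omega)$ on the smooth model $Y$, and then to exploit two features: monotonicity of Lelong numbers along the nef direction, and the fact that there are only finitely many candidate prime divisors in play near a fixed class. Since $f^*\omega$ is nef (hence pseudo-effective) on $Y$, Remark \ref{rmk:convexity-of-N} (convexity/subadditivity of $N$) gives one half of the needed monotonicity, and we will want the reverse inclusion for small $\epsilon$.

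First I would show the ``$\supseteq$'' inclusion $\Supp N(\alpha) \subseteq \Supp N(\alpha + \epsilon\omega)$ for all small $\epsilon > 0$. The cleanest route is to work with the Lelong numbers $\nu(\cdot, P)$ directly: for a fixed prime divisor $P$ on $Y$, the function $t \mapsto \nu(f^*\alpha + t f^*\omega, P)$ is defined for $t$ in a neighborhood of $0$ (since $f^*\alpha + tf^*\omega$ stays pseudo-effective, indeed big if we had added a K\"ahler class, but nefness of $f^*\omega$ suffices to keep it pseudo-effective for $t\ge 0$). By \cite[Proposition 3.2(iii) / Proposition 3.5]{Bou04} (subadditivity and the behavior of minimal multiplicities under adding a nef class), $\nu(f^*\alpha + tf^*\omega, P)$ is a non-increasing function of $t$ and in fact $\nu(f^*\alpha + tf^*\omega, P) \le \nu(f^*\alpha, P)$; moreover it is upper semicontinuous and one has $\nu(f^*\alpha + tf^*\omega,P) \to \nu(f^*\alpha,P)$ as $t \to 0^+$, which is the key continuity input I would extract from Boucksom's results. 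Because $Y$ is compact and $f^*\alpha$ has only finitely many prime divisors $P_1,\dots,P_r$ with $\nu(f^*\alpha, P_i) > 0$ (Definition \ref{def:negative-part} and \cite[Theorem 3.12(i)]{Bou04}), we only need to track finitely many Lelong numbers. For each such $P_i$, if $\nu(f^*\alpha, P_i) = c_i > 0$, then by the convergence statement there is $\epsilon_i > 0$ with $\nu(f^*\alpha + \epsilon f^*\omega, P_i) > c_i/2 > 0$ for all $0 < \epsilon < \epsilon_i$; taking $\epsilon_0 = \min_i \epsilon_i$ handles all of them simultaneously. Pushing forward to $X$ (exceptional divisors of $f$ do not affect the support comparison, as in Lemma \ref{lem:pullback-pushforward-of-negative-part}), this gives $\Supp N(\alpha) \subseteq \Supp N(\alpha + \epsilon\omega)$ for $0 < \epsilon < \epsilon_0$.

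For the reverse inclusion $\Supp N(\alpha + \epsilon\omega) \subseteq \Supp N(\alpha)$, I would again use that $N(f^*\alpha + \epsilon f^*\omega) \le N(f^*\alpha) + N(\epsilon f^*\omega) = N(f^*\alpha)$, since $f^*\omega$ is nef and hence $N(f^*\omega) = 0$ (a nef class has trivial negative part, by \cite[Proposition 3.6 / Corollary 3.4]{Bou04}). This is immediate and in fact holds for \emph{all} $\epsilon \ge 0$, not just small ones, and it already gives the inclusion of supports. Combining the two inclusions yields $\Supp N(\alpha + \epsilon\omega) = \Supp N(\alpha)$ for $0 < \epsilon < \epsilon_0$, as claimed. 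The main obstacle I anticipate is the lower-semicontinuity/convergence statement $\liminf_{t\to 0^+}\nu(f^*\alpha + tf^*\omega, P) \ge \nu(f^*\alpha, P)$: the minimal multiplicity $\nu(\cdot, P)$ is upper semicontinuous on the big cone but can jump \emph{down} in limits, so one must be careful that adding a \emph{nef} (not just pseudo-effective) perturbation and letting it shrink recovers the full Lelong number in the limit. This should follow from \cite[Proposition 3.2(iii)]{Bou04} together with the continuity of $\nu(\cdot, P)$ along segments in the pseudo-effective cone toward the boundary, but it is the point that needs the most care, and if a direct reference is unavailable one can instead argue via a Boucksom--Zariski decomposition $f^*\alpha = \beta + N(f^*\alpha)$ with $\beta$ modified nef, writing $f^*\alpha + \epsilon f^*\omega = (\beta + \epsilon f^*\omega) + N(f^*\alpha)$ and using that $\beta + \epsilon f^*\omega$ is modified nef (hence contributes nothing to the negative part, up to exceptional divisors) to conclude $N(f^*\alpha + \epsilon f^*\omega) \ge N(f^*\alpha)$ minus terms supported on $\Ex(f)$ — which again suffices after pushforward.
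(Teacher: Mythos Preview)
Your proposal is correct and takes essentially the same approach as the paper: reduce to a resolution, use subadditivity together with $N(f^*\omega)=0$ for the inclusion $\Supp N(\alpha+\epsilon\omega)\subseteq\Supp N(\alpha)$, and lower semicontinuity of $\alpha\mapsto\nu(\alpha,P)$ on the pseudo-effective cone for the reverse inclusion at small $\epsilon$. Your worry about the semicontinuity direction is unfounded --- \cite[Proposition~3.5]{Bou04}, which the paper also invokes, gives exactly the inequality $\nu(\alpha,P)\le\liminf_{\epsilon\to 0^+}\nu(\alpha+\epsilon\omega,P)$ you need, so the backup argument via the Zariski decomposition is unnecessary.
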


\begin{proof}
    Let $f:X'\to X$ be a resolution of singularities of $X$. Note that if $\Supp (N(f^*(\alpha+\omega)))=\Supp (N(f^*\alpha))$, then applying $f_*$ both sides we get the required result; in particular, it is enough to prove the statement on a resolution of $X$. Thus replacing $X$ by $X'$ and $\alpha$ and $\omega$ by $f^*\alpha$ and $f^*\omega$, respectively, we may assume that $X$ is a compact complex manifold. Let $N:=N(\alpha)$ and $N_\eps:=N(\alpha+\eps\omega)$ for $\eps>0$. Then we have $N_\eps\leq N_{\eps'}\leq N$ for all $0<\eps'\leq\eps$, and thus $\Supp(N_\eps)\subset\Supp(N_{\eps'})\subset\Supp(N)$. In particular, there exists a $0<\eps_0<1$ such that $\Supp(N_\eps)$ is independent of $\epsilon$ for all $0<\eps\leq \eps_0$.  

    Now let $P$ be a prime Weil divisor not contained in $\Supp(N_\eps)$ for $0<\eps\leq \eps_0$. Let us denote $\alpha_\eps:=\alpha+\eps\omega$. Since $P$ is not in the support of $N_\eps$, $\nu(\alpha_\eps, P)=0$ for all $0<\eps\leq\eps_0$. The coefficient of $P$ in $N$ is $\nu(\alpha, P)$. Since by \cite[Proposition 3.5]{Bou04}, $\alpha\mapsto \nu(\alpha, P)$ is a lower semi-continuous function on the pseudo-effective cone, we have $\nu(\alpha, P)\leq \lim\inf_{\eps\to 0^+}\nu(\alpha_\eps, P)=0$, hence $\nu(\alpha, P)=0$. In particular, $P$ is not contained in the support of $N$. This shows that $\Supp(N_\eps)=\Supp (N)$ for all $0<\eps\leq\eps_0$.

\end{proof}

\begin{definition}\label{def:alpha-negative}
    Let $\phi:X\bir X'$ be a bimeromorphic contraction of normal compact analytic varieties. Let $\alpha\in H^{1,1}_{\BC}(X)$ and assume that $\alpha':=\phi_*\alpha\in H^{1,1}_{\BC}(X')$. We say that $\phi$ is $\alpha$-negative, if for any common resolution $p:W\to X$ and $q:W\to X'$, we may write 
    \[
p^*\alpha=q^*\alpha'+E,
    \]
    where $E\>0$ is an effective $\mbR$-divisor such that it is $q$-exceptional and $\Supp(p_*E)$ consists precisely the $\phi$-exceptional divisors on $X$. 
\end{definition}

The following theorem is the main result of this section.
\begin{theorem}\label{thm:contracted-locus-of-mmp}
Let $\phi:X\bir X'$ be a bimeromorphic contraction of normal compact analytic varieties. 
Let $(X, B+ \bbeta_X)$ and $(X', B'+\bbeta_{X'})$ be generalized dlt pairs such that $K_X+B+\bbeta_X$ is pseudo-effective and $B'+\bbeta_{X'}=\phi_*(B+\bbeta_X)$. If $\phi$ is $(K_X+B+\bbeta_X)$-negative, then the divisors contracted by $\phi$ are contained in the support of $N(K_X+B+\bbeta_X)$. In particular, if $K_{X'}+B'+\bbeta_{X'}$ is nef, then  the divisors contracted by $\phi$ are precisely the divisors in the support of $N(K_X+B+\bbeta _X)$.

\end{theorem}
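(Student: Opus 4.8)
The plan is to reduce to the case where $X$ is smooth and then compare negative parts on a common resolution. Let $p:W\to X$ and $q:W\to X'$ be a common log resolution of $\phi$, chosen so that $W$ is smooth and dominates a log resolution of both generalized pairs. Write, as in Definition \ref{def:alpha-negative},
\[
p^*(K_X+B+\bbeta_X)=q^*(K_{X'}+B'+\bbeta_{X'})+E,
\]
where $E\geq 0$ is effective, $q$-exceptional, and $\Supp(p_*E)$ is exactly the set of $\phi$-exceptional divisors on $X$. The first step is to record that $N$ is computed on $W$: by Definition \ref{def:normal-BZ-decomposition} we have $N(K_X+B+\bbeta_X)=p_*N(p^*(K_X+B+\bbeta_X))$ and $N(K_{X'}+B'+\bbeta_{X'})=q_*N(q^*(K_{X'}+B'+\bbeta_{X'}))$.

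Next I would apply Lemma \ref{lem:basic-properties} with $f=q$, $\alpha=K_{X'}+B'+\bbeta_{X'}$ (which is pseudo-effective since $K_X+B+\bbeta_X$ is and pushforwards of pseudo-effective classes under bimeromorphic contractions remain pseudo-effective), and the effective $q$-exceptional divisor $E$. This gives
\[
N\bigl(p^*(K_X+B+\bbeta_X)\bigr)=N\bigl(q^*(K_{X'}+B'+\bbeta_{X'})+E\bigr)=N\bigl(q^*(K_{X'}+B'+\bbeta_{X'})\bigr)+E.
\]
Now push forward by $p$. Since $E$ contains, with positive multiplicity, every $\phi$-exceptional divisor (these are exactly the components of $p_*E$), and since $p_*N(q^*(K_{X'}+B'+\bbeta_{X'}))\geq 0$, we conclude that $\Supp(p_*E)\subset\Supp\bigl(p_*N(p^*(K_X+B+\bbeta_X))\bigr)=\Supp N(K_X+B+\bbeta_X)$. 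As $\Supp(p_*E)$ is precisely the $\phi$-exceptional divisors on $X$, the first assertion follows. For the converse inclusion under the hypothesis that $K_{X'}+B'+\bbeta_{X'}$ is nef: a nef class on a compact K\"ahler manifold has $N(q^*(K_{X'}+B'+\bbeta_{X'}))=0$ by \cite[Proposition 3.6]{Bou04} (nef classes are modified nef and their negative part vanishes), hence $N(p^*(K_X+B+\bbeta_X))=E$ is purely $q$-exceptional, so $N(K_X+B+\bbeta_X)=p_*E$ has support exactly the $\phi$-exceptional divisors.

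The main obstacle I anticipate is bookkeeping around the hypotheses of Lemma \ref{lem:basic-properties}: one must make sure the chosen model $W$ is genuinely a \emph{common} resolution realizing the $(K_X+B+\bbeta_X)$-negativity decomposition with $E$ effective and $q$-exceptional, and that $K_{X'}+B'+\bbeta_{X'}$ is pseudo-effective on $X'$ (this uses that $\phi$ is a contraction, so $\phi_*$ preserves pseudo-effectivity, together with $B'+\bbeta_{X'}=\phi_*(B+\bbeta_X)$). One should also check that $q^*(K_{X'}+B'+\bbeta_{X'})$ makes sense as a pseudo-effective $(1,1)$ class pulled back from $X'$, which is fine once we know the class descends to the relevant model; this is where the generalized dlt hypothesis and the definition of $\bbeta$ descending are used. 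Everything else is a routine application of the Lelong-number additivity established in the appendix.
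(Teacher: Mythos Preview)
Your proof is correct and follows essentially the same approach as the paper: resolve on a common smooth model, invoke Lemma~\ref{lem:basic-properties} to split off the exceptional divisor $E$ from the negative part, and push forward by $p$. The paper's argument is slightly terser (it does not spell out the pseudo-effectivity of $K_{X'}+B'+\bbeta_{X'}$ or cite a specific result for $N$ vanishing on nef pullbacks), but the substance is identical.
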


\begin{proof}
Let $W$ be a compact complex manifold resolving the map $\phi$, and $p:W\to X$ and $q:W\to Y$ are the projections. Since $\phi$ is $(K_X+B+\bbeta_X)$-negative, we have
\begin{equation}\label{eqn:contracted-locus}
 p^*(K_X+B+\bbeta_X)=q^*(K_{X'}+B'+\bbeta_{X'})+E,   
\end{equation}
where $E\>0$ is an effective $q$-exceptional divisor and the support of $p_*E$ is the set of divisors contracted by $\phi$.\\
Then by Lemma \ref{lem:basic-properties} 
\[N(p^*(K_X+B+\bbeta_X))=N(q^*(K_{X'}+B'+\bbeta_{X'})+E)=N(q^*(K_{X'}+B'+\bbeta_{X'}))+E,\] 
and by Definition \ref{def:normal-BZ-decomposition},  $N(K_X+B+\bbeta_X)=p_*(N(q^*(K_{X'}+B'+\bbeta_{X'}))+E)$. In particular, the $\phi$-exceptional divisors are contained in the support of $N(K_X+B+\bbeta_X)$.\\
Moreover, if $K_{X'}+B'+\bbeta_{X'}$ is nef, then $N(q^*(K_{X'}+B'+\bbeta_{X'}))=0$, and so $N(K_X+B+\bbeta_X)=p_*E$ and we are done.

\end{proof}

\bibliographystyle{hep}
\bibliography{4foldreferences}

\end{document}